\def\thm@space@setup{%
  \thm@preskip=\parskip \thm@postskip=0pt
}
\numberwithin{equation}{section}
\renewcommand{\cal}{\mathcal}
\newcommand{\cC}{{\cal C}}
\newcommand{\cD}{\cal D}
\newcommand{\cE}{{\cal E}}
\newcommand{\cG}{{\cal G}}
\newcommand{\cM}{{\cal M}}
\newcommand{\cR}{{\mathcal R}}
\newcommand{\fa}{{\frak a}}
\newcommand{\fb}{{\frak b}}
\newcommand{\fc}{{\frak c}}
\newcommand{\fd}{{\frak d}}
\newcommand{\fm}{{\frak m}}
\newcommand{\rd}{{\rm d}}
\newcommand{\ri}{\mathrm{i}}
\newcommand{\bC}{{\mathbb C}}
\newcommand{\bE}{\mathbb{E}}
\newcommand{\bP}{\mathbb{P}}
\newcommand{\bR}{{\mathbb R}}
\newcommand{\bZ}{\mathbb{Z}}
\newcommand{\bW}{\mathbb{W}}
\newcommand{\bK}{\mathbb{K}}
\newcommand{\bM}{\mathbb{M}}
\newcommand{\al}{\alpha}
\newcommand{\la}{\lambda}
\newcommand{\veps}{\varepsilon}
\DeclareMathOperator{\supp}{supp}
\DeclareMathOperator{\dist}{dist}
\DeclareMathOperator{\OO}{O}
\DeclareMathOperator{\oo}{o}
\DeclareMathOperator{\argmin}{argmin}
\renewcommand{\Re}{\mathop{\mathrm{Re}}}
\renewcommand{\Im}{\mathop{\mathrm{Im}}}
\newcommand{\deq}{\mathrel{\mathop:}=} 
\renewcommand{\leq}{\leqslant}
\renewcommand{\geq}{\geqslant}
\newcommand{\td}{\tilde}
\newcommand{\del}{\partial}
\newcommand{\qq}[1]{[\![{#1}]\!]}
\newcommand{\beq}{\begin{equation}}
\newcommand{\eeq}{\end{equation}}
\theoremstyle{plain} 
\newtheorem{theorem}{Theorem}[section]
\newtheorem*{theorem*}{Theorem}
\newtheorem{lemma}[theorem]{Lemma}
\newtheorem*{lemma*}{Lemma}
\newtheorem{corollary}[theorem]{Corollary}
\newtheorem*{corollary*}{Corollary}
\newtheorem{proposition}[theorem]{Proposition}
\newtheorem*{proposition*}{Proposition}
\newtheorem{assumption}[theorem]{Assumption}
\newtheorem*{assumption*}{Assumption}
\newtheorem{claim}[theorem]{Claim}
\newtheorem{definition}[theorem]{Definition}
\newtheorem*{definition*}{Definition}
\newtheorem*{example*}{Example}
\newtheorem{remark}[theorem]{Remark}
\newtheorem*{remark*}{Remark}
\newtheorem*{remarks*}{Remarks}
\def\author#1{\par
    {\centering{\authorfont#1}\par\vspace*{0.05in}}
}
\def\titlefont{\fontsize{13}{15}\bfseries\boldmath\selectfont\centering{}}
\def\authorfont{\fontsize{13}{15}}
\let\affiliationfont\rhfont
\def\address#1{\par
    {\centering{\affiliationfont#1\par}}\par\vspace*{11pt}
}
\def\body{
\setcounter{footnote}{0}
\def\thefootnote{\alph{footnote}}
\def\@makefnmark{{$^{\rm \@thefnmark}$}}
}
\def\title#1{
    \thispagestyle{plain}
    \vspace*{-14pt}
    \vskip 79pt
    {\centering{\titlefont #1\par}}%
    \vskip 1em
}
\renewcommand{\i}{{\rm{i}}}
\newcommand{\ext}{{\rm{ext}}}
\newcommand{\dual}{{\rm dual}}
\newcommand{\loc}{{\rm{dis}}}
\newcommand{\cont}{{\rm cont}}
\begin{document}
\title{Rigidity and Edge Universality of Discrete $\beta$-Ensembles}

\vspace{1.2cm}

 \begin{minipage}[c]{0.5\textwidth}
 \author{Alice Guionnet}
\address{CNRS \& Universit\'e de Lyon, ENSL, UMPA, UMR 5669\\
   E-mail: alice.guionnet@ens-lyon.fr}
 \end{minipage}
 \begin{minipage}[c]{0.5\textwidth}
 \author{Jiaoyang Huang}
\address{Harvard University\\
   E-mail: jiaoyang@math.harvard.edu}
 \end{minipage}

~\vspace{0.3cm}

\begin{abstract}
We study discrete $\beta$-ensembles as introduced in \cite{Borodin2016}. We obtain rigidity estimates on the particle locations, i.e. with high probability, the particles are close to their classical locations with an optimal error estimate. We prove the edge universality of the discrete $\beta$-ensembles, i.e. for $\beta\geq 1$, the distribution of extreme particles converges to the Tracy-Widom $\beta$ distribution. As far as we know, this is the first proof of general Tracy-Widom $\beta$ distributions in the discrete setting.A special case of our main results implies that under the Jack deformation of the Plancherel measure, the distribution of the lengths of the first few rows in Young diagrams, converges to the Tracy-Widom $\beta$ distribution, which answers an open problem in \cite{MR3498866}.
Our proof relies on Nekrasov's (or loop) equations, a multiscale analysis  and a comparison argument with continuous $\beta$-ensembles.

\end{abstract}


\begingroup
\hypersetup{linkcolor=black}
 \tableofcontents
\endgroup

\let\thefootnote\relax\footnote{\noindent This work of A.G. is partially  supported by the LABEX MILYON (ANR-10-LABX-0070) of Universit\'e de Lyon. }

\date{\today}

\vspace{-0.7cm}

\section{Introduction}

During the eighteenth century, De Moivre was the first to show that the Gaussian (or Normal) distribution describes the fluctuations of the sums of independent binomial variables. Soon after, Laplace  generalized his result to more general independent variables. Gauss advertised this central limit theorem by showing that it allowes the evaluation of the errors in a large class of systems characterized by independence, and many other proofs of the central limit theorems were given, see e.g. \cite{lind,stein, bill, dur}. During the last twenty years, understanding the fluctuations of much more correlated systems became of central interest. Instead of the Gaussian distribution, some of these systems  kept producing another statistical curve, which had become known as the Tracy-Widom distribution. A typical example concerns the eigenvalues of random matrices with independent entries (up to the symmetry constraint). The eigenvalues are far from being independent and are in general ordered. The fluctuations of the extreme eigenvalues are described by the Tracy-Widom distribution with an exotic scaling exponent two-third. Such results were first proven for very specific examples that are integrable, namely the Gaussian ensembles \cite{TRW96,TW1,ME,forr}. It was much more recently proven that these results are universal, in the sense  that the local fluctuations do not depend too much on the microscopic details of the random matrices, i.e. single entry distribution, sparsity, see e.g. \cite{MR1727234, MR2669449, MR3161313, MR2964770,
MR3034787, Lee2016}.  It is worth noting that this universality is always proven by comparison to integrable models and there is not yet a direct approach to this problem for general models.
Besides matrix models, the Tracy-Widom distributions also appear in a large class of models which a priori are very different. In the beautiful article  \cite{MR1682248}, it was shown that the length of the longest increasing subsequence of a uniform random permutation is also asymptotically described by the Tracy-Widom distribution. Other highly correlated systems such as uniform random lozenge tiling \cite{JoArct}, interacting particle systems \cite{MR3340328,MR2520510,MR3265169}, polymers in disordered environments \cite{MR2917766, MR3116323} and the corner growth model \cite{MR1737991}, fluctuate following the Tracy-Widom distribution. Investigating the universality class of the Tracy-Widom distribution has become a key issue in probability theory. This article investigates the universality class of  the so-called Tracy-Widom $\beta$ distributions, which has been introduced and studied in \cite{RRV}, characterizing the fluctuations of the extreme particles of Gaussian $\beta$-ensembles.  We show that a natural family of discrete models, the discrete $\beta$-ensembles as introduced in \cite{Borodin2016}, and the Jack deformation of the Plancherel measure \cite{MR2227852} belong to the universality class of the Tracy-Widom $\beta$ distributions.

We fix the parameter $\theta=\beta/2$ and a sequence of positive real-valued functions $w(x;N)$. A discrete $\beta$-ensemble is a sequence of distributions 
\begin{align}{\label{eqn:distr}}
\bP_N(\ell_1, \ell_2, \cdots, \ell_N)=\frac{1}{Z_N} \prod_{1\leq i<j\leq N}\frac{\Gamma(\ell_j-\ell_i+1)\Gamma(\ell_j-\ell_i+\theta)}{\Gamma(\ell_j-\ell_i)\Gamma(\ell_j-\ell_i+1-\theta)}
\prod_{i=1}^N w(\ell_i; N),
\end{align}
on ordered $N$-tuples $\ell_1<\ell_2<\cdots<\ell_N$, such that $\ell_i=a(N)+\lambda_i+\theta i$ and $0< \lambda_1\leq \lambda_2\leq \cdots\leq \lambda_N<b(N)-a(N)-N\theta$ are integers. We refer to $w(x;N)$ as the weight of the discrete $\beta$-ensemble, and $\ell_i$ as the positions of particles. We denote the configuration space by $\bW_N^\theta$.  When $\theta$ equals one, the particles live on $\mathbb Z$, but in general  the possible positions of a particle depend on the position of the previous particle, and they do not live on a fixed lattice. Let us note that when $\theta=1$ or $1/2$  the above ratio of gamma functions is simply the usual Coulomb gas interaction $\prod |\ell_i-\ell_j|^{2\theta}$.

When $\theta$ equals one, the distribution \eqref{eqn:distr} arises in numerous problems of statistical mechanics and asymptotic representation theory, e.g., the distribution of
 uniformly random domino tilings of the
Aztec diamond (cf.\ \cite{Jo02}), the distribution of
 uniformly random lozenge tilings of an abc--hexagon (cf.\ \cite{Jo02}), last passage percolation (cf.\
\cite{Jo01}), stochastic systems of non--intersecting paths (cf.\
\cite{Jo02, KOR}), and the representation theory of the
infinite dimensional unitary group $U(\infty)$ (cf.\ \cite[Section 5]{BO1},
\cite[Section 4]{Bor}). One of the most heavily studied examples is the random lozenge tilings of an abc--hexagon. The distribution of the positions of horizontal lozenges on a vertical line is given by such a discrete $\beta$-ensemble with $\theta=1$ and $w(x;N)$ certain binomial weight \cite{Jo01,Jo02}; it is called  the Hahn ensemble. In fact, more general domains can be handled provided they can be  described in terms of Gelfand-Tsetlin patterns \cite{MR1641839, MR2454474, MR3178541,MR3278913}. Choosing  uniformly random lozenge tilings of those domains, the distributions of the positions of horizontal lozenges on a vertical line are given by discrete $\beta$-ensemble with $\theta=1$ and more complicated binomial weights, e.g., \cite[Section 9.2]{Borodin2016}. { For general $\theta>0$ and special weights, the distribution \eqref{eqn:distr} appears in 
\cite{MR2083231}, called the $zw$-measures. The normalization constants in the $zw$-measures are explicit, which could be interpreted as that the special form of the interaction in $\bP_N$, a ratio of Gamma functions, is a discrete analogue of the Selberg integral. Other families of related measures are the Jack deformation of the Plancherel measures on Young diagrams and, more generally, the $z$-measures \cite{MR2227852,MR2143199, MR2098845,MR2607329,MR3498866,MR2465773,MR2837720}. We remark that the Jack deformation of the Plancherel measures and $z$-measures on Young diagrams are slightly different from the discrete $\beta$-ensemble \eqref{eqn:distr}, where the number of particles is fixed.}

Moreover, the discrete $\beta$-ensemble \eqref{eqn:distr} is a natural discrete analogue of the continuous  $\beta$-ensemble given by
$$\rd\bP_{N}^{\cont}(\lambda_{1},\ldots,\lambda_{N})=\frac{1}{Z_{N}}\prod_{1\leq i<j\leq N}|\lambda_{i}-\lambda_{j}|^{\beta}e^{-N\sum_{i=1}^{N }V(\lambda_{i})}\prod_{i=1}^N \rd\lambda_{i}\,.$$
For classical values of  $\beta=1,2,4$ and $V(x)=\beta x^{2}/4$, $\bP_{N}^{\cont}$ corresponds to the joint law of the eigenvalues of the Gaussian Orthogonal (with real entries), Gaussian Unitary (with complex entries) or Gaussian Symplectic (with quaternion entries) Ensembles. 
When $V$ is quadratic, Dumitriu and Edelman \cite{DuEd} constructed  for any $\beta>0$  a real symmetric tri-diagonal matrix with independent chi-square and Gaussian entries, whose eigenvalues are distributed according to $\bP_{N}^{\cont}$.

The continuous $\beta$-ensembles could be studied in details. It is well known that the particles of $\beta$-ensembles are asymptotically distributed according to a so-called equilibrium measure. The fluctuations of the empirical measures could be studied in the case where the equilibrium measure has a connected support \cite{johanssonclt, KrSh, borot-guionnet} and in the multi-cut case \cite{Shc12,borot-guionnet2,BGK}.  Local fluctuations were first studied in the case $\beta=2$ where the determinantal structure of the law provides specific tools. They have been first analyzed in the pioneering works of Gaudin, Mehta and Dyson \cite{MR0112895,MR0278668,MR0277221} (see \cite{ME} for a review), where it was shown that the local correlation statistics are governed by the sine kernel for the GUE in the bulk. At the edge, the local fluctuations for the GUE were first identified by Tracy and Widom \cite{TW1} to be asymptotically  given by the Tracy-Widom distribution. Later, bulk universality was proven in \cite{MR1715324} for $V$ polynomial,  \cite{MR1702716} for $V$ analytic and \cite{PS,MR2375744} for $V$ locally $C^3$.
\cite{LL} provides the most general result for bulk universality in the case $\beta=2$. The edge universality for analytic $V$ was proven in \cite{MR1702716}.
In the case $\beta=1,4$, the distribution is Pfaffian and more difficult to analyze. For quadratic $V$, the local statistics were derived in \cite{MR0278668,MR0277221} for the bulk and \cite{TRW96} for the edge.  Universality was proven in  \cite{DeGibulk} in the bulk, and \cite{DeGiedge} at the edge, for monomial potentials $V$ (see \cite{DeGi2} for a review). For more general potentials, universality was shown in \cite{KrSh} ($\beta=1,4$, one-cut case) and \cite{Sh} ($\beta=1,4$, multi-cut case). The local fluctuations of more general $\beta$-ensembles were only derived recently \cite{VV,RRV} for quadratic $V$, based on Dumitriu-Edelman matrix representation. It was shown that correlation functions in the bulk are given by the Sine $\beta$ kernel, and local fluctuations at the edge converge to the Tracy-Widom $\beta$ law.
Universality in  $\beta$-ensembles was first addressed in the bulk \cite{MR3192527,MR2905803} ($\beta> 0$, $V\in C^4$), then at the edge  \cite{MR3253704} ($\beta\geq 1$, $V\in C^4$) and 
\cite{KRV} ($\beta>0$, $V$ convex polynomial). Later, the transportation of measure approach was developed to prove the universality of local fluctuations \cite{Shch} (bulk, $\beta>0$, $V$ analytic, both one-cut and multi-cut cases), \cite{BFG} (both bulk and edge, $\beta>0$, $V \in C^{{31}}$, one-cut case) and \cite{Bek} (edge, $\beta>0$, $V$ analytic, multi-cut case). 

A central point to study mesoscopic or microscopic fluctuations of $\beta$-ensembles is to derive rigidity estimates showing that particles are very close to their deterministic limit. They are usually formulated in terms of concentration of the Stieltjes transform of the empirical particle density at short scales. These results were first established for Wigner matrices in a series of papers \cite{MR2481753, MR2537522,MR2871147,MR2981427,MR3109424}, then extended to other matrix models, i.e. sparse random matrices \cite{MR3098073}, deformed Wigner ensembles \cite{Landon2016, MR3502606}. Beyond matrix models, rigidity estimates have been established for one-cut and multi-cut $\beta$-ensembles \cite{MR3192527, MR3253704,MR2905803, multicutRig}, and two-dimensional Coulomb gas \cite{Leble2015, Roland2015}. Another key tool to study fluctuations is  the loop (or Dyson-Schwinger) equations.
They were introduced to the mathematical community by Johansson \cite{johanssonclt} to derive macroscopic central limit theorems for general $\beta$-ensembles, see also \cite{borot-guionnet, borot-guionnet2, KrSh}.  They were later on used to study fluctuations on microscopic scales \cite{MR3192527, MR3253704} and mesoscopic scales \cite{BL16}.
The central idea is to analyze these equations by linearizing them around the limit, hence obtaining linear equations at the first order, and to show that higher order terms are negligible by concentration of measure arguments. To this end, one needs to solve the linear equations, which amounts to invert some linear operators. This can easily be done when the equilibrium measure has a connected support with density vanishing  like a square root near the edges. In this case, we say the potential and the equilibrium measure are off-critical.  It can be shown (see \cite{kuij}) that most potentials  are off-critical, in the sense that if we multiply the potential by a constant, the equilibrium measure will be off-critical for almost all such constants.  Critical potentials lead to different fluctuations at the edge \cite{Deic}.

It is natural to seek for similar results for the discrete $\beta$-ensembles. As for continuous models, the convergence of the empirical measure of the particles towards an equilibrium measure is easy to prove, and a large deviation principle can be derived \cite{feral}. However, proving global or local fluctuations is much more challenging in the discrete setting than in the continuous one.
When $\beta=2$, the ensembles are called the discrete orthogonal polynomial ensembles, which can be analyzed thanks to their determinantal structure. The central limit theorem for the empirical measure was proven  in \ \cite{BF, MR3298467, BD, BuGo, Borodin2016}. Local fluctuations were studied for a large family of integrable choices of weights in \cite{BOO,  JoArct, MR1969205,  MR1842783, Jo01, Jo02} and it was shown that local fluctuations in the bulk converge to the discrete sine kernel, whereas Tracy-Widom distribution describes the fluctuations at the edge. \cite{MR1952523, baik} provide the most general results for the local fluctuations in the case $\beta=2$. 
However, these results mentioned above depend on the integrability of the system, and do not include the multi-cut case.

In this paper, we study the  local fluctuations of the positions of extreme particles of discrete $\beta$-ensembles (also the positions of extreme holes in the case $\beta=2$ by duality) in off-critical situations (corresponding to the case where the density behaves like a square root near zero or $\theta^{-1}$).  We show that these local fluctuations at the edge  are the same as for continuous $\beta$-ensembles, i.e. are governed by the Tracy-Widom $\beta$ distributions. This reflects that the fluctuations of extreme particles are in a much larger space size than the mesh size of the lattice.  The central point is thus to obtain rigidity estimates of the particle locations on the optimal scale, or at least on a scale comparable to the scale of the fluctuations.  With the rigidity estimates as input, we then can prove universality by a comparison of discrete and continuous local measures, for which the edge universality was proven in \cite{MR3253704}. Typically, the continuous models can be studied as the degenerations of discrete models. However, our proof goes in the opposite direction, i.e. we prove the universality of discrete models by comparing with continuous models. We believe that this approach can be further developed to study other discrete models.

Our approach to rigidity estimates relies on a multiscale analysis combining ideas from \cite{MR3192527, MR3253704,MR2905803, Leble2015, Roland2015}, in particular the use of local measures, which are the measures on a small number of consecutive particles that are obtained by fixing all other particles which act as boundary conditions,  and the Nekrasov's equations from \cite{Borodin2016}. 
Nekrasov's equations  originated in the work of Nekrasov and his collaborators \cite{Nekrasov, Nek_PS,Nek_Pes} and require the specific form of the interaction in $\bP_N$ (a ratio of Gamma functions rather than the Coulomb interaction). 
Nekrasov's equations are a key tool to obtain fine estimates on the convergence to equilibrium in a spirit very close to the loop (or Dyson-Schwinger) equations. 
In  \cite{Borodin2016}, the authors analyzed Nekrasov's equations with similar arguments  as in the continuous case and  derived macroscopic central limit theorems for the discrete $\beta$-ensembles. It was shown in particular that the covariance structure of the central limit theorem is the same as that in the continuous case. Our multiscale iteration approach is as follows. The optimal rigidity estimates at macroscopic scale were proven in \cite{Borodin2016}. With the rigidity estimates on a larger scale, we derive large deviation estimates of the local measures. Those large deviation estimates all together lead to weak rigidity estimates of the original discrete $\beta$-ensemble, but in a finer scale. We then use the Nekrasov's equation to boostrap the rigidity estimates up to optimal errors. By iterating this procedure, after finite steps, we get the rigidity estimates on the optimal scale.

We generalize rigidity and the edge universality results in section \ref{s:multi-cut} to the case  of multi-cut models with fixed filling fractions. We then apply these results to the Krawtchouk ensembles, which correspond to binomial weights in section \ref{s:bw}. We also generalize to cases where particles are not constrained to stay within a bounded set  but are submitted to a uniformly convex potential in section \ref{s:cp}.  Later we discuss the case of uniformly random lozenge tilings of a hexagon with a hole in section \ref{s:lt}. It is similar to multi-cut models but where filling fractions are not fixed. Under the assumption that the filling fractions do not fluctuate too much (that is verified in a work in progress of G. Borot, V. Gorin and A. Guionnet \cite{GGG}), we show that there is universality of the fluctuations at the band edges. { Finally in section \ref{s:JM}, we use the universality result for the discrete $\beta$-ensembles to study the Jack deformation of the Plancherel measure. We prove that the distribution of the lengths of the first few rows in Young diagrams under the Jack defromation of the Plancherel measure, converges to the Tracy-Widom $\beta$ distribution for $\beta\geq 1$, which answers an open problem in \cite{MR3498866}.}

\emph{Conventions.} We use $C$ to represent large universal constants, and $c$ small universal constants, which may be different from line by line. Let $Y\geq 0$. We write that $X=\OO(Y)$ if there exists some universal constant such that $|X|\leq CY$. We write $X=\oo(Y)$, or $X\ll Y$ if the ratio $|X|/Y\rightarrow 0$ as $N$ goes to infinity. We write $X\asymp Y$ if there exist universal constants such that $cY\leq |X|\leq CY$. We write $X\lesssim Y$ if there exist universal constant such that $X\leq CY$. We write $X\gtrsim Y$ if there exist universal constants such that $X\geq cY$. We denote the set $\{1, 2,\cdots, N\}$ by $\qq{1,N}$. We denote the $\bZ_{\geq 0}$ the set of non-negative integers, and $\bZ_{>0}$ the set of positive integers. We say an event $\Omega$ holds with high probability, if there exist universal constant $c$, and $N\geq N_0(c)$ large enough, so that $\bP(\Omega)\geq 1-\exp(-c(\ln N)^2)$.

\noindent \textbf{Acknolwedgements:} { We thank A. Borodin and E. Dimitrov for useful comments on the draft of this paper.} J.H. thanks R. Bauerschmidt and M. Nikula for  explaining their insights of \cite{Roland2015}, and  A. Adhikari, A. Borodin, P. Sosoe and  H-T. Yau for helpful discussions. J.H. would also like to thank {\'E}cole Normale Sup{\'e}rieure de Lyon for hospitality during preparation of this manuscript.

\subsection{Background on Discrete $\beta$-Ensemble}\label{s:notation}
In this section we collect several assumptions of the discrete $\beta$-ensembles from \cite{Borodin2016}. 
For each $N>0$, we take an interval $(a(N),b(N))$ such that $b(N)-a(N)-N\theta\in \mathbb Z_{>0}$.
 \begin{definition} \label{Def_state_space_1cut}The state space $\bW_N^{\theta}$ consists of $N$--tuples $a(N)<\ell_1<\ell_2<\dots<\ell_N<b(N)$:
\begin{enumerate}
 \item $\ell_1-a(N)\in \mathbb Z_{> 0}$, and $b(N)-\ell_N\in\mathbb Z_{> 0}$.
 \item For $i\in\qq{1,N-1}$,  $\ell_{i+1}-\ell_{i}-\theta\in\bZ_{\geq 0}$.
\end{enumerate}
\end{definition}

We need to assume that the weights $w(\ell; N)$ as in \eqref{eqn:distr} depend on $N$ in a regular way.

\begin{assumption}\label{a:VN}
There exists $-\infty<\hat a<\hat b<+\infty$ such that as $N\rightarrow \infty$,
\begin{align}\label{e:locaN}
a(N)=N\hat a+\OO(\ln (N)),\quad b(N)=N\hat b+\OO(\ln(N)).
\end{align}
We require that $w(a(N); N)=w(b(N); N)=0$, and on $[a(N)+1,b(N)-1]$, it has the form
\begin{align*}
w(x; N)=\exp\left(-NV_N\left(\frac{x}{N}\right)\right),
\end{align*}
for a function $V_N$ that is continuous in the intervals $[(a(N)+1)/N,(b(N)-1)/N]$, and such that 
\begin{align*}
V_N(u)=V(u)+\OO\left(\frac{\ln(N)}{N}\right), 
\end{align*}
uniformly over $u\in [(a(N)+1)/N, (b(N)-1)/N]$. The function $V(u)$ is   twice continuously differentiable and the following bound holds for a constant $C>0$,
\begin{align*}
|V'(u)|\leq C(1+|\ln(u-\hat a)|+|\ln(u-\hat b)|).
\end{align*}
\end{assumption}

One important tool for the study  of measures $P_N$
is the Nekrasov's equation of Theorem \ref{t:aRN} for which we will need the following additional assumption.  

\begin{assumption}\label{a:wx}
There exists an open set $\cM$ of the complex plane, which contains the interval $[\hat a, \hat b]$, and  functions $\psi^{\pm}_N(x)$ such that 
\begin{align*}
\frac{w(x;N)}{w(x-1;N)}=\frac{\psi_N^+(x)}{\psi_N^-(x)},
\end{align*}
which satisfy
\begin{align*}
\psi_N^{\pm}(x)=\phi^{\pm}\left(\frac{x}{N}\right)+\OO\left(\frac{1}{N}\right),
\end{align*}
uniformly over $x/N$ in compact subsets of $\cM$. All the aforementioned functions are holomorphic in $\cM$. \end{assumption}

\begin{theorem}\label{t:aRN}
Let $\bP_N$ be a distribution on $N$-tuples $(\ell_1, \ell_2, \cdots, \ell_N)\in \bW_N^\theta$ as in \eqref{eqn:distr},  with
\begin{align*}
\frac{w(x,N)}{w(x-1), N}=\frac{\psi^+_N(x)}{\psi_N^-(x)}.
\end{align*}
We define
\begin{align*}
R_N(\xi)=\psi^-_N(\xi)\bE_{\bP_N}\left[\prod_{i=1}^{N}\left(1-\frac{\theta}{\xi-\ell_i}\right)\right]+\psi^+_N(\xi)\bE_{\bP_N}\left[\prod_{i=1}^{N}\left(1+\frac{\theta}{\xi-\ell_i-1}\right)\right].
\end{align*}
If $\psi_N^{\pm}(\xi)$ are holomorphic in a domain $N\cM=\{Nz: z\in \cM\}\in \bC$, then so is $R_N(\xi)$. 
\end{theorem}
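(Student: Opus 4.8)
The assertion is that $R_N$, which a priori is only meromorphic on $N\cM$, in fact has no poles there; this is a purely local statement about residues. First I would record the structure of $R_N$: for each fixed $\bm{\ell}\in\bW_N^\theta$ the two products $\prod_i(1-\tfrac{\theta}{\xi-\ell_i})$ and $\prod_i(1+\tfrac{\theta}{\xi-\ell_i-1})$ are rational in $\xi$ with at most simple poles, and $\bW_N^\theta$ is finite, so $R_N=\psi_N^-\cdot(\text{rational})+\psi_N^+\cdot(\text{rational})$ is meromorphic on $N\cM$ with at most simple poles, all of them in the finite set $\cP$ of points $\ell_j$ or $\ell_j+1$ ($\bm{\ell}$ in the support of $\bP_N$, $j\in\qq{1,N}$). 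Since $a(N)<\ell_i<b(N)$ with $a(N)/N\to\hat a$, $b(N)/N\to\hat b$ and $\cM$ is open with $[\hat a,\hat b]\subset\cM$, we have $\cP\subset N\cM$ for large $N$, so it suffices to show $\mathrm{Res}_{\xi=y}R_N=0$ for each $y\in\cP$. Writing $g_y^-(\bm{\ell})=\prod_{i:\,\ell_i\neq y}(1-\tfrac{\theta}{y-\ell_i})$ and $g_y^+(\bm{\ell})=\prod_{i:\,\ell_i\neq y-1}(1+\tfrac{\theta}{y-\ell_i-1})$, extracting the residue of the single singular factor in each product (particles being distinct, at most one $\ell_i$ equals $y$, resp.\ $y-1$), and using that $\psi_N^\pm$ are holomorphic at $y$, gives
\begin{align*}
\mathrm{Res}_{\xi=y}R_N(\xi)=\theta\Bigl(\psi_N^+(y)\,\bE_{\bP_N}\bigl[\1(y-1\in\bm{\ell})\,g_y^+(\bm{\ell})\bigr]-\psi_N^-(y)\,\bE_{\bP_N}\bigl[\1(y\in\bm{\ell})\,g_y^-(\bm{\ell})\bigr]\Bigr).
\end{align*}

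So everything reduces to the identity $\psi_N^-(y)\,\bE_{\bP_N}[\1(y\in\bm{\ell})g_y^-(\bm{\ell})]=\psi_N^+(y)\,\bE_{\bP_N}[\1(y-1\in\bm{\ell})g_y^+(\bm{\ell})]$, which I would prove by a ``move one particle'' change of variables on the unnormalized weight $\mu(\bm{\ell})$ of \eqref{eqn:distr}. Given $\bm{\ell}$ with $\ell_j=y$, let $\bm{\ell}^{\downarrow}$ be $\bm{\ell}$ with $\ell_j$ replaced by $y-1$. Using $\Gamma(z+1)=z\Gamma(z)$ to evaluate how each pair-factor involving the index $j$ (those with partner $i<j$, and those with partner $k>j$) transforms, and Assumption \ref{a:wx} for the weight ratio $w(y-1;N)/w(y;N)=\psi_N^-(y)/\psi_N^+(y)$, one obtains after telescoping
\begin{align*}
\frac{\mu(\bm{\ell}^{\downarrow})}{\mu(\bm{\ell})}=\frac{\psi_N^-(y)}{\psi_N^+(y)}\prod_{i\neq j}\frac{(y-\ell_i-\theta)(y-\ell_i-1)}{(y-\ell_i)(y-\ell_i-1+\theta)}=\frac{\psi_N^-(y)}{\psi_N^+(y)}\cdot\frac{g_y^-(\bm{\ell})}{g_y^+(\bm{\ell}^{\downarrow})},
\end{align*}
equivalently $\psi_N^-(y)\,\mu(\bm{\ell})\,g_y^-(\bm{\ell})=\psi_N^+(y)\,\mu(\bm{\ell}^{\downarrow})\,g_y^+(\bm{\ell}^{\downarrow})$, a term-by-term version of the desired identity.

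It then remains to sum over configurations. The map $\bm{\ell}\mapsto\bm{\ell}^{\downarrow}$ is a bijection between $\{\bm{\ell}:y\in\bm{\ell},\ \bm{\ell}^{\downarrow}\in\bW_N^\theta\}$ and $\{\bm{\ell}':y-1\in\bm{\ell}',\ (\bm{\ell}')^{\uparrow}\in\bW_N^\theta\}$, the inverse being ``move the particle at $y-1$ up by one''. The configurations excluded from these sets are exactly the degenerate ones, where the shifted particle would collide with its neighbour, violate the spacing $\theta$, or land on a hard wall; in each such case a factor of $g_y^-(\bm{\ell})$ (resp.\ $g_y^+(\bm{\ell}')$) vanishes, or else $y-1=a(N)$ (resp.\ $y=b(N)$) and then $w(a(N);N)=0$ (resp.\ $w(b(N);N)=0$) forces $\psi_N^-(y)=0$ (resp.\ $\psi_N^+(y)=0$) through Assumption \ref{a:wx}, so these configurations contribute $0$ to the side on which they sit. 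Summing $\psi_N^-(y)\mu(\bm{\ell})g_y^-(\bm{\ell})=\psi_N^+(y)\mu(\bm{\ell}^{\downarrow})g_y^+(\bm{\ell}^{\downarrow})$ over $\{\bm{\ell}:y\in\bm{\ell}\}$, reindexing the right side by the bijection, and dividing by $Z_N$ yields the identity; hence $\mathrm{Res}_{\xi=y}R_N=0$ for every $y\in\cP$, so $R_N$ extends holomorphically across $\cP$, i.e.\ is holomorphic on $N\cM$.

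I expect the one genuinely delicate step to be the middle one: getting the ratio $\mu(\bm{\ell}^{\downarrow})/\mu(\bm{\ell})$ exactly right — the cancellation of Gamma factors over the two families of pairs must reproduce precisely $g_y^-(\bm{\ell})/g_y^+(\bm{\ell}^{\downarrow})$, and it is here that the ratio-of-Gammas form of the interaction in \eqref{eqn:distr} (rather than the bare Coulomb factor $\prod|\ell_i-\ell_j|^{2\theta}$) is essential — together with the careful bookkeeping of the degenerate configurations near the walls $a(N),b(N)$ and where consecutive gaps equal $\theta$. The remaining steps are routine.
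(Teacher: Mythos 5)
Your proof is correct. The paper does not supply its own argument for Theorem \ref{t:aRN} --- it imports the Nekrasov equation from Borodin--Gorin--Guionnet \cite{Borodin2016} --- and your reconstruction (simple poles only, residue extraction, the particle-shift identity $\psi_N^-(y)\,\mu(\bm{\ell})\,g_y^-(\bm{\ell})=\psi_N^+(y)\,\mu(\bm{\ell}^{\downarrow})\,g_y^+(\bm{\ell}^{\downarrow})$ via $\Gamma(z+1)=z\Gamma(z)$, and the degenerate-configuration bookkeeping using $g_y^{\pm}=0$ or $\psi_N^{\mp}(y)=0$) is precisely the residue-cancellation argument of that reference.
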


We denote the empirical particle density as $\mu_N=N^{-1}\sum_{i=1}^N \delta_{\ell_i/N}$. Let $\mu(dx)=\rho_V(x)\rd x$ be the constrained equilibrium measure of the discrete $\beta$-ensemble $\bP_N$, which is given by the minimizer of the following functional 
\begin{align}\label{e:defIV}
-\theta \int_{x\neq y}\ln |x-y|\rho(x)\rho(y)\rd x\rd y+\int V(x)\rho(x)\rd x,
\end{align}
over all the densities $\rho(x)$, supported on $[\hat a, \hat b]$, with $0\leq \rho(x)\leq \theta^{-1}$. We define $\hat a_N\deq \min\{a(N)/N,\hat a\}$ and $\hat b_N\deq \max\{b(N)/N, \hat b\}$. Then both the empirical particle density $\mu_N$ and the constrained equilibrium measure $\mu$ are supported on $[\hat a_N, \hat b_N]$. We have the following characterization of the constrained equilibrium measure $\mu$.  
The restriction $0 \leq \rho_V(x)\leq \theta^{-1}$
leads to the subdivision of $[\hat a, \hat b]$ into three types of regions (we
borrow the terminology from \cite{MR1952523}):
\begin{itemize}
\item  Maximal (with respect to inclusion) closed connected intervals where $\rho_V(x) = 0$ are called
voids.
\item Maximal open connected intervals where $0 <\rho_V(x) < \theta^{-1}$ are called bands.
\item Maximal closed connected intervals where $\rho_V(x) = \theta^{-1}$ are called saturated regions.
\end{itemize}

In a related context of random tilings and periodically-weighted dimers the voids and saturated
regions are usually called frozen, while bands are liquid regions.
There exists a constant $f_V$ so that the  effective potential $F_V$ given by 
\begin{align}\label{e:defFV}
F_V (x) \deq -2\theta 
\int_{\hat a}^{\hat b}
\ln |x - y|\rho_V(y)dy + V (x)-f_V,
\end{align}
satisfies:
\begin{enumerate}
\item $F_V (x)  \geq 0$, for all $x$ in voids in $[\hat a, \hat b]$;
\item $F_V (x) \leq  0$, for all $x$ in saturated regions in $[\hat a, \hat b]$;
\item $F_V (x)  = 0$, for all $x$ in bands in $[\hat a, \hat b]$.
\end{enumerate}

We denote the classical particle locations $\gamma_1, \gamma_2, \cdots, \gamma_N$ corresponding to the constrained equilibrium measure $\mu$, as
\begin{align}\label{e:defloc}
\frac{i-1/2}{N}=\int_{\hat a}^{\gamma_i}\rho_V(x)\rd x, \quad i\in \qq{1, N}.
\end{align}
A convenient way to study the particle system $\bP_N$ is through the Stieltjes transform $G_N$ of the empirical particle density $\mu_N$, 
\begin{align}\label{e:defGN}
G_N(z)=\frac{1}{N}\int \frac{ \mu_N(x)}{z-x}=\frac{1}{N}\sum_{i=1}^N\frac{1}{z-\ell_i/N}.
\end{align}
We also define the Stieltjes transform $G_\mu$ of the equilibrium measure $\mu$,
\begin{align}\label{e:defGmu}
G_\mu(z)=\int \frac{\rho_V(x)\rd x}{z-x}.
\end{align}
Notice that $G_N$ and $G_\mu$ are well defined outside $[\hat a_N, \hat b_N]$, and are holomorphic there. 

We next define the following two functions $R_\mu$ and $Q_\mu$, which are important for our asymptotic study,
\begin{align}\begin{split}\label{e:defRQ}
R_\mu(z) \deq \phi^-(z)e^{-\theta G_\mu(z)}+\phi^+(z)e^{\theta G_\mu(z)},\\
Q_\mu(z) \deq \phi^-(z)e^{-\theta G_\mu(z)}-\phi^+(z)e^{\theta G_\mu(z)}.
\end{split}\end{align}
It is proven in \cite{Borodin2016} that under Assumptions \ref{a:VN}  and \ref{a:wx}, $R_\mu(z)$ is analytic on $\cM$. For the function $Q_\mu(z)$, we assume
\begin{assumption}\label{a:Hz}
We assume there exists a function $H(z)$ holomorphic in $\cM$ and numbers $A, B$ such that 
\begin{itemize}
\item { $\hat a < A<B< \hat b$};
\item $Q_\mu(z)=H(z)\sqrt{(z-A)(z-B)}$, where the branch of square root is such that $\sqrt{(z-A)(z-B)}\sim z$ as $z\rightarrow \infty$; 
\item $H(z)\neq 0$ for all $z$ in a neighborhood of  $[\hat a, \hat b]$.
\end{itemize}
\end{assumption}

{
\begin{remark}
Assumption \ref{a:Hz} is slightly different from\cite[Assumption 4]{Borodin2016}, i.e. we do not allow that $\hat a=A$ or $\hat b=B$. If this is the case, the equilibrium measure might not have square root behaviors at band edges. For example, for the Krawtchouk orthogonal polynomial ensemble in section \ref{s:bw}, if $\fm=2$, then $\hat a=A=0$, $\hat b=B=2$ and the equilibrium measure $\mu_\fm(x)={\bf 1}_{[0,2]}(x)/2$, which does not have square root behaviors at the edges.
\end{remark}
}

For any function $F(z)$ defined on the complex plane $\bC$, we denote,
\begin{align*}
F(E+0\ri)\deq\lim_{\eta\rightarrow 0+} F(E+\eta\ri),\quad F(E-0\ri)\deq\lim_{\eta\rightarrow 0+} F(E-\eta\ri).
\end{align*}
The measure $\mu$ can be recovered by its Stieltjes transform. And more precisely, we have
\begin{align}\label{e:Grho}
G_\mu(E+0\ri)=P.V.\int\frac{\rho_V(x)\rd x}{E-x}-\pi \rho_V(E)\ri.
\end{align}
We assume Assumption \ref{a:Hz}. It follows from \eqref{e:defRQ} and \eqref{e:Grho}, the density $\rho_V$ satisfies
\begin{align}\label{e:RQ1}
\frac{R_\mu(E)+Q_\mu(E+0\ri)}{R_{\mu}(E)+Q_\mu(E-0\ri)}=e^{2\pi \theta\ri  \rho_V(E)}.
\end{align}
For $E\in (A, B)$, we have $Q_\mu(E\pm 0\ri)=\pm H(E)\ri\sqrt{(B-E)(E-A)}\neq 0$. Therefore $0<\rho_V(E)<\theta^{-1}$. { If $R_\mu(A), R_\mu(B)\neq 0$, then either $\rho_V(E)$ or $\theta^{-1}-\rho_V(E)$ has square root behaviors at $A, B$. Otherwise, by symmetry, we assume $R_\mu(A)=0$. We also have $Q_\mu(A)=0$, then from \eqref{e:defRQ} $\phi^{\pm}(A)=0$, which is impossible since $\hat a<A<\hat b$.} For $E\not\in [A, B]$, we have $Q_\mu(E\pm 0\ri)=H(E)\sqrt{(E-B)(E-A)}\neq 0$. Therefore $e^{2\pi \theta \ri \rho_V(E)}=1$, so $\rho_V(E)\equiv 0$ or $\rho_V(E)\equiv \theta^{-1}$. Notice that under Assumptions \ref{a:VN} and \ref{a:wx}, we have $
e^{-V'(x)}=\phi^+(x)/\phi^-(x).
$
For $E\notin [A, B]$, we have $Q_\mu(E\pm 0\ri)=H(E)\sqrt{(E-A)(E-A)}\neq 0$. It follows from \eqref{e:defRQ} and \eqref{e:Grho}, the Stieltjes transform $G_\mu$ satisfies
\begin{align}\label{e:RQ2}
\frac{R_\mu(E)+Q_\mu(E+0\ri)}{R_{\mu}(E)-Q_\mu(E-0\ri)}=e^{V'(E)-2\theta \Re[G_\mu(E)]}=e^{F'_V(E)},
\end{align}
where $F'_V(E)$ is defined in \eqref{e:defFV}. As a result, $F'_V(E)$ has square root behaviors at $A, B$. We remark that, the righthand sides of \eqref{e:RQ1} and \eqref{e:RQ2} can be rewritten as the same expression,
\begin{align}\label{e:RQ3}
\frac{R_\mu(E)+Q_\mu(E+0\ri)}{R_{\mu}(E)-Q_\mu(E+0\ri)}=\left\{
\begin{array}{cc}
e^{2\pi \theta \ri \rho_V(E)}, & E\in [A, B],\\
e^{F'_V(E)}, & E\not\in [A,B].
\end{array}\right.
\end{align}
In summary, we have the following four cases. There exists $\varepsilon>0$ so that:
\begin{enumerate}
\item the constrained equilibrium measure is supported on $[A,B]$. There exist analytic functions $s_A$ and $s_B$ such that  $s_A(x)\sqrt{x-A}=2\pi \theta\rho_V(x)$ on $[A, A+\varepsilon]$, $s_A(x)\sqrt{A-x}=-F'_V(x)$ on $[A-\varepsilon, A)$; and  $s_B(x)\sqrt{B-x}=2\pi\theta \rho_V(x)$ on $[B-\varepsilon, B]$, $s_B(x)\sqrt{x-B}=F'_V(x)$ on $x\in (B, B+\varepsilon)$.
\item the constrained equilibrium measure is supported on $[A,\hat b]$, and $\rho(x)\equiv \theta^{-1}$ on $[B, \hat b]$. There exist analytic functions $s_A$ and $s_B$ such that  $s_A(x)\sqrt{x-A}=2\pi \theta\ri\rho_V(x)$ on $[A, A+\varepsilon]$, $s_A(x)\sqrt{A-x}=-F'_V(x)$ on $[A-\varepsilon, A)$; and  $s_B(x)\sqrt{B-x}=2\pi-2\pi\theta \rho_V(x)$ on $[B-\varepsilon, B]$, $s_B(x)\sqrt{x-B}=-F'_V(x)$ on $x\in (B, B+\varepsilon)$.

\item the constrained equilibrium measure is supported on $[\hat a,B]$, and $\rho(x)\equiv \theta^{-1}$ on $[\hat a, A]$. There exist analytic functions $s_A$ and $s_B$ such that  $s_A(x)\sqrt{x-A}=2\pi-2\pi \theta\rho_V(x)$ on $[A, A+\varepsilon]$, $s_A(x)\sqrt{A-x}=F'_V(x)$ on $[A-\varepsilon, A)$; and  $s_B(x)\sqrt{B-x}=2\pi\theta \rho_V(x)$ on $[B-\varepsilon, B]$, $s_B(x)\sqrt{x-B}=F'_V(x)$ on $x\in (B, B+\varepsilon)$.

\item the constrained equilibrium measure is supported on $[\hat a, \hat b]$, $\rho(x)\equiv \theta^{-1}$ on $[\hat a, A]$ and $\rho(x)\equiv \theta^{-1}$ on $[B, \hat b]$. There exist analytic functions $s_A$ and $s_B$ such that  $s_A(x)\sqrt{x-A}=2\pi-2\pi \theta\rho_V(x)$ on $[A, A+\varepsilon]$, $s_A(x)\sqrt{A-x}=F'_V(x)$ on $[A-\varepsilon, A)$; and  $s_B(x)\sqrt{B-x}=2\pi-2\pi\theta \rho_V(x)$ on $[B-\varepsilon, B]$, $s_B(x)\sqrt{x-B}=-F'_V(x)$ on $x\in (B, B+\varepsilon)$.
\end{enumerate}

For the proof of edge universality, we need stronger control on the potential $V_N$ (as in Assumption \ref{a:VN}), i.e. the difference between the derivative of $V_N$ and the derivative of its limit is negligible. 
\begin{assumption}\label{a:moreVN}
For any $u$ in a small neighborhood of $[A, B]$, { $V_N(u)$ is analytic} and the following holds
\begin{align*}
V_N'(u)=V'(u)+\OO\left(N^{ -1/3}\right).
\end{align*}
\end{assumption}

\subsection{Main Results}

The goal of this paper is to prove the edge universality of discrete $\beta$-ensemble. In the rest of the paper, we assume that $\hat a<A$ and $[\hat a, A]$ is a void region, i.e. $\rho_V(x)=0$ on $[\hat a, A]$.  As discussed in the previous section, the density vanishes like a square root at $A$,
\begin{align*} 
\rho_V(u)=(1+\OO(u-A))s_A\sqrt{u-A}/\pi,\quad u\rightarrow A^+.
\end{align*}

The proof of the edge universality consists of two steps. In the first step, we prove the rigidity of the particle configuration, i.e. with high probability, the particle locations are very close  to their classical locations up to optimal scale, which occupies the main part of the paper. With the rigidity of the particle configuration as input, the edge universality of discrete $\beta$-ensembles follows from a direct comparison with continuous $\beta$-ensemble, for which the edge universality was proven in \cite{MR3253704}.

To state the rigidity theorem, we need some more definitions. We fix a small parameter $\fa>0$, and define the spectral domain $\cD_r$  given for $r\geq 0$ by
\begin{align}\begin{split}\label{def:domainD}
\cD_r&\deq \cD_r^{\rm int}\cup \cD^{\ext},\\
\cD_r^{\rm int}&\deq \{E+\ri \eta \in \cM\cap \bC_+: E\in[A, B-r], \eta\sqrt{\kappa_E+\eta}\geq N^{\fa}/N\},\\
\cD^{\ext}&\deq \{E+\ri \eta \in \cM\cap \bC_+: E\leq A, \eta\geq (N^{\fa}/N)^{2/3}\},
\end{split}
\end{align}
where $\kappa_E=\dist(E,\{A,B\})$.

\begin{theorem}\label{t:rigidity}
We assume Assumptions \eqref{a:VN}, \eqref{a:wx}, \eqref{a:Hz} and that $[\hat a, A]$ is a void region, i.e. $\rho_V(x)=0$ on $[\hat a, A]$, then the following holds:
\begin{enumerate}
\item
Fix $s\geq (\ln N)^2$ and small $r>0$. With probability at least $1-e^{-cs}$, we have  uniformly for any $z=E+\ri \eta\in \cD_r$, 
\begin{align}\label{e:bulkrig}
\left|G_N(z)-G_\mu(z)\right|\leq \frac{s}{N\eta}.
\end{align}

\item 
Fix a small $\fc$ such that $0<\fc<\fa/4$. With probability at least $1-\exp(-c(\ln N)^{2})$, we have uniformly for $z=E+\ri (N^{\fa}/N)^{2/3}\in  \cM$ with $E\leq A-N^{2\fc}(N^{\fa}/N)^{2/3}$, 
\begin{align}\label{e:edgerig}
\left|G_N(z)-G_\mu(z)\right|\ll \frac{1}{N\eta}.
\end{align}
\end{enumerate}
A similar statement holds if $[B, \hat b]$ is a void region in the vicinity of $\hat b$.
\end{theorem}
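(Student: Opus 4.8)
The plan is to prove the rigidity estimates of Theorem~\ref{t:rigidity} by a multiscale bootstrap, starting from the macroscopic rigidity already established in \cite{Borodin2016} and improving the scale step by step until reaching the optimal one. The mechanism at each step is a combination of two inputs: (i) \emph{large deviation estimates for local measures}, i.e. one fixes all particles outside a window of $K$ consecutive indices, treats them as a fixed external field, and studies the conditional law of the remaining $K$ particles --- which is again a discrete $\beta$-ensemble of the same form but with a modified (now random, but controlled) potential --- and (ii) \emph{Nekrasov's equation} (Theorem~\ref{t:aRN}). The local-measure analysis turns good control on the particle density at scale $\eta_0$ (the current scale) into good control of the fluctuations of individual particles inside a window of width $\sim N\eta_0$; averaging over windows yields a new, weaker bound on $G_N - G_\mu$ but at the finer scale $\eta_1 \ll \eta_0$. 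Then one feeds this weak bound at scale $\eta_1$ into Nekrasov's equation: writing $R_N(\xi) = R_\mu(\xi) + (\text{error})$ and expanding the products $\prod_i(1 \mp \theta/(\xi - \ell_i \mp \cdots))$ to second order in $G_N - G_\mu$, the holomorphy of $R_N$ forces cancellations. Because $R_\mu(\xi) = H(\xi)\sqrt{(\xi-A)(\xi-B)}$-type structure governs the linearized operator (via \eqref{e:RQ3}), and the equilibrium density has a square-root edge at $A$ by Assumption~\ref{a:Hz}, this linearized operator is invertible with the expected square-root gain near the edge; inverting it upgrades the weak bound at scale $\eta_1$ to the optimal bound $|G_N - G_\mu| \lesssim s/(N\eta)$ at that same scale. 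Iterating $O(1)$ times (the scale improves geometrically, $\eta \mapsto \eta^{1+c}$ or similar, so finitely many steps suffice to go from $\eta \asymp 1$ down to $\eta \asymp N^{\fa-1}$ in the bulk and $\eta \asymp (N^{\fa}/N)^{2/3}$ at the edge) closes the argument.

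Concretely, I would organize it as follows. First, set up the local measures: given a configuration with all particles outside indices $\qq{p+1, p+K}$ frozen, the conditional law is a discrete $\beta$-ensemble with $K$ particles and weight $\widetilde w$ absorbing the interaction with the frozen particles; using the current-scale rigidity one shows $\widetilde V_N$ is a small perturbation (in an appropriate norm, controlled by $s/(N\eta_0)$) of a deterministic convex-on-the-window potential whose equilibrium measure matches $\rho_V$ locally. Second, prove a concentration/large-deviation bound for these local measures --- this is essentially a discrete analogue of the concentration results in \cite{MR3192527, MR3253704, Leble2015, Roland2015}, using the convexity of the interaction after a suitable change of variables together with a log-Sobolev or Brascamp--Lieb type inequality adapted to the lattice; the output is that within each window the empirical measure is close to its local equilibrium with fluctuations of the claimed order. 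Third, combine the windowed estimates by a union bound and a telescoping/averaging argument to obtain the weak improved-scale rigidity for the global $G_N$. Fourth, linearize Nekrasov's equation around $R_\mu$: the key identity is that $R_N(\xi) - R_\mu(\xi)$ is holomorphic in $N\cM$ and, on the contour, equals $\phi^-(\xi)e^{-\theta G_\mu(\xi)}(-\theta(G_N-G_\mu)) + \phi^+(\xi)e^{\theta G_\mu(\xi)}(\theta(G_N - G_\mu)) + (\text{quadratic}) + (\text{discretization error } O(1/N))$; using a contour integral (deforming to pick up the cut $[A,B]$) and the square-root behavior of $Q_\mu$ one solves for $G_N - G_\mu$ outside $[A, B-r]$ and near $A$, gaining the optimal power of $N\eta$. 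Fifth, iterate. The second statement of the theorem (the sharper bound at the precise edge scale $\eta = (N^{\fa}/N)^{2/3}$, slightly inside $A$) is obtained in the final iteration step by exploiting the extra room $E \leq A - N^{2\fc}(N^{\fa}/N)^{2/3}$, where $\kappa_E$ is large enough relative to $\eta$ that the self-improving estimate gains an extra factor and turns $\lesssim s/(N\eta)$ into $\ll 1/(N\eta)$. The analogous statement for a void region near $\hat b$ follows by the symmetry $\ell_i \mapsto -\ell_{N+1-i}$ (equivalently $x \mapsto \hat a + \hat b - x$), which maps the ensemble to one of the same form with the roles of the two edges exchanged, so no separate argument is needed.

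The main obstacle I expect is the local-measure concentration step combined with making the bootstrap self-consistent. In the continuous case one has the Dumitriu--Edelman tridiagonal model or, more robustly, direct convexity of $\sum V(\lambda_i) - \beta\sum_{i<j}\log|\lambda_i - \lambda_j|$ after removing the log-singularities; in the discrete setting the "interaction" is a ratio of Gamma functions on a non-trivial lattice (the admissible positions of $\ell_{i+1}$ depend on $\ell_i$), so one must first massage the measure --- passing to the variables $\lambda_i$, using $\log\Gamma$ asymptotics to compare the Gamma-ratio with $\prod|\ell_i-\ell_j|^{2\theta}$ up to errors that are \emph{uniform and summable} --- before any convexity argument applies, and one must check that the errors incurred here are genuinely lower-order than the target accuracy at every scale of the iteration, including the finest. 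A secondary difficulty is controlling the boundary effects of the local measures: the frozen particles just outside the window exert a force that is only known to the current (coarser) accuracy, and one must verify this does not destroy the gain --- this is handled by taking the window width $K$ to be a small power of $N\eta_0$ so that the boundary contribution is subleading, at the cost of needing a slightly weaker-than-optimal intermediate bound, which is exactly why the bootstrap (rather than a one-shot argument) is necessary. Finally, the contour-integral inversion of the linearized Nekrasov equation near the edge $A$ requires knowing that $R_\mu$ (equivalently $H$) does not vanish near $[A,B]$, which is guaranteed by Assumption~\ref{a:Hz}, but the error terms from the $O(1/N)$ discretization in Nekrasov's equation must be shown not to overwhelm the square-root gain precisely at the edge scale --- this is the delicate quantitative point that makes the restriction $0 < \fc < \fa/4$ appear.
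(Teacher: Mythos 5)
Your architecture --- multiscale bootstrap, local measures for a weak improvement at a finer scale, Nekrasov's equation for a strong improvement, then iterate finitely many times --- is exactly the paper's strategy, and you correctly identify the $\log\Gamma$-to-Coulomb comparison in the local measures and the symmetry reduction for the second edge. However, there is a genuine gap in the mechanism by which Nekrasov's equation is supposed to ``upgrade'' a high-probability weak bound $|G_N - G_\mu| \lesssim \varepsilon$ to the optimal high-probability bound. As you describe it, linearizing Nekrasov's identity and inverting the linearized operator gives a bound on the \emph{expectation} $\bE[G_N(v)-G_\mu(v)] = \OO(\varepsilon^2/\sqrt{\kappa_E+\eta} + \cdots)$, which is what the paper's Theorem~\ref{t:bootstrap} first establishes. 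But an expectation bound does not by itself yield a pointwise bound with probability $1-e^{-cs}$, which is what \eqref{e:bulkrig} requires. The paper closes this by introducing the tilted measures $\bP_N^{K,t,v,\alpha}$ defined in \eqref{e:defdeformm}: it runs the Nekrasov-equation argument under the tilted measure (for which Nekrasov's equation still holds, since the tilt can be absorbed into $\psi_N^\pm$ as in \eqref{e:defPhi+-}), obtains an expectation bound there, observes that this expectation is exactly the $t$-derivative of the log-Laplace transform of $\Re[\alpha \sum_i \ln(1+\cdots)]$, integrates over $t$ to bound the moment generating function, and then applies Markov's inequality. Without this tilting-and-integrating step, your bootstrap does not self-close: you would only be improving the mean, not the tail, and the iteration could not proceed.

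A secondary point: for the local-measure concentration step you propose a log-Sobolev or Brascamp--Lieb inequality ``adapted to the lattice.'' The paper instead uses a direct energy-comparison large-deviation bound (Proposition~\ref{prop:ldb}): it rewrites the conditional Hamiltonian in terms of the quadratic functional $\cD^2[\rho_W - \rho_L]$ relative to the constrained local equilibrium density $\rho_W$, upper-bounds the density, and lower-bounds the partition function by evaluating at the quantiles of $\rho_W$. This is both more elementary and more robust here: an LSI/BL route would require verifying uniform convexity of the discrete Hamiltonian on the $\theta$-shifted lattice $\bW_N^\theta$, where the admissible positions of $\ell_{i+1}$ depend on $\ell_i$ and the interaction is a Gamma ratio rather than a genuine log, and it is not clear this can be carried out with the needed uniformity. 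The energy-comparison method bypasses the need for a functional inequality entirely; if you pursued your route you would likely find this step is where the discrete structure bites hardest. Finally, you acknowledge but do not resolve the boundary-effect difficulty for local measures; the paper handles it with the notion of ``good'' boundary configurations $\cG_{M,L,K}$ and the two-step sampling of Proposition~\ref{prop:rigiditylocalmeasure}, which makes the boundary control inherit the current-scale rigidity with only an $e^{c(\ln N)^2}$ loss in the failure probability --- this is precisely what keeps the conditioning self-consistent across iterations.
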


\begin{remark}
We believe that the rigidity estimate \eqref{e:bulkrig} also holds for saturated regions. However, for  technical reasons (see e.g. the proof of Proposition \ref{prop:ldb}), the current method does not work in this case.
\end{remark}

Following a standard application of the Helffer-Sj{\" o}strand
functional calculus along the lines of \cite[Lemma B.1]{MR2639734}, the following result may be deduced from
Theorem \ref{t:rigidity}.
\begin{corollary}
We assume Assumptions \eqref{a:VN}, \eqref{a:wx}, \eqref{a:Hz} and that $[\hat a, A]$ is a void region, i.e. $\rho_V(x)=0$ on $[\hat a, A]$, then the following holds:
For any $\fa>0$ and $r>0$,  with probability at least $1-\exp(-c(\ln N)^{2})$, uniformly for $i<(\mu([A,B])-r)N$, we have
\begin{align*}
\left|\frac{\ell_i}{N}-\gamma_i\right|\leq \frac{N^{\fa}}{N^{2/3}\min\{i, N-i\}^{1/3}},
\end{align*}
where $\gamma_i$ are the classical particle locations of the equilibrium measure $\mu$ as defined in \eqref{e:defloc}. A similar statement holds if $[B, \hat b]$ is a void region.
\end{corollary}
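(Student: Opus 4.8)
The plan is to derive the corollary from Theorem \ref{t:rigidity} by a standard contour-integral (Helffer--Sj\"ostrand) argument, exactly as in \cite[Lemma B.1]{MR2639734}, so I will only indicate the mechanism and the bookkeeping. The starting point is that control of $G_N - G_\mu$ on a spectral domain $\cD_r$ near the left edge $A$ translates, via estimates on the counting functions, into control of the number of particles in intervals, and hence into rigidity of individual particle locations $\ell_i/N$ against the classical locations $\gamma_i$ defined by \eqref{e:defloc}. The key object is the difference of Stieltjes transforms integrated against a smooth cutoff; since $G_N$ is the Stieltjes transform of $\mu_N = N^{-1}\sum_i \delta_{\ell_i/N}$ and $G_\mu$ that of $\mu = \rho_V\,\rd x$, and both extend holomorphically off $[\hat a_N,\hat b_N]$, a Cauchy/Helffer--Sj\"ostrand representation lets one write, for a test function $\chi$,
\begin{align*}
\int \chi(x)\,\rd(\mu_N-\mu)(x) = \frac{1}{\pi}\int_{\bC} \bar\partial\,\widetilde\chi(z)\,\bigl(G_N(z)-G_\mu(z)\bigr)\,\rd^2 z,
\end{align*}
where $\widetilde\chi$ is an almost-analytic extension of $\chi$ supported in a neighborhood of the real axis.

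The steps I would carry out are the following. First, fix $i < (\mu([A,B]) - r)N$; the classical location $\gamma_i$ then sits at distance $\gtrsim \kappa$ from $B$ with $\kappa \asymp (\min\{i,N-i\}/N)^{2/3}$ from $A$ by the square-root vanishing of $\rho_V$ at $A$. Second, choose $\chi$ to be a smooth indicator of an interval of the form $(-\infty, \gamma_i + t]$ with $t = N^{\fa} N^{-2/3}\min\{i,N-i\}^{-1/3}$ (the target error), mollified on scale a bit smaller than $t$; plug into the Helffer--Sj\"ostrand formula. Third, split the $z$-integral into the region $\eta \geq \eta_0$ (with $\eta_0$ chosen so $z \in \cD_r$, e.g. $\eta_0\sqrt{\kappa+\eta_0}\asymp N^{\fa}/N$, matching the boundary of $\cD_r^{\rm int}$ near $A$, and $\eta_0 \asymp (N^{\fa}/N)^{2/3}$ in $\cD^{\ext}$), on which the bound \eqref{e:bulkrig} or \eqref{e:edgerig} applies directly, and the region $\eta < \eta_0$, which is handled by the standard trick of using that the counting function of $\mu_N$ is monotone and that near the edge one only needs an upper bound on the number of particles in $[A,\gamma_i+t]$ — this last piece uses \eqref{e:edgerig} at the single scale $\eta = (N^{\fa}/N)^{2/3}$ for energies $E \leq A - N^{2\fc}(N^{\fa}/N)^{2/3}$, i.e. the part of Theorem \ref{t:rigidity}(2) that guarantees essentially no particles lie a macroscopic-in-$\eta_0$ distance to the left of $A$. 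Fourth, combine the two pieces: the resulting bound on $|\,\#\{j:\ell_j/N \leq \gamma_i + t\} - N\mu((-\infty,\gamma_i+t])\,|$ is $o(1)$ relative to the number of particles in a window of size $t$ around $\gamma_i$ (which is $\asymp N t \rho_V(\gamma_i) \asymp N^{\fa}$), and similarly with $\gamma_i - t$; sandwiching $\ell_i$ between these gives $|\ell_i/N - \gamma_i| \leq t$ with the stated probability. Fifth, take a union bound over the $O(N)$ relevant indices $i$, which costs only a polynomial factor and is absorbed since the probability in Theorem \ref{t:rigidity} is $1 - \exp(-c(\ln N)^2)$ with $s = (\ln N)^2$.

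The main obstacle — though it is really a bookkeeping rather than a conceptual one — is the edge behavior: near $A$ the density degenerates like $\sqrt{u-A}$, so the naive rigidity scale $N^{-1}/\rho_V$ blows up, and one must instead track the $i$-dependent scale $N^{-2/3}\min\{i,N-i\}^{-1/3}$ coming from integrating the square root. Concretely this means the Helffer--Sj\"ostrand cutoff must be placed at the correct location $\gamma_i$ with the correct adapted smoothing scale, and the $\eta < \eta_0$ remainder must be controlled using the one-sided (upper bound on particle count) estimate \eqref{e:edgerig} rather than a two-sided bound, since \eqref{e:bulkrig} is only stated on $\cD_r$ and not down to the real axis. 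Away from the edge (indices $i$ with $\min\{i,N-i\}\gtrsim N$) the argument is the classical bulk rigidity transfer and presents no difficulty. One should also note the restriction $i < (\mu([A,B])-r)N$ keeps $\gamma_i$ bounded away from the right edge $B$, where no analogous input is assumed unless $[B,\hat b]$ is also a void region — hence the final sentence of the corollary.
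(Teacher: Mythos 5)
Your proposal follows the same Helffer--Sj\"ostrand route that the paper invokes (``a standard application \dots along the lines of \cite[Lemma B.1]{MR2639734}''), and it identifies the right inputs: the rigidity estimate \eqref{e:bulkrig} on $\cD_r$, the pointwise edge estimate \eqref{e:edgerig}, the square-root behaviour of $\rho_V$ near $A$, the sandwiching of counting functions, and the final union bound. The paper's Appendix~\ref{ap:proofrig} carries out essentially this computation (see \eqref{e:location2}, \eqref{e:location3} in the proof of Proposition~\ref{prop:rig}), producing a uniform counting-function estimate $|\#\{j:\ell_j/N\leq E\}-\#\{j:\gamma_j\leq E\}|\lesssim N\tilde\eta\ln N$ and then the location estimate \eqref{e:location}, which with $\tilde\eta=N^{\fa'}/N$ is exactly your step four.

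Two places where the reasoning is a bit blurred and worth spelling out. First, the role of \eqref{e:edgerig} is not to control the $\eta<\eta_0$ remainder of the Helffer--Sj\"ostrand integral (that piece is handled, as in Appendix~\ref{ap:proofrig}, by the monotonicity of $y\mapsto y\,\Im G_N(x+\ri y)$ propagated down from $\eta=\eta_0$); rather it is used as a \emph{direct pointwise} exclusion of particles far to the left of $A$: if $\ell_1/N=E$ with $\kappa_E=A-E\geq N^{2\fc}\eta_0$, then $\Im G_N(E+\ri\eta_0)\geq 1/(N\eta_0)$ while $\Im G_\mu(E+\ri\eta_0)\asymp\eta_0/\sqrt{\kappa_E}$, and \eqref{e:edgerig} then forces $\kappa_E\lesssim N^2\eta_0^4=N^{8\fa'/3-2/3}$. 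Second, this exponent $8\fa'/3-2/3$ is strictly larger than $\fa'-2/3$, and the counting-function estimate also carries $\ln N$ factors, so the corollary with parameter $\fa$ does not follow from Theorem~\ref{t:rigidity} applied with the \emph{same} parameter: one must apply the theorem with a strictly smaller $\fa'$ (any $\fa'\leq 3\fa/8$, say, with $\fc<\fa'/4$), which is licit because the theorem holds for every $\fa'>0$. Neither point is a conceptual gap, but both are needed to make the exponents actually close.
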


With the rigidity of the particle configuration as input, the edge universality of discrete $\beta$-ensemble follows from a direct comparison with continuous $\beta$-ensembles.

\begin{theorem}\label{t:edgeuniversality}
Fix $\beta\geq 1$, $\theta=\beta/2$,  $0<\fb<1/13$ and $L=N^{\fb}$. We assume Assumptions \ref{a:VN}, \ref{a:wx}, \ref{a:Hz}, \ref{a:moreVN} and that $[\hat a, A]$ is a void region, $\rho_V(u)=(1+\OO(u-A))s_A\sqrt{u-A}/\pi$. Take any fixed $m\geq 1$ and a continuously differentiable compactly supported function $O:\bR^m\rightarrow \bR$. For any index set $\Lambda\in \qq{1, L}$ with $|\Lambda|=m$, we have
\begin{align}\label{e:edgeuniv}
\left|\bE_{\bP_N}\left[O\left(\left(N^{2/3}k^{1/3}s_A^{2/3}(\ell_k/N-\gamma_k)\right)_{k\in \Lambda}\right)\right]-\bE_{G\beta E}\left[O\left(\left(N^{2/3}k^{1/3}(x_k/N-\tilde \gamma_k)\right)_{k\in \Lambda}\right)\right]\right|=\OO(N^{-\chi}),
\end{align}
where $\gamma_k$ are classical eigenvalue locations of $\rho_V$, and $\tilde \gamma_k$ are classical eigenvalue locations of semi-circle distribution. A similar statement holds if $[B, \hat b]$ is a void region.
\end{theorem}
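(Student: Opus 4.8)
The plan is to deduce the edge universality (Theorem \ref{t:edgeuniversality}) from the rigidity estimates (Theorem \ref{t:rigidity}) via a comparison with the continuous $\beta$-ensemble, for which the result is already known \cite{MR3253704}. The key conceptual input is that near the edge $A$ the discrete $\beta$-ensemble looks, after rescaling, like a continuous one-cut $\beta$-ensemble: the lattice spacing (order $1/N$) is much smaller than the fluctuation scale of the extreme particles (order $N^{-2/3}$), so the discreteness should be invisible to the edge statistics. Concretely, I would first use rigidity to localize: with overwhelming probability the first $L=N^{\fb}$ particles $\ell_1,\dots,\ell_L$ lie in a window of size $O(N^{\fb} N^{-2/3+\fa})$ around $N A$. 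Inside this window I freeze all the remaining particles $\ell_{L+1},\dots,\ell_N$ and consider the conditional measure on $(\ell_1,\dots,\ell_L)$, which by \eqref{eqn:distr} is again a discrete $\beta$-type ensemble with an effective external potential coming from the frozen particles (through the logarithmic interaction) and the original weight $w$.

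The next step is to construct a continuous $\beta$-ensemble on $L$ particles whose edge behaves identically to leading order, and to interpolate. Following the transportation/comparison strategy, I would introduce a one-parameter family of measures $\bP_N^{(t)}$, $t\in[0,1]$, interpolating between the (conditioned) discrete local ensemble at $t=0$ and a reference continuous local $\beta$-ensemble at $t=1$, built so that the equilibrium densities and their square-root edge behavior with coefficient $s_A$ agree. Then $\frac{d}{dt}\bE_{\bP_N^{(t)}}[O(\cdots)]$ is a sum of terms, each a correlation of $O$ (evaluated at the rescaled edge particles) against the $t$-derivative of the log-density; these correlations are controlled by (i) the rigidity bounds \eqref{e:bulkrig}–\eqref{e:edgerig} which say $G_N-G_\mu$ is small down to the optimal scale, (ii) Assumption \ref{a:moreVN} which gives $V_N'=V'+O(N^{-1/3})$ so the potential mismatch is $N^{-1/3}$-small on the relevant window, and (iii) the analyticity/square-root structure recorded in cases (1)–(4) after \eqref{e:RQ3}, which guarantees the effective potential is uniformly convex near the edge and thus the local measures concentrate. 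Summing over $t$ gives the $O(N^{-\chi})$ error in \eqref{e:edgeuniv} for some $\chi>0$ depending on $\fa,\fb,\fc$; the constraint $\fb<1/13$ is what makes all error terms (window size times derivative estimates) beat the target.

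The main obstacle I expect is controlling the discrete-to-continuous replacement uniformly on the \emph{local} ensemble after conditioning — that is, showing that the conditioned discrete measure on $\ell_1,\dots,\ell_L$ is, up to errors negligible on the $N^{-2/3}$ scale, a genuine continuous $\beta$-ensemble with a $C^2$ (indeed analytic, by Assumption \ref{a:moreVN}) uniformly convex potential near $A$. This requires: smoothing the frozen-particle contribution $\sum_{j>L}\log|\ell_i-\ell_j|$ into an analytic function of $\ell_i/N$ using the rigidity of $\ell_j$ for $j>L$ (the frozen particles are rigid on scale $N^{-2/3+\fa}$, so their potential is close to $2\theta\int \log|x-y|\rho_V(y)\,dy$ restricted away from $[\,\gamma_L,\,\cdot\,]$); and then an approximation argument replacing the discrete configuration space $\bW_L^\theta$ (whose lattice depends on the frozen boundary) by $\bR^L$ at cost controlled by the $1/N$ spacing versus $N^{-2/3}$ scale. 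One must also verify the hypotheses of the continuous edge universality theorem of \cite{MR3253704} (namely $\beta\ge 1$ and $V\in C^4$, here even analytic) hold for the effective potential, uniformly in $N$ and in the frozen configuration. Once this local equivalence is established, the interpolation and the termwise estimates are, modulo bookkeeping, routine; the delicate point is the uniformity of the equivalence and tracking that the accumulated errors remain below $N^{-\chi}$.
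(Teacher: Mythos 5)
Your overall scaffold is right: localize via rigidity, condition on the tail particles $\ell_{L+1},\dots,\ell_N$ to obtain a local ensemble on the first $L$ particles, compare to a continuous local $\beta$-ensemble, and invoke the continuous edge universality of \cite{MR3253704}. But the paper does \emph{not} use an interpolating family $\bP_N^{(t)}$ to pass from discrete to continuous, and that part of your plan has a real problem: there is no natural one-parameter family of probability measures interpolating between a measure supported on the lattice-like state space $\bW_L^\theta$ (which itself depends on the frozen boundary) and a measure with a density on $\bR^L$, whose $t$-derivative you could hope to estimate by Schwinger--Dyson/transport techniques. Those techniques are designed for comparing two continuous measures with densities of the same type; that comparison is precisely what is already carried out inside \cite{MR3253704}, so there is nothing to re-prove there. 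The genuinely new step in the present theorem is the \emph{discrete} $\to$ \emph{continuous} comparison, and your sketch gestures at it ("replacing $\bW_L^\theta$ by $\bR^L$ at cost $1/N$ versus $N^{-2/3}$") without supplying a mechanism.

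The paper's mechanism is a direct pointwise density comparison. It defines the local discrete measure $\bP_L^{\loc}$ (with an added soft confining potential $\Theta$ to the left) and a continuous analogue $\bP_L^{\cont}$ with the same effective potential, and shows (Proposition \ref{p:same}) that for $\ell_k=\lceil x_k\rceil_\theta$ one has $Z_L^{\loc}\bP_L^{\loc}(\ell_1,\dots,\ell_L)=\exp\{O(L^{4/3}N^{\fa}/(sN^{1/3}))\}\,Z_L^{\cont}\bP_L^{\cont}(x_1,\dots,x_L)$, via Stirling on the Gamma-ratio interactions and a derivative bound on $\hat w$. Crucially, that Stirling approximation has error $O(1/(\ell_j-\ell_i))$ per pair, so it is only useful on the event that consecutive gaps are $\gg 1$. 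This is where the key ingredient you omit comes in: the \emph{weak level repulsion} estimates for both $\bP_L^{\loc}$ and $\bP_L^{\cont}$ (Proposition \ref{p:levelrep}), proved by a discrete change-of-variables argument, which bound the probability of small gaps by $CsL^2$. Without these you cannot restrict to the "good" set $\cG^{\loc}$ on which the density ratio is $1+o(1)$, nor control $Z_L^{\loc}/Z_L^{\cont}$. The constraint $\fb<1/13$ (and the optimal choice of the gap parameter $s$) arises exactly from balancing the level-repulsion probability $sL^3$ against the Stirling error $L^{4/3}N^{\fa}/(sN^{1/3})$, a tradeoff invisible in your outline. In summary: keep the conditioning/localization step and the reduction to \cite{MR3253704}, but replace the interpolation by a direct Radon--Nikodym comparison of $\bP_L^{\loc}$ and $\bP_L^{\cont}$, and add level repulsion as the enabling estimate.
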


\section{Rigidity of discrete $\beta$-Ensembles}

Our argument for the rigidity of discrete $\beta$-ensemble is based on a multi-scale analysis. In this section we introduce some basic notations, and give an outline of the proof.

We define the spectral domains on the scale $\tilde \eta=M/N$, for any $1\ll M\leq N$:
\begin{align}\begin{split}\label{def:D}
\cD_{M,r}&\deq \cD_{M,r}^{\rm int}\cup \cD_{M}^{\ext},\\
\cD_{M,r}^{\rm int}&\deq \{E+\ri \eta \in \cM\cap \bC_+: E\in[A, B-r], \eta\sqrt{\kappa_E+\eta}\geq M/N\},\\
\cD_M^{\ext}&\deq \{E+\ri \eta \in \cM\cap \bC_+: E\leq A, \eta\geq (M/N)^{2/3}\}.
\end{split}
\end{align}
where $\kappa_E=\dist(E, \{A,B\})$.
The spectral domain $\cD_{M,r}$ contains information of the particle configuration away from the right endpoint $B$. For the multi-scale analysis, we also need certain weak control on the particle configuration around the right endpoint $B$. To do this, we fix $\fa>0$ and  define the following spectral domain on the scale $\tilde \eta=N^{-(1-\fa)/2}$,
\begin{align}\begin{split}\label{def:D_0}
\cD_*&\deq \cD_*^{\rm int}\cup \cD_*^{\ext},\\
\cD_*^{\rm int}&\deq \{E+\ri \eta \in \cM\cap \bC_+: E\in[A, B], \eta\sqrt{\kappa_E+\eta}\geq N^{-(1-\fa)/2}\},\\
\cD_*^{\ext}&\deq \{E+\ri \eta \in \cM\cap \bC_+: E\notin [A, B], \eta\geq N^{-(1-\fa)/3}\}.
\end{split}
\end{align}

The proof of Theorem \ref{t:rigidity} uses the iteration of the following two-step scheme. If uniformly for any $z=E+\ri\eta\in \cD_{M_i,r_0}\cup \cD_*$, we have
\begin{align}\label{e:midstep1}
\bP_N\left(|G_N(z)-G_\mu(z)|\leq \frac{M_i}{N\eta}\right)\geq 1-e^{-cM_i},
\end{align}
then Theorem \ref{t:lowerscale} implies that for $M_{i+1}=M_i^{\fb/2}$, where $1<{\fb}<2$, with probability $1-e^{-cM_i}$, 
\begin{align*}
|G_N(z)-G_\mu(z)|\ll\sqrt{\kappa_E+\eta} 
\end{align*}
uniformly for $z=E+\ri \eta \in \cD_{M_{i+1},r_0+r}\cup \cD_*$, where $\kappa_E=\dist(E,\{A,B\})$.
The hypotheses of Theorem 
\ref{t:bootstrap} are therefore fulfilled and we deduce that
uniformly for any $z=E+\ri\eta\in \cD_{M_{i+1},r_0+r}\cup \cD_*$, and $s\geq (\ln N)^2$, we have
\begin{align}\label{e:midstep2}
\bP_N\left(|G_N(z)-G_\mu(z)|\lesssim \frac{s}{N\eta}\right)\geq 1-e^{-cs}\,.
\end{align}
This gives 
 \eqref{e:midstep1} at a smaller scale. Then we can use \eqref{e:midstep2} as the input, and iterate the above scheme again, the optimal rigidity follows after finite steps.


\subsection{Rigidity of discrete $\beta$-Ensembles}
\label{s:rig1}

For the equilibrium measure of discrete $\beta$-ensemble, there are void regions and saturated regions. For the energy level close to void regions, the empirical particle density is a natural object to study. However, for the energy level close to saturated regions, the \emph{dual} of the empirical particle density is a more natural object to estimate. In the following we define the \emph{dual} equilibrium measure and the \emph{dual} empirical particle density, and some control parameters. 

We recall the constrained equilibrium measure $\mu$, the empirical particle density $\mu_N$ and their Stieltjes transforms $G_\mu$ and $G_N$ as defined in Section \ref{s:notation}. We define the dual equilibrium measure $\mu^{\dual}$, 
\begin{align}\label{e:dualmu}
\mu^{\dual}=\rho_V^{\dual}(x)\rd x, \quad \rho_V^{\dual}(x)={\bf 1}_{[\hat a, \hat b]}\theta^{-1}-\rho_V(x),
\end{align}
and the dual empirical particle density $\mu_N^{\dual}$,
\begin{align}\label{e:dualmuN}
\mu_N^{\dual}=\frac{1}{\theta N}\sum_{i=0}^{N}\sum_{x\in \ell_i+\bZ_{> 0},\atop x<\ell_{i+1}}\delta_{x/N},
\end{align}
where $\ell_0=a(N)$, $\ell_{N+1}=b(N)$, and $\bZ_{> 0}$ is the set of positive integers. When $\theta=1$, this is essentially the empirical particle density of the dual ensemble studied in \cite[Section 3.2]{MR1952523}. We denote the Stieltjes transforms of \eqref{e:dualmu} and \eqref{e:dualmuN} by $G_\mu^{\dual}(z)$ and $G_N^{\dual}(z)$ respectively. We remark that $\mu_N+\mu_N^{\dual}$ is an \emph{almost} $1/N$-discretization of the measure ${\bf 1}_{[a(N)/N, b(N)/N]}\theta^{-1}\rd x$. And it implies the following estimates, which are useful later 
\begin{align}\begin{split}\label{e:approxmeasure}
&\left|\int_{a(N)/N}^{b(N)/N} \frac{1}{\theta(z-x)} \rd x - \int \frac{1}{z-x} \rd(\mu_N +\mu_N^{\dual})\right|=\OO\left(\frac{1}{N\eta}\right),\\
&\left|\int_{a(N)/N}^{b(N)/N} \frac{1}{\theta(z-x)^2} \rd x - \int \frac{1}{(z-x)^2} \rd(\mu_N +\mu_N^{\dual})\right|=\OO\left(\frac{1}{N\eta^2}\right),
\end{split}
\end{align}
where $\eta=\Im[z]$.

For any $z=E+\ri \eta\in \bC_+$, let $\kappa_E=\dist(E, \{A,B\})$, $\hat \kappa_E=\dist(E, \{\hat a, \hat b\})$, and we define the control parameter:
\begin{align}\begin{split}\label{e:defcontrolmu}
\Theta_\mu(z)=\min\left\{\frac{|\Im[G_\mu(z)]|}{N\eta},\frac{ |\Im[G_\mu^{\dual}(z)]|}{N\eta}+\frac{1}{N(\eta+\hat \kappa_E)}\right\}.
\end{split}\end{align}
Thanks to the square root behaviors of the equilibrium density $\rho_V(x)$ at $A, B$, the asymptotic behavior of $\Theta_\mu$ depends on $\eta$, $\kappa_E$ and $\hat \kappa_E$,
\begin{align}\label{e:Thetaest}
\Theta_\mu(z)\asymp\left\{\begin{array}{ll}
\sqrt{\kappa_E+\eta}/N\eta, &\quad E\in [A, B],\\
1/N\sqrt{\kappa_E+\eta}, & \quad E\not\in [A, B] \text{ on voids}, \\
1/N\sqrt{\kappa_E+\eta}+1/N(\hat \kappa_E+\eta), & \quad E\not\in [A, B] \text{ on saturated regions}.
\end{array}\right.
\end{align}
We also define,
\begin{align*}
\Theta_N(z)=\left\{\begin{array}{ll}
|\Im[G_N(z)]|/N\eta, & \text{ if } \Theta_\mu(z)=|\Im[G_\mu(z)]|/N\eta,\\
|\Im[G_N^{\dual}(z)]|/N\eta+1/N(\eta+\hat \kappa_E), & \text{ if }  \Theta_\mu(z)=|\Im[G_\mu^{\dual}(z)]|/N\eta+1/N(\eta+\hat \kappa_E).
\end{array}\right.
\end{align*}

In the following we collect some basic estimates on the Green's function $G_N(z)$. It turns out that in the discrete setting, the Green's function $G_N(z)$ behaves much more regularly, compared with the continuous setting.
\begin{lemma}\label{l:GNprop}
For any particle configuration $(\ell_1, \ell_2, \cdots, \ell_N)\in \bW_N^\theta$, and $z=E+\ri \eta$ with $\eta\geq \ln N/N$, we have
\begin{align}\label{e:GNprop}
|G_N(z)|=\OO(\ln N), \quad |\Im[G_N(z)]|=\OO(1), \quad |\del_z G_N(z)|=\OO(1/\eta),
\end{align}
where the implicit constants are universal. Moreover, we have the following relation between densities and their duals
\begin{align}\label{e:Gdual}
\left|G_N(z)+G_N^{\dual}(z)-G_\mu(z)-G_\mu^{\dual}(z)\right|=\OO\left(\frac{\ln N}{N\eta}\right).
\end{align}
\end{lemma}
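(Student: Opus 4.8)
The statement to prove is Lemma \ref{l:GNprop}, which gives deterministic bounds on the Stieltjes transform $G_N(z)$ of the empirical particle density and its relation to the dual.

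\textbf{Plan of proof.}

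First I would establish the bounds in \eqref{e:GNprop}. Write $G_N(z) = \frac1N \sum_{i=1}^N (z - \ell_i/N)^{-1}$ with $z = E + \ri\eta$. For the bound $|\Im[G_N(z)]| = \OO(1)$, note $\Im[G_N(z)] = -\frac1N\sum_i \eta / ((E-\ell_i/N)^2 + \eta^2)$, so $|\Im[G_N(z)]| \leq \frac1N \sum_i \frac{\eta}{(E-\ell_i/N)^2+\eta^2}$. Since the particles $\ell_i$ satisfy the spacing constraint $\ell_{i+1} - \ell_i \geq \theta$ from Definition \ref{Def_state_space_1cut}, the points $\ell_i/N$ are separated by at least $\theta/N$. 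Comparing the sum to an integral of the Poisson kernel over a $\theta/N$-discretization, one gets $\frac1N\sum_i \frac{\eta}{(E-\ell_i/N)^2+\eta^2} \leq \frac{1}{\theta}\int \frac{\eta}{(E-x)^2+\eta^2}\rd x + \OO(1/N\eta \cdot \text{something})$; the integral is exactly $\pi/\theta$, and the discretization error is controlled because $\eta \geq \ln N / N \gg \theta/N$ so each mesh cell contributes a comparable amount to the integral as to the sum. This gives $|\Im[G_N(z)]| = \OO(1)$. For $|G_N(z)| = \OO(\ln N)$, the real part $\Re[G_N(z)] = \frac1N\sum_i \frac{E-\ell_i/N}{(E-\ell_i/N)^2+\eta^2}$ is the delicate one: splitting according to whether $|E - \ell_i/N| \leq \eta$ or not, the near terms contribute $\OO(1)$ as before, while the far terms give a sum $\frac1N\sum_{|E-\ell_i/N|>\eta} \frac{1}{|E-\ell_i/N|}$, which by the $\theta/N$-separation is $\lec \frac1\theta \int_{\eta}^{|\hat b - \hat a| + \OO(1)} \frac{\rd r}{r} = \OO(\ln(1/\eta)) = \OO(\ln N)$ using $\eta \geq \ln N/N$ and the fact that all particles lie in an interval of length $\OO(N)$. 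Finally $|\partial_z G_N(z)| = \frac1N\sum_i |z - \ell_i/N|^{-2} \leq \frac1N\sum_i \frac{1}{(E-\ell_i/N)^2+\eta^2}$; again the separation gives $\lec \frac{1}{\eta}\cdot\frac1\theta\int \frac{\eta}{(E-x)^2+\eta^2}\rd x + \OO(1/\eta) = \OO(1/\eta)$ (one power of $\eta$ is pulled out and the remaining integral is the bounded Poisson kernel integral).

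Next I would prove \eqref{e:Gdual}. The key observation, already noted in the text above the lemma, is that $\mu_N + \mu_N^{\dual}$ is an \emph{almost} $1/N$-discretization (with spacing exactly $1/N$ on the $\bZ$-lattice shifted appropriately, up to the $\theta$-gaps) of the measure $\mathbf 1_{[a(N)/N, b(N)/N]}\theta^{-1}\rd x$, as recorded in \eqref{e:approxmeasure}: the first line there gives $\left|\int_{a(N)/N}^{b(N)/N}\frac{\rd x}{\theta(z-x)} - \int \frac{\rd(\mu_N + \mu_N^{\dual})(x)}{z-x}\right| = \OO(1/N\eta)$. On the other hand, by definition $G_\mu + G_\mu^{\dual}$ is the Stieltjes transform of $\mu + \mu^{\dual} = \mathbf 1_{[\hat a,\hat b]}\theta^{-1}\rd x$, which by Assumption \ref{a:VN} differs from $\mathbf 1_{[a(N)/N, b(N)/N]}\theta^{-1}\rd x$ only near the endpoints, on a set of length $\OO(\ln N/N)$; the contribution of that set to the Stieltjes transform at height $\eta$ is $\OO((\ln N / N)/\eta) = \OO(\ln N/N\eta)$. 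Combining these two estimates via the triangle inequality yields $|G_N(z) + G_N^{\dual}(z) - G_\mu(z) - G_\mu^{\dual}(z)| = \OO(\ln N / N\eta)$.

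\textbf{Main obstacle.} The only genuinely quantitative point is controlling the discretization error of Riemann-type sums against the integral of the Poisson kernel and its $x$-derivative, uniformly in $E$ and in $\eta \geq \ln N/N$; this is where the particle-separation constraint $\ell_{i+1}-\ell_i \geq \theta$ from the configuration space $\bW_N^\theta$ is essential — it is precisely the discrete structure that makes $G_N$ more regular than in the continuous case, as the text remarks. Once one is careful that $\eta$ may be as small as $\ln N/N$ (so that a mesh cell of width $\theta/N$ is still small compared to $\eta$, making the Poisson kernel nearly constant across it), the rest is bookkeeping. For \eqref{e:Gdual} there is essentially no obstacle beyond quoting \eqref{e:approxmeasure} and Assumption \ref{a:VN}.
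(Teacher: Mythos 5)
Your proposal is correct and follows essentially the same strategy as the paper: exploit the $\theta$-separation of the particles in $\bW_N^\theta$ to compare the Stieltjes-transform sums to integrals (equivalently, to the worst-case sum $\frac{1}{N}\sum_{i}\frac{1}{i\theta/N+\eta}$), and for \eqref{e:Gdual} combine \eqref{e:approxmeasure} with the endpoint estimate $\hat a = a(N)/N + \OO(\ln N/N)$, $\hat b = b(N)/N + \OO(\ln N/N)$ from Assumption \ref{a:VN}. The only small stylistic difference is in the derivative bound: where you re-run a Poisson-kernel integral comparison, the paper instead observes the exact identity $\frac{1}{N}\sum_i |z-\ell_i/N|^{-2} = |\Im[G_N(z)]|/\eta$ and simply reuses the $\OO(1)$ bound on $\Im[G_N]$, which is slightly cleaner but not essentially different.
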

\begin{proof}
By the definition of $\bW_N^\theta$, the  particles $\ell_i$ are well separated by distance at least $\theta$. We have the estimate,
\begin{align*}
|G_N(E+\ri \eta)|\leq \frac{1}{N}\sum_{i=1}^N \frac{1}{|E-\ell_i/N+\ri \eta|}
\leq \frac{2\sqrt{2}}{N}\sum_{i=0}^N \frac{1}{i\theta/N+\eta}=\OO(\ln N).
\end{align*}
For the imaginary part of $G_N(z)$, we have
\begin{align*}
|\Im[G_N(E+\ri \eta)]|= \frac{1}{N}\sum_{i=1}^N \frac{\eta}{(E-\ell_i/N)^2+\eta^2 }
\leq \frac{2}{N}\sum_{i=0}^N \frac{\eta}{(i\theta /N)^2+\eta^2}=\OO(1).
\end{align*}
For the derivative of $G_N(z)$, we have
\begin{align*}
|\del_zG_N(E+\ri \eta)|\leq \frac{1}{N}\sum_{i=1}^N \frac{1}{(E-\ell_i/N)^2+\eta^2 }
\leq \frac{|\Im[G_N(E+\ri\eta)]|}{\eta}=\OO(1/\eta).
\end{align*}
Finally for \eqref{e:Gdual}, we notice that $G_{\mu}(z)+G_{\mu}^{\dual}(z)$ is the Stieltjes transform of the measure $\theta^{-1}{\bf 1}_{[\hat a, \hat b]}$, and from \eqref{e:approxmeasure} $G_N(z)+G_{N}^{\dual}(z)$ is approximated by the Stieltjes transform of the measure $\theta^{-1}{\bf 1}_{[a(N)/N, b(N)/N]}$. Therefore, we have
\begin{align*}
\left|G_N(z)+G_N^{\dual}(z)-G_\mu(z)-G_\mu^{\dual}(z)\right|=\OO\left(\frac{1}{N\eta}\right)
+\left|\left(\int_{\hat a}^{\hat b}-\int_{a(N)/N}^{b(N)/N}\right) \frac{\rd x}{\theta (z-x)}\right|=\OO\left(\frac{\ln N}{N\eta}\right),
\end{align*}
where we used Assumption \ref{e:locaN}, $\hat a=a(N)/N+\OO(\ln N/N)$ and $\hat b=b(N)/N+\OO(\ln N/N)$.
\end{proof}

Thanks to \eqref{e:Gdual}, the following estimate holds,
\begin{align}\label{e:Thetadif}
\Theta_N(z)-\Theta_\mu(z)=\pm \frac{\Im[G_N(z)]-\Im[G_\mu(z)]}{N\eta}+\OO\left(\frac{\ln N}{(N\eta)^2}\right).
\end{align}

The following lemma implies that, the rigidity estimates of the Green function $G_N(z)$ are equivalent to 
the rigidity estimates of the quantity $\sum_{i=1}^{N} \ln (1+1/N(z-\ell_i/N))$, which naturally appears in the Nekrasov's equation.
\begin{lemma}\label{l:GNapprox}
For any particle configuration $(\ell_1, \ell_2, \cdots, \ell_N)\in \bW_N^\theta$, $z=E+\ri \eta$ with $\eta\geq \ln N/N$, $\alpha=\pm 1, \pm \ri$ and $0\leq t\leq 1$, we have
\begin{align}\label{e:GNapprox}
\sum_{i=1}^N \ln \left(1+\frac{\alpha t}{N(z-\ell_i/N)} \right)=\alpha t G_N(z)+\OO\left(\Theta_N(z)+\frac{\ln N}{(N\eta)^2}\right).
\end{align}
where the implicit constant is universal.
\end{lemma}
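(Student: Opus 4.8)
\textbf{Proof plan for Lemma \ref{l:GNapprox}.}

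The plan is to use the Taylor expansion $\ln(1+w) = w + \OO(|w|^2)$, valid when $|w|\leq 1/2$, applied term by term with $w_i = \frac{\alpha t}{N(z-\ell_i/N)}$. First I would check that $|w_i|\leq 1/2$: since $|\alpha t|\leq 1$ and $|z-\ell_i/N|\geq \eta \geq \ln N/N$, we have $|w_i|\leq 1/(N\eta)\leq 1/\ln N$, which is at most $1/2$ for $N$ large, so the logarithm is well defined and the expansion applies. Summing the linear term gives $\sum_i w_i = \alpha t\, \frac1N\sum_i \frac{1}{z-\ell_i/N} = \alpha t\, G_N(z)$ exactly. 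Thus
\begin{align*}
\sum_{i=1}^N \ln\left(1+\frac{\alpha t}{N(z-\ell_i/N)}\right) = \alpha t\, G_N(z) + \OO\left(\sum_{i=1}^N |w_i|^2\right),
\end{align*}
and it remains to bound the error sum $\sum_i |w_i|^2 = \frac{1}{N^2}\sum_i \frac{1}{|z-\ell_i/N|^2}$.

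The key step is to identify this error sum with (a multiple of) $\Theta_N(z)$ up to the stated additive $\OO(\ln N/(N\eta)^2)$ term. Note $\frac{1}{N^2}\sum_i \frac{1}{|z-\ell_i/N|^2} = \frac{1}{N\eta}\cdot\frac{1}{N}\sum_i\frac{\eta}{|z-\ell_i/N|^2} = \frac{\Im[G_N(z)]}{N\eta}$. So if $\Theta_N(z) = |\Im[G_N(z)]|/(N\eta)$ (the void case), the error sum is exactly $\Theta_N(z)$ and we are done. In the dual (saturated) case, $\Theta_N(z) = |\Im[G_N^{\dual}(z)]|/(N\eta) + 1/(N(\eta+\hat\kappa_E))$, and I would instead use the relation \eqref{e:Gdual} from Lemma \ref{l:GNprop}, which gives $\Im[G_N(z)] = -\Im[G_N^{\dual}(z)] + \Im[G_\mu(z)] + \Im[G_\mu^{\dual}(z)] + \OO(\ln N/(N\eta))$. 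Since $\Im[G_\mu(z)]+\Im[G_\mu^{\dual}(z)]$ is $\Im$ of the Stieltjes transform of $\theta^{-1}\1_{[\hat a,\hat b]}$, it is $\OO(1/(\eta+\hat\kappa_E))$ at worst and in fact comparable to the boundary term appearing in $\Theta_\mu$; hence $|\Im[G_N(z)]|/(N\eta) \lesssim \Theta_N(z) + \OO(\ln N/(N\eta)^2)$. Either way the error sum is absorbed into $\OO(\Theta_N(z) + \ln N/(N\eta)^2)$.

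The main (though mild) obstacle is bookkeeping in the saturated case: one must make sure the auxiliary term $1/(N(\eta+\hat\kappa_E))$ in the definition of $\Theta_N$ genuinely dominates the contribution of $\Im[G_\mu(z)]+\Im[G_\mu^{\dual}(z)]$ to $|\Im[G_N(z)]|/(N\eta)$, using $\eta\geq \ln N/N$ to convert the $\OO(\ln N/(N\eta))$ slack in \eqref{e:Gdual} into the stated $\OO(\ln N/(N\eta)^2)$ error after dividing by $N\eta$. No deep input is needed beyond the elementary log expansion, the separation of particles by distance $\geq\theta$ (already used in Lemma \ref{l:GNprop}), and the duality identity \eqref{e:Gdual}.
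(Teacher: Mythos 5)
Your plan works for the void case but has a genuine gap in the saturated case, and the gap is not the "bookkeeping" issue you flagged but a loss of cancellation that cannot be recovered by the route you sketch.

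The problem starts with the expansion $\ln(1+w)=w+\OO(|w|^2)$: stopping at first order forces you to bound the signed complex sum $\sum_i w_i^2=\frac{(\alpha t)^2}{N^2}\sum_i\frac{1}{(z-\ell_i/N)^2}$ by the absolute sum $\sum_i|w_i|^2=\frac{1}{N^2}\sum_i\frac{1}{|z-\ell_i/N|^2}=\frac{|\Im[G_N(z)]|}{N\eta}$. In the regime where the dual branch of $\Theta_N$ is active (a saturated neighbourhood, $\kappa_E,\hat\kappa_E\gg\eta$), the relation \eqref{e:Gdual} gives $\Im[G_N(z)]\approx -\Im[G_N^{\dual}(z)]+\Im[G_\mu(z)]+\Im[G_\mu^{\dual}(z)]+\OO(\ln N/(N\eta))$, but the middle two terms sum to $\Im\!\int_{\hat a}^{\hat b}\frac{\theta^{-1}\rd x}{z-x}\approx -\pi\theta^{-1}$ for $E$ in the interior of $[\hat a,\hat b]$, i.e.\ this quantity is of order $1$, not $\OO(1/(\eta+\hat\kappa_E))$ as your proposal asserts. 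Consequently $\frac{|\Im[G_N(z)]|}{N\eta}\asymp\frac{1}{N\eta}$, which is \emph{larger} than $\Theta_N(z)+\frac{\ln N}{(N\eta)^2}$ by a factor of roughly $\min\{\sqrt{\kappa_E+\eta}/\eta,\ \hat\kappa_E/\eta,\ N\eta/\ln N\}$ whenever $\eta\ll\kappa_E,\hat\kappa_E$ and $N\eta\gg\ln N$. So the inequality you want, $|\Im[G_N(z)]|/(N\eta)\lesssim\Theta_N(z)+\ln N/(N\eta)^2$, is false across most of the saturated spectral domain, and \eqref{e:Gdual} cannot rescue it.

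The paper's proof avoids this by going one more order in Taylor: $\ln(1+w)=w-w^2/2+\OO(|w|^3)$. The cubic remainder is harmless, $\frac{1}{N^3}\sum_i\frac{1}{|z-\ell_i/N|^3}\lesssim\frac{|\Im G_N|}{N^2\eta^2}=\OO((N\eta)^{-2})$, while the quadratic term is kept as the \emph{signed} sum $\frac{1}{N^2}\sum_i\frac{1}{(z-\ell_i/N)^2}$. This signed sum admits two distinct bounds: taking absolute values inside gives the void bound $|\Im G_N|/(N\eta)$, which is what you found; but one can alternatively use \eqref{e:approxmeasure}, the statement that $\mu_N+\mu_N^{\dual}$ is an almost-$1/N$ discretisation of $\theta^{-1}\mathbf 1_{[a(N)/N,b(N)/N]}$, to rewrite $\frac{1}{N^2}\sum_i\frac{1}{(z-\ell_i/N)^2}$ as $-\frac{1}{N}\int\frac{\rd\mu_N^{\dual}(x)}{(z-x)^2}+\frac{1}{\theta N}\bigl(\frac{1}{z-b(N)/N}-\frac{1}{z-a(N)/N}\bigr)+\OO((N\eta)^{-2})$. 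The key point is that $\int\frac{\rd x}{(z-x)^2}$ over an interval telescopes to boundary terms of size $\OO(1/(\eta+\hat\kappa_E))$, whereas the corresponding absolute integral $\int\frac{\rd x}{|z-x|^2}$ is of order $1/\eta$. This cancellation is exactly what your $\sum_i|w_i|^2$ throws away, and it is what produces the $\frac{1}{N(\eta+\hat\kappa_E)}$ term in $\Theta_N$ rather than $\frac{1}{N\eta}$. To fix your argument you would need to retain the signed second-order term $-\frac{(\alpha t)^2}{2}\sum_i w_i^2$ and then reproduce the two-sided estimate of \eqref{e:voiderror} and \eqref{e:saturatederror}.
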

\begin{proof}
By Taylor expansion, it follows
\begin{align}\begin{split}\label{e:taylor}
\sum_{i=1}^N \ln \left(1+\frac{\alpha t}{N(z-\ell_i/N)}\right)
=&\alpha t G_N(z)-\frac{(\alpha t)^2}{2N^2}\sum_{i=1}^N\frac{1}{(z-\ell_i/N)^2}+\OO\left(\frac{1}{N^3}\sum_{i=1}^N \frac{1}{|z-\ell_i/N|^3}\right).
\end{split}\end{align}
The last term on the righthand side of \eqref{e:taylor} is  $\OO((N\eta)^{-2})$. For the second term on the righthand side of \eqref{e:taylor}, we can directly estimate it,
\begin{align}\label{e:voiderror}
\left|\frac{1}{N^2}\sum_{i=1}^N\frac{1}{(z-\ell_i/N)^2}\right|
\lesssim \frac{1}{N^2}\sum_{i=1}^N\frac{1}{|z-\ell_i/N|^2}=\frac{|\Im[G_N(z)]|}{N\eta}.
\end{align}
There is another way to estimate it. Since $\mu_N+\mu^{\dual}_N$ is an \emph{almost} $1/N$-discretization of ${\bf 1}_{[a(N)/N, b(N)/N]}\theta^{-1}\rd x$, we deduce from \eqref{e:approxmeasure} that
\begin{align*}
\int \frac{1}{(z-x)^2}\left(\rd \mu_N(x)+\rd\mu_N^{\dual}(x)-{\bf 1}_{[a(N)/N, b(N)/N]}\theta^{-1}\rd x\right)=\OO\left(\frac{1}{N\eta^2}\right).
\end{align*}
Therefore, we have
\begin{align}\begin{split}\label{e:saturatederror}
\frac{1}{N^2}\sum_{i=1}^N\frac{1}{(z-\ell_i/N)^2}
&=-\frac{1}{N}\int \frac{1}{(z-x)^2}\rd\mu_N^{\dual}(x)+\frac{1}{N}\left(\frac{1}{z-b(N)/N}-\frac{1}{z-a(N)/N}\right)+\OO\left(\frac{1}{(N\eta)^2}\right)\\
&=\OO\left(\frac{|\Im[G_N^{\dual}(z)]|}{N\eta}+\frac{1}{N(\eta+\hat \kappa_E)}+\frac{1}{(N\eta)^2}\right).
\end{split}\end{align}
The statement follows from combining \eqref{e:voiderror} and \eqref{e:saturatederror}.
\end{proof}

We recall that $\mu$ is the limit of the  empirical distribution of  the particles locations under $\bP_N$ and $G_\mu(z)$ its Stieltjes transform. For any $v\in \bC_+$, $0\leq t\leq 1$, $\al=\pm1,\pm i$, and large number $K=K(N,\Im[v])$, we introduce the deformed probability measure
\begin{align}\begin{split}\label{e:defdeformm}
\bP_N^{K,t,v,\al}
=&\frac{Z_N}{Z_N^{K,t,v,\al}} e^{ K\left(\sum_{i=1}^N \ln \left(1+\frac{\al t}{N(v-\ell_i/N)}\right)+\ln \left(1+\frac{\bar \al t}{N(\bar v-\ell_i/N)}\right)\right)}\bP_N\\
=&\frac{Z_N}{Z_N^{K,t,v,\al}} e^{ 2K\Re\left[\sum_{i=1}^N \ln \left(1+\frac{\al t}{N(v-\ell_i/N)}\right)\right]}\bP_N.
\end{split}\end{align}

The strategy to derive the optimal rigidity consists of two steps. The first step is the large deviation estimate at scale $\tilde\eta$, (we start with scale $O(1)$), which gives us certain control on $G_N(z)-G_\mu(z)$ with respect to the measure $\bP_N^{K,t,v,\al}$. However we don't expect it to be optimal. Then we use the loop equation to get improved estimates. By iteration, the optimal rigidity at scale $\tilde \eta$ follows. Then using the optimal rigidity at larger scale, we can repeat the process at a smaller scale. We can reach the optimal scale $\OO(1/N)$, in finite steps.

\def\cD{{\mathcal D}}

In the macroscopic scale, for any deformed measure $\bP_N^{K, t,v,\al}$, with $K\leq N$, notice that \eqref{e:GNprop} implies
\begin{align*}
2K\Re\left[\sum_{i=1}^N \ln \left(1+\frac{\al t}{N(v-\ell_i/N)}\right)\right]=\OO(2K \Re[\alpha t G_N(v)])=\OO(N\ln N).
\end{align*}
Therefore, by a large deviation argument, similar to \cite[Proposition 2.15] {Borodin2016}, see also Proposition \ref{prop:ldb}, we have

\begin{proposition}\label{p:LDP0}
For two probability measures $\nu,\rho$ set
\begin{equation}\label{eq_quadratic_main}
 \cD(\nu,\rho)=\left(-\int_{\mathbb R}\int_{\mathbb R}\ln|x-y| \rd(\nu(x)-\rho(x))\rd(\nu(y)-\rho(y))\right)^{1/2}.
\end{equation}
Fix a parameter $p>2$ and let $\tilde \mu_N$ denote the convolution of the empirical measure
$\mu_N$  with the  uniform measure on the interval $[-N^{-p}/2,N^{-p}/2]$. Let $K\leq N$.
Then for any $\gamma>0$
\begin{equation}\label{b1} \bP_N^{K, t,v,\al} \Bigl( \cD(\tilde\mu_N,\mu)\geq \gamma\Bigr)\leq\exp\bigl( CN\ln(N)
  -\gamma^2 N^2\bigr).
 \end{equation}
 
As a consequence, there exists a constant $C>0$, such that if  $\hat a_N =\min\{\hat a, a(N)/N\}$ and $\hat b_N=\min\{\hat b, b(N)/N\}$, for any $z$ such that $\dist(z, [\hat a_N, \hat b_N])\geq N^{-p+1}$, 
\begin{align}\label{e:initerror}
 \bP_N^{K, t,v,\al} \left( \left|G_N(z)-G_\mu(z)\right|\geq  \frac{C(\ln N \ln \dist(z, [\hat a_N, \hat b_N]))^{1/2}  }{N^{1/2}\dist(z, [\hat a_N, \hat b_N])}\right) \leq e^{- N\ln N}\,.
\end{align}
\end{proposition}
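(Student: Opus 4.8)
The plan is to transfer the statement from the tilted measure $\bP_N^{K,t,v,\al}$ to the reference measure $\bP_N$, and then run the energy-comparison scheme behind \cite[Proposition 2.15]{Borodin2016} and Proposition \ref{prop:ldb}. Since $K\leq N$, the bound \eqref{e:GNprop} shows that the exponent $2K\Re\bigl[\sum_{i=1}^N\ln(1+\tfrac{\al t}{N(v-\ell_i/N)})\bigr]$ in \eqref{e:defdeformm} is $\OO(N\ln N)$ uniformly over $\bW_N^\theta$; hence $Z_N^{K,t,v,\al}/Z_N=\bE_{\bP_N}\bigl[e^{2K\Re[\cdots]}\bigr]$ lies in $[e^{-CN\ln N},e^{CN\ln N}]$, and so $\bP_N^{K,t,v,\al}(\Omega)\leq e^{CN\ln N}\bP_N(\Omega)$ for every event $\Omega$. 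It therefore suffices to prove \eqref{b1}, and then \eqref{e:initerror}, with $\bP_N$ in place of $\bP_N^{K,t,v,\al}$, after enlarging $C$.

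For $\bP_N$, write $W(\ell)$ for the un-normalized weight in \eqref{eqn:distr}, so $\bP_N(\ell)=W(\ell)/Z_N$. Using $\Gamma(m+\alpha)/\Gamma(m+\beta)=m^{\alpha-\beta}(1+\OO(1/m))$ and the fact that particles in $\bW_N^\theta$ are separated by at least $\theta$, the logarithm of the $\beta$-interaction product in \eqref{eqn:distr} equals $2\theta\binom{N}{2}\ln N+\theta N^2\int\!\!\int_{x\neq y}\ln|x-y|\,\rd\mu_N\rd\mu_N+\OO(N\ln N)$; passing from $\mu_N$ to its smoothing $\tilde\mu_N$ changes the off-diagonal double integral by $\oo(1)$ (the spacing $\theta/N$ dominates $N^{-p}$), while the self-energy of the introduced spikes is $\OO(N\ln N)$. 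Combined with $w(x;N)=e^{-NV_N(x/N)}$ and Assumption \ref{a:VN}, this yields $W(\ell)=\exp\bigl(2\theta\binom{N}{2}\ln N-N^2 I_V(\tilde\mu_N)+\OO(N\ln N)\bigr)$, where $I_V$ is the functional in \eqref{e:defIV}. Evaluating the same expansion at a deterministic configuration $\ell^\ast$ built from the quantiles \eqref{e:defloc} (packed at spacing $\theta/N$ on saturated regions), for which $N^2(I_V(\tilde\mu_N^\ast)-I_V(\mu))=\OO(N\ln N)$, gives $Z_N\geq W(\ell^\ast)\geq\exp\bigl(2\theta\binom{N}{2}\ln N-N^2 I_V(\mu)+\OO(N\ln N)\bigr)$, hence $\bP_N(\ell)\leq\exp\bigl(-N^2(I_V(\tilde\mu_N)-I_V(\mu))+\OO(N\ln N)\bigr)$. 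Now I use the convexity identity $I_V(\nu)-I_V(\mu)=\theta\,\cD(\nu,\mu)^2+\int F_V\,\rd(\nu-\mu)$, valid for any probability measure $\nu$ of finite logarithmic energy, with $F_V$ the effective potential \eqref{e:defFV}: since $F_V\geq0$ on voids, $=0$ on bands, and $\leq0$ on saturated regions, one has $\int F_V\,\rd(\nu-\mu)\geq0$ whenever $\nu$ has density bounded by $\theta^{-1}$, and although $\tilde\mu_N$ overshoots this ceiling on the $N^{-p}$-windows around the particles, the a priori bound $\mu_N(I)\leq|I|/\theta+1/N$ (again from the particle separation) together with the regularity of $F_V$ gives $\int F_V\,\rd(\tilde\mu_N-\mu)\geq-\OO(\ln N/N)$. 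Therefore $\bP_N(\ell)\leq\exp\bigl(-cN^2\cD(\tilde\mu_N,\mu)^2+\OO(N\ln N)\bigr)$, and summing over the $e^{\OO(N\ln N)}$ configurations in $\bW_N^\theta$ proves \eqref{b1}.

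To deduce \eqref{e:initerror}, fix $z$ with $d:=\dist(z,[\hat a_N,\hat b_N])\geq N^{-p+1}$ and apply \eqref{b1} with $\gamma=\gamma(z)\asymp(\ln N\,|\ln d|)^{1/2}N^{-1/2}$; then off an event of probability at most $e^{-N\ln N}$ one has $\cD(\tilde\mu_N,\mu)\leq\gamma(z)$. On that event I estimate $|G_N(z)-G_\mu(z)|$ in two steps. First, a second-order Taylor expansion of $x\mapsto(z-x)^{-1}$ at scale $N^{-p}$ gives $|G_N(z)-\int(z-x)^{-1}\rd\tilde\mu_N(x)|=\OO(N^{-2p}d^{-3})$, which is negligible compared with the right-hand side of \eqref{e:initerror} since $d\geq N^{-p+1}$. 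Second, since $\tilde\mu_N-\mu$ is a compactly supported signed measure of total mass zero and finite logarithmic energy, a Cauchy--Schwarz estimate in the logarithmic-energy pairing (equivalently, interior gradient estimates for the logarithmic potential of $\tilde\mu_N-\mu$ in the disc of radius $d$ about $z$) gives $\bigl|\int(z-x)^{-1}\rd(\tilde\mu_N-\mu)\bigr|\leq C\,d^{-1}(|\ln d|)^{1/2}\,\cD(\tilde\mu_N,\mu)$. Combining with $\cD(\tilde\mu_N,\mu)\leq\gamma(z)$ yields \eqref{e:initerror}.

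The step I expect to be the main obstacle is the $\OO(N\ln N)$ bookkeeping in the second paragraph, and within it the control of $\int F_V\,\rd(\tilde\mu_N-\mu)$ near saturated regions, where the smoothed empirical measure genuinely overshoots the ceiling $\theta^{-1}$ on the spike supports; what rescues it is that those spikes sit at the lattice scale while $F_V$ is smooth, so the local excess and deficit nearly cancel, but turning this into a clean $\OO(\ln N/N)$ bound is delicate and is exactly where the argument must follow \cite[Proposition 2.15]{Borodin2016} and Proposition \ref{prop:ldb}. The logarithmic-energy Cauchy--Schwarz estimate, which produces the $(|\ln d|)^{1/2}/d$ factor, is classical.
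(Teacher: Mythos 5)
Your proposal is correct and follows essentially the same route as the paper: both use the bound $\rd\bP_N^{K,t,v,\al}/\rd\bP_N=e^{\OO(N\ln N)}$ to reduce to $\bP_N$, both establish \eqref{b1} via the logarithmic-energy large-deviation estimate (the paper simply cites \cite[Proposition 2.15]{Borodin2016} while you rewalk its energy-comparison proof, flagging the saturated-region bookkeeping as the crux — which is precisely what that cited proposition handles), and both then bound $|G_N(z)-G_\mu(z)|$ by a Cauchy--Schwarz in the $\cD$-pairing, which in the paper is carried out explicitly through the Fourier representation \eqref{eq_quadratic_alternative} and a cutoff $f_z$, producing the same $(\lvert\ln d\rvert)^{1/2}/d$ factor. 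The only cosmetic differences are that you re-derive \eqref{b1} rather than citing it, and you use a symmetric second-order smoothing error $\OO(N^{-2p}d^{-3})$ where the paper uses the cruder first-order $\OO(N^{-p}d^{-2})$; both are negligible under $d\geq N^{-p+1}$.
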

\begin{proof} The first point is  a direct consequence of \cite[Proposition 2.15] {Borodin2016} as the density of $\bP_N^{K, t,v,\al}$ with respect to $\bP_N$ is uniformly bounded by $e^{c N\ln N}$.
For the second point, notice that there is an alternative formula for $\cD(\nu,\rho)$, if $\nu-\rho$ has zero mass, cf.\cite{BeGu},
\begin{equation}
\label{eq_quadratic_alternative}
 \cD(\nu,\rho)=\sqrt{\int_0^\infty \frac{1}{t} \left|\int_{\mathbb R} e^{\i tx}
 (\nu(x)-\rho(x))\rd x\right|^2 dt}.
\end{equation}
Let $d=\dist(z,[\hat a_N, \hat b_N])$. Note that as $\tilde\mu_N$ and $\mu$ are supported in the finite interval  $[\hat a_N, \hat b_N]$,
$$G_N(z)-G_\mu(z)=\int f_z(x) \rd (\mu_N-\mu)(x)$$
with $f_z(x)=(z-x)^{-1}\chi(x)$ where $\chi$ is nonegative, equals to one on $[\hat a_N, \hat b_N]$ and vanishes outside a slightly larger interval $[\hat a_N-d/2,\hat b_N+d/2]$. We further assume that $\|\chi'\|_{\infty}=\OO(1/d)$.
Therefore, by taking the Fourier transform of $f_z$,
\begin{align}\begin{split}\label{bent}
|G_N(z)-G_\mu(z)|&\leq \left|\int f_z(x) \rd (\tilde\mu_N-\mu)(x)\right|+\left|\int f_z(x) \rd (\mu_N-\tilde\mu_N)(x)\right|\\
&\leq \left|\int  \hat f_z(s) \widehat{ (\tilde\mu_N-\mu)}(s) \rd s\right|+N^{-p} \sup_{x\in [\hat a_N, \hat b_N]} |f'_z(x)|\\
&\leq2 \left(\int s|\hat f_z(s)|^2 \rd s\right)^{1/2} \cD(\tilde\mu_N,\mu) + \frac{1}{N^{p}d^2}.
\end{split}
\end{align}
Finally, since $\int f'_z(x) \rd x=0$,
\begin{align}\begin{split}\label{e:logxy}
\int s|\hat f_z(s)|^2 \rd s&= \int s^{-1}|\hat f_z'(s)|^2 \rd s\\
&= -\int \int \log|x-y|  f'_z(x) \rd x  \bar f'_z(y) \rd y\\
&\leq \int\int \left|\log|x-y|\right|  \left(\frac{\chi(x)}{|z-x|^2}+\frac{|\chi'(x)|}{|z-x|}\right) \rd x    \left(\frac{\chi(y)}{|z-y|^2}+\frac{|\chi'(x)|}{|z-x|}\right)\rd y.
\end{split}\end{align}
We can divide the integral domain of \eqref{e:logxy} into two pieces, $|x-y|\leq d$ and $|x-y|\geq d$. It is easy to check that both integrals are bounded by $\OO(\ln d/d^2)$. We can  estimate the right hand side of \eqref{e:logxy} by $\OO(\ln d/d^2)$. Hence, \eqref{bent} and \eqref{b1} give \eqref{e:initerror}. \end{proof}

Once we have the estimate \eqref{e:initerror}, we can use the loop equation to improve the estimate. More generally, we have
the following theorem. We postpone its proof to the end of of this section.
\begin{theorem}\label{t:bootstrap}
Let $v=E+\ri \eta\in \bC_+$, $\kappa_E=\dist(E,\{A,B\})$, and $\hat\kappa_E=\dist(E, \{\hat a, \hat b\})$. We assume Assumptions \ref{a:VN}, \ref{a:wx} and \ref{a:Hz}, and that $N\eta\sqrt{\kappa_E+\eta}\gg 1$. For any $0\leq t\leq 1$, $\alpha=\pm 1, \pm\ri$, and $K\ll (N\eta)^2\sqrt{\kappa_E+\eta}$, $K\leq N$,  we assume that with high probability w.r.t. $\bP_N^{K,t,v,\al}$ 
\begin{align}\label{e:oldbound}
G_N(v)=G_\mu(v)+\OO(\varepsilon),
\end{align}
then we have
\begin{align}\label{e:bootstrap}
\bE_{\bP_N^{K,t,v,\al}}\left[G_N(v)-G_\mu(v)\right]=\OO\left(\frac{\varepsilon^2}{\sqrt{\kappa_E+\eta}}+\varepsilon_0\right),
\end{align}
where,
\begin{align*}
\varepsilon_0=\frac{1}{\sqrt{\kappa_E+\eta}}\left(\Theta_\mu(v)+\frac{K\min\{|\Im[G_\mu(v)]|,|\Im[G_\mu^{\dual}(v)]|\}}{(N\eta)^2}+\frac{\ln N}{(N\eta)^2}+\frac{\ln N}{N}+\frac{\ln NK}{(N\eta)^3}+\frac{K^2}{(N\eta)^4}\right).
\end{align*}
 Let $\tilde \varepsilon_0 =\max \{\veps_0, (\ln N)^{2}K^{-1}\}$. If  there exists some $\fc>0$, such that $\varepsilon \leq N^{-\fc}\sqrt{\kappa_E+\eta}$, then we have
\begin{align}\label{e:LDP1}
\bP_N\left(|G_N(v)-G_\mu(v)|\lesssim s\tilde\varepsilon_0\right)\geq 1-e^{-csK\tilde \veps_0},
\end{align}
for any $s\geq 1$.
\end{theorem}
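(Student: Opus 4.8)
The plan is to apply Nekrasov's equation (Theorem~\ref{t:aRN}) to the tilted measure $\bP_N^{K,t,v,\al}$, linearise the resulting loop equation around the equilibrium measure using the a~priori bound \eqref{e:oldbound}, solve for $\bE_{\bP_N^{K,t,v,\al}}[b_N(v)]$ by exploiting holomorphy, and then upgrade \eqref{e:bootstrap} to the tail bound \eqref{e:LDP1} through an exponential Chebyshev argument; throughout, write $b_N(z)\deq\theta\pb{G_N(z)-G_\mu(z)}$. The first step is to check that $\bP_N^{K,t,v,\al}$ is itself of the form \eqref{eqn:distr} (taking $K$ a positive integer, which we may without loss): from \eqref{e:defdeformm} its weight is $\tilde w(x;N)=w(x;N)\,\absb{1+\tfrac{\al t}{Nv-x}}^{2K}$, and a direct computation gives $\tilde w(x;N)/\tilde w(x-1;N)=\tilde\psi_N^+(x)/\tilde\psi_N^-(x)$ with
\begin{align*}
\tilde\psi_N^+(x)&=\psi_N^+(x)\qb{(Nv-x+\al t)(N\bar v-x+\bar\al t)(Nv-x+1)(N\bar v-x+1)}^K,\\
\tilde\psi_N^-(x)&=\psi_N^-(x)\qb{(Nv-x)(N\bar v-x)(Nv-x+1+\al t)(N\bar v-x+1+\bar\al t)}^K,
\end{align*}
both holomorphic on $N\cM$; hence Theorem~\ref{t:aRN} produces $\tilde R_N(\xi)$ holomorphic on $N\cM$. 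Dividing by the common factor $\cN_N(\xi)\deq\qb{(Nv-\xi)(N\bar v-\xi)(Nv-\xi+1)(N\bar v-\xi+1)}^K$, one has $\tilde\psi_N^\pm(Nz)/\cN_N(Nz)=\phi^\pm(z)\pb{1+\delta^\pm(z)}+\OO(N^{-1})$ uniformly on compacts of $\cM$ bounded away from $v$, where $\delta^\pm(z)=\OO\pb{K/(N\eta)}$ individually but, crucially, $\delta^+(z)-\delta^-(z)=\OO\pb{K/(N\eta)^2}$: the tilt perturbs the effective potential only at order $K/(N\eta)^2$.

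Next comes the loop-equation expansion. Using Lemma~\ref{l:GNapprox} (applied with $\pm\al t$ replaced by $\mp\theta$, which changes only universal constants since $\theta$ is fixed) one writes $\prod_{i=1}^N\pb{1\mp\tfrac{\theta}{\xi-\ell_i}}=\exp\pb{\mp\theta G_N(z)+r_N^\mp(z)}$ with $r_N^\mp(z)=\OO\pb{\Theta_N(z)+\ln N/(N\eta)^2}$. Writing $G_N=G_\mu+b_N/\theta$, expanding the exponentials, taking expectations under $\bP_N^{K,t,v,\al}$, and using \eqref{e:oldbound} together with the deterministic bound $\absb{b_N}=\OO(\ln N)$ of Lemma~\ref{l:GNprop} on the complementary (exponentially small) event to get $\bE[b_N^2]\lec\varepsilon^2$ and, via \eqref{e:Thetadif}, $\bE[r_N^\mp]\lec\Theta_\mu(v)+\varepsilon/(N\eta)+\ln N/(N\eta)^2$, the Nekrasov relation becomes
\begin{align*}
\frac{\tilde R_N(Nz)}{\cN_N(Nz)}=R_\mu(z)-Q_\mu(z)\,\bE_{\bP_N^{K,t,v,\al}}[b_N(z)]+\tfrac12 R_\mu(z)\,\bE_{\bP_N^{K,t,v,\al}}[b_N(z)^2]+\mathcal E(z),
\end{align*}
where $\mathcal E(z)$ is $\OO$ of $\Theta_\mu(z)+\tfrac{\ln N}{(N\eta)^2}+\tfrac{\ln N}{N}+\tfrac{K\min\{|\Im G_\mu(z)|,\,|\Im G_\mu^{\dual}(z)|\}}{(N\eta)^2}+\tfrac{\ln N\,K}{(N\eta)^3}+\tfrac{K^2}{(N\eta)^4}$, the last three terms coming respectively from $\delta^\pm$, from $\delta^\pm\bE[b_N]$, and from $(\delta^+-\delta^-)\phi^+e^{\theta G_\mu}$, while the $\min$ with the dual quantity is as in \eqref{e:saturatederror}. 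Since both $\tilde R_N/\cN_N$ and $R_\mu$ are holomorphic across the band $[A,B]$ whereas the tilt's insertion points sit near $v$, away from $[A,B]$, a contour-deformation argument (equivalently, matching the parts of the identity holomorphic across $[A,B]$ against those that are not) isolates $Q_\mu(z)\bE[b_N(z)]$ as the leading obstruction and gives, at $z=v$,
\begin{align*}
Q_\mu(v)\,\bE_{\bP_N^{K,t,v,\al}}[b_N(v)]=\tfrac12 R_\mu(v)\,\bE_{\bP_N^{K,t,v,\al}}[b_N(v)^2]+\OO\pb{\sqrt{\kappa_E+\eta}\;\varepsilon_0}.
\end{align*}
By Assumption~\ref{a:Hz}, $Q_\mu(v)=H(v)\sqrt{(v-A)(v-B)}$ with $\absb{H(v)}\asymp1$ and $\absb{\sqrt{(v-A)(v-B)}}\asymp\sqrt{\kappa_E+\eta}$ near $A$, $R_\mu(v)=\OO(1)$, and $\absb{\bE[b_N(v)^2]}\le\bE[\absb{b_N(v)}^2]\lec\varepsilon^2$; dividing by $Q_\mu(v)$ yields \eqref{e:bootstrap}.

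For \eqref{e:LDP1}, fix $\al'\in\h{\pm1,\pm\ri}$, put $X\deq\Re\bkt{\al'(G_N(v)-G_\mu(v))}$, and use an exponential Chebyshev inequality under $\bP_N$: since $Z_N^{K,1,v,\al'}/Z_N=\bE_{\bP_N}\bkt{\exp\pb{2K\Re[\sum_i\ln(1+\al'/(Nv-\ell_i))]}}$ and, by Lemma~\ref{l:GNapprox}, $\Re[\sum_i\ln(1+\al'/(Nv-\ell_i))]=\Re[\al' G_N(v)]+\OO\pb{\Theta_N+\ln N/(N\eta)^2}$, one gets for $\lambda>0$ (after restricting to the event where the a~priori bound and hence $\Theta_N=\OO(\Theta_\mu+\varepsilon/(N\eta))$ hold, off which $\absb{X}=\OO(\ln N)$ deterministically and the probability is exponentially small)
\begin{align*}
\bP_N\pb{X\ge\lambda}\lec e^{-2K\lambda}\,\frac{Z_N^{K,1,v,\al'}}{Z_N}\,e^{-2K\Re[\al'G_\mu(v)]}\,e^{CK(\Theta_\mu(v)+\varepsilon/(N\eta)+\ln N/(N\eta)^2)}+e^{-c(\ln N)^2}.
\end{align*}
Differentiating in $t$ and inserting \eqref{e:bootstrap} (valid uniformly in $t\in[0,1]$, for each $\al'$) gives $\ln\tfrac{Z_N^{K,1,v,\al'}}{Z_N}=2K\Re[\al'G_\mu(v)]+\OO\pb{K(\varepsilon^2/\sqrt{\kappa_E+\eta}+\varepsilon_0)}$. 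Taking $\lambda=s\tilde\varepsilon_0$ and using $\varepsilon\le N^{-\fc}\sqrt{\kappa_E+\eta}$ — so that $\varepsilon^2/\sqrt{\kappa_E+\eta}\le N^{-\fc}\varepsilon\ll\tilde\varepsilon_0$, while the remaining fixed errors are $\le(\ln N)^2/K\le\tilde\varepsilon_0$ by the definition of $\tilde\varepsilon_0$ — the exponent is at most $-2Ks\tilde\varepsilon_0+\OO(Ks\tilde\varepsilon_0)\le-cKs\tilde\varepsilon_0$ for $s\ge1$. A union bound over the four choices of $\al'$ turns these one-sided bounds into the two-sided bound $\bP_N\pb{\absb{G_N(v)-G_\mu(v)}\lec s\tilde\varepsilon_0}\ge1-e^{-csK\tilde\varepsilon_0}$, which is \eqref{e:LDP1}.

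The hard part is the loop-equation expansion itself: one must propagate every error through the nonlinear expansion of $\prod_i(1\mp\theta/(\xi-\ell_i))$ and through its coupling with the degree-$4K$ deformation factors, with constants uniform in $K\le N$ (in particular the quadratic-in-$K$ term $K^2/(N\eta)^4$ and the cross term $K\ln N/(N\eta)^3$), and then make rigorous the passage from holomorphy of $\tilde R_N$ across $[A,B]$ to the scalar identity at $z=v$, justifying the division by $Q_\mu(v)$ even when $v$ lies within $\OO(N^{-1+\fa})$ of the edge $A$, where $\absb{Q_\mu(v)}$ can be as small as $N^{(\fa-1)/2}$. Since $K$ may be as large as $N$, the reweighting is genuinely non-perturbative at the level of the measure — only its footprint on the effective potential, of size $\OO(K/(N\eta)^2)$, stays small — so all estimates have to be carried out with uniform control in $K$.
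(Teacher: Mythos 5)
Your overall strategy matches the paper's: tilt the measure, apply Nekrasov's equation, linearise, extract $\bE[G_N(v)-G_\mu(v)]$ from the resulting identity, and then upgrade the one-point bound to a tail bound via the partition-function derivative in $t$. Two points, however, need attention. First, the ``contour-deformation argument (equivalently, matching the parts of the identity holomorphic across $[A,B]$ against those that are not)'' is not an argument as stated: the error term $\mathcal E(z)$ in your putative identity is not holomorphic and not uniformly of the stated size (the a~priori bound \eqref{e:oldbound} controls $\bE[b_N(z)^2]$ only for $z$ within $\OO(\eta)$ of $v$; for $z$ at distance $\gtrsim 1$ from the spectrum one must instead use Proposition~\ref{p:LDP0} to get $|G_N-G_\mu|\lec(\ln N/N)^{1/2}$, a different scale). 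The paper makes this precise by dividing the Nekrasov identity by $2\pi\ri(z-v)H(z)$ and integrating over two families of contours — small clockwise circles $\cC_1$ of radius $\eta/2$ around $v,\bar v$ (whose residue produces $\theta\sqrt{(v-A)(v-B)}\,\bE[G_N(v)-G_\mu(v)]$), and a large counter\-clockwise contour $\cC_2$ around $[\hat a,\hat b]$ on which $H\neq0$ and $\dist(z,[\hat a_N,\hat b_N])\gtrsim1$ (where the corresponding piece $I_2$ vanishes by analyticity and decay). Summing to zero by holomorphy of $R_N(zN)$ in the annulus is exactly the ``matching'' you invoke, but it has to be set up this way to be valid near the edge, where $|Q_\mu(v)|$ is as small as $\sqrt{\kappa_E+\eta}$.

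The genuine gap is in the proof of \eqref{e:LDP1}. You assert that $\varepsilon^2/\sqrt{\kappa_E+\eta}\le N^{-\fc}\varepsilon\ll\tilde\varepsilon_0$, so that a \emph{single} exponential Chebyshev inequality (with the partition-function normalisation controlled by \eqref{e:bootstrap}) gives the stated tail bound. This inequality $N^{-\fc}\varepsilon\ll\tilde\varepsilon_0$ is simply false in general: the a~priori $\varepsilon$ can be as large as $N^{-\fc}\sqrt{\kappa_E+\eta}$, so $N^{-\fc}\varepsilon$ can be $N^{-2\fc}\sqrt{\kappa_E+\eta}$, while $\tilde\varepsilon_0\geq(\ln N)^2/K$ with $K$ as large as $N$ — for instance if $\kappa_E+\eta\asymp1$ and $K\asymp N$ the claimed inequality would force $N^{1-2\fc}\ll(\ln N)^2$, which fails for $\fc<1/2$. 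What the argument actually yields after one pass is only $|G_N(v)-G_\mu(v)|\lec s\varepsilon_2$ with $\varepsilon_2=\varepsilon^2/\sqrt{\kappa_E+\eta}+\tilde\varepsilon_0\le\varepsilon N^{-\fc}+\tilde\varepsilon_0$, i.e.\ a \emph{multiplicative} improvement over $\varepsilon$ by $N^{-\fc}$, not yet the optimal scale. The paper repairs this by \emph{iterating}: the conclusion $G_N(v)=G_\mu(v)+\OO(\varepsilon_2)$ with high probability under the tilted measure is again a hypothesis of the form \eqref{e:oldbound}, so one reruns the argument with $\varepsilon\to\varepsilon_2$, paying the price of halving $K$ at each step (this is the Cauchy--Schwarz step producing $K_2=K/2$). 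After $\lceil 1/\fc\rceil$ iterations the residual $\varepsilon N^{-\lceil1/\fc\rceil\fc}\lec\tilde\varepsilon_0$ is negligible, $K$ has dropped only by a fixed constant factor, and \eqref{e:LDP1} follows. This bootstrap, and the need to halve $K$ so that the moment generating function stays controlled at each stage, is a key idea missing from your write-up and cannot be circumvented by a single Chebyshev bound.
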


As an easy application of Theorem \ref{t:bootstrap}, with Proposition \ref{p:LDP0} as input, we have the following optimal rigidity estimate on the scale $\tilde \eta\gg N^{-1/2}$.
\begin{corollary}\label{c:initialstep} We assume Assumptions \ref{a:VN}, \ref{a:wx} and \ref{a:Hz}.
Fix $s\geq (\ln N)^2$. With probability at least $1-e^{-cs}$, it holds uniformly for any $z=E+\ri \eta\in \cD_{*}$ (as defined in \eqref{def:D_0} for a given $\fa>0$),
\begin{align*}
\left|G_N(z)-G_\mu(z)\right|\lesssim \frac{s}{N\eta}.
\end{align*}

\end{corollary}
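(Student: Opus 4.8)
The plan is to run the two-step bootstrap scheme described in the paragraph preceding Subsection \ref{s:rig1}, but starting from the crude macroscopic input of Proposition \ref{p:LDP0} and iterating Theorem \ref{t:bootstrap} finitely many times until the error reaches the optimal size $s/(N\eta)$ on the domain $\cD_*$. Concretely, fix $\fa>0$ and let $\tilde\eta = N^{-(1-\fa)/2}$ be the smallest scale appearing in $\cD_*$; note that for $z = E + \ri\eta \in \cD_*$ one has $N\eta\sqrt{\kappa_E+\eta}\gtrsim N^{\fa/2}\gg 1$, so the standing hypothesis of Theorem \ref{t:bootstrap} is met throughout $\cD_*$. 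The iteration will be carried out with a deformation parameter $K$ chosen adapted to the scale, say $K$ a small power of $N$ (and $K\le N$), so that the constraint $K\ll (N\eta)^2\sqrt{\kappa_E+\eta}$ holds on $\cD_*$; this is possible precisely because $\eta$ is bounded below by $N^{-(1-\fa)/2}$ on $\cD_*^{\rm int}$ (resp. $N^{-(1-\fa)/3}$ on $\cD_*^{\ext}$), which makes $(N\eta)^2\sqrt{\kappa_E+\eta}$ at least a positive power of $N$.

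First I would record the base case: by Proposition \ref{p:LDP0}, for any $z$ with $\dist(z,[\hat a_N,\hat b_N])\geq N^{-p+1}$ we have, with $\bP_N^{K,t,v,\al}$-probability at least $1-e^{-N\ln N}$,
\[
|G_N(z)-G_\mu(z)| \leq \frac{C(\ln N \,\ln\dist(z,[\hat a_N,\hat b_N]))^{1/2}}{N^{1/2}\dist(z,[\hat a_N,\hat b_N])}.
\]
For $z$ at distance of order one from the support this gives $|G_N(z)-G_\mu(z)| = \OO(N^{-1/2}\ln N)$, which in particular is $\leq N^{-\fc}\sqrt{\kappa_E+\eta}$ for a suitable small $\fc$; so the hypothesis \eqref{e:oldbound} of Theorem \ref{t:bootstrap} holds with $\varepsilon = \OO(N^{-1/2}\ln N)$ at macroscopic scale, and the conclusion \eqref{e:LDP1} upgrades this to $|G_N(z)-G_\mu(z)|\lesssim s\tilde\varepsilon_0$ with $\bP_N$-probability $\geq 1-e^{-csK\tilde\varepsilon_0}$. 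The key structural point in \eqref{e:bootstrap} is the self-improving factor: the error $\varepsilon$ is replaced by $\varepsilon^2/\sqrt{\kappa_E+\eta}$ plus the "irreducible" term $\varepsilon_0$, which is of order $\Theta_\mu(v)/\sqrt{\kappa_E+\eta}$ up to the optimal $s/(N\eta)$ provided $K$ is chosen large enough to kill the $\ln N \, K/(N\eta)^3$ and $K^2/(N\eta)^4$ pieces while $(\ln N)^2/K$ stays negligible — this is the delicate balancing of $K$ against the scale.

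Then I would iterate: set $\varepsilon_0 = \OO(N^{-1/2}\ln N)$ and define $\varepsilon_{j+1} = \varepsilon_j^2/\sqrt{\kappa_E+\eta} + \varepsilon_0^{(j)}$, where $\varepsilon_0^{(j)}$ is the irreducible term at the $j$-th scale. Since the map $\varepsilon\mapsto \varepsilon^2/\sqrt{\kappa_E+\eta}$ is a contraction once $\varepsilon \ll \sqrt{\kappa_E+\eta}$ (which is guaranteed by the assumption $\varepsilon\leq N^{-\fc}\sqrt{\kappa_E+\eta}$, preserved along the iteration), the quadratic gain roughly squares the relative error at each step, so after $\OO(\ln\ln N)$ — in particular finitely many — steps the error saturates at the optimal order, namely $|G_N(z)-G_\mu(z)|\lesssim s\Theta_\mu(z) \asymp s\sqrt{\kappa_E+\eta}/(N\eta) \le s/(N\eta)$ on $\cD_*^{\rm int}$ (and similarly $\lesssim s/(N\sqrt{\kappa_E+\eta}) \le s/(N\eta)$ on $\cD_*^{\ext}$, using \eqref{e:Thetaest} and the fact that on $\cD_*^{\ext}$ one has $\eta\leq\sqrt{\kappa_E+\eta}$). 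At each step the failure probability is at most $e^{-csK\tilde\varepsilon_0}\leq e^{-cs}$ because $K\tilde\varepsilon_0 \gtrsim (\ln N)^2$ by the choice $\tilde\varepsilon_0 = \max\{\varepsilon_0,(\ln N)^2 K^{-1}\}$; summing the finitely many error probabilities, and taking a union bound over a polynomially fine net of points in $\cD_*$ (legitimate since $|\partial_z G_N|=\OO(1/\eta)$ by Lemma \ref{l:GNprop} and $G_\mu$ is Lipschitz on $\cD_*$ with a polynomial constant, so discretization error is absorbed), we get the uniform bound with probability $\geq 1 - e^{-cs}$ after adjusting $c$. The main obstacle is the bookkeeping in the last step: one must verify that throughout the iteration the parameter choices $(K, t, v, \al)$ keep satisfying $K\ll (N\eta)^2\sqrt{\kappa_E+\eta}$, $K\leq N$, and $\varepsilon_j \leq N^{-\fc}\sqrt{\kappa_E+\eta}$ simultaneously on all of $\cD_*$, and that the error term $\varepsilon_0^{(j)}$ never exceeds $\OO(s/(N\eta))$ — this is where the precise form of $\cD_*$ (with its scale exponents $(1-\fa)/2$ and $(1-\fa)/3$) is used, to guarantee the denominators $(N\eta)^k$ beat the numerators $K^\ell \ln N$ with room to spare.
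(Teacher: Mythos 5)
Your plan has the right ingredients (Proposition \ref{p:LDP0} as input, Theorem \ref{t:bootstrap} as the upgrading mechanism, a union bound over a net), but it misconstrues the structure of the argument in two ways, and the second one is a genuine gap.

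First, the iteration you set up — $\varepsilon_{j+1}=\varepsilon_j^2/\sqrt{\kappa_E+\eta}+\varepsilon_0^{(j)}$ applied "finitely many times until the error reaches the optimal size" — is already built into Theorem \ref{t:bootstrap}: the conclusion \eqref{e:LDP1} is stated with the \emph{final} error $s\tilde\varepsilon_0$, and the geometric improvement you describe is exactly the $\lceil 1/\fc\rceil$-fold internal repetition at the end of the proof of Theorem \ref{t:bootstrap} (where $K$ drops by half each time). So the paper's proof of the Corollary is a \emph{single} call to Theorem \ref{t:bootstrap}, not an external loop. Your description of the balancing of $K$ is also backwards: you say $K$ must be "large enough to kill $\ln N\,K/(N\eta)^3$ and $K^2/(N\eta)^4$" — increasing $K$ makes those terms larger, not smaller. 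The paper takes $K=N\eta$, which is adapted to the scale (not a fixed "small power of $N$": for $\eta\asymp 1$ it is $\asymp N$); with this choice the $K$-dependent pieces of $\varepsilon_0$ are all $\lesssim \ln N/(N\eta)^2$, while $(\ln N)^2/K=(\ln N)^2/(N\eta)$, giving $\tilde\varepsilon_0=(\ln N)^2/(N\eta)$, from which the stated bound follows by rescaling $s$.

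Second, and more seriously: you verify the hypothesis \eqref{e:oldbound} of Theorem \ref{t:bootstrap} only "at macroscopic scale", with $\varepsilon=\OO(N^{-1/2}\ln N)$. But Theorem \ref{t:bootstrap} upgrades the estimate \emph{at a fixed point $v$} from a weak bound \emph{at that same $v$}; it does not push information from large $\eta$ down to small $\eta$ — that is what Theorem \ref{t:lowerscale} (via the local-measure argument) does, and it is not available here. If your initial estimate is established only at macroscopic distance, you can only conclude there, and you never reach the small scales in $\cD_*$. The missing observation is that Proposition \ref{p:LDP0} applies \emph{directly} at every $z\in\cD_*$: with $\dist(z,[\hat a_N,\hat b_N])\gtrsim\eta$ one gets $|G_N(z)-G_\mu(z)|\lesssim (\ln N)/(N^{1/2}\eta)\lesssim N^{\fa/4}/(N^{1/2}\eta)\lesssim N^{-\fa/4}\sqrt{\kappa_E+\eta}$, using $N\eta\sqrt{\kappa_E+\eta}\geq N^{\fa/2}$ on $\cD_*$. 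This is exactly why the cutoff $\eta\sqrt{\kappa_E+\eta}\geq N^{-(1-\fa)/2}$ is the right one for $\cD_*$: it is the scale below which the crude concentration from Proposition \ref{p:LDP0} no longer satisfies the hypothesis $\varepsilon\leq N^{-\fc}\sqrt{\kappa_E+\eta}$. Once you record this, a single application of Theorem \ref{t:bootstrap} with $K=N\eta$ and $\varepsilon=N^{-\fa/4}\sqrt{\kappa_E+\eta}$ gives \eqref{e:LDP1} pointwise, and the union bound over a net (which you have right) finishes.
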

\begin{proof}
For any $ z=E+\ri \eta\in \cD_{*}$ , with $\kappa_E=\dist(E, \{A,B\})$ and $\hat \kappa_E=\dist(E,\{\hat a, \hat b\})$, we have $\eta\sqrt{\kappa_E+\eta}\geq N^{-(1-\fa)/2}$. By the large deviation estimates  of Proposition \ref{p:LDP0}, the following holds with high probability with respect to the deformed measure $\bP_N^{N\eta, t,v,\alpha}$ as defined in \eqref{e:defdeformm}
\begin{align*}
\left|G_N(z)-G_\mu(z)\right|\lesssim \frac{N^{\fa/4}}{N^{1/2}\eta}\lesssim N^{-\fa/4}\sqrt{\kappa_E+\eta}.
\end{align*}
 By taking $K=N\eta$ and $\varepsilon=N^{-\fa/4}\sqrt{\kappa_E+\eta}$ in Theorem \ref{t:bootstrap}, we have 
 \begin{align*}
  \varepsilon_0\lesssim
 \frac{1}{\sqrt{\kappa_E+\eta}}\left(\Theta_\mu(z)+\frac{\ln N}{(N\eta)^2}+\frac{\ln N}{N}\right).
 \end{align*}
 From \eqref{e:Thetaest} and $\hat \kappa_E+\kappa_E\gtrsim 1$, we see that $\Theta_\mu(z)/\sqrt{\kappa_E+\eta}\lesssim 1/N\eta$. Moreover, from the definition of the domain $\cD_{*}$, $N\eta\sqrt{\kappa_E+\eta}\gg 1$.  We have
 \begin{align*}
 \varepsilon_0\lesssim\frac{1}{N\eta}+\frac{\ln N}{N\sqrt{\kappa_E+\eta}}+\frac{\ln N}{(N\eta)}\lesssim \frac{\ln N}{N\eta},\quad \td \veps_0=\frac{(\ln N)^2}{N\eta}.
 \end{align*}
Therefore, by \eqref{e:LDP1}, we deduce that
\begin{align}\label{e:LDPM3}
\bP_N\left(|G_N(z)-G_\mu(z)|\lesssim \frac{s}{N\eta}\right)\geq 1-e^{-cs},
\end{align}
for any $s\geq (\ln N)^2$. By an union bound and the Lipschitz property of $G_N(z)-G_\mu(z)$ from Lemma \ref{l:GNprop}, we find a constant $c>0$ such that  for any $s\geq (\ln N)^2$,
\begin{align}\label{e:uniformLDPM3}
\bP_N\left(\sup_{z\in \cD_*}\Im [z] |G_N(z)-G_\mu(z)|\lesssim \frac{s}{N}\right)\geq 1-e^{-cs}\,.
\end{align}
\end{proof}

\begin{theorem}\label{t:lowerscale}
Fix small constants $r_0,r>0$, and a parameter $M$ so that $N^{\fa}\leq M\ll N$. We assume the following holds
\begin{align}\label{e:LDPM2}
\bP_N\left(\left|G_N(z)-G_\mu(z)\right|\geq \frac{M}{N\Im[v]}\right)\leq e^{-cM},
\end{align}
uniformly for any $z\in \cD_{M,r_0}\cup \cD_{*}$. Fix $L=M^\fb$, where $1<\fb<2$. For any $z=E+\ri \eta\in \bC_+$, with $\kappa_E=\dist(E, \{A,B\})$, the following holds with probability $1-e^{-cM}$,
\begin{enumerate}
\item if $E-A\gtrsim (L/N)^{2/3}$ and $E\leq B-r_0-r$, then 
\begin{align}\label{e:Gconcentratebulk}
|G_N(z)-G_\mu(z)|\leq C(\ln N)^2\left(\frac{\sqrt{L}}{N\eta}+\frac{M}{L}\sqrt{\kappa_E}\right).
\end{align}
\item If $E\leq A$ or $E-A\lesssim (L/N)^{2/3}$, then 
\begin{align}\label{e:Gconcentrateedge}
|G_N(z)-G_\mu(z)|\leq C(\ln N)^2\left(\frac{\sqrt{L}}{N\eta}+\left(\frac{M}{L}\right)^{1/2}\left(\frac{L}{N}\right)^{1/3}\wedge\frac{M}{L}\left(\frac{L}{N}\right)^{2/3}\frac{1}{\sqrt{\kappa_E+\eta}}+\frac{M}{L}\left(\frac{L}{N}\right)^{1/3}\right).
\end{align}
\end{enumerate}
\end{theorem}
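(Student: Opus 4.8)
The plan is to derive the improved concentration estimates at scale $L/N$ from the (weaker) rigidity input at scale $M/N$ by a two-step argument inside each local measure, very much in the spirit of the bootstrap scheme that produced Corollary \ref{c:initialstep}. The key conceptual device is the \emph{local measure}: fix all but a block of $\sim K$ consecutive particles around a location near $E$, leaving a conditional measure on that block which is again (essentially) a discrete $\beta$-ensemble with an external potential determined by the frozen particles. The hypothesis \eqref{e:LDPM2} tells us that with probability $1-e^{-cM}$ the frozen configuration is $M/N$-rigid on the domain $\cD_{M,r_0}\cup\cD_*$, so the effective external potential of the local measure is close (up to errors controlled by $M/N$ and the rigidity of the boundary particles) to the one coming from the equilibrium measure $\mu$. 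First I would make this quantitative: on the good event of \eqref{e:LDPM2}, the Stieltjes transform of the frozen part differs from $G_\mu$ by $O(M/N\eta)$, which feeds a controlled perturbation into the loop/Nekrasov equation for the local block.

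Next I would run the large-deviation estimate of Proposition \ref{p:LDP0} (or its local analogue, Proposition \ref{prop:ldb}) for the local measure at scale $\tilde\eta = L/N$ with $L = M^{\fb}$, $1<\fb<2$. Because the local block has $\sim L$ particles, the a priori large-deviation bound gives control of $G_N(z)-G_\mu(z)$ at scale roughly $\sqrt{L}/(N\eta)$ — the first term in \eqref{e:Gconcentratebulk} and \eqref{e:Gconcentrateedge} — but this is not yet optimal, and it also carries an error $O((M/L)\sqrt{\kappa_E})$ (bulk) resp. the edge-type error terms in \eqref{e:Gconcentrateedge}, coming from the mismatch between the true frozen potential and the $\mu$-potential, propagated through the square-root vanishing of $\rho_V$ near $A$ (hence the factors $(L/N)^{1/3}$, $(L/N)^{2/3}$ and the $1/\sqrt{\kappa_E+\eta}$). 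I would then invoke Theorem \ref{t:bootstrap} with $K$ of order $L$ (more precisely $K \asymp N\eta$ on the relevant part of the domain, $K\ll (N\eta)^2\sqrt{\kappa_E+\eta}$), using the $\sqrt{L}/(N\eta)$ bound as the input $\varepsilon$: the quadratic self-improvement $\varepsilon^2/\sqrt{\kappa_E+\eta}$ together with $\varepsilon_0$ turns this into the stated bounds, and \eqref{e:LDP1} upgrades the expectation bound to a high-probability statement with the desired $e^{-cM}$ tail. A union bound over $z$ combined with the Lipschitz continuity of $G_N-G_\mu$ from Lemma \ref{l:GNprop} gives the uniform version.

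The bulk case ($E-A\gtrsim (L/N)^{2/3}$, $E\le B-r_0-r$) and the edge case ($E\le A$ or $E-A\lesssim (L/N)^{2/3}$) differ only in how the square-root behaviour of $\rho_V$ at $A$ enters the estimate of $\Theta_\mu$ and of the perturbation error: in the bulk $\Theta_\mu(z)\asymp \sqrt{\kappa_E+\eta}/(N\eta)$ and the boundary-mismatch error is $O((M/L)\sqrt{\kappa_E})$, whereas near the edge one must use \eqref{e:Thetaest} with $\kappa_E$ small and track the competition between $(M/L)^{1/2}(L/N)^{1/3}$ and $(M/L)(L/N)^{2/3}(\kappa_E+\eta)^{-1/2}$, which accounts for the $\wedge$ in \eqref{e:Gconcentrateedge}, plus the additive $(M/L)(L/N)^{1/3}$ from the term $\hat\kappa_E$ / the endpoint contributions in the local loop equation.

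The main obstacle I anticipate is controlling the local measure itself: establishing that conditioning on a rigid boundary configuration yields a measure to which Nekrasov's equation (Theorem \ref{t:aRN}) and the large-deviation bound still apply with \emph{effective} weights whose regularity (Assumptions \ref{a:VN}, \ref{a:wx}) is inherited with only $O(M/N)$-type losses, and in particular that the square-root edge at $A$ survives the perturbation so that the inversion of the linearized loop operator remains uniform. Closely related is the bookkeeping of the many error terms in $\varepsilon_0$ (the $K$-dependent, $\ln N$-dependent and $K^2/(N\eta)^4$ pieces) so that, for the chosen $K\asymp N\eta$ and $L=M^{\fb}$, all of them are dominated by the claimed right-hand sides of \eqref{e:Gconcentratebulk}–\eqref{e:Gconcentrateedge}; this is routine but delicate, and is where the constraint $1<\fb<2$ and the lower bound $M\ge N^{\fa}$ get used.
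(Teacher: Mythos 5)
Your plan captures the right starting point — condition on a ``good'' boundary at scale $M/N$ and study a local measure of $\sim L$ particles, using the large-deviation estimate to get the $\sqrt{L}/(N\eta)$ term — but it then diverges from the paper and, I think, runs into a genuine obstruction. You propose to close the argument by ``invoking Theorem~\ref{t:bootstrap}'' on the local measure, using the LD bound as the input $\varepsilon$, and you attribute the remaining error terms (the $(M/L)\sqrt{\kappa_E}$ in the bulk and the $(M/L)^{1/2}(L/N)^{1/3}\wedge\cdots$ in the edge case) to the bootstrap error $\varepsilon_0$ and $\Theta_\mu$. That is not how those terms arise, and the bootstrap is in fact not used at all in the proof of Theorem~\ref{t:lowerscale}. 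The paper's architecture is strictly modular: Theorem~\ref{t:lowerscale} is proved purely from (i) the decomposition $G_N-G_\mu = \int\chi_0(z-x)^{-1}(\rd\mu_N-\rd\mu)+\int(1-\chi_0)(z-x)^{-1}(\rd\mu_N-\rd\mu)$, (ii) a dyadic estimate of the far part using Propositions~\ref{prop:rig}–\ref{prop:rig2} (giving $(\ln N)(M/L)\sqrt{\kappa_I}$), (iii) the large deviation bound (Proposition~\ref{prop:ldb}) for the near part with the local equilibrium density $\rho_W$ replacing $\rho_V$, and (iv) a deterministic comparison of $\rho_W$ with $(N/L)\rho_V$ (Propositions~\ref{prop:equilibriumstructure}, \ref{prop:abestimate}, \ref{prop:bulkeq}, \ref{prop:alphabetaloc}, \ref{prop:edgeeq}). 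Theorem~\ref{t:bootstrap} is applied later, in the proof of Theorem~\ref{t:rigidity}, to upgrade the output of Theorem~\ref{t:lowerscale} to the optimal $1/(N\eta)$ scale; it acts on the full measure $\bP_N$ via Nekrasov's equation, not on the conditioned local measure.

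The obstruction you flag at the end — that conditioning on a rigid boundary must yield a measure for which Nekrasov's equation still applies with controlled ``effective weights'' — is real, and the paper does \emph{not} surmount it; it sidesteps it. The conditioned local measure has effective weight $\hat w(Nu;L)$ containing $\Gamma$-function factors at every frozen particle, which produces a forest of poles on the real axis and destroys the analyticity that Theorem~\ref{t:aRN} needs. Instead the paper avoids ever writing a Nekrasov equation for the local measure: the local measure is analysed only through its Hamiltonian (Proposition~\ref{prop:ldb}) and through the classical constrained equilibrium theory of Dragnev–Saff (Theorems~\ref{t:bound}--\ref{t:dual}), which gives the structure of $\rho_W$ (a single band $[\alpha,\beta]$, either a void or saturation on each side) and quantitative bounds on $\alpha,\beta$ and on $\rho_W-(N/L)\rho_V$. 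The edge terms in \eqref{e:Gconcentrateedge}, including the $\wedge$ and the $(M/L)(L/N)^{1/3}$ piece, come precisely from the shift $|\alpha-A|=\OO(\ln N(M/L)^{1/3}(M/N)^{2/3})$ of the local band edge (Proposition~\ref{prop:alphabetaloc}) and from the pointwise bounds \eqref{e:erroraA}--\eqref{e:errorAa} on $\rho_W-(N/L)\rho_V$ near that shifted edge — not from $\Theta_\mu$ or $\varepsilon_0$. So the missing ingredient in your plan is the deterministic comparison of the local to the global equilibrium measure, and the part of your plan that does reach for Nekrasov on the local block is the part that would fail without substantial new ideas.
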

The proof of Theorem \ref{t:lowerscale} is given in Section \ref{s:lm}.

\begin{proof}[Proof of Theorem \ref{t:rigidity}]
Let $M_1\deq N^{1/2+\fa}$, and an arbitrarily small constant $r>0$. By Corollary \ref{c:initialstep}, there exists $c>0$ so that
for any $s\geq (\ln N)^2$,
\begin{align}\label{e:uniformLDPM3}
\bP_N\left(|G_N(z)-G_\mu(z)|\lesssim \frac{s}{N\eta}\right)\geq 1-e^{-cs},
\end{align}
uniformly for any $z\in \cD_{*}$. Since $\cD_{M_1,r}\cup \cD_{*}=\cD_{*}$, \eqref{e:uniformLDPM3} holds on domain $\cD_{M_1,r}\cup \cD_{*}$. In the following we construct a decreasing sequence: $M_1\gg M_2\gg\cdots M_n=N^{\fa}$. Given that \eqref{e:uniformLDPM3} holds on domain $\cD_{M_k, kr}\cup \cD_{*}$, we prove that \eqref{e:uniformLDPM3} holds on the domain $\cD_{M_{k+1}, (k+1)r}\cup \cD_*$. Thus,  $\eqref{e:uniformLDPM3}$ holds on the  domain $\cD_{M_n,nr}=\cD_{nr}$, and \eqref{e:bulkrig} follows, since we can take $r$ arbitrarily small.

We assume that \eqref{e:uniformLDPM3} holds for any $z\in \cD_{M_k,kr}\cup \cD_*$. Especially \eqref{e:LDPM2} holds for any $M$ such that $M_k\leq M\leq N$ in the spectral domain $\cD_{M,kr}\cup \cD_*$. We assume that $M_k> N^{\fa}$, otherwise we have finished the proof of \eqref{e:bulkrig}. We take $L_k=M_k^\fb$, ($1<\fb<2$ will be chosen later) and $z=E+\ri \eta\in \cD_{M_k^{\fb/2},(k+1)r}\setminus\cD_{M_k,kr}$ with $\kappa_E=\dist(E, \{A,B\})$. More precisely, $\cD_{M_k^{\fb/2},(k+1)r}\setminus\cD_{M_k,kr}$ is given by $E\in [A,B-(k+1)r]$, $M_k^{\fb/2}\leq N\eta\sqrt{\kappa_E+\eta}\leq M_k$; or $E\leq A$, and $(M_k^{\fb/2}/N)^{2/3}\leq\eta\leq (M_k/N)^{2/3}$.

Thanks to Theorem \ref{t:lowerscale} with $r_0=kr$, for any $M$ such that $M_k\leq M\leq N$, we have
\begin{align}\label{e:LDP4.5}
\bP_N\left(|G_N(z)-G_{\mu}(z)|\leq \omega(M,z)\right)\geq 1-e^{-cM},
\end{align}
where $\omega(M,z)$ is defined as the right hand side of \eqref{e:Gconcentratebulk} or \eqref{e:Gconcentrateedge}: Setting  $L=M^\fb$, if $E-A\gtrsim (L/N)^{2/3}$ and $E\leq B-(k+1)r$, then
\begin{align*}
\omega(M,z)\deq
C(\ln N)^2\left(\frac{\sqrt{L}}{N\eta}+\frac{M}{L}\sqrt{\kappa_E}\right),
\end{align*}
whereas if $E\leq A$ or $E-A\lesssim (L/N)^{2/3}$, then
\begin{align*}
\omega(M,z)\deq
C(\ln N)^2\left(\frac{\sqrt{L}}{N\eta}+\left(\frac{M}{L}\right)^{1/2}\left(\frac{L}{N}\right)^{1/3}\wedge\frac{M}{L}\left(\frac{L}{N}\right)^{2/3}\frac{1}{\sqrt{\kappa_E+\eta}}+\frac{M}{L}\left(\frac{L}{N}\right)^{1/3}\right).
\end{align*}
In the following we check that for any $z\in \cD_{M_k^{\fb/2},(k+1)r}\setminus\cD_{M_k,kr}$ and $M\geq N^{\fa}$, $N\eta\omega (M,z)$ is much smaller than $M$. For the bulk case, if $E-A\gtrsim (L/N)^{2/3}$ and $E\leq B-(k+1)r$, we have
\begin{align*}
N\eta \omega(M,z)=C(\ln N)^2\left(\sqrt{L}+\frac{M}{L}N\eta\sqrt{\kappa_E}\right)
\leq C(\ln N)^2\left(M^{\fb/2}+\frac{M}{L}M_k\right)\ll M.
\end{align*}
where we used that on $\cD_{M_k^{\fb/2},(k+1)r}\setminus\cD_{M_k,kr}$, $N\eta\sqrt{\kappa_E+\eta}\leq M_k$.
For the edge case, if $E\leq A$ or $E-A\lesssim (L/N)^{2/3}$, we have $\eta\leq (M_k/N)^{2/3}$, and
\begin{align*}\begin{split}
N\eta \omega(M,z)&\leq C(\ln N)^2\left(\sqrt{L}+N\eta\left(\frac{M}{L}\right)^{1/2}\left(\frac{L}{N}\right)^{1/3}\wedge N\sqrt{\eta}\frac{M}{L}\left(\frac{L}{N}\right)^{2/3}+N\eta \frac{M}{L}\left(\frac{L}{N}\right)^{1/3}\right)\\
&\leq C(\ln N)^2\left(M^{\fb/2}+\left(\frac{M_k}{L}\right)^{1/3}M+\left(\frac{M_k}{L}\right)^{2/3}M\right)
\ll M. 
\end{split}
\end{align*}

Fix any $z=E+\ri \eta\in \cD_{M_k^{\fb/2},(k+1)r}\setminus \cD_{M_k,kr}$, we consider the deformed measure 
\begin{align*}
\bP_N^{N\eta,t,z,\al}
=&\frac{Z_N}{Z_N^{N\eta,t,z,\al}} e^{ \tilde H}\bP_N,\quad \tilde H(z)=2N\eta\Re\left[\sum_{i=1}^N \ln \left(1+\frac{\al t}{N(z-\ell_i/N)}\right)\right],
\end{align*}
for some $0\leq t\leq 1$ and $\alpha=\pm1,\pm\ri$. 
By Lemma \ref{l:GNapprox}, we have  
\begin{align}\label{e:defX}
X(z)\deq \left|\tilde H(z) - 2t N\eta \Re\left[\alpha G_\mu(z)\right]\right|
= 2N\eta\left|G_N(z)-G_\mu(z)\right|+\OO\left(|\Im[G_N(z)]|\right).
\end{align}
Thanks to \eqref{e:LDP4.5}, for any $M_k\leq M\leq N$, we have $|G_N(z)-G_\mu(z)|\leq\omega(M,z)$, with probability at least $1-e^{-cM}$ with respect to $\bP_N$.
Therefore, we have for any $z\in\mathbb C^+$
\begin{align}
\begin{split}
&\phantom{{}={}}\bP_N^{N\eta,t,z,\al}\left( |G_N(z)-G_{\mu}(z)|\geq \omega(M_k,z) \right)
=\frac{\int {\bf{1}}(|G_N(z)-G_{\mu}(z)|\geq \omega(M_k,z)) e^{\tilde H(z)}\rd \bP_N}{\int e^{\tilde H(z)}\rd \bP_N}\\
&
\leq\frac{\int {\bf{1}}(|G_N(z)-G_{\mu}(z)|\geq \omega(M_k,z)) e^{\tilde H(z)}\rd \bP_N}{\int {\bf 1}(X(z)\leq N\eta\psi(M_k)) e^{\tilde H(z)}\rd \bP_N}\\
&\leq\frac{\int {\bf{1}}(|G_N(z)-G_{\mu}(z)|\geq \omega(M_k,z)) e^{\tilde H(z)-2t N \eta \Re[\alpha G_\mu(z)]}\rd \bP_N}{\int {\bf 1} (X(z)\leq N\eta\omega(M_k,z)) e^{\tilde H(z)-2t N \eta \Re[\alpha G_\mu(z)]}\rd \bP_N}\\
&\leq e^{N\eta\omega(M_k,z)}
\int {\bf{1}}(|G_N(z)-G_{\mu}(z)|\geq \omega(M_k,z)) e^{X(z)}\rd \bP_N.
\label{e:tg}
\end{split}
\end{align}
We take $m=\lfloor\log_2(N/M_k)\rfloor$ and decompose the integral region as 
$\{|G_N(z)-G_\mu(z)|\geq N\eta\omega(M_k,z)\}\subset \cup_{i=0}^{m} A_i$, where
\begin{align*}
A_i&=\{|G_N(z)-G_\mu(z)|\in [N\eta\omega( 2^iM_k,z), N\eta\omega(2^{i+1}M_k,z)]\},\quad 0\leq i\leq m-1,\\
A_m&=  \{|G_N(z)-G_\mu(z)|\geq N\eta\omega( 2^mM_k,z)\},
\end{align*}
and the set above is empty if $N\eta\omega( 2^iM_k,z)>N\eta\omega(2^{i+1}M_k,z)$. 
By \eqref{e:GNprop}, $|G_N(z)-G_\mu(z)|\lesssim \ln N$, and combining with \eqref{e:defX} we have that $X(z)\lesssim N\eta\ln N$ on $A_m$. Similarly for $0\leq i\leq m-1$, on $A_i$ we have $X(z)\lesssim N\eta\omega(2^{i+1}M_k,z)$. On the other hand, by \eqref{e:LDP4.5}, we have $\bP_N(A_m)\leq e^{-cN}$ and for $0\leq i\leq m-1$, $\bP_N(A_i)\leq e^{-c2^iM_k}$. Hence, 
We then have 
\begin{align*}
\int {\bf 1} (|G_N(z)-G_\mu(z)|\geq N\eta\omega(M_k,z)) e^{X(z)}\rd \bP_N
&\leq \sum_{i=0}^m \int_{A_i} e^{X(z)}\rd \bP_N\\
&\leq \sum_{i=0}^{m-1} e^{  CN\eta\omega(2^{i+1}M_k,z)} \bP_N(A_i)+e^{CN\eta\ln N}\bP_N(A_m)\\
&\leq\sum_{i=0}^m e^{  CN\eta\omega(2^{i+1}M_k,z)} e^{-c 2^i M_k}+e^{CN\eta\ln N}e^{-cN}\leq e^{-cM_k/2}
\end{align*}
where we used the fact that $\omega(2 M,z)\lesssim \omega(M,z)$ as well as $N\eta \omega(M,z)\ll M$. 
Hence, \eqref{e:tg} yields
$$\bP_N^{N\eta,t,z,\al}\left( |G_N(z)-G_{\mu}(z)|\geq \omega(M_k,z) \right)\leq e^{-cM_k/2+ N\eta\omega(M_k,z)}\leq e^{-cM_k/3}\,.$$
It follows that, with respect to the new measure $\bP_N^{N\eta, t,z,\alpha}$, the following holds 
\begin{align}\label{e:LDPM4}
|G_N(z)-G_{\mu}(z)|\leq\omega(M_k,z)
\end{align}
with probability $1-e^{-cM_k/3}$, provided $z\in \cD_{M_k^{\fb/2},(k+1)r}\setminus\cD_{M_k,kr}$.

We take $\fb=3/2$, and $M_{k+1}\deq \max\{N^{\fa}, M_k^{\fb/2}N^{\fa/5}\}$. Notice that $\cD_{M_{k+1},(k+1)r}\subset \cD_{M_k^{\fb/2}, (k+1)r}$. Then for any $z=E+\ri \eta\in \cD_{M_{k+1},(k+1)r}\setminus\cD_{M_{k},kr}$ with $\kappa_E=\dist(E, \{A,B\})$, \eqref{e:LDPM4} holds. We have for  the bulk case,
\begin{align}\label{e:LDPM4bulk}
\omega(M_k,z)=C(\ln N)^2\left(\frac{M_k^{\fb/2}}{N\eta}+M_{k}^{-(\fb-1)}\sqrt{\kappa_E}\right)\leq N^{-\fa/6}\sqrt{\kappa_E+\eta},
\end{align}
where we used $M_k\geq N^{\fa}$ and  $N\eta\sqrt{\kappa_E+\eta}\geq M_{k+1}$. For the edge case, since $N\eta\sqrt{\kappa_E+\eta}\geq M_{k+1}$, $\kappa_E+\eta\gtrsim (M_{k+1}/N)^{2/3}$ and $\fb=3/2$, we get
\begin{align}
\begin{split}
\omega(M_k,z)&= C(\ln N)^2 \left(\frac{M_k^{\fb/2}}{N\eta}+\frac{M_{k}^{1/2-\fb/6}}{N^{1/3}}\wedge\frac{M_{k}^{1-\fb/3}}{N^{2/3}\sqrt{\kappa_E+\eta}}+\frac{M_k^{1-2\fb/3}}{N^{1/3}}\right)\\
&\leq C(\ln N)^2 \left(\frac{M_k^{\fb/2}}{N\eta}+\frac{M_{k}^{1/2-\fb/6}}{N^{1/3}}\right)\leq 
N^{-\fa/16} \sqrt{\kappa_E+\eta}.
\label{e:LDPM4edge}
\end{split}
\end{align} 
It follows from \eqref{e:LDPM4}, \eqref{e:LDPM4bulk} and \eqref{e:LDPM4edge}, that for any $z\in \cD_{M_{k+1},(k+1)r}\setminus \cD_{M_k,kr}$, the assumptions in Theorem \ref{t:bootstrap} are satisfied. By the same argument as in Corollary \ref{c:initialstep}, we deduce that for any $s\geq (\ln N)^2$,
\begin{align*}
\bP_N\left(|G_N(z)-G_\mu(z)|\lesssim \frac{s}{N\eta}\right)\geq 1-e^{-cs},
\end{align*}
uniformly for any $z\in \cD_{M_{k+1},(k+1)r}$. From our choices of $M_j$, we have
\begin{align*}
M_j=\max\{N^{4\fa/5+(3/4)^{j-1}(1/2+\fa/5)} , N^{\fa}\}, \quad j=1,2,3,\cdots.
\end{align*}
And $M_n=N^{\fa}$ where $n=1+\lceil \log_{3/4}(\fa/5)\rceil$.
It follows taht,  by repeating the above process $n$ times, we have for any $s\geq (\ln N)^2$,
\begin{align*}
\bP_N\left(|G_N(z)-G_\mu(z)|\lesssim \frac{s}{N\eta}\right)\geq 1-e^{-cs},
\end{align*}
uniformly for any $z\in \cD_{M_n,nr}=\cD_{nr}$. Since we can take $r>0$ arbitrarily small, this finishes the proof of \eqref{e:bulkrig}.

In the following we prove \eqref{e:edgerig}. Fix a small constant $\fc>0$ (which will be chosen later), $\eta=(N^{\fa}/N)^{2/3}$, $E\leq A-N^{2\fc}(N^{\fa}/N)^{2/3}$ and $z=E+\ri \eta$. We set $\fb=3/2$. For any $M$ such that $N^{\fa}\leq M\leq N$, let $L=M^{\fb}$. Since $z\in \cD_r$, \eqref{e:bulkrig} holds. Thanks to Theorem \ref{t:lowerscale},  we have
$$
\bP_N\left(|G_N(z)-G_{\mu}(z)|\leq \omega(M,z)\right)\geq 1-e^{-cM}.
$$
Moreover, notice that for $N^\fa\leq M\leq N$
$$
N^{1+\fc}\eta \omega(M,z)
=N^{\fc}M^{3/4}+N^{\fc+2\fa/3}M^{1/4}\wedge N^{\fa/3}M^{1/2}+N^{\fc+2\fa/3}
\ll M. 
$$
provided $\fc< \fa/4$. By the same argument as for \eqref{e:LDPM4}, we have
\begin{align*}
|G_N(z)-G_{\mu}(z)|\lesssim\omega(N^{\fa},z)\leq N^{-\fc}\sqrt{\kappa_E+\eta},
\end{align*}
holds with high probability, with respect to the deformed measure, $\bP_N^{N^{1+\fc}\eta, v, t,\alpha}$.

We take $K=N^{1+\fc}\eta$ in Theorem \ref{t:bootstrap}. Since $\kappa_E\geq N^{2\fc} \eta$, we have 
$$
\veps_0\lesssim \frac{N^{\fc}}{N\kappa_E}+\frac{N^{2\fc}}{(N\eta)^2\sqrt{\kappa_E+\eta}}\lesssim \frac{1}{N^{1+\fc}\eta},\quad \td \veps_0=\frac{(\ln N)^2}{N^{1+\fc}\eta}
$$
and by \eqref{e:LDP1},
\begin{align}\label{e:LDPM3}
\bP_N\left(|G_N(z)-G_\mu(z)|\lesssim \frac{s}{N^{1+\fc}\eta}\right)\geq 1-e^{-cs},
\end{align}
for any $s\geq (\ln N)^2$. By an union bound and the Lipschitz property of $G_N(z)-G_\mu(z)$ from Lemma \ref{l:GNprop}, we conclude that
\begin{align}\label{e:uniformLDPM4}
\bP_N\left(\sup_{z=E+\ri (N^{\fa}/N)^{2/3}\atop 
E\leq A-N^{2\fc}(N^{\fa}/N)^{2/3}}
|G_N(z)-G_\mu(z)|\leq \frac{1}{N^{1+\fc} (N^{\fa}/N)^{2/3} }\right)\geq 1-\exp(-c(\ln N)^2),
\end{align}
provided  $0<\fc<\fa/4$.

\end{proof}

\subsection{Bootstrap and Proof of Theorem \ref{t:bootstrap}}
In this section, we give the proof of Theorem \ref{t:bootstrap}. The main ingredient is  Nekrasov's equation, which plays the role of Dyson-Schwinger (or loop) equation in continuous $\beta$-ensembles. Heuristically, if the weak estimate $G_N(v)=G_\mu(v)+\OO(\varepsilon)$ holds with high probability w.r.t. the deformed measure $\bP_N^{K,t,v,\alpha}$, then  a careful analysis of Nekrasov's equation implies that
\begin{align*}
\bE_{\bP_N^{K,t,v,\alpha}}[G_N(v)-G_\mu(v)]\lesssim \OO(|G_N(v)-G_\mu(v)|^2)=\OO(\varepsilon^2).
\end{align*} 
The error is improved to $\OO(\varepsilon^2)$. By integrating over $t$, we deduce an upper bound on the Laplace transform of $G_N(v)-G_\mu(v)$. As a result, it follows that $G_N(v)=G_\mu(v)+\OO(\varepsilon^2)$ holds with high probability w.r.t. the deformed measure. \eqref{e:LDP1} is proven by iterating this procedure.  

\begin{proof}[Proof of Theorem \ref{t:bootstrap}]
The perturbed measure (defined in \eqref{e:defdeformm}) is given by
\begin{align*}
\bP_N^{K,t,v,\al}
=&\frac{Z_N}{Z_N^{K,t,v,\al}} \prod_{i=1}^N  \left(1+\frac{\al t}{N(v-\ell_i/N)}\right)^K\left(1+\frac{\bar \al t}{N(\bar v-\ell_i/N)}\right)^K \bP_N.
\end{align*}
For simplicity of notations, in the following proof, we write $\bE_{\bP_N^{K,t,v,\alpha}}$ as $\bE$ and $\alpha t$ as $t$ (which is therefore complex).
We construct two analytic functions $\phi_N^+(x)$ and $\phi_N^-(x)$ such that 
\begin{align*}
\frac{\phi_N^+(x)}{\phi_N^-(x)}=
\frac{w(x;N)}{w(x-1;N)}\frac{\left(1+\frac{ t}{N(v-x/N)}\right)^K\left(1+\frac{\bar  t}{N(\bar v-x/N)}\right)^K}{\left(1+\frac{ t}{N(v-(x-1)/N)}\right)^K\left(1+\frac{\bar  t}{N(\bar v-(x-1)/N)}\right)^K}.
\end{align*}
In fact, by our Assumption \ref{a:wx}, we can take
\begin{align}\begin{split}\label{e:defPhi+-}
\phi_N^+(x)=&\psi_N^+(x)
\left(v-\frac{x}{N}+\frac{t}{N}\right)^K\left(v-\frac{x-1}{N}\right)^K\left(\bar v-\frac{x}{N}+\frac{\bar t}{N}\right)^K\left(\bar v-\frac{x-1}{N}\right)^K,\\
\phi_N^-(x)=&\psi_N^-(x)
\left(v+\frac{t}{N}-\frac{x-1}{N}\right)^K\left(v-\frac{x}{N}\right)^K\left(\bar v+\frac{\bar t}{N}-\frac{x-1}{N}\right)^K\left(\bar v-\frac{x}{N}\right)^K.
\end{split}\end{align}
By Theorem \ref{t:aRN}, $R_N(zN)$ defined by
\begin{align}\begin{split}\label{e:defRNPhi}
&R_N(zN)
\deq
\phi_N^-(zN)\bE\left[\prod_{i=1}^N \left(1-\frac{\theta}{N(z-\ell_i/N)}\right)\right]
+\phi_N^+(zN)\bE\left[\prod_{i=1}^N \left(1+\frac{\theta}{N(z-(\ell_i+1)/N)}\right)\right].
\end{split}\end{align}
is analytic in $\mathcal M$.
We rearrange the above expression of $R_N(zN)$, 
\begin{align}\begin{split}\label{e:perturbedRN}
&\quad\frac{R_N(zN)}{\left(v-z+t/N\right)^K\left(v+1/N-z\right)^K\left(\bar v-z+\bar t/N\right)^K\left(\bar v+1/N-z\right)^K}\\
&=\psi_N^-(zN)
\bE\left[\prod_{i=1}^N \left(1-\frac{\theta}{N(z-\ell_i/N)}\right)-e^{-\theta G_\mu(z)}\right]
+\psi_N^+(zN)
\bE\left[\prod_{i=1}^N \left(1+\frac{\theta}{N(z-(\ell_i+1)/N)}\right)-e^{\theta G_\mu(z)}\right]
\\&+\left(\psi_N^-(zN)e^{-\theta G_\mu(z)}+\psi_N^+(zN)e^{\theta G_{\mu}(z)}\right)+\cE(z)\psi_N^-(zN)\bE\left[\prod_{i=1}^N \left(1-\frac{\theta}{N(z-\ell_i/N)}\right)\right],
\end{split}\end{align}
where
\begin{align}\begin{split}\label{e:defE}
\cE(z)\deq&\frac{\left(v+\frac{t}{N}+\frac{1}{N}-z\right)^K\left(v-z\right)^K\left(\bar v+\frac{\bar t}{N}+\frac{1}{N}-z\right)^K\left(\bar v-z\right)^K}{\left(v-z+\frac{t}{N}\right)^K\left(v+\frac{1}{N}-z\right)^K\left(\bar v-z+\frac{\bar t}{N}\right)^K\left(\bar v+\frac{1}{N}-z\right)^K}-1\\
=&\left(1-\frac{t}{N^2}\frac{1}{(v-z+t/N)(v-z+1/N)}\right)^K\left(1-\frac{\bar t}{N^2}\frac{1}{(\bar v-z+\bar t/N)(\bar v-z+1/N)}\right)^K-1\\
=&-\frac{tK}{N^2}\frac{1}{(v-z)^2}-\frac{\bar t K}{N^2}\frac{1}{(\bar v -z)^2}+\OO\left(\frac{K}{(N\eta)^3}+\frac{K^2}{(N\eta)^4}\right),
\end{split}\end{align}
provided $\eta\gg 1/N$, $|z-v|\gtrsim \eta$ and $K\ll (N\eta)^2$.

We take two set of oriented contours: $\cC_1$ consists of two clockwise oriented circles, one is centered at $v$ with radius $\eta/2$, and the other is centered at $\bar v$ with radius $\eta/2$.  $\cC_2\subset \cM$ is a contour which encloses a small neighborhood of $[\hat a, \hat b]$ (especially, for any point on $z\in \cC_2$,  $\dist(z, [\hat a_N, \hat b_N])\gtrsim 1$ and $H(z)\neq 0$) and has counterclockwise orientation. Notice that $\eta\gg 1/N$, so  $v, v+t/N, v+1/N$ are all inside the contour $\cC_1$. By dividing \eqref{e:perturbedRN} by $2\pi \ri (z-v) H(z)$ (as defined in Assumption \ref{a:Hz}), and integrating along the union of contours $\cC_1\cup \cC_2$, we get
\begin{align*}
0=I_1+I_2+I_3+I_4,
\end{align*}
where we used Theorem \ref{t:aRN} that the lefthand side of \eqref{e:perturbedRN} is analytic outside the contour $\cC_1$, and 
\begin{align}\begin{split}\label{e:defIs}
I_1&=\frac{1}{2\pi i}\int_{\cC_1}\Bigl\{\psi_N^-(zN)
\bE\left[\prod_{i=1}^N \left(1-\frac{\theta}{N(z-\ell_i/N)}\right)-e^{-\theta G_\mu(z)}\right]\\
&+\psi_N^+(zN)
\bE\left[\prod_{i=1}^N \left(1+\frac{\theta}{N(z-(\ell_i+1)/N)}\right)-e^{\theta G_\mu(z)}\right]\Bigr\}\frac{\rd z}{(z-v)H(z)},\\
I_2&=\frac{1}{2\pi i}\int_{\cC_2}\Bigl\{\psi_N^-(zN)
\bE\left[\prod_{i=1}^N \left(1-\frac{\theta}{N(z-\ell_i/N)}\right)-e^{-\theta G_\mu(z)}\right]\\
&+\psi_N^+(zN)
\bE\left[\prod_{i=1}^N \left(1+\frac{\theta}{N(z-(\ell_i+1)/N)}\right)-e^{\theta G_\mu(z)}\right]\Bigr\}\frac{\rd z}{(z-v)H(z)},\\
I_3&=\frac{1}{2\pi i}\int_{\cC_1\cup \cC_2}\left(\psi_N^-(zN)e^{-\theta G_\mu(z)}+\psi_N^{+}(zN)e^{\theta G_\mu(z)}\right)\frac{\rd z}{(z-v)H(z)}\\
I_4&=\frac{1}{2\pi i}\int_{\cC_1\cup \cC_2}\cE(z)\psi_N^-(zN)\bE\left[\prod_{i=1}^N \left(1-\frac{\theta}{N(z-\ell_i/N)}\right)\right]\frac{\rd z}{(z-v)H(z)}.
\end{split}\end{align}

From \eqref{e:GNprop}, we have $|\del_z G_N(z)|\leq 1/\eta$. As a consequence, if $|z-v|\leq \eta/2$, $|G_N(z)-G_N(v)|\leq 1/2$ and if $|z-\bar v|\leq \eta/2$, $|G_N(z)-G_N(\bar v)|\leq 1/2$. By our assumption, with high probability, $G_N(v)=G_\mu(v)+\OO(\epsilon)=\OO(1)$, and $G_N(\bar v)=\overline{G_N(v)}=\OO(1)$. 
It follows that, uniformly for $z$ on or inside $\cC_1$, with high probability, $G_N(z)=\OO(1)$. Moreover, by \eqref{e:GNapprox} and \eqref{e:GNprop}, $\prod_{i=1}^N \left(1-\frac{t}{N(z-\ell_i/N)}\right)$ is uniformly bounded by $C\ln N$ for all particle configuration and $\eta N\gg 1$. Hence, we can neglect in the expectation the set where $G_N(z)$ is eventually big (but smaller than $\ln N$). 
As a consequence,  we find that
\begin{align}\begin{split}{\label{e:Ebound}}
&\bE\left[\prod_{i=1}^N \left(1-\frac{\theta}{N(z-\ell_i/N)}\right)\right]
=\bE\left[e^{-\theta G_N(z)+\OO(1)}\right] 
=\OO(1),\\
&\bE\left[\prod_{i=1}^N \left(1+\frac{\theta}{N(z-(\ell_i+1)/N)}\right)\right]
=\bE\left[e^{\theta G_N(z-1/N)+\OO(1)}\right]
=\OO(1).
\end{split}\end{align}

In the following, we analyze the contour integrals for $I_1, I_2, I_3, I_4$. We show that  the leading term of $I_1$ gives the expectation of $G_N(v)-G_\mu(v)$ and prove that $I_2, I_3, I_4$ are all very small, which implies the upper bound \eqref{e:bootstrap}.

For $I_1$, the integrand has a single pole at $z=v$ inside the contour $\cC_1$. Therefore, $I_1$ equals
\begin{align}\label{e:I1simplifys}
\frac{\psi_N^-(vN)}{H(v)}
\bE\left[\prod_{i=1}^N \left(1-\frac{\theta}{N(v-\ell_i/N)}\right)-e^{-\theta G_\mu(v)}\right]
+\frac{\psi_N^+(vN)}{H(v)}
\bE\left[\prod_{i=1}^N \left(1-\frac{\theta}{N(v-(\ell_i+1)/N)}\right)-e^{\theta G_\mu(v)}\right]\,.
\end{align}
We notice that from \eqref{e:Ebound} and Assumption \ref{a:Hz}, the expectation terms and  $1/H(v)$ are all bounded uniformly (independent of $N$). Thus, by Assumption \ref{a:wx}, we can replace $\psi_N^{\pm}(vN)$ by $\phi^{\pm}(v)$, which gives an error $\OO(1/N)$. The remaining part of the first term in \eqref{e:I1simplifys} simplifies as follows:
\begin{align}\begin{split}\label{e:I1first}
&\frac{\phi^-(v)}{H(v)}
\bE\left[\prod_{i=1}^N \left(1-\frac{\theta}{N(v-\ell_i/N)}\right)-e^{-\theta G_\mu(v)}\right]\\
=&\frac{\phi^-(v)e^{-\theta G_\mu(v)}}{H(v)}
\bE\left[e^{-\theta(G_N(v)-G_\mu(v))+\OO\left(\Theta_N(v)+\frac{\ln N}{(N\eta)^2}\right)}-1\right]\\
=&-\frac{\phi^-(v)e^{-\theta G_\mu(v)}}{H(v)}
\bE\left[\theta(G_N(v)-G_\mu(v))\right]+\OO\left(\bE\left[|G_N(v)-G_\mu(v)|^2+\Theta_N(v)\right]+\frac{\ln N}{(N\eta)^2}\right)\\
=&-\frac{\phi^-(v)e^{-\theta G_\mu(v)}}{H(v)}
\bE\left[\theta(G_N(v)-G_\mu(v))\right]+\OO\left(\varepsilon^2+\frac{\left|\bE\left[\Im[G_N(v)-G_\mu(v)]\right]\right|}{N\eta}+\Theta_\mu(v)+\frac{\ln N}{(N\eta)^2}\right),
\end{split}\end{align}
where we used Lemma \ref{l:GNapprox} in the second line, and \eqref{e:Thetadif} in the last line.
We have a similar estimate for the second term in \eqref{e:I1simplifys}:
\begin{align}\begin{split}\label{e:l1second}
&\frac{\phi^+(v)}{H(v)}
\bE\left[\prod_{i=1}^N \left(1+\frac{\theta}{N(v-(\ell_i+1)/N)}\right)-e^{\theta G_\mu(v)}\right]\\
=&\frac{\phi^+(v)e^{\theta G_\mu(v)}}{H(v)}
\bE\left[\theta(G_N(v)-G_\mu(v))\right]+\OO\left(\varepsilon^2+\frac{\left|\bE\left[\Im[G_N(v)-G_\mu(v)]\right]\right|}{N\eta}+\Theta_\mu(v)+\frac{\ln N}{(N\eta)^2}\right).
\end{split}\end{align}
Putting \eqref{e:I1first} and \eqref{e:l1second} together,and recalling Assumption \ref{a:Hz},  we get the following estimate of $I_1$:
\begin{align}\begin{split}{\label{e:I1}}
I_1
&=\theta \sqrt{(v-A)(v-B)}
\bE\left[G_N(v)-G_\mu(v)\right]+\OO\left(\varepsilon^2+\frac{\left|\bE\left[\Im[G_N(v)-G_\mu(v)]\right]\right|}{N\eta}+\Theta_\mu(v)+\frac{1}{N}+\frac{\ln N}{(N\eta)^2}\right).
\end{split}\end{align}

Next we estimate $I_2$. For $z\in \cC_2$, we have $\dist(z, [\hat a_N, \hat b_N])\gtrsim 1$. So from Theorem \ref{p:LDP0},  $|G_N(z)-G_\mu(z)|\lesssim (\ln N/N)^{1/2}$ holds with high probability. Taylor expanding the exponential as $e^{x}=1+x+\OO(x^2)$, we deduce
\begin{align*}
\psi_N^-(zN)
\bE\left[\prod_{i=1}^N \left(1-\frac{\theta}{N(z-\ell_i/N)}\right)-e^{-\theta G_\mu(z)}\right]
&=\phi^-(z)
\bE\left[e^{-\theta G_N(z)+\OO(1/N)}-e^{-\theta G_\mu(z)}\right]+\OO\left(\frac{1}{N}\right)\\
&=-\theta \phi^-(z)e^{-\theta G_\mu(z)}\left(G_N(z)-G_\mu(z)\right)+\OO\left(\frac{\ln N}{N}\right).
\end{align*}
We have a similar estimate for the second term in $I_2$. Combining them we obtain
\begin{align}\begin{split}\label{e:I2}
I_2&=\frac{1}{z\pi \ri}\int_{\cC_2}\frac{\theta(G_N(z)-G_\mu(z))\left(\phi^+(z)e^{\theta G_\mu(z)}-\phi^{-}(z)e^{-\theta G_\mu(z)}\right)}{(z-v)H(z)}\rd z+\OO\left(\frac{\ln N}{N}\right)\\
&=\frac{\theta}{z\pi \ri}\int_{\cC_2}\frac{(G_N(z)-G_\mu(z))\sqrt{(z-A)(z-B)}}{z-v}\rd z+\OO\left(\frac{\ln N}{N}\right)=\OO\left(\frac{\ln N}{N}\right).
\end{split}\end{align}
where the integral vanishes, since the integrand is analytic outside $\cC_2$, and behaves like $1/z^2$ as $z\rightarrow \infty$. 

For $I_3$, since $e^{\pm \theta G_\mu(z)}=\OO(1)$, we can replace $\psi_N^{\pm}(zN)$ by $\phi^{\pm}(z)$, which gives an error $\OO(1/N)$. We can rewrite the integrand as
$$
\psi_N^-(zN)e^{-\theta G_\mu(z)}+\psi_N^{+}(zN)e^{\theta G_\mu(z)}
=\phi^-(z)e^{-\theta G_\mu(z)}+\phi^{+}(z)e^{\theta G_\mu(z)}+\OO\left(\frac{1}{N}\right)
=R_\mu(z)+\OO\left(\frac{1}{N}\right).
$$
Since $R_\mu(z)/H(z)$ is bounded and analytic inside $\cC_2$, we have 
\begin{align}\label{e:I3}
I_3=\frac{1}{2\pi \ri}\int_{\cC_1\cup \cC_2} \frac{R_\mu(z)+\OO(1/N)}{(z-v)H(z)}\rd z=\OO\left(\frac{\ln N}{N}\right).
\end{align}

For $I_4$, if $z\in \cC_2$, we have $|z-v|, |z-\bar v|\gtrsim 1$, and
$$
|\cE(z)|\lesssim\frac{K}{N^2}.
$$
Therefore, it follows,
\begin{align}\label{e:I4_1}
\left|\frac{1}{2\pi i}\int_{ \cC_2}\cE(z)\psi_N^-(zN)\bE\left[\prod_{i=1}^N \left(1-\frac{\theta}{N(z-\ell_i/N)}\right)\right]\frac{\rd z}{(z-v)H(z)}\right|\lesssim \frac{K}{N^2}.
\end{align}
The estimates are more involved for $z\in \cC_1$. We recall that $\cC_1$ consists of two circles, one is centered at $v$ with radius $\eta/2$, and the other is centered at $\bar v$ with radius $\eta/2$. Similar to Lemma \ref{l:GNapprox}, for $z\in \cC_1$, we have with high probability, since $G_N(z)=O(1)$
$$
\prod_{i=1}^N \left(1-\frac{\theta}{N(z-\ell_i/N)}\right)=e^{-\theta G_N(z)+\OO(1/N\eta)}=e^{-\theta G_N(z)}+\OO\left(\frac{1}{N\eta}\right),
$$
and the same estimate holds for the expectation.
Thanks to Assumption \ref{a:wx}, \eqref{e:defE} and \eqref{e:Ebound}, we have
\begin{align}\begin{split}\label{e:I4}
&\frac{1}{2\pi i}\int_{ \cC_1}\cE(z)\psi_N^-(zN)\bE\left[\prod_{i=1}^N \left(1-\frac{\theta}{N(z-\ell_i/N)}\right)\right]\frac{\rd z}{z-v}=\OO\left(\frac{1}{N\eta}+\frac{K}{(N\eta)^3}+\frac{K^2}{(N\eta)^4}\right)\\
&\qquad-\frac{1}{2\pi i}\int_{ \cC_1}\left(\frac{tK}{N^2}\frac{1}{(v-z)^2}+\frac{\bar t K}{N^2}\frac{1}{(\bar v -z)^2}\right)\phi^-(z)\bE\left[e^{-\theta G_N(z)}\right]\frac{\rd z}{(z-v)H(z)}.
\end{split}\end{align}
Let
$$
f(z)\deq \frac{\phi^-(z)\bE\left[e^{-\theta G_N(z)}\right]}{H(z)}\,.
$$
$f$ is analytic inside $\cC_1$ so that the last integral in \eqref{e:I4} can be estimated by
\begin{align}\begin{split}\label{e:I4_2}
I_4'=&-\frac{1}{2\pi i}\frac{tK}{N^2}\int_{ \cC_1}\frac{f(z)}{(z-v)^3}\rd z-\frac{1}{2\pi i}\frac{\bar tK}{N^2}\int_{ \cC_1}\frac{f(z)}{(z-\bar v )^2(z-v)} \rd z\\
=&\frac{tK}{2N^2}f''(v)-\frac{1}{2\pi i}\frac{\bar tK}{N^2}\int_{ \cC_1}f(z)\left(\frac{1}{(v-\bar v)^2}\left(\frac{1}{z-v}-\frac{1}{z-\bar v}\right)-\frac{1}{(v-\bar v)(z-\bar v)^2}\right)\rd z\\
=&\frac{tK}{2N^2}f''(v)+\frac{\bar tK}{N^2}\left(\frac{1}{(v-\bar v)^2}\left(f(v)-f(\bar v)\right)-\frac{1}{v-\bar v}f'(\bar v)\right),
\end{split}\end{align}

Since $\phi^-(z)$ and $H(z)$ are both analytic, similar to the proof of Lemma \ref{l:GNprop}, we have 
\begin{align}\label{e:estf}
|f(v)-f(\bar v)|\lesssim (v-\bar v)+\bE[|\Im[G_N(v)]|],\quad |f'(\bar v)|\lesssim \frac{\bE[|\Im[G_N(v)]|]}{\eta}, \quad |f''(v)|\lesssim \frac{\bE[|\Im[G_N(v)]|]}{\eta^2},
\end{align}
which gives an upper bound for \eqref{e:I4_2}, 
\begin{equation}\label{e:I4_3}
I_4'\lesssim \frac{K}{N^2\eta}+\frac{K\bE[|\Im[G_N(v)]|]}{(N\eta)^2}.
\end{equation}
Thanks to \eqref{e:Gdual}, we can rewrite $f(z)$ in the following way:
\begin{eqnarray*}
f(z)=\frac{\phi^-(z)\bE\left[e^{-\theta G_N(z)}\right]}{H(z)}
&=&\frac{\phi^-(z)\bE\left[e^{-\int_{\hat a}^{\hat b}\frac{\rd x}{z-x}+\theta G^{\dual}_N(z)+\OO(\ln N/N\eta)}\right]}{H(z)}\\
&=&\frac{(z-\hat b)\phi^-(z)\bE[e^{\theta G_N^{\dual}(z)}]}{(z-\hat a)H(z)}+\OO\left(\frac{\ln N}{N\eta}\right).
\end{eqnarray*}
Since $\phi^-(z)$ vanishes at $\hat a$, $\phi^-(z)/(z-\hat a)$ is an analytic function on $\cM$. We have similar estimates as in \eqref{e:estf}, but replacing $G_N(v)$ by $G_N^{\dual}(v)$, which gives another upper bound for  $I_4'$ in \eqref{e:I4_2},
\begin{equation}\label{e:I4_4}
I_4'\lesssim \frac{K}{N^2\eta}+\frac{K\bE[|\Im[G_N^{\dual}(v)]|]}{(N\eta)^2}+\frac{\ln N K}{(N\eta)^3}.
\end{equation}
Therefore, combining \eqref{e:I4_1}, \eqref{e:I4_3} and \eqref{e:I4_4} together, we have the following estimate of $I_4$:
\begin{align}\label{e:I4}
I_4\lesssim \frac{K\min\{\bE[|\Im[G_N(v)]|],\bE[|\Im[G_N^{\dual}(v)]|]\}}{(N\eta)^2}+\OO\left(\frac{1}{N}+\frac{K}{N^2\eta}+\frac{\ln N K}{(N\eta)^3}+\frac{K^2}{(N\eta)^4}\right).
\end{align}

It follows by combining the estimates \eqref{e:I1}, \eqref{e:I2}, \eqref{e:I3} and \eqref{e:I4} all together, we have
\begin{align}\begin{split}\label{e:bootstrap0}
&\theta \sqrt{(v-A)(v-B)}
\bE\left[G_N(v)-G_\mu(v)\right]
=\OO\left(\frac{1}{N\eta}+\frac{K}{(N\eta)^2}\right)\left|\bE[\Im[G_N(v)-G_\mu(v)]]\right|+\\
&+\OO\left(\varepsilon^2+\Theta_\mu(v)+\frac{K\min\{|\Im[G_\mu(v)]|,|\Im[G_\mu^{\dual}(v)]|\}}{(N\eta)^2}+\frac{\ln N}{(N\eta)^2}+\frac{\ln N}{N}+\frac{\ln N K}{(N\eta)^3}+\frac{K^2}{(N\eta)^4}\right).
\end{split}\end{align}
Notice that $\sqrt{(v-A)(v-B)}\asymp \sqrt{\kappa_E+\eta}$, and by our assumption $N\eta\sqrt{\kappa_E+\eta}\gg 1$, and $(N\eta)^2\sqrt{\kappa_E+\eta}\gg K$, it follows by rearranging \eqref{e:bootstrap0}, that $\bE\left[G_N(v)-G_\mu(v)\right]$ is bounded by
\begin{align*}\begin{split}
\frac{1}{\sqrt{\kappa_E+\eta}}\OO\left(\varepsilon^2+\Theta_\mu(v)+\frac{K\min\{|\Im[G_\mu(v)]|,|\Im[G_\mu^{\dual}(v)]|\}}{(N\eta)^2}+\frac{\ln N}{(N\eta)^2}+\frac{\ln N}{N}+\frac{\ln NK}{(N\eta)^3}+\frac{K^2}{(N\eta)^4}\right).
\end{split}\end{align*}

This finishes the proof of \eqref{e:bootstrap}. In the following we prove \eqref{e:LDP1}. Set

$$
\varepsilon_2:=\frac{\varepsilon^2}{\sqrt{\kappa_E+\eta}}+\tilde\varepsilon_0.$$
If $\varepsilon \leq N^{-\fc} \sqrt{\kappa_E+\eta}$, and $\varepsilon\geq \tilde\varepsilon_0$, we have
$$
\varepsilon_2\leq \varepsilon N^{-\fc}+\tilde\varepsilon_0,\quad \varepsilon_2\leq 2\varepsilon.$$
Notice that 
\begin{align}\begin{split}\label{e:derdensity}
&\del_t \ln \bE_{\bP_N}\left[e^{ 2K\Re\left[\sum_{i=1}^N \ln \left(1+\frac{\al t}{N(v-\ell_i/N)}\right)\right]}\right]
=2K \bE_{\bP_N^{K, t,v,\al}}\left[\Re\left[ \al G_N\left(v+\frac{\al t}{N}\right)\right]
\right].
\end{split}\end{align}
For the righthand side of \eqref{e:derdensity}, we use \eqref{e:voiderror} and \eqref{e:saturatederror} to find 
\begin{align}\begin{split}\label{e:diffat}
G_N\left(v+\frac{\al t}{N}\right)-G_N(v)
=-\frac{\alpha t}{N^2}\frac{1}{(v-\ell_i/N)^2}+\OO\left(\frac{1}{(N\eta)^2}\right)
=\OO\left(\Theta_N(z)+\frac{1}{(N\eta)^2}\right).
\end{split}\end{align}
 \eqref{e:bootstrap} and \eqref{e:diffat} give the following  bound of the righthand side of \eqref{e:derdensity},
\begin{align*}
&\phantom{{}={}}
2K \bE_{\bP_N^{K, t,v,\al}} \left[ \Re\left[ \al G_N\left(v+\frac{\al t}{N}\right)\right]\right]\\
&=2K\bE_{\bP_N^{K, t,v,\al}}\left[\Re[\alpha(G_N(v)-G_\mu(v))]\right]+2K\Re[\alpha G_\mu(v)]+K\OO\left(\bE_{\bP_N^{K,t,v,\al}}\left[\Theta_{N}(v)\right]+\frac{1}{(N\eta)^2}\right)\\
&=2K\Re[\alpha G_\mu(v)]+K\OO\left(\varepsilon_2+\frac{|\bE_{\bP_N^{K,t,v,\alpha}}\left[\Im [G_N(v)-G_\mu(v)]\right]|}{N\eta}+\Theta_{\mu}(v)+\frac{1}{(N\eta)^2}\right)\\
&=2K\Re[\alpha G_\mu(v)]+\OO(K\varepsilon_2),
\end{align*}
where in the last two lines we used \eqref{e:Thetadif} and \eqref{e:bootstrap}.
It follows by integrating over $t$  that for any  $\al=\pm 1, \pm i$,
\begin{align*}
\bE_{\bP_N}\left[e^{ 2K\Re\left[\sum_{i=1}^N \ln \left(1+\frac{\al  }{N(v-\ell_i/N)}\right)\right]}\right]= e^{2K \Re[\al  G_\mu(v)]+\OO\left(K\varepsilon_2\right)}.
\end{align*}
Let $K_2=K/2$, for any $\al_2=\pm1, \pm \ri$, by Cauchy-Schwarz inequality, we have
$$
\bE_{\bP_N^{K_2, t,v,\al}}\left[e^{ 2K_2\Re\left[\sum_{i=1}^N \ln \left(1+\frac{\al_2  }{N(v-\ell_i/N)}\right)\right]}\right]= e^{2K_2 \Re[\al_2  G_\mu(v)]+\OO\left(K\varepsilon_2\right)}.
$$
By the Markov inequality, the above implies that, with probability $1-e^{-csK\varepsilon_2}$ with respect to the measure $\bP_N^{K_2, t,v,\al}$, the following holds
\begin{align}\label{e:realpartbound}
\Re\left[\sum_{i=1}^N \ln \left(1+\frac{\al_2  }{N(v-\ell_i/N)}\right)\right]-\Re[\alpha_2G_\mu(v)]\lesssim s\varepsilon_2,
\end{align}
where $s\geq 1$. Thanks to \eqref{e:Thetadif} and Lemma \ref{l:GNapprox}, \eqref{e:realpartbound} leads to
\begin{align}\label{e:realpartbound2}
\Re[\alpha_2(G_N(v)-G_\mu(v))]+\OO\left(\frac{|\Im[G_N(v)-G_\mu(v)]|}{N\eta}+\Theta_\mu(v)+\frac{\ln N}{(N\eta)^2}\right)\lesssim s\varepsilon_2.
\end{align}
Since $N\eta\gg 1$, and $\varepsilon_2\geq \Theta_\mu(v) +\ln N/(N\eta)^2$, we deduce by taking $\alpha_2=\pm \ri$ in \eqref{e:realpartbound2} that
\begin{align}\label{e:impartbound}
|\Im[G_N(v)-G_\mu(v)]|\lesssim s\varepsilon_2.
\end{align}
By taking $\alpha_2=\pm 1$ in \eqref{e:realpartbound2} and using \eqref{e:impartbound} for the error, we get
\begin{align}\label{e:realpartbound3}
\left|\Re[G_N(v)-G_\mu(v)]\right|\lesssim s\varepsilon_2.
\end{align}
Finally \eqref{e:impartbound} and \eqref{e:realpartbound3} together imply that 
$$
\left|G_N(v)-G_\mu(v)\right|\lesssim s\varepsilon_2.
$$
w.r.t. $\bP_N^{K_2,t,v,\alpha}$ for any $0\leq t\leq 1$ and $\alpha=\pm1, \pm \ri$.
Especially, by taking $s=1$, with high probability, w.r.t. $\bP_N^{K_2,t,v,\alpha}$, 
\begin{align}\label{e:newbound}
G_N(v)=G_\mu(v)+\OO(\varepsilon_2).
\end{align}
\eqref{e:newbound} is in the same form as \eqref{e:oldbound}. However, we pay a price here, the moment $K$ drops by half. If we  repeat the above procedure  for $\lceil 1/\fc\rceil$ times, We will reach the optimal rigidity, with probability $1-e^{-csK\tilde\varepsilon_0}$ 
$$
\left|G_N(v)-G_\mu(v)\right|\lesssim s\tilde\varepsilon_0,
$$
with respect to the measure $\bP_N^{K',t,v,\alpha}$ where $K'=2^{-\lceil 1/\fc\rceil}K\asymp K$. 
The statement \eqref{e:LDP1} follows by taking $t=0$.

\end{proof}

\section{Local Measure}
\label{s:lm}

In this Section we prove Theorem \ref{t:lowerscale}. The main idea is to localize our measure by conditionning, and therefore get estimates on finer scales. We fix small constants $r_0,r>0$, the parameters $N^{\fa}\leq M\ll N$ and $L=M^{\fb}$, as in the statement of Theorem \ref{t:lowerscale}. We recall that $\rd\mu(x)=\rho_V(x)\rd x$ denotes the constrained equilibrium measure and  $\gamma_1,\gamma_2,\cdots, \gamma_N$ the corresponding classical particle locations defined in \eqref{e:defloc}. 

\begin{definition}\label{def:rigidity}
Fix a scale $\tilde\eta\geq \ln N/N$. We say that $(\ell_1,\ell_2,\cdots, \ell_N)\in\bW_N^{\theta}$ satisfies the rigidity estimates  on the scale $\tilde \eta$ in the spectral domain $\tilde \cD$, if 
\begin{align}\label{e:rigidityMN}
\left|G_N(z)-G_\mu(z)\right|\leq \frac{\tilde \eta}{\Im[z]},\quad z\in\tilde \cD.
\end{align}
The above estimates \eqref{e:rigidityMN} combining with \eqref{e:Gdual} lead to
$$
\left|G^{\dual}_N(z)-G^{\dual}_\mu(z)\right|\leq \frac{2\tilde \eta}{\Im[z]},\quad z\in\tilde \cD.
$$
\end{definition}

In the following propositions, we collect some consequences of rigidity estimates in the spectral domains $\cD_{M,r_0}$ and $\cD_*$ (as defined in \eqref{def:D} and \eqref{def:D_0})
The proofs follow an application of the Helffer-Sj{\" o}strand functional calculus along the lines of \cite[Lemma B.1]{MR2639734}. We postpone them to the appendix \ref{ap:proofrig}. 

\begin{proposition}\label{prop:rig}
We assume $(\ell_1,\ell_2,\cdots, \ell_N)\in \bW_N^{\theta}$ satisfies the  rigidity estimates  on the scale $\tilde \eta=M/N$ in the spectral domains $\cD_{M,r_0}$ (as in Definition \ref{def:rigidity}). For any  interval $I=[a,b]\subset \cM$ with length $|I|=\eta$, we denote $\kappa_I=\dist\{I, \{A,B\}\}$. There are two cases:
\begin{enumerate}
\item the bulk case, where $A\leq a$ and $b\leq B-r_0$. We assume that $\kappa_I\geq \tilde \eta^{2/3}$, $\eta\geq \tilde \eta/\sqrt{\kappa_I}$, and
$$
\kappa_I\asymp \sup\{\dist(x, \{A,B\}): x\in I\};
$$
\item the edge case, where $a\leq A$ and $b\leq B-r_0$. We assume $\eta\geq \tilde\eta^{2/3}$.
\end{enumerate}
Then for any $C^2$ function $f$ supported on $I$, with $f(x)=\OO(1)$, $f'(x)=\OO(1/\eta)$ and $f''(x)=\OO(1/\eta^2)$, we have
\begin{align}\label{e:rig}
\left|\frac{1}{N}\sum_{i=1}^Nf(\ell_i/N)-\int f(x)\rd \mu(x)\right|\lesssim
\td \eta \ln N.
\end{align}
Moreover, there exists a constant $C$, such that uniformly for any index $i\in \qq{1, N}$ such that $\gamma_i\leq B-r_0-r/2$,
\begin{align}\label{e:location}
\gamma_{i-CN\tilde \eta\ln N }\leq \ell_i/N\leq \gamma_{i+CN\tilde \eta\ln N},
\end{align}
where we make the convention that $\gamma_i=a(N)/N$ if $i\leq 0$.
\end{proposition}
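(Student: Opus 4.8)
## Proof proposal for Proposition \ref{prop:rig}

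The plan is to deduce the averaged estimate \eqref{e:rig} from the rigidity hypothesis on $G_N - G_\mu$ via the Helffer–Sj\"ostrand functional calculus, and then to bootstrap \eqref{e:rig} into the particle-location bound \eqref{e:location} by a counting argument. First I would recall the Helffer–Sj\"ostrand formula: for $f \in C^2$ supported on $I$, choosing a smooth cutoff $\chi$ equal to $1$ near $I$, one writes
\begin{align*}
\frac1N\sum_{i=1}^N f(\ell_i/N) - \int f\,\rd\mu
= \frac{1}{2\pi}\int_{\bC} \bar\partial\big(\tilde f(z)\chi(z)\big)\,\big(G_N(z)-G_\mu(z)\big)\,\rd^2 z,
\end{align*}
where $\tilde f$ is the standard almost-analytic extension, so that $\bar\partial \tilde f(x+\ri y) = \OO(|y|\,|f''(x)|)$ and $\tilde f$ is supported where $x\in I$. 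Splitting the $y$-integral into the regime $|y|\leq \tilde\eta_0$ (for an appropriate cutoff scale, e.g. $\tilde\eta_0 = \tilde\eta/\sqrt{\kappa_I}$ in the bulk case and $\tilde\eta_0 = \tilde\eta^{2/3}$ in the edge case, which are exactly the heights at which $I$ enters $\cD_{M,r_0}$) and $|y|\geq \tilde\eta_0$, one estimates the large-$|y|$ part directly by the rigidity bound $|G_N-G_\mu|\leq \tilde\eta/|y|$, which contributes $\OO(\tilde\eta \ln N)$ after integrating $\rd y/y$ against the $\OO(1/\eta)$ mass coming from $f'$ on the support of length $\eta$. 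The small-$|y|$ part is handled as usual by integrating $\bar\partial$ back in $y$ (or using that $\Im G_N$, $\Im G_\mu$ are controlled): here one uses $|\Im G_N(z)| = \OO(1)$ from Lemma \ref{l:GNprop}, the corresponding $\OO(1)$ bound for $\Im G_\mu$, and the $|y|\,|f''|$ decay of $\bar\partial\tilde f$, together with the boundary term at $|y|=\tilde\eta_0$ which is again $\OO(\tilde\eta\ln N)$. The case distinction (bulk versus edge) only affects which height $\tilde\eta_0$ is available and the shape of $\kappa_I$, and the hypothesis $\kappa_I \asymp \sup\{\dist(x,\{A,B\}): x\in I\}$ guarantees that $G_\mu$ and its derivatives do not vary too wildly over $I$, so the estimates are uniform. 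This is essentially \cite[Lemma B.1]{MR2639734}, and I would simply cite that computation and adapt the scales.

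Next I would derive \eqref{e:location} from \eqref{e:rig}. The classical argument: for a fixed index $i$ with $\gamma_i \leq B - r_0 - r/2$, apply \eqref{e:rig} with $f$ a smooth approximation of $\1_{(-\infty, \gamma_i]}$ (smoothed on a scale $\eta$ comparable to the relevant rigidity scale at the edge, i.e. $\eta \asymp \tilde\eta^{2/3}$ near $A$, and $\eta \asymp \tilde\eta/\sqrt{\kappa}$ in the bulk). One gets that the counting function $\#\{j: \ell_j/N \leq \gamma_i\}$ differs from $N\mu((-\infty,\gamma_i]) = i - 1/2$ by at most $\OO(N\tilde\eta\ln N)$; the loss of $N\tilde\eta$ in the cutoff scale is exactly the $CN\tilde\eta\ln N$ in the statement. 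Converting a counting-function bound into a location bound is standard: since $\gamma_j$ and $\gamma_{j+1}$ differ by $\OO(1/(N\rho_V(\gamma_j)))$ and $\rho_V$ has a square-root profile near $A$, a shift of $CN\tilde\eta\ln N$ indices corresponds to the monotone two-sided sandwich $\gamma_{i - CN\tilde\eta\ln N} \leq \ell_i/N \leq \gamma_{i + CN\tilde\eta\ln N}$. One has to be slightly careful to first establish that no particle $\ell_i/N$ with small index sits far to the left of $A$ (hence the convention $\gamma_i = a(N)/N$ for $i \leq 0$), which follows because $[\hat a, A]$ is a void and the external-domain part of $\cD_{M,r_0}^{\ext}$, together with $\cD_*^{\ext}$, already forces $G_N \approx G_\mu$ down to scale $\tilde\eta^{2/3}$ just outside $A$; a standard argument then shows the leftmost particle cannot be more than $\OO(\tilde\eta^{2/3})$ to the left of $A$.

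I expect the main obstacle to be the \emph{uniformity} of the Helffer–Sj\"ostrand estimate over the whole range of intervals $I$ allowed in the two cases — in particular, handling intervals $I$ near the edge $A$ where $G_\mu$ blows up in a square-root fashion and where the available rigidity height $\tilde\eta^{2/3}$ is much larger than $\tilde\eta$, so that the naive split at $|y| = \tilde\eta$ is not available and one must split at $|y| = \tilde\eta^{2/3}$ and absorb the extra $G_\mu$-growth using $\kappa_I \asymp \tilde\eta^{2/3}$. The bulk condition $\eta \geq \tilde\eta/\sqrt{\kappa_I}$ is precisely what is needed to guarantee the segment $I + \ri[\tilde\eta_0,\,\eta]$ lies inside $\cD_{M,r_0}^{\rm int}$, and checking this containment carefully (and that the boundary contributions telescope correctly) is the delicate bookkeeping; the rest is routine. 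I would relegate the full computation to Appendix \ref{ap:proofrig} as the statement indicates, and here only record the structure above.
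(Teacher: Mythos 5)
Your overall route --- Helffer--Sj\"ostrand for \eqref{e:rig}, then smoothing the indicator of a half-line and counting particles to get \eqref{e:location} --- is the same as the paper's, and your treatment of \eqref{e:location} is fine. The concrete gap is in your estimate of the small-$y$ contribution to the Helffer--Sj\"ostrand formula. You propose to bound $|\Im(G_N-G_\mu)(x+\ri y)|$ on the region $y\le\tilde\eta_0$ by $O(1)$ (invoking Lemma~\ref{l:GNprop}), which gives
\begin{align*}
\int_0^{\tilde\eta_0}\!\!\int_I y\,|f''(x)|\,|\Im S(x+\ri y)|\,\rd x\,\rd y
\;\lesssim\; \tilde\eta_0^2\cdot\eta\cdot\frac{1}{\eta^2}\;=\;\frac{\tilde\eta_0^2}{\eta}.
\end{align*}
Under the stated hypotheses this is not $O(\tilde\eta\ln N)$: in the bulk case with $\kappa_I\asymp\tilde\eta^{2/3}$ and $\eta\asymp\tilde\eta_0=\tilde\eta/\sqrt{\kappa_I}\asymp\tilde\eta^{2/3}$ you get $\tilde\eta^{2/3}$, and the same failure occurs at the edge where $\tilde\eta_0=\tilde\eta^{2/3}$. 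You correctly flag in your final paragraph that the enlarged cutoff height near the edge is the main obstacle, but the $O(1)$ bound you write down does not overcome it, and neither does the alternative of integrating the $\bar\partial$ back in $y$ (the remaining interior integral then costs $\tilde\eta_0\sup|\Re S|\gtrsim\tilde\eta_0\ln N$, which is still too large).

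The missing ingredient, used in the paper's proof, is the elementary monotonicity of $y\mapsto y\,|\Im G_N(x+\ri y)|$: for $y\le\tilde\eta_0$ it gives
\begin{align*}
|\Im G_N(x+\ri y)|\;\le\;\frac{\tilde\eta_0}{y}\,|\Im G_N(x+\ri\tilde\eta_0)|
\;\lesssim\;\frac{\tilde\eta_0}{y}\Bigl(\frac{\tilde\eta}{\tilde\eta_0}+|\Im G_\mu(x+\ri\tilde\eta_0)|\Bigr),
\end{align*}
and the square-root profile of $\Im G_\mu$ together with the calibration $\tilde\eta_0=\tilde\eta/\sqrt{\kappa_I}$ (bulk) or $\tilde\eta_0=\tilde\eta^{2/3}$ (edge) turns the right-hand side into $O(\tilde\eta/y)$. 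This pointwise bound replaces your $O(1)$ and makes the small-$y$ integral $\lesssim\tilde\eta_0\tilde\eta/\eta\lesssim\tilde\eta$. The hypotheses $\kappa_I\ge\tilde\eta^{2/3}$, $\eta\ge\tilde\eta/\sqrt{\kappa_I}$, and $\kappa_I\asymp\sup_{x\in I}\dist(x,\{A,B\})$ are exactly what is needed for this absorption; without the monotonicity step the argument does not close.
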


\begin{proposition}\label{prop:rig2}
We assume that $(\ell_1,\ell_2,\cdots, \ell_N)\in\bW_N^{\theta}$ satisfies the rigidity estimates  on the scale $\tilde \eta=M/N$ in the spectral domains $\cD_*$ (as in Definition \ref{def:rigidity}). Then for any $C^2$ function $f$ supported in $\cM$, with $f(x)=\OO(1)$, $f'(x)=\OO(1)$ and $f''(x)=\OO(1)$, we have
\begin{align}\label{e:rigO1}
\left|\frac{1}{N}\sum_{i=1}^Nf(\ell_i/N)-\int f(x)\rd \mu(x)\right|\lesssim
\td \eta.
\end{align}
\end{proposition}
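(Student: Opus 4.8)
The plan is to deduce this linear-statistics bound from the assumed rigidity of $G_N$ on $\cD_*$ via the Helffer--Sj\"ostrand functional calculus, exactly in the spirit of \cite[Lemma B.1]{MR2639734}. Fix an $N$-independent $\eta_0>0$ small enough that the strip $\{x+\ri y:\ x\in\supp f,\ 0<|y|\le\eta_0\}$ lies in $\cM$, and a smooth $\chi$ with $\chi\equiv 1$ on $\{|y|\le\eta_0/2\}$ and $\supp\chi\subset\{|y|\le\eta_0\}$. With the almost-analytic extension $\tilde f(x+\ri y)=(f(x)+\ri y f'(x))\chi(y)$ one has
\begin{align*}
\frac1N\sum_{i=1}^N f(\ell_i/N)-\int f\,\rd\mu
=\frac1\pi\int_{\bC}\bar\partial\tilde f(x+\ri y)\,(G_\mu-G_N)(x+\ri y)\,\rd x\,\rd y ,
\end{align*}
where $\bar\partial\tilde f(x+\ri y)=\tfrac{\ri}{2}\bigl(y f''(x)\chi(y)+\ri(f(x)+\ri y f'(x))\chi'(y)\bigr)$. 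By the symmetry $\overline{G_N(z)}=G_N(\bar z)$ (and likewise for $G_\mu$) it suffices to treat $y>0$. I would split the $z$-integral into three pieces: (i) the range $\eta_0/2\le y\le\eta_0$ where $\chi'(y)\ne 0$; (ii) the part of the $yf''(x)\chi(y)$ term with $y\ge\eta_1(x)$; and (iii) the part with $0<y<\eta_1(x)$, where $\eta_1(x)$ is the least height at which $x+\ri\eta_1(x)\in\cD_*$ (of size $N^{-(1-\fa)/2}/\sqrt{\kappa_x}$ when $x$ is well inside $[A,B]$, and of size $N^{-(1-\fa)/3}$ near $A,B$ and outside $[A,B]$, with $\kappa_x=\dist(x,\{A,B\})$).

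Pieces (i) and (ii) are routine. On (i) one sits on a fixed compact set at distance $\asymp\eta_0$ from $\bR$, hence inside $\cD_*$ for $N$ large, so $|G_N-G_\mu|\lesssim\tilde\eta$ there; since $\bar\partial\tilde f=\OO(1)$ on this set the contribution is $\OO(\tilde\eta)$. On (ii) the rigidity bound gives $|G_N-G_\mu|(x+\ri y)\le\tilde\eta/y$, so $\int_{\eta_1(x)}^{\eta_0}|y f''(x)|\,\tfrac{\tilde\eta}{y}\,\rd y\lesssim\eta_0\,\tilde\eta\,|f''(x)|$; integrating in $x$ and using $\int|f''|\lesssim 1$ (this is where the macroscopic scale $f,f',f''=\OO(1)$ of the test function is used, and it is what prevents the $\ln N$ loss present in Proposition \ref{prop:rig}) yields $\OO(\tilde\eta)$.

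Piece (iii), the contribution of $\int_0^{\eta_1(x)}y\,(G_N-G_\mu)(x+\ri y)\,\rd y$ below the scale of available rigidity, is where the real work lies. For the imaginary part I would exploit that $y\mapsto y|\Im G_N(x+\ri y)|=\frac1N\sum_i\frac{y^2}{(x-\ell_i/N)^2+y^2}$ is nondecreasing (and likewise for $G_\mu$), which bounds $|\Im(G_N-G_\mu)(x+\ri y)|$ for $y<\eta_1(x)$ in terms of its value at $\eta_1(x)$, controlled by the rigidity input plus the explicit form of $\Im G_\mu$. The real part reduces to the imaginary part by analyticity: $\partial_y\Re(G_N-G_\mu)(x+\ri y)=-\Im(G_N-G_\mu)'(x+\ri y)$, whose modulus is $\lesssim(|\Im G_N|+|\Im G_\mu|)(x+\ri y)/y$, so after an integration by parts in $y$ the real-part contribution is again dominated by $\int_0^{\eta_1(x)}s\,(|\Im G_N|+|\Im G_\mu|)(x+\ri s)\,\rd s$. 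The crucial input is the square-root vanishing of $\rho_V$ at $A,B$ from Assumption \ref{a:Hz}: it makes $|\Im G_N(x+\ri y)|\lesssim y/\sqrt{\kappa_x}$ for $x$ in a void and $|\Im G_N(x+\ri y)|\lesssim\sqrt{\kappa_x+\eta_1(x)}$ for $x$ in a band, so that $\int_0^{\eta_1(x)}y|\Im G_N(x+\ri y)|\,\rd y$ is $\OO(\eta_1(x)^3/\sqrt{\kappa_x})$ (void) or $\OO(\eta_1(x)^2\sqrt{\kappa_x+\eta_1(x)})$ (band), up to lower-order errors. Since $\eta_1(x)^3\le N^{-(1-\fa)}$ uniformly and the $\kappa_x^{-1/2}$-type singularities at $A,B$ are integrable (precisely because the edges are square-root), integrating these against $\|f''\|_\infty$ over the bounded support of $f$ produces a bound $\OO(N^{-(1-\fa)})=\OO(\tilde\eta)$ with no logarithm.

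The main obstacle is precisely this near-axis piece (iii), and inside it the case where $x$ sweeps a \emph{saturated} region, where $|\Im G_N|\asymp 1$ and the above estimate breaks. There one must pass to the dual: by \eqref{e:Gdual}, $\Im(G_N-G_\mu)(x+\ri y)=-\Im(G_N^{\dual}-G_\mu^{\dual})(x+\ri y)+\OO(\tfrac{\ln N}{Ny})$, and since $\theta^{-1}-\rho_V$ vanishes like a square root at the adjacent band-edge (this is exactly the content of Assumption \ref{a:Hz}, which forces either $\rho_V$ or $\theta^{-1}-\rho_V$ to be square-root at $A,B$), the dual rigidity built into Definition \ref{def:rigidity} together with the same monotonicity argument applied to $G_N^{\dual}$ closes the estimate; the extra $\OO(\tfrac{\ln N}{Ny})$ integrates to $\OO(N^{-1}\eta_1(x)\ln N)$, which is $\ll\tilde\eta$ because $\tilde\eta\ge N^{-(1-\fa)}$ while $\eta_1(x)\le N^{-(1-\fa)/3}$. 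Assembling the bounds for (i)--(iii) gives $\bigl|\frac1N\sum_i f(\ell_i/N)-\int f\,\rd\mu\bigr|\lesssim\tilde\eta$, as claimed.
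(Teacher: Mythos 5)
Your proposal is correct and follows essentially the same route as the paper's proof in Appendix \ref{ap:proofrig}: Helffer--Sj\"ostrand with an $\OO(1)$ cutoff, the rigidity bound $|G_N-G_\mu|\le\tilde\eta/y$ above the $\cD_*$ threshold, and below it the monotonicity of $y\mapsto y|\Im G_N(x+\ri y)|$ combined with the dual Stieltjes transform via \eqref{e:Gdual} to handle saturated regions, with the square-root edge behavior making the resulting singularities integrable. The only cosmetic difference is that your discussion of the real part in piece (iii) is superfluous, since the second Helffer--Sj\"ostrand term already involves only $\Im[G_N-G_\mu]$.
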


\begin{remark}\label{r:decompint}

Given any $b\in [A, B]$, with $\kappa_b=\dist(b, \{A,B\})\gtrsim (M/N)^{2/3}$. Fix a scale $\tilde\eta\gtrsim M/N$. Let $b_0\deq b$. Given $b_k<B$, we define $b_{k+1}$ in the following way: if $B-b_{k}\geq 2^{k+1} \tilde\eta/\sqrt{\kappa_b}$, we define $b_{k+1}\deq b_k+2^k\tilde\eta/\sqrt{\kappa_b}$; otherwise, we take $b_{k+1}\deq b_k+2^{k+2}\tilde\eta/\sqrt{\kappa_b}$. Let $\hat b_N=\max\{\hat b, b(N)/N\}$. Given $B<b_k<\hat b$, we define $b_{k+1}\deq b_k+2^{k+2}\tilde\eta/\sqrt{\kappa_b}$. In this way, we get a sequence of points $b_0<b_1<b_2 <\cdots<b_{n}<B< b_{n+1}<b_{n+2}<\cdots <b_{n'}<\hat b_N<b_{n'+1}$. It is easy to check that
\begin{itemize}
\item For $k\in\qq{0,n}$, we have $b_k-A\asymp (b_0-A)+2^{k}\tilde\eta/\sqrt{\kappa_b}$ and $B-b_k\asymp 2^{n}\tilde\eta/\sqrt{\kappa_b}$.
\item For $k\in \qq{n,n'}$, we have $b_{k+1}-b_{k}=2^{k+2}\tilde\eta/\sqrt{\kappa_b}\gtrsim (M/N)^{2/3}$. 
\end{itemize}

We denote $I_k=[b_{k-1},b_{k+1}]$ for $k\in\qq{1,n'}$. With the properties of $b_k$ listed above, it is easy to check $I_k$, for $k\in \qq{1,n-1}$, are bulk intervals, and $I_k$ for $k\geq n$, are edge intervals, and they satisfy the conditions in Proposition \ref{prop:rig}. We can take bump functions $\chi_k$ supported on $I_k$, with  values in $[0,1]$, $\chi_k'(x)\asymp 1/|I_k|$ and $\chi_k'(x)\asymp 1/|I_k|^2$, such that $\chi_1(x)+\chi_2(x)+\cdots+\chi_{n'}(x)=1$ on $[b_1,\hat b_N]$. We call $\chi_1, \chi_2,\cdots, \chi_{n'}$ a \emph{dyadic partition of unity on the scale $\tilde\eta$}.

For any $C^2$ function $f$, by combining Proposition \ref{prop:rig} with a dyadic decomposition given above, we can estimate the  integral
\begin{align}\label{e:int}
\int_b^{\hat b_N} f(x)\left(\rd\mu_N(x)-\rd \mu(x)\right),
\end{align}
for some $b\in [A,B]$ with $\kappa_b=\dist(b, \{A,B\})\gtrsim (M/N)^{2/3}$ by  decomposing   it as  the sum
\begin{align}\label{e:decompint}
\int_{b_0}^{b_1} (1-\chi_1(x))f(x)\left(\rd\mu_N(x)-\rd\mu(x)\right)+\sum_{k=1}^{n'}\int \chi_k(x) f(x)\left(\rd\mu_N(x)-\rd\mu(x)\right).
\end{align}
Then we can estimate \eqref{e:decompint} term by term using Proposition \ref{prop:rig}. 

By symmetry, for the integral 
\begin{align*}
\int_{\hat a_N}^a f(x)\left(\rd\mu_N(x)-\rd\mu(x)\right),
\end{align*}
where $\hat a_N=\min\{\hat a, a(N)/N\}$, we can construct a sequence of points $a=a_0>a_1>a_2>\cdots>a_n>A>a_{n+1}>a_{n+2}>\cdots>a_{n'}>\hat a_N>a_{n'+1}$ and bump functions $\chi_1, \chi_2,\cdots, \chi_{n'}$.

\end{remark}

We assume that with high probability, the rigidity estimates hold on the scale $ M/N$ in the spectral domain $\cD_{M,r_0}\cup \cD_*$, as in Definition \ref{def:rigidity}. We fix $M\ll L \ll N$ and consider the conditioned discrete $\beta$ ensembles. More precisely, there are two cases:
\begin{enumerate}
\item the bulk case: We fix a bulk index $K\in \qq{\lfloor L/2\rfloor+1, N_0}$, where $N_0\deq \max\{i: \gamma_i\leq B-r_0-r\}$.  We condition on the particles $\ell_1, \ell_2,\cdots, \ell_K$, $\ell_{K+L+1}, \ell_{K+L+2},\cdots,\ell_N$ and resample the particles $\ell_{K+1}, \ell_{K+2},\cdots, \ell_{K+L}$. Then $\ell_{K+1}/N, \ell_{K+2}/N,\cdots, \ell_{K+L}/N\in I$, where 
\begin{align}\label{e:bulkI}
I=[a,b],\quad a=(\ell_{K}+\theta)/N,\quad b=(\ell_{K+L+1}-\theta)/N.
\end{align}
We denote $\kappa_I=\dist(I, \{A,B\})$.
\item the left edge case: We condition on the particles $\ell_{L+1}, \ell_{L+2},\cdots,\ell_N$ and resample the particles $\ell_{1}, \ell_{2},\cdots, \ell_{L}$.  Then $\ell_{1}/N, \ell_{2}/N,\cdots, \ell_{L}/N\in I$, where 
\begin{align}\label{e:edgeI}
I=[a,b], \quad a=a(N)/N,\quad b=(\ell_{L+1}-\theta)/N.
\end{align}
We denote $\kappa_I=\dist(b, \{A,B\})$.
\end{enumerate}

The conditioned discrete $\beta$-ensemble is given by 
\begin{align}\label{e:condensemble}
\bP_{L}(\ell_{K+1},\ell_{K+2},\cdots, \ell_{K+L} )=\frac{1}{Z_{L}}\prod_{K+1\leq i<j\leq K+L}\frac{\Gamma(\ell_j-\ell_i+1)\Gamma(\ell_j-\ell_i+\theta)}{\Gamma(\ell_j-\ell_i)\Gamma(\ell_j-\ell_i+1-\theta)}
\prod_{i=K+1}^{K+L} \hat w(\ell_i; L),
\end{align}
where
\begin{align*}
\hat w(Nu; L)=e^{-NV_N(u)}\prod_{j\leq K, \text{ or }\atop j\geq K+L+1}\frac{\Gamma(\ell_j-Nu+1)\Gamma(\ell_j-Nu+\theta)}{\Gamma(\ell_j-Nu)\Gamma(\ell_j-Nu+1-\theta)},
\end{align*}
if $u\in I$, otherwise $\hat w(Nu; L)=0$. 
Notice that $K\in \qq{\lfloor L/2\rfloor+1, N_0}$ (where $N_0=\max\{i:\gamma_i\leq B-r_0-r\}$) corresponds to the bulk case, $K=0$ corresponds to the left edge case. In the rest of this section, We assume that $K\in \qq{\lfloor L/2\rfloor+1, N_0}$ or $K=0$.

\begin{definition}
We define the set $\cG_{M,L,K}$ of \emph{``good" boundary conditions} as the particle configurations
\begin{align*}
(\ell_1, \ell_2,\cdots, 
\ell_K,\ell_{K+L+1}, \ell_{K+L+2},\cdots \ell_{N})
\end{align*}
 such that with high probability with respect to the conditioned discrete $\beta$-ensemble $\bP_L$, the particle configuration $(\ell_1, \ell_2,\cdots, 
\ell_K,\ell_{K+1},\ldots,\ell_{K+L},\ell_{K+L+1}, \ell_{K+L+2},\cdots \ell_{N})$ 
satisfies the rigidity estimates on the scale $M/N$ in the spectral domain $\cD_{M,r_0}\cup \cD_*$, as in Definition \ref{def:rigidity}.
\end{definition}

The following proposition is an easy consequence of Proposition \ref{prop:rig}.
\begin{proposition}\label{prop:Iproperty}
We assume that $(\ell_1, \ell_2,\cdots, 
\ell_K,\ell_{K+L+1}, \ell_{K+L+2},\cdots \ell_{N})\in \cG_{M,L,K}$, i.e. it is in the set of ``good" boundary conditions, and $I$ is as defined in \eqref{e:bulkI} or \eqref{e:edgeI}.
In the bulk case, we have $\kappa_I\gtrsim (L/N)^{2/3}$,  $|I|\asymp L/N\sqrt{\kappa_I}\lesssim \kappa_I$, and $\rho_V(x)\asymp\sqrt{\kappa_I}$. And for the left edge case $\kappa_I=b-A\asymp (L/N)^{2/3}$, and $\rho_V(x)\asymp\sqrt{[x-A]_+}$. In all cases, we have $I\subset [a(N)/N, B-r_0-r/2]$, 
\begin{align*}
\left|N\int_I \rho_V(x)\rd x-L\right|=\OO\left(M\ln N\right) .
\end{align*}
\end{proposition}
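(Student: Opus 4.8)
The plan is to extract everything from the particle-location bound~\eqref{e:location} of Proposition~\ref{prop:rig} together with the square-root behaviour of the equilibrium density $\rho_V$ at $A$ and $B$; the proof is then essentially bookkeeping, as advertised. Let the boundary data be a good configuration in $\cG_{M,L,K}$. By definition some completion to a full configuration satisfies the rigidity estimates on scale $M/N$ in $\cD_{M,r_0}\cup\cD_*$, and since the boundary particles $\ell_K,\ell_{K+L+1}$ (bulk case) resp.\ $\ell_{L+1}$ (left edge case) do not depend on the completion while~\eqref{e:location} is a deterministic consequence of~\eqref{e:rigidityMN}, we obtain $\gamma_{j-C'}\le\ell_j/N\le\gamma_{j+C'}$ for those indices $j$, with $C'=CM\ln N$. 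Applying~\eqref{e:location} is legitimate once $\gamma_j\le B-r_0-r/2$: for $j=K\le N_0$ this is automatic, and for $j=K+L+1$ (resp.\ $j=L+1$) it follows because $L\ll N$ and $\rho_V$ is bounded below on a fixed neighbourhood of $\gamma_{N_0}\in(A,B)$, so raising the index from $N_0$ by $L+1$ raises $\gamma$ by $\OO(L/N)\le r/2$. Hence, with $a,b$ as in~\eqref{e:bulkI} or~\eqref{e:edgeI} and $\theta/N=\OO(1/N)$, we have $a=\gamma_K+\OO(\gamma_{K+C'}-\gamma_{K-C'})$, $b=\gamma_{K+L+1}+\OO(\gamma_{K+L+1+C'}-\gamma_{K+L+1-C'})$ in the bulk case, resp.\ $a=a(N)/N$ and $b=\gamma_{L+1}+\OO(\gamma_{L+1+C'}-\gamma_{L+1-C'})$ in the edge case.

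Next I use the structure of $\rho_V$ under Assumption~\ref{a:Hz}: $(A,B)$ is a single band with $0<\rho_V<\theta^{-1}$ inside, $\rho_V(x)\asymp\sqrt{x-A}$ near $A$, and $\rho_V$ bounded above and below on $[A+\delta,B-r_0-r/2]$ for every fixed $\delta>0$; thus $\rho_V(y)\asymp\sqrt{\kappa_y}$ with $\kappa_y=\dist(y,\{A,B\})$ uniformly on $(A,B-r_0-r/2]$, $\gamma_{i+1}-\gamma_i\asymp(N\sqrt{\kappa_{\gamma_i}})^{-1}$, and $\gamma_i-A\asymp(i/N)^{2/3}$ for $i/N$ small. \emph{Bulk case:} $K\ge\lfloor L/2\rfloor+1$ gives $\gamma_K-A\gtrsim(L/N)^{2/3}$, hence $a-A\gtrsim(L/N)^{2/3}$, and with $b\le B-r_0-r/2$ this yields $\kappa_I\gtrsim(L/N)^{2/3}$. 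This lower bound forces the span of the $L+1$ classical points from $\gamma_K$ to $\gamma_{K+L+1}$ to stay $\lesssim\kappa_I$, so $\kappa_{\gamma_i}\asymp\kappa_I$ along the way, $\rho_V\asymp\sqrt{\kappa_I}$ on all of $I$ (the $(1+\OO(x-A))$ correction to the square root being $1+\oo(1)$ since $x-A\lesssim\kappa_I\to0$), the location errors satisfy $\gamma_{j+C'}-\gamma_{j-C'}\asymp C'/(N\sqrt{\kappa_I})\lesssim L/(N\sqrt{\kappa_I})$, and therefore $|I|\asymp\gamma_{K+L+1}-\gamma_K\asymp L/(N\sqrt{\kappa_I})$, whence $|I|/\kappa_I\asymp L/(N\kappa_I^{3/2})\lesssim1$; also $I\subset[a(N)/N,B-r_0-r/2]$ since $a>\ell_1/N>a(N)/N$. \emph{Edge case} $K=0$: $\ell_{L+1}/N\in[\gamma_{L+1-C'},\gamma_{L+1+C'}]$ with $L\gg C'$, so $b-A=(\gamma_{L+1}-A)(1+\OO(C'/L))\asymp(L/N)^{2/3}$, which is small, whence $\kappa_I=b-A\asymp(L/N)^{2/3}$; on $I=[a(N)/N,b]$ one has $\rho_V\equiv0$ on $[a(N)/N,A]$ (void, as $a(N)/N=\hat a+\OO(\ln N/N)<A$) and $\rho_V(x)\asymp\sqrt{x-A}$ on $[A,b]$, i.e.\ $\rho_V(x)\asymp\sqrt{[x-A]_+}$, and $I\subset[a(N)/N,B-r_0-r/2]$.

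For the mass identity, write $N\int_I\rho_V=N\mu(I)$ and use $N\mu([\hat a,\gamma_j])=j-1/2$. In the bulk case $N\mu([\gamma_K,\gamma_{K+L+1}])=L+1$, so $\big|N\int_I\rho_V-L\big|\le 1+N\mu([a,\gamma_K])+N\mu([\gamma_{K+L+1},b])\lesssim 1+N\sqrt{\kappa_I}\cdot C'/(N\sqrt{\kappa_I})=\OO(M\ln N)$, using $|a-\gamma_K|,|b-\gamma_{K+L+1}|\lesssim C'/(N\sqrt{\kappa_I})$ and $\rho_V\asymp\sqrt{\kappa_I}$ on the small intervals involved; the edge case is identical with $N\mu([\hat a,\gamma_{L+1}])=L+1/2$ and $\rho_V\asymp\sqrt{\kappa_I}$ near $b$. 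This closes all the assertions.

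The two steps that are not pure bookkeeping are the uniform comparison $\rho_V(x)\asymp\sqrt{\kappa_I}$ on $I$—which needs the square-root asymptotics at $A$, the analyticity of $s_A$ to bound the variation of $\rho_V$ across $I$, and the lower bound $\kappa_I\gtrsim(L/N)^{2/3}$ to guarantee $|I|\lesssim\kappa_I$—and the a priori verification that the upper index $K+L+1$ (resp.\ $L+1$) lies below the validity threshold $B-r_0-r/2$ of~\eqref{e:location}. I expect the former to be the main point, the latter being a mild non-degeneracy of $\rho_V$ on the band near $\gamma_{N_0}$.
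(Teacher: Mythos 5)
Your proof is correct and uses exactly the tool the paper intends: the paper states without proof that Proposition~\ref{prop:Iproperty} is ``an easy consequence of Proposition~\ref{prop:rig}'', and your argument fills in those details in the natural way, applying the deterministic location bound~\eqref{e:location} to the fixed boundary particles (legitimate because the good set guarantees at least one completion satisfying the rigidity estimates), then combining with the square-root behaviour of $\rho_V$ at $A$ to control $\kappa_I$, $|I|$, $\rho_V$ on $I$, and the mass via $N\mu([\hat a,\gamma_j])=j-1/2$.

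\end{document}
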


We can sample the particle configurations $(\ell_1, \ell_2, \cdots, \ell_N)$ using the following two-step sampling process: we first sample the boundary $(\ell_1, \ell_2,\cdots, 
\ell_K,\ell_{K+L+1}, \ell_{K+L+2},\cdots \ell_{N})$ according to the marginal distribution; then we sample $(\ell_{K+1}, \ell_{K+2},\cdots, \ell_{K+L})$ according to $\bP_L$. As a consequence of this point of view, the following proposition holds.
\begin{proposition}\label{prop:rigiditylocalmeasure}
We assume the rigidity estimates (as in Definition \ref{def:rigidity}) hold on the scale $M/N$ with probability $1-p_1$ for the discrete $\beta$-ensemble. Then with probability at least $1-p_1e^{c(\log N)^2}$, we have $(\ell_1, \ell_2, \cdots, \ell_K, \ell_{K+L+1}, \ell_{K+L+2},\cdots, \ell_N)\in \cG_{M,L,K}$, i.e. it is in the set of ``good" boundary conditions. 
\end{proposition}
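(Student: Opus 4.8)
The plan is to deduce Proposition~\ref{prop:rigiditylocalmeasure} from the two-step sampling description recalled just above, combined with Markov's inequality; the statement is a soft conditioning fact with no analytic content beyond the definitions.

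First I would fix the event being controlled: let $\Omega_{\rm rig}$ denote the event, measurable with respect to the full configuration $(\ell_1,\ldots,\ell_N)$, that the rigidity bound $|G_N(z)-G_\mu(z)|\le (M/N)/\Im[z]$ of Definition~\ref{def:rigidity} holds simultaneously for all $z\in\cD_{M,r_0}\cup\cD_*$. By hypothesis $\bP_N(\Omega_{\rm rig}^c)\le p_1$. (If the rigidity input is available only as the family of pointwise tail bounds \eqref{e:LDPM2}, one first passes to the uniform event $\Omega_{\rm rig}$ by a union bound over an $N^{-C}$-net of $\cD_{M,r_0}\cup\cD_*$ together with the Lipschitz estimate $|\del_z(G_N-G_\mu)|=\OO(1/\eta)$ from Lemma~\ref{l:GNprop}; this inflates the failure probability by at most a factor $N^{C}$, which is harmless below.)

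Next I would use the two-step sampling: a draw from $\bP_N$ is obtained by first sampling the boundary configuration $\bmb:=(\ell_1,\ldots,\ell_K,\ell_{K+L+1},\ldots,\ell_N)$ from its marginal law $\nu$ under $\bP_N$, and then, conditionally on $\bmb$, sampling the interior particles $(\ell_{K+1},\ldots,\ell_{K+L})$ from the conditioned ensemble $\bP_L=\bP_L^{\bmb}$ of \eqref{e:condensemble}. Setting
\[
q(\bmb):=\bP_L^{\bmb}\big(\Omega_{\rm rig}^c\big),
\]
the tower property gives
\[
p_1\ \ge\ \bP_N(\Omega_{\rm rig}^c)\ =\ \int q(\bmb)\,\rd\nu(\bmb),
\]
so by Markov's inequality $\nu\big(q(\bmb)>\delta\big)\le p_1/\delta$ for every $\delta>0$.

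Finally I would invoke the convention that an event holds \emph{with high probability} under $\bP_L^{\bmb}$ precisely when its $\bP_L^{\bmb}$-probability is at least $1-\exp(-c(\ln N)^2)$; by the definition of $\cG_{M,L,K}$ this is equivalent to $q(\bmb)\le\exp(-c(\ln N)^2)$. Taking $\delta=\exp(-c(\ln N)^2)$ in the Markov bound yields
\[
\bP_N\big(\bmb\notin\cG_{M,L,K}\big)=\nu\big(q(\bmb)>\exp(-c(\ln N)^2)\big)\le p_1\exp\big(c(\ln N)^2\big),
\]
which is the assertion. The only items requiring care are the passage from pointwise to uniform rigidity indicated above and keeping track that the constant $c$ in the conclusion is the one implicit in the notion of ``high probability''; I do not anticipate any genuine obstacle, since the heart of the statement is simply Markov's inequality applied to the conditional failure probability $q(\bmb)$.
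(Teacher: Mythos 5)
Your proposal is correct and coincides with the argument the paper intends: the paper itself gives no explicit proof, only the two-step sampling description followed by ``As a consequence of this point of view, the following proposition holds,'' and your tower-property-plus-Markov argument is precisely the implicit content of that remark. One small simplification: since Definition~\ref{def:rigidity} is already a \emph{uniform} statement over the spectral domain and Proposition~\ref{prop:rigiditylocalmeasure}'s hypothesis is phrased directly in terms of it, the parenthetical net/Lipschitz union-bound step is unnecessary here.
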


\begin{remark}
If the rigidity estimates (as in Definition \ref{def:rigidity}) hold on the scale $M/N$ with high probability for the discrete $\beta$-ensemble, then with high probability, $(\ell_1, \ell_2, \cdots, \ell_K, \ell_{K+L+1}, \ell_{K+L+2},\cdots, \ell_N)$ belongs to $ \cG_{M,L,K}$.
\end{remark}

It follows from  Stirling's formula that
\begin{align*}
\frac{\Gamma(h+\theta)}{\Gamma(h)}=h^\theta(1+\OO(h^{-1}))=h^\theta e^{\OO(h^{-1})}.
\end{align*}
We can simplify the expression of the conditioned discrete $\beta$-ensemble \eqref{e:condensemble}
\begin{align}\begin{split}\label{e:newproduct}
\prod_{K+1\leq i<j\leq K+L}\frac{\Gamma(\ell_j-\ell_i+1)\Gamma(\ell_j-\ell_i+\theta)}{\Gamma(\ell_j-\ell_i)\Gamma(\ell_j-\ell_i+1-\theta)}
&=\prod_{K+1\leq i<j\leq K+L}|\ell_j-\ell_i|^{2\theta}e^{\OO(|\ell_j-\ell_i|^{-1})}\\
&=e^{\OO(L\ln N)}\prod_{K+1\leq i<j\leq K+L}|\ell_j-\ell_i|^{2\theta},
\end{split}\end{align}
and
\begin{align}\begin{split}\label{e:newweight}
\hat w(Nu; L)&=e^{-NV(u)+\OO(\ln N)}\prod_{j\leq K, \text{ or }\atop j\geq K+L+1}|\ell_j-Nu|^{2\theta}e^{\OO(|\ell_j-Nu|^{-1})}\\
&=e^{-NV(u)+\OO(\ln N)}\prod_{j\leq K, \text{ or }\atop j\geq K+L+1}|\ell_j-Nu|^{2\theta},
\end{split}\end{align}
where we used $|\ell_j-\ell_i|\geq |j-i|\theta$ and Assumption \ref{a:VN}. We can rewrite the conditioned discrete $\beta$-ensemble \eqref{e:condensemble} by plugging in \eqref{e:newproduct} and \eqref{e:newweight}
\begin{align}\label{e:conddist}
\bP_{L}(\ell_{K+1},\ell_{K+2},\cdots, \ell_{K+L} )=\frac{1}{Z_{L}'} e^{-H_L(\ell_{K+1},\ell_{K+2},\cdots, \ell_{K+L})+\OO(L\ln N)}
\end{align}
where the Hamiltonian is given by 
\begin{align}\label{e:defHL}
H_L=-\theta\sum_{K+1\leq i\neq j\leq K+L} \ln \left|\frac{\ell_i}{N}-\frac{\ell_j}{N}\right|+L \sum_{i=K+1}^{K+L}W\left(\frac{\ell_i}{N}\right),
\end{align}
where 
\begin{align}\label{e:Wdefinition2}
W(x)=\left\{
\begin{array}{cc}
\frac{N}{L}\left(V(x)-\frac{2\theta}{N}\sum_{j\leq K, \text{ or }\atop j\geq K+L+1}\ln \left|x-\frac{\ell_j}{N}\right|-f_V\right),  & x\in I;\\
-\infty, & x\notin I ,
\end{array}
\right.
\end{align}
where $f_V$ is as defined in \eqref{e:defFV}.
For the new potential $W(x)$, we can rewrite it as 
\begin{align}\label{e:expW}
W(x)=\frac{N}{L}\left(F_V(x)+2\theta\int_I\ln |x-y|\rho_V(y)\rd y+2\theta\int_{I^c}\ln|x-y|(\rho_V(y)\rd y-\rd \mu_N(y))\right),
\end{align}
where $F_V(x)\geq 0$ vanishes inside $[A,B]$, and $F_V'(x)$ has square root behavior outside $[A,B]$.

\begin{proposition}\label{prop:Westimate}
We assume $(\ell_1, \ell_2, \cdots, \ell_K, \ell_{K+L+1}, \ell_{K+L+2}, \cdots, \ell_N)\in \cG_{M,L,K}$, i.e. it is in the set of ``good" boundary conditions, and $I$ is as defined in \eqref{e:bulkI} or \eqref{e:edgeI}. Then for $x\in I$, we have $W(x)=NF_V(x)/L+\OO(\ln N)$ and $W'(x)=\OO(\ln N N/L)$.
\end{proposition}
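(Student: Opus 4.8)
\textbf{Proof proposal for Proposition \ref{prop:Westimate}.}

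The plan is to estimate $W(x)$ directly from the expression \eqref{e:expW}, namely
\begin{align*}
W(x)=\frac{N}{L}\left(F_V(x)+2\theta\int_I\ln|x-y|\rho_V(y)\,\rd y+2\theta\int_{I^c}\ln|x-y|\,(\rho_V(y)\,\rd y-\rd\mu_N(y))\right),
\end{align*}
by controlling the last two integrals and differentiating. The first step is the ``self-interaction'' integral $\int_I\ln|x-y|\rho_V(y)\,\rd y$. Since $x\in I$ and, by Proposition \ref{prop:Iproperty}, $|I|\asymp L/(N\sqrt{\kappa_I})$ in the bulk with $\rho_V\asymp\sqrt{\kappa_I}$ there (and $|I|\asymp(L/N)^{2/3}$ with $\rho_V(y)\asymp\sqrt{[y-A]_+}$ at the edge), the integral $\int_I|\ln|x-y||\,\rho_V(y)\,\rd y$ is bounded by $\OO(|I|\,\|\rho_V\|_{L^\infty(I)}\,|\ln|I||)=\OO((L/N)\ln N)$, so its contribution to $W$ after multiplying by $N/L$ is $\OO(\ln N)$. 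For the derivative, $\del_x\int_I\ln|x-y|\rho_V(y)\,\rd y=\mathrm{P.V.}\int_I(x-y)^{-1}\rho_V(y)\,\rd y$; using the Hölder/square-root regularity of $\rho_V$ on $I$ (constant $\OO(\sqrt{\kappa_I})$ in the bulk, $\OO(1)$ of $\sqrt{\,\cdot\,}$-type at the edge) this principal value is $\OO(1)$, hence its contribution to $W'$ is $\OO(N/L)=\OO(N\ln N/L)$.

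The second step is the ``boundary'' integral $\int_{I^c}\ln|x-y|\,(\rho_V(y)\,\rd y-\rd\mu_N(y))$. Here I would invoke Remark \ref{r:decompint}: split $I^c\cap[\hat a_N,\hat b_N]$ into the two half-lines on either side of $I$, introduce a dyadic partition of unity $\chi_k$ on the scale $\tilde\eta=M/N$ adapted to the endpoints of $I$, and apply Proposition \ref{prop:rig} (valid since the boundary data lies in $\cG_{M,L,K}$, so rigidity holds on scale $M/N$ on $\cD_{M,r_0}\cup\cD_*$) to each piece $\int\chi_k(y)\ln|x-y|\,(\rd\mu_N-\rd\mu)(y)$. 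For a dyadic block $I_k$ at distance $d_k$ from $x$ and of length $|I_k|\asymp\max\{d_k,\tilde\eta/\sqrt{\kappa_I}\}$ (or the edge analogue), the test function $y\mapsto\chi_k(y)\ln|x-y|$ satisfies the hypotheses of Proposition \ref{prop:rig} with the relevant $\eta=|I_k|$ up to a logarithmic factor from $\ln|x-y|$, so each block contributes $\OO(\tilde\eta\ln N)=\OO((M/N)\ln N)$; summing over the $\OO(\ln N)$ dyadic scales gives $\OO((M/N)(\ln N)^2)$. Multiplying by $N/L$ and using $M\ll L$ (in fact $M\leq L$, and the extra logarithm is absorbed since we only claim $\OO(\ln N)$ up to constants, with $M(\ln N)^2/L\lesssim\ln N$ for the relevant range $L=M^{\fb}$, $\fb>1$) yields a contribution of $\OO(\ln N)$ to $W$. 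The same dyadic decomposition applied to $\del_x[\chi_k(y)\ln|x-y|]=\chi_k'(y)\ln|x-y|+\chi_k(y)/(x-y)$ — again an admissible test function on scale $|I_k|$ up to logarithms — gives each block's contribution to $W'$ of size $\OO((N/L)\cdot(M/N)\cdot|I_k|^{-1}\ln N)$; the worst block is the nearest one, $|I_k|^{-1}\lesssim N\sqrt{\kappa_I}/\tilde\eta = N^2\sqrt{\kappa_I}/M$, which combined with $\kappa_I\asymp(L/N)^{2/3}$ and $M\leq L$ is $\OO(N\ln N/L)$ after summation.

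Combining the two steps with the trivial term $F_V(x)$ gives $W(x)=NF_V(x)/L+\OO(\ln N)$ and $W'(x)=NF_V'(x)/L+\OO(N\ln N/L)=\OO(N\ln N/L)$, where in the last equality we used that $F_V$ vanishes on $[A,B]$ and $F_V'$ has a square-root vanishing (hence is $\OO(1)$, indeed $o(1)$) near the band edges, so $F_V'(x)=\OO(1)$ uniformly on $I\subset[a(N)/N,B-r_0-r/2]$. The main obstacle I expect is bookkeeping the dyadic sum for $W'$: one must check that the logarithmic losses from $\ln|x-y|$ in the test function, the $\OO(\ln N)$ dyadic scales, and the $\OO(\ln N)$ in Proposition \ref{prop:rig} together stay within the claimed $\OO(N\ln N/L)$ — this uses crucially that $L=M^{\fb}$ with $\fb>1$ so that $M\ll L$ swallows the extra powers of $\ln N$, and that near the edge $\kappa_I\asymp(L/N)^{2/3}$ keeps the nearest-block factor $N\sqrt{\kappa_I}/\tilde\eta$ under control.
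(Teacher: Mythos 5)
Your treatment of $W(x)$ itself is essentially the paper's: the self-interaction integral $\frac{2\theta N}{L}\int_I\ln|x-y|\rho_V(y)\,\rd y$ is bounded by $\OO(\ln N)$ exactly as you do, and the boundary integral over $I^c$ is handled with the dyadic partition of Remark \ref{r:decompint} together with Propositions \ref{prop:rig} and \ref{prop:rig2}, giving $\OO((\ln N)^3 M/N)$ and hence $\OO(\ln N)$ after multiplying by $N/L$ since $M\ll L$. That part is fine.

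For $W'(x)$ you take a genuinely different and much heavier route than the paper, and as written it has a gap. The paper does not differentiate the decomposition \eqref{e:expW} at all; it goes back to the raw formula \eqref{e:Wdefinition2} and writes
\begin{align*}
W'(x)=\frac{N}{L}\Bigl(V'(x)-\frac{2\theta}{N}\sum_{j\leq K}\frac{1}{x-\ell_j/N}-\frac{2\theta}{N}\sum_{j\geq K+L+1}\frac{1}{x-\ell_j/N}\Bigr),
\end{align*}
then uses only two deterministic facts: $|V'(x)|\lesssim\ln N$ from Assumption \ref{a:VN}, and the minimal spacing $\ell_{j+1}-\ell_j\geq\theta$ in $\bW_N^\theta$, which dominates each sum by $\frac{2\theta}{N}\sum_{j\geq1}\frac{1}{j\theta/N}=\OO(\ln N)$. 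No rigidity, no dyadic decomposition, no regularity of $\rho_V$ is needed. Your route, by contrast, requires applying Proposition \ref{prop:rig} to the test functions $\chi_k(y)/(x-y)$, and this fails for the block $I_1$ adjacent to $I$: when $x$ is within distance $\ll|I_1|$ of the endpoint of $I$ (e.g.\ $x=b$), the function $\chi_1(y)/(x-y)$ is of size $N/\theta$ rather than $|I_1|^{-1}$, so it does not satisfy the hypotheses $f=\OO(1)$, $f'=\OO(1/\eta)$, $f''=\OO(1/\eta^2)$ on the scale $|I_1|$ after normalization. Patching this block forces you back to the deterministic spacing bound anyway, at which point the whole dyadic machinery for $W'$ is superfluous. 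Two smaller issues: your claim that $\mathrm{P.V.}\int_I\frac{\rho_V(y)}{x-y}\,\rd y=\OO(1)$ is false near the endpoints of $I$ (it is only $\OO(\ln N)$, which still suffices), and the factor $|I_k|^{-1}\lesssim N\sqrt{\kappa_I}/\tilde\eta$ should read $\sqrt{\kappa_I}/\tilde\eta=N\sqrt{\kappa_I}/M$ — your version is off by a factor of $N$ and would destroy the bound if taken literally. I recommend replacing your $W'$ argument by the paper's two-line deterministic one.
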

\begin{proof}
For the second term in \eqref{e:expW}, if $I$ is a bulk interval, let $\kappa_I=\dist(I, \{A,B\})$. Take $x\in I$. Then from Proposition \ref{prop:Iproperty}, $|I|\asymp L/N\sqrt{\kappa_I}$ and $\rho_V(x)\asymp \sqrt{\kappa_I}$, we deduce that
\begin{align*}
\frac{2\theta N}{L}\int_I \ln |x-y|\rho_V(y)\rd y\asymp\frac{N\sqrt{\kappa_I}}{L} \int_I\ln|x-y|\rd y=\OO\left(\ln N\right).
\end{align*}
If $I$ is the left edge interval, then $b-A\asymp (L/N)^{2/3}$, where $b$ is the right endpoint of $I$, $\rho_V(x)\asymp \sqrt{[x-A]_+}$ and
\begin{align*}
\frac{2\theta N}{L}\int_I \ln |x-y|\rho_V(y)\rd y\asymp\frac{N}{L} \int_A^b\ln|x-y|\sqrt{[y-A]_+}\rd y=\OO\left(\ln N\right).
\end{align*}
For the last term in \eqref{e:expW},
\begin{align}\label{e:sumtwo}
\frac{2\theta N}{L}\int_{y\geq b}\ln|x-y|(\rho_V(y)\rd y-\rd \mu_N(y))+\frac{2\theta N}{L}\int_{y\leq a}\ln|x-y|(\rho_V(y)\rd y-\rd \mu_N(y)).
\end{align}
We first show the bound for the first term. We recall the construction from Remark \ref{r:decompint}. Let $\kappa_b=\dist(b, \{A,B\})$, $\kappa_b\gtrsim (L/N)^{2/3}$, and  construct the bump functions $\chi_1,\chi_2,\cdots, \chi_{n'}$ with scale $\tilde \eta=M/N$.
\begin{align}\begin{split}\label{e:errorsum}
&\int_{y\geq b}\ln|x-y|(\rho_V(y)\rd y-\rd \mu_N(y))\\
=&\int_{b_0}^{b_1}(1-\chi_1(y))\ln|x-y|(\rho_V(y)\rd y-\rd \mu_N(y))+\sum_{k=1}^{n'} \int_{y\geq b}\chi_k(y)\ln|x-y|(\rho_V(y)\rd y-\rd \mu_N(y)).
\end{split}
\end{align}
The first term in \eqref{e:errorsum} can be bounded by $\ln N M/N$. For the sum, from our construction of $\chi_1,\chi_2,\cdots, \chi_{n'}$, for each $k\in \qq{1,n'}$, we have
\begin{align*}
\chi_k(y)\ln |x-y|=\OO(\ln N),\quad
\del_y\left(\chi_k(y)\ln |x-y|\right)=\OO(\ln N/|I_k|),\quad
\del_{y}^2\left(\chi_k(y)\ln |x-y|\right)=\OO(\ln N/|I_k|^2),
\end{align*}
where $\chi_k(y)$ is supported on $I_k$. Moreover, either $I_k\subset [b, B-r_0]$, or $I_k$ is not completely contained in $[b,B-r_0]$, in which case $|I_k|\gtrsim 1$. Therefore, it follows from Proposition \ref{prop:rig} and \ref{prop:rig2}, we have
\begin{align*}
\begin{split}
\sum_{k=1}^{n'} \int_{y\geq b}\chi_k(y)\ln|x-y|(\rho_V(y)\rd y-\rd \mu_N(y))\lesssim\sum_{k=1}^{n'}\frac{(\ln N)^2 M}{N}\lesssim \frac{(\ln N)^3M}{N}.
\end{split}\end{align*}
 The second term in \eqref{e:sumtwo} can be estimated in the same way, and the first claim $W(x)=NF_V(x)/L+\OO(\ln N)$ follows.

For $W'(x)$, using \eqref{e:Wdefinition2}, we have  for $x\in I$
\begin{align*}\begin{split}
W'(x)
&=\frac{N}{L}\left(V'(x)-\frac{2\theta}{N}\sum_{j\leq K}\frac{1}{x-\ell_i/N}-\frac{2\theta}{N}\sum_{j\geq K+L+1}\frac{1}{x-\ell_i/N}\right)\\
&=\frac{N}{L}\left(\OO(\ln N)+\frac{4\theta}{N}\sum_{j\geq 1}\frac{1}{j\theta/N }\right)=\OO\left(\frac{N\ln N }{L}\right).
\end{split}\end{align*}
\end{proof}

We denote the constrained equilibrium measure of the conditioned discrete $\beta$-ensemble \eqref{e:conddist} as the unique probability measure on $I$  characterized by
\begin{align}\label{e:expW2}
W(x)-2\theta \int_{a}^b \ln|x-y|\rho_W(y)\rd y-f_W=F_W(x)=\left\{
\begin{array}{lll}
\leq 0,  &\text{on }\rho_W(x)=N/L\theta,\\
=0,  &\text{on } 0< \rho_W(x)< N/L\theta,\\
\geq 0, &\text{on }  \rho_W(x)=0.
\end{array}
\right.
\end{align}

We will need some  properties of the constrained equilibrium measure. Given an interval $I\subset \bR$,  a positive measure $\sigma$ supported on $I$ with total mass $\|\sigma\|>1$, and a potential function $Q$.
We denote $\rho_{Q}$ the unconstrained equilibrium measure, i.e the unique probability measure such that there exists a constant $f_{Q}$ so that
\begin{align}\label{defrhoW}
Q(x)-2\theta \int_I \ln|x-y|\rho_Q(y)\rd y-f_Q=\left\{
\begin{array}{lll}
=0,  &\text{on } \rho_Q(x)>0,\\
\geq 0, &\text{on }  \rho_Q(x)=0.
\end{array}
\right.
\end{align}
The equilibrium measure constrained by being bounded above by $\sigma$ is denoted $ \rho^\sigma_Q$.

\begin{theorem}\label{t:bound}[Theorem 2.6 \cite{MR1482996}]  $\supp(\rho_Q)\subset \supp(\rho_Q^\sigma)$ and 
\begin{align}\nonumber
{\bf1}_{\{\rho_Q^\sigma<\sigma\}}\rho_Q^\sigma\geq {\bf1}_{\{\rho_Q^\sigma<\sigma\}}\rho_Q.
\end{align}
\end{theorem}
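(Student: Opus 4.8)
The plan is to compare the two equilibrium problems through their effective potentials and a maximum-principle argument, which is the route of \cite{MR1482996}; I sketch the steps. Write $U^{\lambda}(x)=\int\ln\frac{1}{|x-y|}\,\rd\lambda(y)$, so that $\Delta U^{\lambda}=-2\pi\lambda$ and $U^{\lambda}$ is harmonic off $\supp\lambda$, and set
\begin{equation*}
F_Q\deq Q+2\theta U^{\rho_Q}-f_Q,\qquad F_Q^{\sigma}\deq Q+2\theta U^{\rho_Q^{\sigma}}-f_Q^{\sigma}.
\end{equation*}
By \eqref{defrhoW} and the variational characterization of $\rho_Q^{\sigma}$, $F_Q\geq 0$ on $I$ with $F_Q=0$ on $\supp\rho_Q$, while $F_Q^{\sigma}\leq 0$ on $\supp\rho_Q^{\sigma}$ (with $F_Q^{\sigma}=0$ on the band $\{0<\rho_Q^{\sigma}<\sigma\}$) and $F_Q^{\sigma}\geq 0$ on the void $\{\rho_Q^{\sigma}=0\}$. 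The object to work with is
\begin{equation*}
G\deq F_Q^{\sigma}-F_Q=2\theta\bigl(U^{\rho_Q^{\sigma}}-U^{\rho_Q}\bigr)+(f_Q-f_Q^{\sigma}),
\end{equation*}
in which $Q$ has cancelled; thus $G$ extends to $\bC$, is harmonic off $\supp\rho_Q\cup\supp\rho_Q^{\sigma}\subset\bR$, is continuous (the densities being bounded here), satisfies $\Delta G=4\pi\theta(\rho_Q-\rho_Q^{\sigma})$, and tends to $f_Q-f_Q^{\sigma}$ at $\infty$ since $\rho_Q,\rho_Q^{\sigma}$ are probability measures.

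The first step is to prove $G\leq 0$ on $\bC$. On $\supp\rho_Q^{\sigma}$ this is immediate: $G=F_Q^{\sigma}-F_Q\leq 0-0$. On $\bC\setminus\supp\rho_Q^{\sigma}$ one has $\Delta G=4\pi\theta\rho_Q\geq 0$, so $G$ is subharmonic and bounded there and extends subharmonically through $\infty$; the maximum principle gives $G\leq\max_{\supp\rho_Q^{\sigma}}G\leq 0$ everywhere, so $F_Q^{\sigma}\leq F_Q$ on $I$ and $f_Q\leq f_Q^{\sigma}$. I would also record, using the symmetry $G(\bar z)=G(z)$ and $\Delta G=4\pi\theta(\rho_Q-\rho_Q^{\sigma})$, that at any point $x$ where the densities are continuous $(\rho_Q-\rho_Q^{\sigma})(x)=\frac{1}{2\pi\theta}\,\partial_y G(x,0^{+})$.

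With these facts, part (b) is soft. On $\{\rho_Q^{\sigma}<\sigma\}$ --- the band together with the void --- we argue as follows. On the band $F_Q^{\sigma}=0$, so $G=-F_Q$; if $\rho_Q(x)=0$ there is nothing to prove, and if $\rho_Q(x)>0$ then $x\in\supp\rho_Q$, so $F_Q(x)=0$, i.e.\ $G(x)=0=\max_{\bC}G$, and since $G$ is harmonic and $\leq 0$ on $\bC_+$ with $G(x)=0$, the one-sided derivative satisfies $\partial_y G(x,0^{+})\leq 0$, whence $\rho_Q(x)\leq\rho_Q^{\sigma}(x)$. On the void $\rho_Q^{\sigma}=0$, so the asserted inequality there says exactly that $\rho_Q$ puts no mass on the void, which is part (a). Thus (b) reduces to (a). For (a), $F_Q^{\sigma}\leq F_Q$ and $F_Q=0$ on $\supp\rho_Q$ give $\supp\rho_Q\subseteq\{F_Q^{\sigma}\leq 0\}$, and $F_Q^{\sigma}\geq 0$ on the void forces $\supp\rho_Q$ to meet the void only inside $\{F_Q^{\sigma}=0\}$. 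On the open void $F_Q^{\sigma}$ is $C^2$ and nonnegative with $(F_Q^{\sigma})''=Q''+2\theta\int(x-y)^{-2}\rd\rho_Q^{\sigma}(y)$, which blows up near the endpoints of each void interval; combining this with the finer control of \cite{MR1482996} on $\{F_Q^{\sigma}=0\}$ (needed only in degenerate configurations, not in the off-critical setting here) one gets that $\{F_Q^{\sigma}=0\}$ meets the void in a set of zero Lebesgue measure, hence of zero $\rho_Q$-mass, so $\supp\rho_Q\subseteq\supp\rho_Q^{\sigma}$. An alternative I would keep in reserve is to realize $\rho_Q^{\sigma}$ as the limit of the iterated balayage algorithm started from $\rho_Q$, whose iterates have non-decreasing supports, which yields (a) and the monotonicity in (b) simultaneously.

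I expect the maximum-principle step for $F_Q^{\sigma}\leq F_Q$ and the reduction of (b) to (a) to be routine; the real obstacle is (a) in full generality --- controlling $\rho_Q$ on the zero set of $F_Q^{\sigma}$ inside the void and near the free boundary $\partial\supp\rho_Q^{\sigma}$ --- which is precisely where the potential-theoretic machinery (or the balayage construction) of \cite{MR1482996} enters. In the application of interest $\sigma$ has constant positive density and the relevant effective potential has square-root behaviour at the active edges, so these degeneracies do not arise.
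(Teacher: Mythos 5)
This theorem is not proved in the paper: it is quoted verbatim as Theorem~2.6 of Dragnev--Saff~\cite{MR1482996}, so there is no in-paper proof to compare against. The review below is therefore of your proposal on its own merits.

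Your plan --- compare the two effective potentials through $G=F_Q^{\sigma}-F_Q=2\theta\bigl(U^{\rho_Q^{\sigma}}-U^{\rho_Q}\bigr)+(f_Q-f_Q^{\sigma})$ and invoke the maximum principle --- is the right potential-theoretic route, and your derivation of $G\leq 0$ (hence $F_Q^{\sigma}\leq F_Q$ and $f_Q\leq f_Q^{\sigma}$) is correct. Your reduction of the density inequality in part~(b) to part~(a) on the void, plus the boundary normal-derivative computation $(\rho_Q-\rho_Q^{\sigma})(x)=\tfrac{1}{2\pi\theta}\partial_y G(x,0^{+})\leq 0$ at band points where $\rho_Q(x)>0$, is also fine (modulo the standing continuity-of-densities assumption, which holds in the present application). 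However, the step you yourself flag as ``the real obstacle'' --- part~(a) --- does have a genuine gap as written: you reduce it to showing $\{F_Q^{\sigma}=0\}$ meets the open void in a Lebesgue-null set and then defer that to the machinery of~\cite{MR1482996}. This is both harder than necessary and, as stated, incomplete (Lebesgue-null does not by itself exclude points of $\supp\rho_Q$ without an absolute-continuity argument).

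The gap closes by pushing your own subharmonicity observation one step further. Suppose $x_0\in\supp\rho_Q\setminus\supp\rho_Q^{\sigma}$, and let $\Omega$ be the connected component of $\bC\setminus\supp\rho_Q^{\sigma}$ (on the Riemann sphere, if unbounded) containing $x_0$. Since $x_0\in\supp\rho_Q$ gives $F_Q(x_0)=0$, and $F_Q^{\sigma}(x_0)\geq 0$ on the void while $F_Q^{\sigma}\leq F_Q$, you get $G(x_0)=F_Q^{\sigma}(x_0)-F_Q(x_0)=0$. So $G$, which you have shown is $\leq 0$ on all of $\bC$ and subharmonic on $\Omega$ (because $\Delta G=4\pi\theta\rho_Q\geq 0$ there), attains its global maximum at the interior point $x_0\in\Omega$. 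By the strong maximum principle $G\equiv 0$ on $\Omega$, hence $\Delta G\equiv 0$ on $\Omega$, hence $\rho_Q\big|_{\Omega}\equiv 0$ as a measure --- which contradicts $x_0\in\supp\rho_Q$, since some disc about $x_0$ lies in $\Omega$. This proves $\supp\rho_Q\subset\supp\rho_Q^{\sigma}$ with no appeal to measure-zero analysis, real-analyticity of $F_Q^{\sigma}$, or the iterated-balayage construction, and then your argument for~(b) goes through as you wrote it.
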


\begin{theorem}\label{t:supp}[Theorem 2.17 \cite{MR1482996}] If $Q$ is convex on $I$, then the support of $\rho_Q^\sigma$ is a single interval.
\end{theorem}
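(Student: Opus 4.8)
The plan is to argue by contradiction, combining the Euler--Lagrange characterization of $\rho_Q^\sigma$ with the elementary fact that the logarithmic potential of a nonzero positive measure is strictly convex off its support. Write $\mu=\rho_Q^\sigma$; the hypothesis $\|\sigma\|>1$ guarantees that the admissible class $\{\nu\leq\sigma:\nu(I)=1\}$ is nonempty, so that $\mu$ exists and is the unique minimizer. Exactly as in \eqref{e:expW2}, $\mu$ is characterized by the existence of a constant $f$ for which the effective potential
\[
F(x)\deq Q(x)-2\theta\int_I\ln|x-y|\,\rd\mu(y)-f
\]
satisfies $F\leq 0$ where $\mu$ saturates $\sigma$, $F=0$ on the band $\{0<\mu<\sigma\}$, and $F\geq 0$ where $\mu=0$ (a priori quasi-everywhere, but pointwise by continuity of $F$). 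In particular $F\leq 0$ everywhere on $\supp(\mu)$, which is the closure of the union of the band and the saturated region.

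First I would suppose that $\supp(\mu)$ is not an interval. A disconnected closed subset of $\bR$ always has a bounded complementary interval with both endpoints in the set, so we obtain $(\alpha,\beta)\subset I$ with $(\alpha,\beta)\cap\supp(\mu)=\emptyset$ and $\alpha,\beta\in\supp(\mu)$. On $(\alpha,\beta)$ we have $\mu\equiv 0$, hence $F\geq 0$ there. The crucial point is that $F$ is \emph{strictly convex} on $(\alpha,\beta)$: the function $x\mapsto-2\theta\int_I\ln|x-y|\,\rd\mu(y)$ is real-analytic on $(\alpha,\beta)$, since each compact subinterval lies at positive distance from $\supp(\mu)$, with second derivative $2\theta\int_I(x-y)^{-2}\,\rd\mu(y)>0$ because $\mu$ is a nonzero measure; and $Q$ is convex on $I$ by hypothesis. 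Next I would inspect the endpoints: since $\alpha\in\supp(\mu)$ there are points $x_k\uparrow\alpha$ with $x_k\in\supp(\mu)$, whence $F(x_k)\leq 0$ and $F(\alpha)\leq 0$ by continuity, and likewise $F(\beta)\leq 0$. But a strictly convex function on $[\alpha,\beta]$ whose values at both endpoints are nonpositive is strictly negative on $(\alpha,\beta)$, contradicting $F\geq 0$ there. Hence $\supp(\mu)$ must be a single interval. (The same computation applied to $\rho_Q$ itself shows that $\supp(\rho_Q)$ is an interval, consistently with Theorem~\ref{t:bound}.)

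The step I expect to require the most care is not the convexity estimate but the regularity bookkeeping behind it: one must know that the constrained equilibrium potential $x\mapsto\int_I\ln|x-y|\,\rd\mu(y)$, and hence $F$, is continuous up to the endpoints of the gap, so that the quasi-everywhere variational inequalities can be evaluated pointwise on the closed set $\supp(\mu)$. This is precisely where the structural assumptions on $\sigma$ enter (its absolute continuity, and the bounded density in the situation of Section~\ref{s:lm}), together with the elementary remark that $\mu\leq\sigma$ forces $\mu$ to be atomless, so that a disconnected $\supp(\mu)$ genuinely yields a bounded interior gap rather than an isolated mass point.
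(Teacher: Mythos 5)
This statement is not proved in the paper at all: it is imported verbatim from Dragnev and Saff \cite{MR1482996} (their Theorem 2.17), so there is no internal argument to compare yours against. Your proposal is the standard proof and it is correct as a sketch: using the variational characterization exactly as in \eqref{e:expW2}, on a bounded complementary gap $(\alpha,\beta)$ of the support one has $F\geq 0$, while both endpoints lie in the closure of the band/saturated set, so continuity of $F$ forces $F(\alpha),F(\beta)\leq 0$; since $F$ is a convex $Q$ plus a logarithmic potential term that is real-analytic and strictly convex on the gap (its second derivative is $2\theta\int(x-y)^{-2}\,\rd\mu(y)>0$), $F$ cannot be simultaneously nonnegative on $(\alpha,\beta)$ and nonpositive at the endpoints unless it vanishes identically on the gap, which strict convexity rules out. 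The only delicate point is the one you yourself flag: in the general constrained problem the Euler--Lagrange conditions hold quasi-everywhere and the potential of $\mu$ is a priori only lower semicontinuous, so evaluating the inequalities pointwise at $\alpha,\beta$ requires continuity of the potential up to the gap endpoints; in the setting where the paper invokes the theorem (Section~\ref{s:lm}) this is automatic, because $\mu\leq\sigma$ with $\sigma$ a constant multiple of Lebesgue measure on the finite interval $I$, so $\mu$ has bounded density and continuous logarithmic potential, and $Q=W$ is continuous there. So your argument is a legitimate self-contained proof of the quoted result in the regime in which the paper uses it, and is essentially the argument of the cited reference rather than an alternative route.
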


\begin{theorem} \label{t:dual}[Corollary 2.11 \cite{MR1482996}] Let 
\begin{align}\nonumber
s=\|\sigma\|-1,\quad
U=\frac{1}{s}\left(2\theta\int_I \ln |x-y|\sigma(y)\rd y-Q(x)\right).
\end{align}
Then we have
\begin{align}\nonumber
\sigma=\rho^\sigma_W+s\rho^{\sigma/s}_U.
\end{align}
\end{theorem}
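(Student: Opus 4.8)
Since the statement is quoted from \cite[Corollary 2.11]{MR1482996}, one option is simply to cite it; but the argument is short, and the plan is to check directly that the rescaled complementary measure $\tau:=(\sigma-\rho_Q^\sigma)/s$ is an admissible candidate for the constrained problem that defines $\rho_U^{\sigma/s}$ and satisfies the corresponding Euler--Lagrange conditions, so that uniqueness of the constrained equilibrium measure forces $\tau=\rho_U^{\sigma/s}$. (Here I write $\rho_Q^\sigma$ for the measure denoted $\rho_W^\sigma$ in the statement, in line with the letter $Q$ used in the general setup above.)

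First I would record the variational description of $\mu:=\rho_Q^\sigma$: it is a probability measure with $0\leq\mu\leq\sigma$ for which there is a constant $\ell_Q$ such that the effective potential
\begin{align*}
F_Q(x):=Q(x)-2\theta\int_I\ln|x-y|\,\mu(y)\,\rd y-\ell_Q
\end{align*}
satisfies $F_Q\leq 0$ on $\{\mu=\sigma\}$, $F_Q=0$ on $\{0<\mu<\sigma\}$, and $F_Q\geq 0$ on $\{\mu=0\}$ (the standard complementary-slackness characterization of the minimizer of the logarithmic energy over the convex set of admissible measures, exactly as in \eqref{e:expW2}). Set $\nu:=\sigma-\mu\geq 0$; then $\|\nu\|=\|\sigma\|-1=s$, so $\tau:=\nu/s$ is a probability measure supported on $I$ with $\tau\leq\sigma/s$ (equivalently $\mu\geq 0$). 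Since $\|\sigma/s\|=(1+s)/s>1$, the problem defining $\rho_U^{\sigma/s}$ is meaningful and $\tau$ is admissible for it.

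The remaining step is a short computation with logarithmic potentials. From the definition of $U$ one has $2\theta\int_I\ln|x-y|\,\sigma(y)\,\rd y=sU(x)+Q(x)$, hence
\begin{align*}
2\theta\int_I\ln|x-y|\,\nu(y)\,\rd y
&=2\theta\int_I\ln|x-y|\,\sigma(y)\,\rd y-2\theta\int_I\ln|x-y|\,\mu(y)\,\rd y\\
&=sU(x)+\Bigl(Q(x)-2\theta\int_I\ln|x-y|\,\mu(y)\,\rd y\Bigr)=sU(x)+F_Q(x)+\ell_Q.
\end{align*}
Dividing by $s$ and rearranging, the effective potential of $\tau$ for the potential $U$,
\begin{align*}
F_U(x):=U(x)-2\theta\int_I\ln|x-y|\,\tau(y)\,\rd y-\ell_U,\qquad \ell_U:=-\ell_Q/s,
\end{align*}
equals $-F_Q(x)/s$. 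Therefore $F_U\geq 0$ on $\{\tau=0\}=\{\mu=\sigma\}$, $F_U=0$ on $\{0<\tau<\sigma/s\}=\{0<\mu<\sigma\}$, and $F_U\leq 0$ on $\{\tau=\sigma/s\}=\{\mu=0\}$; i.e. voids and saturated regions are interchanged between the two problems, which is precisely the variational characterization of $\rho_U^{\sigma/s}$. By uniqueness of the constrained equilibrium measure (strict convexity of the logarithmic energy on the convex admissible set), $\tau=\rho_U^{\sigma/s}$, i.e. $\sigma-\rho_Q^\sigma=s\,\rho_U^{\sigma/s}$, which rearranges to the claim. The only genuine content is the Euler--Lagrange description of the constrained minimizer together with its uniqueness; once those are granted the proof is pure bookkeeping, and I do not expect any real obstacle beyond tracking the interchange of the three regions correctly.
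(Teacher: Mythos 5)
Your proof is correct, and the region/sign bookkeeping (voids and saturated regions swapping under $F_U=-F_Q/s$, with $\{\tau=0\}=\{\rho_Q^\sigma=\sigma\}$ and $\{\tau=\sigma/s\}=\{\rho_Q^\sigma=0\}$) checks out. The paper offers no proof of this statement — it is quoted verbatim from \cite{MR1482996} — and your argument is exactly the standard duality argument behind that corollary: verify that $(\sigma-\rho_Q^\sigma)/s$ is admissible and satisfies the complementary-slackness conditions for the potential $U$ with upper constraint $\sigma/s$, then invoke uniqueness of the constrained minimizer; the only ingredients you take on faith (sufficiency of the Euler--Lagrange conditions and strict convexity of the logarithmic energy) are standard and are established in the cited reference.
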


\begin{proposition}\label{prop:equilibriumstructure}
We assume $(\ell_1, \ell_2, \cdots, \ell_{K}, \ell_{K+L+1}, \ell_{K+L+2}, \cdots, \ell_N)\in \cG_{M,L,K}$, i.e. it is in the set of ``good" boundary conditions, and $I$ is as defined in \eqref{e:bulkI} or \eqref{e:edgeI}. Then both $\rho_W$ and $N/L\theta-\rho_W$ are supported on single intervals. Especially, there exists a subinterval $[\alpha, \beta]\subset I=[a,b]$, such that 
\begin{enumerate}
\item For $x\in (\alpha, \beta)$, $0<\rho_W(x)<N/L\theta$;
\item In the bulk case,  if $a<\alpha$, then $\rho_W(x)\equiv 0$ or $\rho_W(x)\equiv N/L\theta$ on $x\in [a,\alpha]$; if $\beta<b$, then $\rho_W(x)\equiv 0$ or $\rho_W(x)\equiv N/L\theta$ on $x\in [\beta,b]$. In the left edge case: $a<\alpha$ and $\rho_W(x)\equiv 0$ on $x\in [a,\alpha]$; if $\beta<b$, then $\rho_W(x)\equiv 0$ or $\rho_W(x)\equiv N/L\theta$ for $x\in [\beta,b]$. 
\end{enumerate}
\end{proposition}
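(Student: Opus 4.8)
In the statement, $\rho_W$ is the constrained equilibrium measure defined by \eqref{e:expW2}, i.e.\ the measure $\rho_W^\sigma$ of \cite{MR1482996} for the constraint $\sigma=\frac{N}{L\theta}\mathbf 1_I$; by Proposition \ref{prop:Iproperty} and the strict inequality $\rho_V<\theta^{-1}$ on the band, $\|\sigma\|=\frac{N|I|}{L\theta}>1$, so $s:=\|\sigma\|-1>0$ and Theorems \ref{t:supp} and \ref{t:dual} apply. The plan is to verify the two convexity hypotheses needed: that $W$ is convex on the interior of $I$, and that the dual potential $U$ of Theorem \ref{t:dual} is convex on the interior of $I$. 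Granting these, Theorem \ref{t:supp} gives $\supp\rho_W=[c_1,c_2]$ a single interval; by Theorem \ref{t:dual}, $\frac{N}{L\theta}\mathbf 1_I-\rho_W=s\,\rho_U^{\sigma/s}$, whose support $[d_1,d_2]$ is then also a single interval; and a short bookkeeping with $[\alpha,\beta]:=[c_1,c_2]\cap[d_1,d_2]$ produces the asserted structure.

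\textbf{Convexity of $W$ and $U$.} Differentiating \eqref{e:Wdefinition2} twice, for $x$ in the interior of $I$,
\[
W''(x)=\frac NL\Big(V''(x)+\frac{2\theta}{N}\sum_{j\notin\{K+1,\dots,K+L\}}\frac{1}{(x-\ell_j/N)^2}\Big).
\]
On good boundary conditions the outer particles sit near their classical locations, so by Proposition \ref{prop:rig} (dyadic decomposition as in Remark \ref{r:decompint}) the sum equals $2\theta\int_{y\notin I}\rho_V(y)(x-y)^{-2}dy$ up to negligible errors. Since $I$ lies in the open band, the equilibrium equation $2\theta\,\mathrm{p.v.}\!\int\rho_V(y)(x-y)^{-1}dy=V'(x)$ differentiates to $2\theta\,\mathrm{f.p.}\!\int_{\mathbb R}\rho_V(y)(x-y)^{-2}dy=-V''(x)$, hence
\[
\frac{2\theta}{N}\sum_{j\notin\{K+1,\dots,K+L\}}\frac{1}{(x-\ell_j/N)^2}=-V''(x)-2\theta\,\mathrm{f.p.}\!\int_I\rho_V(y)(x-y)^{-2}dy+o(1),
\]
the $V''$'s cancel, and using $\mathrm{f.p.}\!\int_I(x-y)^{-2}dy=-\tfrac1{x-a}-\tfrac1{b-x}$ we get $W''(x)\asymp\frac{\theta N}{L}\rho_V(x)\big(\tfrac1{x-a}+\tfrac1{b-x}\big)>0$. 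For the dual, $U(x)=\frac1s\big(\frac{2N}{L}\int_a^b\ln|x-y|dy-W(x)\big)$ so $U''(x)=\frac1s\big(\frac{2N}{L}(\tfrac1{x-a}+\tfrac1{b-x})-W''(x)\big)$, and the same computation yields
\[
U''(x)\asymp\frac{2N}{Ls}\,\big(1-\theta\rho_V(x)\big)\Big(\frac1{x-a}+\frac1{b-x}\Big),
\]
which is positive precisely because $\rho_V(x)<\theta^{-1}$. (In the left-edge case this holds on a neighbourhood of $A$, to which the problem reduces: $W(x)=\tfrac NLF_V(x)+O(\ln N)$ is of order $N/L$ on $[\hat a,A-c]$, forcing $\rho_W\equiv0$ there, and near $A$ one has $\rho_V\to0$, so the gap $1-\theta\rho_V$ is bounded away from $0$.)

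\textbf{Assembling the structure.} With $\supp\rho_W=[c_1,c_2]$ and $\supp(\tfrac{N}{L\theta}\mathbf 1_I-\rho_W)=[d_1,d_2]$ both single intervals, put $[\alpha,\beta]=[\max(c_1,d_1),\min(c_2,d_2)]$; it is non-empty because $\rho_W$ is a probability measure while $\frac{N|I|}{L\theta}>1$ forces $\rho_W<\frac{N}{L\theta}$ on a set of positive measure. On $(\alpha,\beta)$, $0<\rho_W<\frac{N}{L\theta}$. On $[a,\alpha]$ the density is identically $0$ or identically $\frac{N}{L\theta}$ according as $c_1>d_1$ or $d_1>c_1$ — the case $c_1=d_1>a$ being impossible since it would place a point of both $\{\rho_W=0\}$ and $\{\rho_W=\frac{N}{L\theta}\}$ in $[a,c_1)$ — and symmetrically on $[\beta,b]$; checking the remaining consistency (e.g.\ $c_2>d_2$ forces $c_2=b$, which is exactly the alternative allowing a saturated piece on $[\beta,b]$) one recovers the listed cases. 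In the left-edge case, the second paragraph already gives $\rho_W\equiv0$ on $[a,\alpha]$, which is the extra refinement required there.

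\textbf{Main obstacle.} The crux is the convexity of $W$ and especially of $U$. The crude bound $\frac{2\theta}{N}\sum_j(x-\ell_j/N)^{-2}<\frac2{x-a}+\frac2{b-x}$ coming only from the minimal spacing $\theta/N$ of the boundary particles merely gives $U''\ge-\frac{N}{Ls}V''$, which is not manifestly non-negative; one genuinely needs the cancellation of $V''$ supplied by the band equilibrium equation, the replacement of the particle sum by $2\theta\int_{y\notin I}\rho_V(y)(x-y)^{-2}dy$ with a \emph{quantitative} error (Proposition \ref{prop:rig}), and the strict inequality $\rho_V<\theta^{-1}$. Carrying all of this out uniformly over the admissible intervals $I$ — in particular estimating the finite-part integral and the approximation errors near $\partial I$ in both the bulk and left-edge regimes, and reducing the left-edge problem to a neighbourhood of $A$ to avoid the blow-up of $V$ near $\hat a$ — is where the work concentrates; Theorems \ref{t:supp} and \ref{t:dual} then finish the argument.
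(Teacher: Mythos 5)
Your proof uses the same framework as the paper — namely Dragnev–Saff's Theorems \ref{t:supp} and \ref{t:dual} reduce the claim to the convexity of $W$ and of the dual potential $2\tfrac{N}{L}\int_I\ln|x-y|\,\rd y - W$ on $I$ — and the assembly of $[\alpha,\beta]$ from the two supports at the end is fine. But the way you verify convexity is different from the paper's, and the step you yourself flag as ``the crux'' is where the argument as sketched does not close.

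You replace the boundary-particle sum by the macroscopic integral $2\theta\int_{y\notin I}\rho_V(y)(x-y)^{-2}\rd y$ ``up to negligible errors,'' invoke the differentiated band equilibrium equation to cancel $V''$, and conclude $W''\asymp\frac{N}{L}\rho_V(x)\bigl(\tfrac1{x-a}+\tfrac1{b-x}\bigr)$ and $U''\asymp\frac{N}{Ls}(1-\theta\rho_V)\bigl(\tfrac1{x-a}+\tfrac1{b-x}\bigr)$. The trouble is that the ``negligible error'' cannot be made uniform over $x\in I$. Writing $d=\dist(x,\partial I)$, the signal you need to beat is $(1-\theta\rho_V)/d$, while the rigidity estimates of Proposition~\ref{prop:rig} only control particle counts at scale $M\ln N/N$; the resulting error when replacing the sum by the integral is of order $M\ln N/(Nd^2)$, which exceeds the signal for $d\lesssim M\ln N/N$. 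In that region the continuum approximation also ignores a piece of discrete structure that matters: the nearest boundary particle sits at distance exactly $\theta/N$ and contributes a term of size $N/\theta$ to $\tfrac{\theta}{N}\sum_j(\,\cdot\,)^{-2}$, whereas the corresponding slice of the $\rho_V$-integral only contributes $\sim N\rho_V$. So the approximation fails by an unbounded factor exactly where the dual potential is hardest to control. Your ``main obstacle'' paragraph names the difficulty but does not resolve it, and the proposed repair — cancellation of $V''$ via the equilibrium equation — is actually not the mechanism that rescues the estimate.

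The paper's proof avoids all of this. For $W''$ it takes a window $J$ of length $\asymp|I|$ abutting $I$, counts $\asymp L$ particles there via rigidity, and gets $\tfrac1N\sum_j(x-\ell_j/N)^{-2}\gtrsim(N/L)^{1/3}\gg1\gg|V''|$, so no cancellation is needed. For the dual, it uses the elementary inequality $\tfrac\theta N\sum_{j\geq1}(\ell+j\theta/N)^{-2}\leq1/\ell$ together with a particle-by-particle matching of the boundary particles to the maximally packed $\theta/N$-lattice; the key quantitative input is that $\int_J\rd\mu_N<|J|/\theta$ (because $\rho_V<\theta^{-1}$ and rigidity controls the count), giving an excess of lattice terms that yields $\tfrac1{b-x}-\tfrac\theta N\sum_j(x-\ell_j/N)^{-2}\gtrsim(N/L)^{2/3}$ uniformly on $I$, including arbitrarily close to $\partial I$. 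That argument is purely discrete, works with the crude $O(1)$ bound on $V''$, and handles the left-edge case simply by dropping the left sum; no reduction to a neighbourhood of $A$ and no finite-part computation are required. If you want to push your route through, you would have to supplement it near $\partial I$ with essentially the paper's lattice-comparison estimate, at which point the equilibrium-equation cancellation becomes superfluous.
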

\begin{remark} The previous proposition uses crucially that $1/\theta-\rho_{V}$ is uniformly bounded below by a positive constant  on $I$.
\end{remark}
\begin{proof}
Thanks to Theorems \ref{t:supp} and \ref{t:dual}, we only need to show that $W$ and $2(N/L)\int_I\ln |x-y|\rd y-W$ are convex on $I$. We first check that $W$ is convex,
\begin{align}\label{e:w''}
W''(x)=\frac{N}{L}\left(V''(x)+\frac{2\theta}{N}\sum_{j\geq K+L+1}\frac{1}{(x-\ell_i/N)^2}+\frac{2\theta}{N}\sum_{j\leq K}\frac{1}{(x-\ell_i/N)^2}\right).
\end{align}
We  study the first sum, the second sum is analogous. Let $\kappa_b=\dist(b, \{A,B\})$, notice that if $I$ is a bulk interval $\kappa_b\gtrsim (L/N)^{2/3}$, and if $I$ is the left edge interval $\kappa_b\asymp (L/N)^{2/3}$. We take an interval $J=[b, b']$, where $b'\deq b+c(L/N\sqrt{\kappa_b})$, for some small $c>0$, such that $J$ satisfies the conditions in Proposition \ref{prop:rig}. Then we have
\begin{align}\nonumber
\int_{J}\rd\mu_N(x)=\int_J \rho_V(x)\rd x +\OO(\ln N M/N)\asymp L/N.
\end{align}
Therefore, we can lower bound the first sum in \eqref{e:w''} for all $x\in I=[a,b]$:
\begin{align}\nonumber
\frac{1}{N}\sum_{j=K+L+1}^{N}\frac{1}{(x-\ell_i/N)^2}
\gtrsim \frac{1}{N}\sum_{j\geq K+L+1: \atop\ell_j/N\leq b'}\frac{1}{(x-b')^2}
\gtrsim \frac{L}{N}\frac{1}{(x-b')^2}
\gtrsim\frac{ L}{N}\frac{1}{(L/N\sqrt{\kappa_b})^2}
\gtrsim \frac{N}{L\kappa_b}\gtrsim \left(\frac{N}{L}\right)^{1/3}.
\end{align} We deduce  that $W''(x)\gtrsim (N/L)^{4/3}$, and $W$ is convex on $I$.

The second derivative of $2(N/L)\int_I\ln |x-y|\rd y-W$ is given by
\begin{align}\nonumber
\frac{2N}{L}\left(\frac{1}{x-a}+\frac{1}{b-x}-\frac{V''(x)}{2}-\frac{\theta}{N}\sum_{j\geq K+L+1}\frac{1}{(x-\ell_i/N)^2}-\frac{\theta}{N}\sum_{j\leq K}\frac{1}{(x-\ell_i/N)^2}\right).
\end{align}
Notice that for any $\ell> 0$
\begin{align}\nonumber
 \frac{\theta}{N}\sum_{j=1}^{\infty}\frac{1}{(\ell+j\theta/N)^{2}} \le \theta\int_0^\infty\frac{dx}{(\ell+\theta x)^{2}}=\frac{1}{\ell}.
\end{align}
Let $J$ be the interval as defined above. The same argument gives
\begin{align*}
\frac{1}{b-x}-\frac{\theta}{N}\sum_{j=K+L+1}^{N}\frac{1}{(x-\ell_j/N)^2}
&\geq \frac{\theta}{N}\sum_{j=1}^{\infty}\frac{1}{(b-x+j\theta/N)^{2}} -\frac{\theta}{N}\sum_{j=K+L+1}^{N}\frac{1}{(x-\ell_j/N)^2}\\
&\geq \frac{\theta}{(b'-x)^2}\left(|J|/\theta-\int_J \rd\mu_N(x)\right)\\
&\geq  \frac{\theta}{(b'-x)^2}\left(|J|/\theta-\int_J \rho_V(x)\rd x -\OO(M/N)\right)\\
&\gtrsim \frac{|J|}{(b'-x)^2}\gtrsim \frac{N\sqrt{\kappa_b}}{L}\gtrsim \left(\frac{N}{L}\right)^{2/3},
\end{align*}
where we used that $\ell_{K+L+1}/N-x\geq \theta/N$, $|\ell_{j+1}-\ell_i|\geq \theta$ and $\rho_V(x)<\theta^{-1}$. 
Similarly if $I$ is a bulk interval (otherwise this sum is empty), 
\begin{align}\nonumber
\frac{1}{x-a}-\frac{\theta}{N}\sum_{j=1}^{K}\frac{1}{(x-\ell_i/N)^2}
\gtrsim \left(\frac{N}{L}\right)^{2/3}.
\end{align}
It follows that $\left((N/L)\int_I\ln |x-y|\rd y-W\right)''\gtrsim (N/L)^{5/3}$, and  $(N/L)\int_I\ln |x-y|\rd y-W$ is convex.

\end{proof}

\begin{proposition}\label{prop:xiWestimate}
If $(\ell_1, \ell_2,\cdots, \ell_{K},\ell_{K+L+1}, \ell_{K+L+2}, \cdots, \ell_{N})\in \cG_{M,L,K}$, i.e. it is in the set of ``good" boundary conditions, and $I$ is as defined in \eqref{e:bulkI} or \eqref{e:edgeI}, then for $x\in I$, we have $F_W(x)=NF_V(x)/L+\OO(\ln N)$. Moreover for any $p>0$, on $[a+N^{-p},\alpha-N^{-p}]\cup [\beta+N^{-p}, b-N^{-p}]$ ($\alpha$ and $\beta $ are as in Proposition \ref{prop:equilibriumstructure}), we have $F_W'(x)=\OO(\ln N N/L)$. 
\end{proposition}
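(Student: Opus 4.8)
The plan is to read everything off the characterization \eqref{e:expW2} of the constrained equilibrium measure $\rho_W$. Write $U_{\rho_W}(x)=\int_a^b\ln|x-y|\,\rho_W(y)\rd y$, so that $F_W(x)=W(x)-2\theta U_{\rho_W}(x)-f_W$, where $F_W\equiv 0$ on the band $(\alpha,\beta)$ supplied by Proposition~\ref{prop:equilibriumstructure}, and recall $0\le\rho_W\le m\deq N/(L\theta)$ with $\int_I\rho_W=1$; since $L\ge M\ge N^{\fa}$ we have $\ln m=\OO(\ln N)$. The one elementary input used throughout is a log-potential bound: if $\nu$ is a (sub)probability density with $0\le\nu\le m$ supported in an interval of length $\OO(1)$, then $\int|\ln|x-y||\,\nu(y)\rd y=\OO(\ln N)$ for every $x$, and $\bigl|\mathrm{p.v.}\!\int\frac{\nu(y)}{x-y}\rd y\bigr|=\OO(m\ln N)$ whenever $x$ lies at distance $\ge N^{-p}$ from $\supp\nu$ and from the endpoints; both follow by splitting the integral at scale $1/m$ around the point of $\supp\nu$ nearest $x$ and using $\int\nu\le 1$ on the complement. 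In particular $|U_{\rho_W}|=\OO(\ln N)$ on $I$.

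For the first assertion, fix a base point $x_0\in(\alpha,\beta)$ and eliminate $f_W$ via $F_W(x_0)=0$:
$$F_W(x)=\big(W(x)-W(x_0)\big)-2\theta\int_a^b\ln\frac{|x-y|}{|x_0-y|}\,\rho_W(y)\rd y.$$
By Proposition~\ref{prop:Westimate}, $W(x)-W(x_0)=(N/L)(F_V(x)-F_V(x_0))+\OO(\ln N)$, and by the log-potential bound the integral is $\OO(\ln N)$; hence $F_W(x)=(N/L)F_V(x)-(N/L)F_V(x_0)+\OO(\ln N)$, and it remains to check that $(N/L)F_V(x_0)=\OO(\ln N)$. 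In the bulk case this is immediate: by Proposition~\ref{prop:Iproperty}, $\rho_V\asymp\sqrt{\kappa_I}$ and $\rho_V<\theta^{-1}$ uniformly on $I$, so $I$ lies inside a single band of $\rho_V$ and $F_V\equiv 0$ on $I$. In the left-edge case, either $(\alpha,\beta)$ meets $[A,b]$, in which case one picks $x_0$ there where $F_V=0$; or $\beta\le A$, and then $[A,b]$ cannot be a saturated region of $\rho_W$ (that would force mass $\gtrsim m(b-A)\asymp(N/L)^{1/3}\gg1$), so $\rho_W\equiv 0$ on $[A,b]$, where $W=(N/L)F_V+\OO(\ln N)=\OO(\ln N)$; the equilibrium inequality $W\ge f_W+2\theta U_{\rho_W}$ on this void together with $|U_{\rho_W}|=\OO(\ln N)$ gives $f_W=\OO(\ln N)$, whence $(N/L)F_V(x_0)=W(x_0)+\OO(\ln N)=f_W+2\theta U_{\rho_W}(x_0)+\OO(\ln N)=\OO(\ln N)$.

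For the derivative, differentiate to get $F_W'(x)=W'(x)-2\theta\,\mathrm{p.v.}\!\int_a^b\frac{\rho_W(y)}{x-y}\rd y$, with $W'(x)=\OO((N/L)\ln N)$ by Proposition~\ref{prop:Westimate}. On $[a+N^{-p},\alpha-N^{-p}]\cup[\beta+N^{-p},b-N^{-p}]$, Proposition~\ref{prop:equilibriumstructure} tells us $\rho_W$ is identically $0$ or identically $m$ on the adjacent piece $[a,\alpha]$, resp.\ $[\beta,b]$. The singular integral then splits into the contribution of $\rho_W{\bf 1}_{[\alpha,\beta]}$, which is an ordinary integral since $x$ is at distance $\ge N^{-p}$ from $[\alpha,\beta]$ and hence $\OO((N/L)\ln N)$ by the log-potential bound, plus, if the adjacent piece is saturated, the term $m\cdot\mathrm{p.v.}\!\int\frac{\rd y}{x-y}$ over that piece, which is $m\,\OO(\ln N)=\OO((N/L)\ln N)$ because the relevant endpoints are at distance $\ge N^{-p}$ from $x$. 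Summing, $F_W'(x)=\OO((N/L)\ln N)$.

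The only genuinely delicate point is the edge-case step in the second paragraph: there $I$ has length $\asymp 1$ while $\rho_W$ is confined to a window of size $\OO((L/N)^{2/3})$ near $A$, so one cannot merely take $x_0$ to be the center of $I$; one must first localize the band $(\alpha,\beta)$ next to $A$, which is where the square-root vanishing of $\rho_V$ at $A$ (equivalently, the steepness of the well of $W$) enters and where one must keep track of which of $[a,\alpha]$ and $[\beta,b]$ is void versus saturated. Everything else is bookkeeping with Propositions~\ref{prop:Westimate}, \ref{prop:Iproperty}, \ref{prop:equilibriumstructure} and the elementary log-potential estimate above.
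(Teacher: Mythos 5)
Your proof is correct and follows essentially the same route as the paper: both rest on the representation $F_W=W-2\theta U_{\rho_W}-f_W$, the bound $W=(N/L)F_V+\OO(\ln N)$ from Proposition~\ref{prop:Westimate}, an $\OO(\ln N)$ bound on the logarithmic potential of a density bounded by $N/(L\theta)$, and a case analysis anchoring the additive constant at a point of the band where $F_W$ and $F_V$ are controlled. Your base-point normalization (eliminating $f_W$ via $F_W(x_0)=0$) is a tidier way to organize the same argument; the one exposition nit is that the void inequality alone gives only $f_W\leq C\ln N$, with the matching lower bound following from $F_V(x_0)\geq 0$ together with $F_W(x_0)=0$ — which your final display implicitly uses but does not flag.
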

\begin{proof}
From the defining relation \eqref{e:expW2} of $F_W$,  and $W(x)=NF_V(x)/L+\OO(\ln N)$ from Proposition \ref{prop:Westimate}, we have
\begin{align}\nonumber
F_W(x)=\frac{NF_V(x)}{L}+\OO(\ln N) +\int_I \ln |x-y|\rho_W(y)\rd y +f_W
=\frac{NF_V(x)}{L}+f_W+\OO(\ln N),
\end{align}
where we used that $\rho_W$ has total mass $1$ and $\rho_W\leq N/L\theta$. If $I$ is a bulk interval, then for any $x\in \supp\rho_W$, we have $x\in [A,B]$, and $F_W(x)=F_V(x)=0$. Therefore $f_W=\OO(\ln N)$. If $I$ is the left edge interval, there are two cases. If $\rho_W\equiv 1$ on $x\in [\beta, b]$ (where $\beta$ is as in Propostion \ref{prop:equilibriumstructure}), then $b-\beta\lesssim L/N\ll b-A\asymp (L/N)^{2/3}$. As a consequence $\beta\in [A,B]$, $F_W(\beta)=0$ and $F_{V}(\beta)=0$. We have $f_W=\OO(\ln N)$. Otherwise, we have $F_W(a)\geq 0$ and $F_V(a)= 0$. So $f_W\geq -C\ln N$. On the other hand, for any $x\in \supp \rho_W$, we have $F_W(x)=0$ and $F_V(x)\geq 0$. So $f_W\leq C\ln N$. In any case, we have $f_W=\OO(\ln N)$, and $F_W(x)=NF_V(x)/L+\OO(\ln N)$.

From Proposition \eqref{prop:Westimate}, $W'(x)=\OO(\ln N N/L)$. Thus,
\begin{align}\nonumber
F_W'(x)=\OO(\ln N N/L) +P.V.\int_I \frac{\rho_W(y)}{x-y}\rd y.
\end{align}
By symmetry, we assume $x\in [\beta+N^{-p}, b-N^{-p}]$ (the case when $x\in [a+N^{-p}, \alpha-N^{-p}]$ is analogous). Since $\rho_W\leq N/L\theta$ and on $[\beta,b]$ $\rho_W\equiv 0$ or $\rho_W \equiv N/L\theta$,
\begin{align}\nonumber
P.V.\int_I \frac{\rho_W(y)}{x-y}\rd y=
\int_a^\beta \frac{\rho_W(y)}{x-y}\rd y+P.V. \int_{\beta}^b  \frac{\rho_W(y)}{x-y}\rd y=\OO(\ln N N/L). 
\end{align}

\end{proof}

For any particle configuration $\ell_{K+1}< \ell_{K+2}<\cdots< \ell_{K+L}\in \bW_N^\theta$, we denote  $ \mu_L=1/L \sum_{i=K+1}^{K+L}\delta_{\ell_i/N}$, and $\rho_L(x)\rd x$ the convolution of the empirical measure $\mu_L$ with uniform measure on the interval $[-N^{-p}/2, N^{-p}/2]$. For any measure $\rho(x)\rd x$, we denote 
\begin{align}\label{def:energyfunctional}
\cD^{2}[\rho]=-\int \ln|x-y| \rho(x)\rho(y)\rd x\rd y.
\end{align}
We have the following large deviation estimate:
\begin{proposition}\label{prop:ldb}
Given $(\ell_1, \ell_2,\cdots, \ell_{K},\ell_{K+L+1}, \ell_{K+L+2}, \cdots, \ell_{N})\in \cG_{M,L,K}$, i.e. it is in the set of ``good" boundary conditions, and $I$ is as defined in \eqref{e:bulkI} or \eqref{e:edgeI}. Let $\rho_W$ be the density defined by \eqref{defrhoW}. Then there exists $C\in \bR$ such that for all $\gamma>0$ we have
\begin{align}\label{e:ldb}
\bP_L\left(\cD [\rho_W-\rho_L]\geq \gamma\right)\leq \exp\left(C\ln N L-\gamma^2 L^2\right),
\end{align}
and for any Lipschitz function $g(x)$ on $I$,  if $\|g\|_{{1/2}}=\left(\int_0^{\infty} s|\hat g|^{2} ds \right)^{1/2}$, we have
\begin{align}\label{e:integ}
\bP_L\left(\left|\int g(x)(\rd \mu_L(x)-\rho_W(x)\rd x)\right|\geq \gamma \|g\|_{1/2}+N^{-p}\|g\|_{\text{Lip}}\right)\leq \exp\left(C\ln N L-\gamma^2 L^2\right).
\end{align}
\end{proposition}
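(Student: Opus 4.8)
The plan is to establish \eqref{e:ldb} first, since \eqref{e:integ} follows from it by the same Fourier-analytic argument used in the proof of Proposition \ref{p:LDP0}. Write the conditioned measure in the form \eqref{e:conddist}--\eqref{e:defHL}, so that
\begin{align*}
\bP_L \propto \exp\left(\theta \sum_{K+1\le i\ne j\le K+L}\ln\left|\frac{\ell_i}{N}-\frac{\ell_j}{N}\right| - L\sum_{i=K+1}^{K+L}W\left(\frac{\ell_i}{N}\right)+\OO(L\ln N)\right).
\end{align*}
The key algebraic identity is the standard completion of the square: for a probability density $\rho$ with $\int\rho=1$ and total number of particles $L$, one has
\begin{align*}
\theta\sum_{i\ne j}\ln\left|\frac{\ell_i}{N}-\frac{\ell_j}{N}\right| - L\sum_i W\left(\frac{\ell_i}{N}\right)
= -\theta L^2\, \cD^2[\rho_L-\rho_W] + L^2\left(\theta\cD^2[\rho_W] - \text{(linear term)}\right)+(\text{lower order}),
\end{align*}
where the cross terms involving $\int W\,d\rho_L$ cancel against $\int\int \ln|x-y|\rho_W(x)\,d\rho_L(y)$ precisely because of the Euler--Lagrange relation \eqref{e:expW2} defining $\rho_W$, up to an error supported on the region where $F_W>0$ which is controlled since $\rho_L$ charges that region by at most $\OO(\ln N\, M/N)$ worth of mass (or is exactly zero there by the hard constraint). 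Here $\rho_L$ is the smoothed empirical density at scale $N^{-p}$, and passing from $\mu_L$ to $\rho_L$ in the logarithmic energy costs only $\OO(L\ln N)$ because the self-energy $-\theta\sum_{i}\ln(N^{-p})$ and the difference of double integrals are both $\OO(L\ln N)$ using the minimal spacing $\theta/N$ of the $\ell_i$ and $p>2$.

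With this identity, $\bP_L(\cD[\rho_W-\rho_L]\ge\gamma)$ is bounded by $e^{-\theta L^2\gamma^2}$ times the ratio of a restricted partition function to the full one; the crude bounds $e^{\OO(L\ln N)}$ on the Radon--Nikodym derivative between $\bP_L$ and the ``reference'' measure (Coulomb gas with potential $W$) and a matching lower bound on $Z_L'$ obtained by restricting to configurations near $\rho_W$ (which exist because $\rho_W\le N/L\theta$ and $\rho_W$ is supported on a single interval by Proposition \ref{prop:equilibriumstructure}) yield the prefactor $\exp(C\ln N\, L)$. This is structurally identical to \cite[Proposition 2.15]{Borodin2016} and to Proposition \ref{p:LDP0}; the only new input needed is that the potential $W$ is well-behaved, which is exactly the content of Propositions \ref{prop:Westimate}, \ref{prop:equilibriumstructure} and \ref{prop:xiWestimate} (boundedness of $W$, $W'$, convexity giving single-interval support, and $F_W' = \OO(\ln N\, N/L)$ away from the band edges).

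For \eqref{e:integ}, given a Lipschitz $g$ on $I$ one writes $\int g\,d(\mu_L-\rho_W\,dx) = \int g\,d(\rho_L-\rho_W\,dx) + \int g\,d(\mu_L-\rho_L)$; the second term is $\OO(N^{-p}\|g\|_{\mathrm{Lip}})$ deterministically, and the first is bounded by $\|g\|_{1/2}\,\cD[\rho_L-\rho_W]$ via the Plancherel-type identity \eqref{eq_quadratic_alternative} and Cauchy--Schwarz, exactly as in \eqref{bent}--\eqref{e:logxy}. Then \eqref{e:ldb} gives the tail bound. \textbf{The main obstacle} I anticipate is making the cancellation of the cross terms rigorous when the support of $\rho_L$ is not contained in $\{F_W=0\}$: one must show the ``leakage'' $\int F_W\,d\mu_L$ restricted to the void/saturated part of $I$ is negligible at scale $L^2$. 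In the saturated direction this is why the statement requires $1/\theta-\rho_V$ bounded below (cf.\ the remark after Proposition \ref{prop:equilibriumstructure}); in the void direction, any particle $\ell_i$ with $F_W(\ell_i/N)$ of order $1$ is exponentially costly under $\bP_L$ at rate $L$, hence contributes only to an event of probability $e^{-cL}\ll e^{-\gamma^2 L^2 + C L\ln N}$ in the relevant range of $\gamma$, and this can be absorbed. Handling this leakage cleanly, together with the smoothing estimates relating $\mu_L$, $\rho_L$ and their logarithmic energies, is where the real work lies; everything else is a transcription of the continuous $\beta$-ensemble large-deviation machinery.
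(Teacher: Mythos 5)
Your overall architecture matches the paper's: complete the square to isolate $\theta\,\cD^2[\rho_W-\rho_L]$, use the Euler--Lagrange relation for $\rho_W$, bound the conditioned measure by a restricted/full partition function ratio, construct a test configuration near the quantiles of $\rho_W$ to lower-bound $Z_L'$, and deduce \eqref{e:integ} from \eqref{e:ldb} by the Fourier argument of Proposition \ref{p:LDP0}. That part is correct.

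However, your handling of the ``leakage'' term $\int F_W\,\rd\mu_L$ has a genuine gap, and you also misidentify where the actual difficulty lies. On voids, the defining relation \eqref{e:expW2} gives $F_W\geq 0$, so $\int_{F_W\geq 0}F_W\,\rho_L\geq 0$ and in the upper bound on $\bP_L$ this contribution can simply be dropped (it only makes the exponent \emph{more} negative); no probabilistic argument is needed, and in fact the test configuration used for the lower bound on $Z_L'$ never charges the voids at all, since it is supported on $\supp\rho_W$. The place where real work is required is the \emph{saturated} part, where $F_W<0$ and $\rho_W\equiv N/L\theta$: there one needs the deterministic bound $\int_{F_W<0}F_W\,\rho_L\geq \frac{N}{\theta L}\int_{F_W<0}F_W\,\rd x + \OO(\ln N/L)$, which uses the $\theta/N$ separation of the particles and Proposition \ref{prop:xiWestimate} ($F_W=\OO(\ln N)$, $F_W'=\OO(\ln N\,N/L)$ there), and the \emph{same} quantity $\frac{N}{\theta L}\int_{F_W<0}F_W\,\rd x$ then cancels against the matching term arising in the lower bound on $Z_L'$. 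Your proposed alternative --- treating any particle with $F_W(\ell_i/N)\asymp 1$ as exponentially rare at rate $L$, and absorbing the $e^{-cL}$ into the right-hand side --- does not close: the bound \eqref{e:ldb} must hold for \emph{all} $\gamma>0$, and whenever $\gamma^2 L^2 > cL + CL\ln N$ (i.e.\ $\gamma\gtrsim\sqrt{\ln N/L}$) the right-hand side $\exp(CL\ln N - \gamma^2 L^2)$ is already smaller than $e^{-cL}$, so a separate event of probability $e^{-cL}$ cannot be absorbed. The deterministic sign argument both avoids this quantitative obstruction and is simpler.
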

\begin{proof}
The Hamiltonian $H_L$ from \eqref{e:defHL} can be written as
\begin{align}\begin{split}\label{e:ldb1}
&L^{-2}H_L(\ell_{K+1}, \ell_{K+2},\cdots, \ell_{K+L})
=-\theta \int_{x\neq y}\ln|x-y|\rd\mu_L(x)\rd\mu_L(y)+\int W(x)\rd \mu_L(x)\\
=&-\theta \int \ln|x-y|\rho_L(x)\rho_L(y)\rd x\rd y+\int W(x)\rho_L(x)\rd x+\OO\left(\frac{\ln N}{L}\right)\\
=&\theta \cD^2[\rho_W-\rho_L]+\theta \int  \ln|x-y|\rho_W(x)\rho_W(y)\rd x\rd y+\int F_W(x)\rho_L(x)\rd x+f_W+\OO\left(\frac{\ln N}{L}\right),
\end{split}\end{align}
where we used 
\begin{align}\nonumber
W(x)=2\theta\int \ln |x-y|\rho_W(y)\rd y +F_W(x)+f_W.
\end{align}
From the relation \eqref{e:expW2}, $F_W(x)<0$ happens only if $x\in [a,\alpha]$ and $\rho_W\equiv N/L\theta$, or $x\in [\beta, b]$ and $\rho_W\equiv N/L\theta$, where $\alpha,\beta$ are as in Proposition \eqref{prop:equilibriumstructure}.
\begin{align}\label{e:intFW}
\int F_W(x)\rho_L(x)\rd x\geq \int_{F_W<0} F_W(x)\rho_L(x)\rd x.
\end{align}
In the following, we estimate the right hand side of \eqref{e:intFW}. Say $F_W<0$ on $[a,\alpha]$ as in Proposition \eqref{prop:equilibriumstructure}. Since from Proposition \ref{prop:xiWestimate}, we know $F_W(x)=F_V(x)N/L+\OO(\ln N)$ and $F_W'(x)=\OO(\ln N N/L)$. Especially,  we have       $1_{F_W<0}F_W(x)=\OO(\ln N)$. It follows
\begin{align}\nonumber
\int_{a}^{\alpha} F_W(x)\rho_L(x)\rd x=\frac{N}{\theta L}\sum_{\ell_i\in [a,\alpha]}\int_{\ell_i-\theta/2N}^{\ell_i+\theta/2N} F_W(x)\rd x +\OO\left(\frac{\ln N}{L}\right)\geq
\frac{N}{\theta L}\int_{a}^{\alpha} F_W(x)\rd x +\OO\left(\frac{\ln N}{L}\right).
\end{align}
If $F_W<0$ on $[\beta, b]$, the same estimate holds. And it follows,
\begin{align}\label{e:xiWbound}
\int F_W(x)\rho_L(x)\rd x\geq \frac{N}{\theta L}\int_{F_W< 0} F_W(x)\rd x +\OO\left(\frac{\ln N}{L}\right).
\end{align}
Thus we can rewrite the distribution $\bP_L$ as
\begin{align}\begin{split}\label{e:upperbound0}
&\bP_L(\ell_{K+1}, \ell_{K+2}\cdots, \ell_{K+L})
=\frac{1}{Z_L'}e^{-H_L(\ell_{K+1}, \cdots, \ell_{K+L})+\OO(L\ln N)}\\
\leq &\frac{1}{Z_L'}\exp\left(L^2\left(\theta \cD^2[\rho_W]-\theta \cD^2[\rho_W-\rho_L]-\frac{N}{\theta L}\int_{F_W<0} F_W(x)\rd x-f_W+\OO\left(\frac{\ln N}{L}\right)\right)\right).
\end{split}\end{align}

In the following we obtain a lower bound for the partition function $Z_L'$. Let $x_i$, $i=1,2,\cdots, L$ be quantiles of $\rho_W$ defined through
\begin{align}\nonumber
\int_a^{x_i}\rho_W(x)\rd x=\frac{i-1}{L}, \quad 1\leq i \leq L.
\end{align}
Since $\rho_W \leq N/L\theta$, $\theta(x_{i+1}-x_i)\geq 1/N$, and there exists an universal constant $C$ and a particle configuration $(\tilde \ell_{K+1}, \tilde \ell_{K+2},\cdots,\tilde  \ell_{K+L})\in \bW_N^\theta$ such that  
\begin{align}\label{e:l-x}
|\tilde \ell_{K+i}/N-x_i|\leq C/N.
\end{align}
We denote the empirical particle density $ \rd \tilde\mu_L=1/L \sum_{i=1}^{L}\delta_{x_i}$. We can approximate the Hamiltonian $H_L(\tilde \ell_{K+1}, \tilde \ell_{K+2}, \cdots, \tilde \ell_{K+L})
$ by $H_L(Nx_1, Nx_2,\cdots, Nx_L)$, and their difference is negligible,
\begin{align}\begin{split}\nonumber
&H_L(\tilde \ell_{K+1}, \tilde \ell_{K+2}, \cdots, \tilde \ell_{K+L})
=-\theta \sum_{i\neq j} \ln \left|\tilde \ell_{K+i}/N-\tilde \ell_{K+j}/N\right|+L\sum_i W(\tilde \ell_{K+i}/N)\\
=&H_L(Nx_1, Nx_2,\cdots, Nx_L)-\theta \sum_{i\neq j} \ln \left|\frac{\tilde \ell_{K+i}/N-\tilde \ell_{K+j}/N}{x_i-x_j}\right|+L\sum_i \left(W(\tilde \ell_{K+i}/N)-W(x_i)\right)\\
=&H_L(Nx_1, Nx_2,\cdots, Nx_L)+\OO\left(\sum_{i\neq j}  \left|\frac{(\tilde \ell_{K+i}/N-x_i)-(\tilde \ell_{K+j}/N-x_j)}{x_i-x_j}\right|+L\sum_i \|W'\|_{\infty}| \tilde\ell_{K+i}/N-x_i|\right)\\
=&H_L(Nx_1, Nx_2,\cdots, Nx_L)+\OO\left(L\ln N\right),
\end{split}\end{align}
where we used $|\tilde \ell_i/N-x_i|\leq C/N$ from \eqref{e:l-x}, $|x_i-x_j|\geq \theta|i-j|/N$ and $\|W'\|_{\infty}=\OO(N\ln N/L)$ from Proposition \ref{prop:Westimate}. Similar to the derivation in \eqref{e:ldb1}, we have
\begin{align}\begin{split}\label{e:HLx}
&L^{-2}H_L(Nx_1,Nx_2,\cdots, Nx_L)
=-\theta \int_{x\neq y} \ln \left|x-y\right|\rd\tilde \mu_L \rd \tilde \mu_L+\int  W\left(x\right)\rd \tilde\mu_L\\
=&-\theta \cD^2[\rho_W]-\theta \int_{x\neq y}\ln |x-y|(\rho_W(x)\rd x-\rd\tilde\mu_L(x))(\rho_W(y)\rd y -\rd\tilde \mu_L(x))+\int F_W(x)\rd\tilde\mu_L(x) +f_W.
\end{split}\end{align}
For the second term on the righthand side of \eqref{e:HLx}, we can divide the integral region into small rectangles, $[x_i, x_{i+1})\times [x_j, x_{j+1})$, for any $1\leq i,j\leq L$. We have the trivial upper bound for $i<j$
\begin{align}\label{e:trivialbound}
\int_{x\in[x_i,x_{i+1}),\atop y\in[x_j, x_{j+1})}1_{x\neq y}\ln |x-y|(\rho_W(x)\rd x-\rd\tilde\mu_L(x))(\rho_W(y)\rd y -\rd\tilde \mu_L(x))
=\OO\left(\frac{\ln N}{L^2}\right).
\end{align}
We can directly bound the integrals over rectangles close to diagonal by \eqref{e:trivialbound}, 
\begin{align}\begin{split}\nonumber
 &\int_{x\neq y}\ln |x-y|(\rho_W(x)\rd x-\rd\tilde\mu_L(x))(\rho_W(y)\rd y -\rd\tilde \mu_L(x))\\
=&2\sum_{i+2\leq j} \int_{x\in[x_i,x_{i+1}),\atop y\in[x_j, x_{j+1})}\ln |x-y|(\rho_W(x)\rd x-\rd\tilde\mu_L(x))(\rho_W(y)\rd y -\rd\tilde \mu_L(x))+\OO\left(\frac{\ln N}{L}\right)\\
=&2\sum_{i+2\leq j}  \int_{x\in[x_i,x_{i+1}),\atop y\in[x_j, x_{j+1})}\ln\frac{ (y-x)(x_j-x_i)}{(y-x_i)(x_j-x)}\rho_W(x)\rho_W(y)\rd x\rd y +\OO\left(\frac{\ln N}{L}\right).
\end{split}\end{align}
For $x\in[x_i,x_{i+1})$, $y\in [x_j,x_{j+1})$ and $j\geq i+2$, we have
\begin{align}\nonumber
0\leq \ln\frac{ (y-x)(x_j-x_i)}{(y-x_i)(x_j-x)}
\leq \ln \frac{(x_{j+1}-x_{i+1})(x_{j}-x_i)}{(x_{j+1}-x_i)(x_{j}-x_{i+1})}.
\end{align}
Therefore, it follows
\begin{align}\begin{split}\nonumber
0&\leq 2\sum_{i+2\leq j}  \int_{x\in[x_i,x_{i+1}),\atop y\in[x_j, x_{j+1})}\ln\frac{ (y-x)(x_j-x_i)}{(y-x_i)(x_j-x)}\rho_W(x)\rho_W(y)\rd x\rd y\\
&\leq\frac{2}{L^2}\ln \prod_{i+2\leq j} \frac{(x_{j+1}-x_{i+1})(x_{j}-x_i)}{(x_{j+1}-x_i)(x_{j}-x_{i+1})}\\
&\leq\frac{2}{L^2}\ln \prod_{j=3}^L \frac{(x_{j+1}-x_{j-1})(x_{j}-x_{j-1})}{(x_{3}-x_1)(x_{L+1}-x_{1})}=\OO\left(\frac{\ln N}{L}\right),
\end{split}\end{align}
where we used that $\theta/N\leq x_j-x_i\leq 1$ for $i<j$. The second term on the righthand side of \eqref{e:HLx} can be bounded,
\begin{align}\label{e:HLx2}
\theta \int_{x\neq y}\ln |x-y|(\rho_W(x)\rd x-\rd\tilde\mu_L(x))(\rho_W(y)\rd y -\rd\tilde \mu_L(x))=\OO\left(\frac{\ln N}{L}\right).
\end{align}
In the following we estimate the third term on the righthand side of \eqref{e:HLx}. We recall the defining relations \eqref{e:expW2} of $\rho_W$. $\tilde \mu_L(x)$ is supported on the support of $\rho_W$, so on the support of $\tilde \mu_L(x)$, we have $F_W(x)\leq 0$, and 
$$
\int F_W(x)\rd\tilde\mu_L(x)=\int_{F_W<0} F_W(x)\rd\tilde\mu_L(x)
$$
Moreover, $F_W(x)<0$ only if $x\in [a,\alpha]$ and $\rho_W\equiv N/L\theta$ on $[a,\alpha]$, or $x\in [\beta, b]$ and $\rho_W\equiv N/L\theta$ on $[\beta,b]$. Say $F_W(x)<0$ on $[a,\alpha]$. From our choices of $x_i$, if $x_i,x_{i+1}\in [a,\alpha]$, then $x_{i+1}-x_i=\theta/N$. The same argument as for \eqref{e:xiWbound} implies 
\begin{align}\nonumber
\int_{a}^{\alpha} F_W(x)\rd \tilde  \mu_L(x)\rd x=\frac{N}{\theta L}\int_{a}^{\alpha} F_W(x) \rd x+\left(\frac{\ln N}{L}\right),
\end{align}
and the same estimate holds for the integral over $[\beta, b]$ if $\rho_W\equiv N/L\theta$ on $[\beta, b]$. It follows,
\begin{align}\label{e:HLx3}
\int F_W(x)\rd\tilde\mu_L(x)=\frac{N}{\theta L}\int_{F_W<0} F_W(x)\rd x+\left(\frac{\ln N}{L}\right).
\end{align}

Combining \eqref{e:HLx2} and \eqref{e:HLx3} together, we conclude that
\begin{align}\begin{split}\label{e:ldb2}
Z_L'&\geq \bP_L(\tilde \ell_1, \tilde \ell_2,\cdots,\tilde  \ell_L)
=\exp\left(L^2\left(\theta \cD^2[\rho_W]-\frac{N}{\theta L}\int_{F_W<0 }F_W(x)\rd x-f_W+\OO\left(\frac{\ln N}{L}\right)\right)\right).\\
\end{split}\end{align}
\eqref{e:ldb} follows from \eqref{e:upperbound0} and \eqref{e:ldb2}. And \eqref{e:integ} follows from \eqref{e:ldb} by the same argument as in \cite[Corollary 2.16]{Borodin2016} (see also the proof of Proposition \ref{p:LDP0}).
\end{proof}

We collect below a few estimates on the local equilibrium measure that we prove in the appendix.

\begin{proposition}\label{prop:abestimate}[Estimates on $\alpha,\beta$]
We assume that $(\ell_1, \ell_2,\cdots, \ell_K, \ell_{K+L+1}, \ell_{K+L+2}, \cdots, \ell_N)\in \cG_{M,L,K}$, i.e. it is in the set of ``good" boundary conditions, $I$ is as defined in \eqref{e:bulkI} or \eqref{e:edgeI}, and $[\alpha,\beta]\subset I$ is from Proposition \ref{prop:equilibriumstructure}. Then we have the following estimates for the local equilibrium measure:
\begin{enumerate}
\item The bulk case: Let $\kappa_I=\dist(I, \{A,B\})$. If $\rho_W(x)\equiv 0$ on $[a,\alpha]$, then $\alpha-a=\OO(\ln NM/N\sqrt{\kappa_I})$. If $\rho_W(x)\equiv N/L\theta$ on $[a,\alpha]$, then $\alpha-a=\OO(\ln NM/N)$. If $\rho_W(x)\equiv 0$ on $[\beta,b]$, then $b-\beta=\OO(\ln NM/N\sqrt{\kappa_I})$. If $\rho_W(x)\equiv N/L\theta$ on $[\beta,b]$, then $b-\beta\equiv \OO(\ln NM/N)$.  
\item The left edge case: Let $\kappa_I=\dist(b, A)$. If $\rho_W(x)\equiv 0$ on $[\beta,b]$, then $b-\beta=\OO(\ln N M/N\sqrt{\kappa_I})$. If $\rho_W(x)\equiv N/L\theta$ on $[\beta,b]$, then $b-\beta=\OO(\ln N M/N)$.  
\end{enumerate}
\end{proposition}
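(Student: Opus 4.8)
The plan is to prove Proposition~\ref{prop:abestimate} by exploiting the defining variational relation \eqref{e:expW2} together with the approximation $W(x) = NF_V(x)/L + \OO(\ln N)$ from Proposition~\ref{prop:Westimate} and the structural result from Proposition~\ref{prop:equilibriumstructure} that $\rho_W$ and $N/L\theta - \rho_W$ are each supported on single subintervals of $I$. First, I would set up the elementary observation that the edges $\alpha,\beta$ of the band of $\rho_W$ are determined by where the effective potential $F_W$ transitions between being $0$ (on the band) and being nonzero (on a void or a saturated region adjacent to it), and that on the void/saturated pieces we have the quantitative control $F_W(x) = NF_V(x)/L + \OO(\ln N)$ and $F_W'(x) = \OO(\ln N\,N/L)$ from Proposition~\ref{prop:xiWestimate}.

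The core of the argument is then a one-dimensional ``width of the region where a function stays small'' estimate, handled slightly differently in the void case and the saturated case. In the bulk void case (say $\rho_W \equiv 0$ on $[a,\alpha]$): on the band near $\alpha$ we have $F_W \equiv 0$, while for $x < \alpha$ the function $F_W(x) \geq 0$ and $F_W(x) = NF_V(x)/L + \OO(\ln N)$. Since $I$ is a bulk interval with $\dist(I,\{A,B\}) = \kappa_I \gtrsim (L/N)^{2/3}$ and $I \subset [A,B]$, the limiting effective potential $F_V$ actually vanishes identically on $[A,B] \supset I$, so the leading term $NF_V(x)/L$ is zero on all of $I$ — hence $F_W(x) = \OO(\ln N)$ throughout the void piece $[a,\alpha]$, and the width of the void is controlled not by a growth-of-$F_W$ argument but by a potential-theoretic balance: the missing mass on $[a,\alpha]$ forces the logarithmic potential of $\rho_W$ to be small there. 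Concretely, I would use Theorem~\ref{t:dual} (the duality $\sigma = \rho_W^\sigma + s\rho_U^{\sigma/s}$ with $\sigma = (N/L\theta)\mathbf{1}_I$) to reduce the void estimate for $\rho_W$ to a support estimate for the \emph{unconstrained} equilibrium measure $\rho_U$ of the convex (by Proposition~\ref{prop:equilibriumstructure}) potential $U$, and then estimate how far the unconstrained equilibrium measure of a convex potential of size $\asymp N\ln N/L$ can push its edge — this gives $\alpha - a = \OO(\ln N\, M/N\sqrt{\kappa_I})$ once one inserts $\rho_W \asymp \rho_V \asymp \sqrt{\kappa_I}$ from Proposition~\ref{prop:Iproperty} and the total mass defect $|N\int_I \rho_V - L| = \OO(M\ln N)$. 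In the saturated case ($\rho_W \equiv N/L\theta$ on $[a,\alpha]$) the roles of $\rho_W$ and $N/L\theta - \rho_W$ are swapped by the same duality, and since $N/L\theta - \rho_V$ is bounded below by a positive constant on $I$ (the remark after Proposition~\ref{prop:equilibriumstructure}), the density of the dual object is $\asymp 1$ rather than $\asymp \sqrt{\kappa_I}$, which is exactly why the bound improves to $\alpha - a = \OO(\ln N\, M/N)$ without the $\sqrt{\kappa_I}$ in the denominator.

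For the left edge case the argument is the same for the right endpoint $\beta$ of the band, except that now $b - A \asymp (L/N)^{2/3}$ and near $A$ the limiting density genuinely vanishes like a square root, $\rho_V(x) \asymp \sqrt{[x-A]_+}$; one still gets the void bound $b - \beta = \OO(\ln N\, M/N\sqrt{\kappa_I})$ with $\kappa_I = \dist(b,A)$ and the saturated bound $b - \beta = \OO(\ln N\, M/N)$. The comparison inputs are: the mass balance $|N\int_I \rho_V - L| = \OO(M\ln N)$, the density estimates $\rho_V \asymp \sqrt{\kappa_I}$ (bulk) and $\rho_V \asymp \sqrt{[x-A]_+}$ (edge), and convexity of $W$ and of $(N/L)\int_I \ln|x-y|\rd y - W$ from Proposition~\ref{prop:equilibriumstructure}.

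I expect the main obstacle to be making the potential-theoretic balance quantitative in the bulk void case with the correct $\sqrt{\kappa_I}$ factor: because $F_V$ is identically zero on the bulk interval, one cannot read off the void width from a lower bound on the growth of $F_W$, and instead must argue that a void of width $w$ adjacent to a band where $\rho_W \asymp \sqrt{\kappa_I}$ produces a contribution to $F_W' = P.V.\int_I \rho_W(y)/(x-y)\,\rd y$ of order at least $\asymp \sqrt{\kappa_I}\,(\text{something in } w)$ near the band edge, which must be reconciled with $F_W' = \OO(\ln N\,N/L)$ from Proposition~\ref{prop:xiWestimate}; turning this into $w = \OO(\ln N\, M/N\sqrt{\kappa_I})$ requires care with the scaling $|I| \asymp L/N\sqrt{\kappa_I}$ and with which endpoint the void abuts. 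The cleanest route is probably to run everything through the duality of Theorem~\ref{t:dual} and Theorems~\ref{t:bound}--\ref{t:supp}, reducing each case to a statement about the support of an \emph{unconstrained} equilibrium measure of an explicitly bounded convex potential on an interval of known length, for which the edge displacement is classical; I would isolate that as a short lemma and then just plug in the parameter sizes. The detailed computations I would defer to the appendix, as the proposition statement already indicates.
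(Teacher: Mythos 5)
Your proposal attempts a purely deterministic, potential-theoretic argument (via $F_W$, its approximation $NF_V/L+\OO(\ln N)$, and the duality of Theorem~\ref{t:dual}), and this misses the key ingredient of the paper's proof: the comparison between the \emph{local} empirical measure $\mu_L$ and the \emph{global} empirical measure $\mu_N$, which is a probabilistic argument. The paper's proof of Proposition~\ref{prop:abestimate} is short and does not touch $F_W$ at all. It argues by contradiction: suppose (say) the void $[a,\alpha]$ had width $\gtrsim \ln N\,M/N\sqrt{\kappa_I}$, and take a bump $\chi$ supported there. By the large deviation estimate for the local measure $\bP_L$ (Proposition~\ref{prop:ldb}, never invoked in your proposal), $\int\chi\,\rd\mu_L$ is $\OO(\sqrt{\ln N/L})$ with high probability because $\rho_W\equiv 0$ on $J$. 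On the other hand, by the definition of $\cG_{M,L,K}$ and Proposition~\ref{prop:rig}, the \emph{same} particles contribute $\int\chi\,\rd\mu_N = (L/N)\int\chi\,\rd\mu_L$ and this must be within $\OO(\ln N\,M/N)$ of $\int\chi\,\rho_V$, which is of size $\asymp(\alpha-a)\sqrt{\kappa_I}$ because $\rho_V\asymp\sqrt{\kappa_I}$ on a bulk interval. Comparing the two bounds and using $M^2\gg L$ forces $(\alpha-a)\sqrt{\kappa_I}\lesssim\ln N\,M/N$. The saturated case is identical except $\theta^{-1}-\rho_V\asymp 1$, which is why the $\sqrt{\kappa_I}$ disappears from the bound.

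As for your alternative route, I do not think it closes as sketched. You correctly observe that $F_V\equiv 0$ on a bulk interval, so the $\OO(\ln N)$ error in Proposition~\ref{prop:Westimate} does not distinguish the void from the band, and the bound $F_W'=\OO(\ln N\,N/L)$ from Proposition~\ref{prop:xiWestimate} is consistent with arbitrary void width. You then defer to ``the missing mass on $[a,\alpha]$ forces the logarithmic potential of $\rho_W$ to be small there'' and to an unspecified ``classical'' support estimate for equilibrium measures of convex potentials via Theorem~\ref{t:dual}, but neither is made precise, and the duality of Theorem~\ref{t:dual} only exchanges the constrained equilibrium measure $\rho_W^\sigma$ for another \emph{constrained} equilibrium measure $\rho_U^{\sigma/s}$ — it gives no quantitative control on where its support edges sit. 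Moreover, attempting to estimate the void width from the square-root edge behavior of $\rho_W$ combined with $F_W'=\OO(\ln N\,N/L)$ gives a bound of the wrong form (it involves $\kappa_I^{-3/2}$, not $\kappa_I^{-1/2}$), and would also implicitly require density estimates of Proposition~\ref{prop:bulkeq}-type which are proved \emph{after} this proposition and depend on it — a circularity. The probabilistic route via Proposition~\ref{prop:ldb} is what makes the paper's argument both short and non-circular.
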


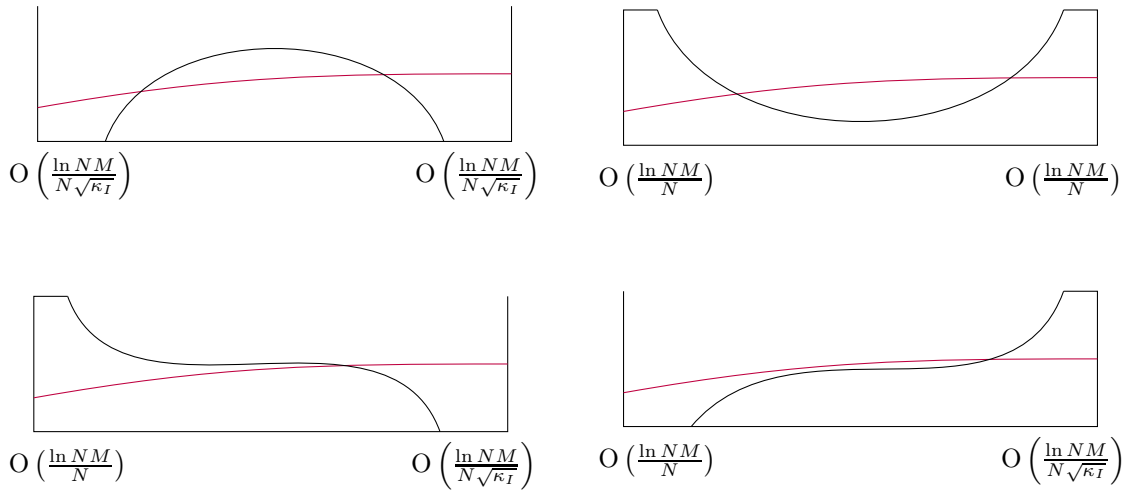
\begin{figure}[h]
\caption{equilibrium measure of localized discrete $\beta$-ensemble(bulk case).}

\vspace{1cm}
\begin{minipage}{0.4\textwidth}
\begin{tikzpicture}[scale=0.9]
\centering
\draw (0,2)--(0,0)--(7,0)--(7,2);
\draw[purple] (0,0.5) to [out=10, in=180] (7,1);
\draw (1,0) to [out=70,in=110] (6,0);
\node at (0.5,-0.5) {$\OO\left(\frac{\ln NM}{N\sqrt{\kappa_I}}\right)$};
\node at (6.5,-0.5) {$\OO\left(\frac{\ln NM}{N\sqrt{\kappa_I}}\right)$};
\end{tikzpicture}
\end{minipage}
\hspace{1cm}
\begin{minipage}{0.4\textwidth}
\begin{tikzpicture}[scale=0.9]
\centering
\draw (0.5,2)--(0,2)--(0,0)--(7,0)--(7,2)--(6.5,2);
\draw[purple] (0,0.5) to [out=10, in=180] (7,1);
\draw (0.5,2) to [out=-70,in=-110] (6.5,2);
\node at (0.5,-0.5) {$\OO\left(\frac{\ln N M}{N}\right)$};
\node at (6.5,-0.5) {$\OO\left(\frac{\ln N M}{N}\right)$};
\end{tikzpicture}
\end{minipage}

\vspace{1cm}
\begin{minipage}{0.4\textwidth}
\begin{tikzpicture}[scale=0.9]
\centering
\draw (0.5,2)--(0,2)--(0,0)--(7,0)--(7,2);
\draw[purple] (0,0.5) to [out=10, in=180] (7,1);
\draw (0.5,2) to [out=-70,in=110] (6,0);
\node at (0.5,-0.5) {$\OO\left(\frac{\ln N M}{N}\right)$};
\node at (6.5,-0.5) {$\OO\left(\frac{\ln N M}{N\sqrt{\kappa_I}}\right)$};
\end{tikzpicture}
\end{minipage}
\hspace{1cm}
\begin{minipage}{0.4\textwidth}
\begin{tikzpicture}[scale=0.9]
\centering
\draw (0,2)--(0,0)--(7,0)--(7,2)--(6.5,2);
\draw[purple] (0,0.5) to [out=10, in=180] (7,1);
\draw (1,0) to [out=50,in=-110] (6.5,2);
\node at (0.5,-0.5) {$\OO\left(\frac{\ln N M}{N}\right)$};
\node at (6.5,-0.5) {$\OO\left(\frac{\ln N M}{N\sqrt{\kappa_I}}\right)$};
\end{tikzpicture}
\end{minipage}
\end{figure}

\begin{proposition}\label{prop:bulkeq} [Estimates on the local density in the bulk]
We assume that the particle configuration $(\ell_1, \ell_2,\cdots, \ell_K, \ell_{K+L+1}, \ell_{K+L+2}, \cdots, \ell_N)\in \cG_{M,L,K}$, i.e. it is in the set of  ``good" boundary conditions, and $I=[a,b]$ is a bulk interval, i.e. $K\in\qq{\lfloor L/2\rfloor+1, N_0}$ (where $N_0\deq \max\{i: \gamma_i\leq B-r_0-r\}$), $a=(\ell_K+\theta)/N$ and $b=(\ell_{K+L+1}-\theta)/N$. Then for any $x\in I$ such that $x-a\asymp |I|$ and $b-x\asymp |I|$, the following holds
$$
\left|\rho_W(x)-\frac{N}{L}\rho_V(x)\right|\lesssim \frac{\ln N M}{L}\frac{N\sqrt{\kappa_I}}{L},
$$
where $\kappa_I=\dist(I, \{A,B\})$.
\end{proposition}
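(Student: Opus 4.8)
The plan is to compare $\rho_W$ with the rescaled restriction $\wt\rho\deq (N/L)\rho_V\mathbf 1_I$ of the global equilibrium density: I will show that both are approximate solutions of the same one-sided equilibrium problem on $I$ with upper constraint $\sigma\deq(N/L\theta)\mathbf 1_I$, and then pass from this to a pointwise bound by inverting a finite Hilbert transform on the band of $\rho_W$. In the bulk case Proposition~\ref{prop:Iproperty} gives that $I$ lies inside a band of $\rho_V$, so $F_V\equiv 0$ near $I$, $\rho_V(x)\asymp\sqrt{\kappa_I}$, hence $\wt\rho\asymp(N/L)\sqrt{\kappa_I}$ is bounded away from $0$ and from $\sigma$ on all of $I$, while $\wt\rho$ has total mass $1+\OO(M\ln N/L)$. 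Differentiating the formula~\eqref{e:Wdefinition2} for $W$, using the band relation $V'(x)=2\theta\,P.V.\!\int_{\hat a}^{\hat b}\rho_V(y)/(x-y)\,\rd y$ on $I$ and the fact that the particles indexed outside $\qq{K+1,K+L}$ are exactly those lying in $I^c$, one gets for $x\in I$
\[
W'(x)=2\theta\,P.V.\!\int_I\frac{\wt\rho(y)}{x-y}\,\rd y\;+\;2\theta\frac{N}{L}\int_{I^c}\frac{\rd(\rho_V-\mu_N)(y)}{x-y}.
\]
The last term, which is deterministic given the good boundary condition, is the quantity to be controlled. A dyadic decomposition of $I^c$ around $x$ as in Remark~\ref{r:decompint}, together with the rigidity estimates of Propositions~\ref{prop:rig} and~\ref{prop:rig2} applied on the dyadic pieces (exactly as in the proof of Proposition~\ref{prop:Westimate}), bounds it by $\veps_1\deq\OO\big(\tfrac{M\ln N}{L}\cdot\tfrac{N\sqrt{\kappa_I}}{L}\big)$ whenever $x$ is at distance $\asymp|I|$ from $\partial I$; here one uses $|I|\asymp L/(N\sqrt{\kappa_I})\gg M/(N\sqrt{\kappa_I})$ and $\kappa_I\gtrsim (L/N)^{2/3}\gg(M/N)^{2/3}$ so that the admissibility hypotheses of Proposition~\ref{prop:rig} hold down to the smallest relevant dyadic scale.

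Next I use the variational characterisation of $\rho_W$. By Proposition~\ref{prop:equilibriumstructure} there is a single open band $(\alpha,\beta)\subset I$ on which $0<\rho_W<\sigma$, so $F_W'\equiv 0$ there and, by~\eqref{e:expW2}, $W'(x)=2\theta\,P.V.\!\int_I\rho_W(y)/(x-y)\,\rd y$ for $x\in(\alpha,\beta)$. Subtracting the identity above for $\wt\rho$, the signed density $h\deq\rho_W-\wt\rho$ satisfies $P.V.\!\int_I h(y)/(x-y)\,\rd y=\OO(\veps_1)$ on $(\alpha,\beta)$. On $I\setminus(\alpha,\beta)$ Proposition~\ref{prop:equilibriumstructure} gives $\rho_W\in\{0,\sigma\}$, so $h$ there equals $-\wt\rho$ or $\sigma-\wt\rho$, of fixed sign and size $\OO(N/L)$; by Proposition~\ref{prop:abestimate} these intervals have length $\OO(\tfrac{M\ln N}{N\sqrt{\kappa_I}})$ or $\OO(\tfrac{M\ln N}{N})$, so $h_1\deq h\mathbf 1_{I\setminus(\alpha,\beta)}$ has total mass $\OO(M\ln N/L)$, and therefore so does $h_0\deq h\mathbf 1_{(\alpha,\beta)}$, since $h$ has total mass $1-(1+\OO(M\ln N/L))$. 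Because $\alpha-a$ and $b-\beta$ are $\ll|I|$, a point $x$ with $x-a\asymp|I|$ and $b-x\asymp|I|$ lies well inside $(\alpha,\beta)$ and at distance $\asymp|I|$ from $\supp h_1$, so
\[
2\theta\,P.V.\!\int_\alpha^\beta\frac{h_0(y)}{x-y}\,\rd y=\OO(\veps_1)-2\theta\int_{I\setminus(\alpha,\beta)}\frac{h_1(y)}{x-y}\,\rd y=\OO\!\Big(\frac{M\ln N}{L}\cdot\frac{N\sqrt{\kappa_I}}{L}\Big).
\]

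Finally I invert the finite Hilbert transform on $[\alpha,\beta]$: $h_0$ is recovered from the right-hand side $g$ above and from its (small) total mass by the standard airfoil formula, whose kernel carries the weight $1/\sqrt{(x-\alpha)(\beta-x)}\asymp 1/|I|$ for $x$ in the middle of $I$. The weight $\sqrt{(y-\alpha)(\beta-y)}$ in the integral against $g$ vanishes at $\alpha,\beta$ and absorbs the mild growth ($\OO(1/\dist(y,I\setminus(\alpha,\beta)))$) of $g$ near the endpoints coming from $h_1$; using $P.V.\!\int_\alpha^\beta\sqrt{(y-\alpha)(\beta-y)}/(x-y)\,\rd y=\pi\big(x-\tfrac{\alpha+\beta}{2}\big)=\OO(|I|)$ one obtains $|h_0(x)|\lesssim \veps_1+\tfrac{M\ln N}{L|I|}\lesssim\tfrac{M\ln N}{L}\cdot\tfrac{N\sqrt{\kappa_I}}{L}$. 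Since at such $x$ one has $h(x)=h_0(x)$ and $\wt\rho(x)=(N/L)\rho_V(x)$, this is the assertion. The main obstacle is the estimate defining $\veps_1$: extracting from the scale-$M/N$ rigidity a bound on $\tfrac{N}{L}\int_{I^c}\rd(\rho_V-\mu_N)/(x-y)$ with the correct $\sqrt{\kappa_I}$ factor, which is precisely where the separation of scales $M\ll L$ is essential; once that and the sizes of the active-constraint intervals (Proposition~\ref{prop:abestimate}) are in hand, the Hilbert transform inversion is routine.
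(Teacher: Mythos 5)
Your proposal follows the same strategy as the paper: differentiate the two characterizations of $W$, obtain a principal-value Hilbert-transform equation for $\rho_W-\tfrac{N}{L}\rho_V$ on the band $[\alpha,\beta]$, control the forcing with the scale-$M/N$ rigidity (via a dyadic decomposition) and with Proposition~\ref{prop:abestimate}, and then invert using the airfoil formula. The difference is one of bookkeeping. The paper inverts first and, via partial fractions, the forcing $\int_{I^c}\frac{\rd\mu-\rd\mu_N}{\,\cdot\,-z}$ turns into the integral $\int_{I^c} h(z,x)\,(\rd\mu-\rd\mu_N)$ with $h(z,x)=\frac{\sqrt{(z-\alpha)(z-\beta)}}{z-x}-1$; the combination $h(z,x)-(\text{const})$ has the crucial $\sqrt{|I_k|/|I|}$ and $|I|/|I_k|$ decay factors that make the dyadic sums converge geometrically to $\ln N\,M/N$ (see \eqref{e:inth3}--\eqref{e:inth4}). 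You instead bound the forcing $g$ in sup norm and \emph{then} invert; this is essentially equivalent, but two steps as written are not quite rigorous. First, your final inequality $\bigl|\,P.V.\!\int_\alpha^\beta \tfrac{\sqrt{(y-\alpha)(\beta-y)}}{y-x}\,g(y)\,\rd y\,\bigr|\lesssim \|g\|_\infty\cdot|I|$ does not follow from $P.V.\!\int_\alpha^\beta\tfrac{\sqrt{(y-\alpha)(\beta-y)}}{y-x}\,\rd y=\OO(|I|)$, because the kernel is not absolutely integrable near $y=x$ and $g$ is not constant; to justify it one must either push $g$ inside by Fubini (which reproduces exactly the paper's $h$-kernel computation) or split $g(y)=g(x)+\bigl(g(y)-g(x)\bigr)$ and use a Lipschitz modulus of $g$ on the band (which holds with constant $\asymp\veps_1/|I|$, but this needs to be stated and proved, with the dyadic decomposition applied to $1/(y-z)^2$). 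Second, the singularity of $g$ near $\alpha,\beta$ coming from $h_1$ is logarithmic, not $\OO(1/\dist)$ as you wrote; this is milder and the $\sqrt{(y-\alpha)(\beta-y)}$ weight does absorb it, but the exponent matters when matching the $(\ln N\,M/L)^{3/2}$ endpoint bound in \eqref{e:bbetabounda}--\eqref{e:bbetabound}. These are technical rather than conceptual gaps — the key identifications (band relation, Hilbert inversion on $[\alpha,\beta]$, Propositions~\ref{prop:rig},~\ref{prop:rig2} and~\ref{prop:abestimate}, mass accounting) are all correct and match the paper — but as written the sup-norm-then-invert shortcut does not close cleanly without reintroducing the $h(z,x)$ structure that the paper uses from the start.
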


\begin{proposition}\label{prop:alphabetaloc}[Estimates on $\alpha$ when $K=0$]
We assume that $(\ell_1, \ell_2,\cdots, \ell_K, \ell_{K+L+1}, \ell_{K+L+2}, \cdots, \ell_N)\in \cG_{M,L,K}$, i.e. it is in the set of ``good" boundary conditions, and $I$ is as defined in \eqref{e:bulkI} or \eqref{e:edgeI}. Then 
for the left edge case, i.e. $K=0$, the local equilibrium measure satisfies
$$
 |A-\alpha|=\OO\left(\ln N\left(\frac{M}{L}\right)^{1/3}\left(\frac{M}{N}\right)^{2/3}\right),
$$
where $\alpha$ is as in Proposition \ref{prop:equilibriumstructure}.
\end{proposition}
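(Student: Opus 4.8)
The plan is to compare the local constrained equilibrium measure $\rho_W$ with the ``frozen'' reference measure $\rho_{\rm ref}:=(N/L)\rho_V\mathbf 1_{[A,b]}$. The first point is that, after adjusting an additive constant, $\rho_{\rm ref}$ is itself a constrained equilibrium measure in the sense of \eqref{e:expW2}: it is the one associated with the external field $\tilde W(x):=(N/L)\bigl(F_V(x)+2\theta\int_I\ln|x-y|\rho_V(y)\rd y\bigr)$ and total mass $m_0:=(N/L)\mu([A,b])$, because its effective potential is exactly $(N/L)F_V$, which vanishes on $(A,b)\subset(A,B)$ (where $0<(N/L)\rho_V<N/(L\theta)$) and is nonnegative on the void $[a,A]$. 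In particular the left edge of $\supp\rho_{\rm ref}$ is exactly $A$, at which $\rho_{\rm ref}$ vanishes like a square root, $\rho_{\rm ref}(x)\asymp (N/L)\sqrt{x-A}$; its height over $I$ is $\asymp(N/L)^{2/3}$, far below the constraint $N/(L\theta)$, so the constraint is inactive near this edge.

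Next I would estimate the two discrepancies between the pairs $(\tilde W,m_0)$ and $(W,1)$, both controlled by the rigidity built into the good boundary conditions $\cG_{M,L,K}$. For the masses: $b=(\ell_{L+1}-\theta)/N$ and the location estimate \eqref{e:location} of Proposition \ref{prop:rig} puts $\ell_{L+1}/N$ within $\OO(M\ln N)$ indices of $\gamma_{L+1}$, whence $m_0=1+\OO(\ln N\,M/L)$. For the fields: the computation in the proof of Proposition \ref{prop:Westimate} gives $W(x)-\tilde W(x)=(2\theta N/L)\int_{(b,\hat b_N]}\ln|x-y|\bigl(\rho_V(y)\rd y-\rd\mu_N(y)\bigr)$ — the key being that the $\OO(\ln N)$-size self-interaction term $\int_I\ln|x-y|\rho_V(y)\rd y$ carried by $W$ is exactly cancelled by the logarithmic potential of $\rho_{\rm ref}$ — so a dyadic decomposition around $b$ (Remark \ref{r:decompint}) combined with Propositions \ref{prop:rig}--\ref{prop:rig2} yields $\|W-\tilde W\|_{\infty}=\OO((\ln N)^{3}M/L)$, with $|(W-\tilde W)'(x)|=\OO(\ln N\,N^{2/3}/L)$ on a neighbourhood of $A$ of width $\ll(L/N)^{2/3}$.

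It then remains to turn these discrepancies into a bound on $|\alpha-A|$. Using the convexity of $W$ and of $2(N/L)\int_I\ln|x-y|\rd y-W$ established inside the proof of Proposition \ref{prop:equilibriumstructure}, together with Theorems \ref{t:bound}, \ref{t:supp} and \ref{t:dual}, one checks that near its left edge $\rho_W$ is one-cut with a genuine square-root vanishing and with the upper constraint inactive, exactly as for $\rho_{\rm ref}$. A linearization of the equation fixing the soft edge — equivalently, writing the resolvents $\int_I\frac{\rho_W(y)}{z-y}\rd y$ and $\int\frac{\rho_{\rm ref}(y)}{z-y}\rd y$ in square-root form and comparing the algebraic equations determining their endpoints — then gives $|\alpha-A|\lesssim |m_0-1|\,(L/N)^{2/3}+\bigl(\|(W-\tilde W)'\|_{\infty}\,L/N\bigr)^{2}$. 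Since $(M/L)\,(L/N)^{2/3}=(M/L)^{1/3}(M/N)^{2/3}$, inserting the estimates above shows the mass term dominates and equals $\OO\bigl(\ln N\,(M/L)^{1/3}(M/N)^{2/3}\bigr)$, which is the claim; the right edge $\beta$ and the reflected configuration are handled identically. The main obstacle is precisely this last step: proving the edge-stability estimate with the correct exponents, i.e.\ that the displacement of a soft edge of height $(N/L)^{2/3}$ under a mass perturbation $\delta m$ is $\asymp\delta m\,(L/N)^{2/3}$ rather than the naive $(\delta m\, L/N)^{2/3}$ — this requires carefully tracking how the edge-layer length $(L/N)^{2/3}$ enters the balayage identities of \cite{MR1482996}.
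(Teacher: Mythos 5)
Your strategy---compare $\rho_W$ with the frozen reference $(N/L)\rho_V\mathbf 1_{[A,b]}$, estimate the mass and field discrepancies (both of order $\ln N\,M/L$ up to logs), and convert them into an edge displacement of order $(M/L)(L/N)^{2/3}=(M/L)^{1/3}(M/N)^{2/3}$---identifies the correct scaling, and the preparatory estimates (mass via \eqref{e:location}, field via the dyadic decomposition of Remark \ref{r:decompint}) are sound. But there is a genuine gap exactly where you flag it: the edge-stability inequality $|\alpha-A|\lesssim|m_0-1|(L/N)^{2/3}+\bigl(\|(W-\tilde W)'\|_\infty L/N\bigr)^2$ is asserted, not proven, and it cannot be extracted from Theorems \ref{t:bound}, \ref{t:supp} and \ref{t:dual}, which give only support inclusion, connectedness of the support, and the duality with the upper constraint---none of them is quantitative about where a soft edge sits. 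Moreover the form of your inequality is off: a perturbation of the external field moves a soft edge \emph{linearly} in $\delta W'$ (weighted against the edge square-root), not quadratically, and in the present setting that linear contribution is of the \emph{same} order $\ln N\,M/L$ as the mass term, so the claim that ``the mass term dominates'' is not accurate even though the final exponent happens to come out right.

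The paper supplies precisely the missing quantitative step, by a more hands-on mechanism. Differentiating the two characterizations \eqref{e:expW} and \eqref{e:expW2} of $\rho_W$ on $[\alpha,\beta]$ and applying the finite-interval inverse Hilbert transform gives the representation \eqref{e:defrW2} for $\rho_W-(N/L)\rho_V$ with the singular prefactor $1/\sqrt{(x-\alpha)(\beta-x)}$. Since $\rho_W-(N/L)\rho_V$ remains bounded at $x=\alpha$, the bracket must vanish there; this solvability condition is the identity \eqref{e:pluga}, which is the exact ``balayage identity'' you say is needed. Its first two terms (the outside-particle field error and the $[\beta,b]$ boundary-layer error) are bounded by $\OO(\ln N\,M/L)$ using the rigidity encoded in $\cG_{M,L,K}$ and Proposition \ref{prop:abestimate}, while exactly one of the last two terms is nonzero and is computed in \eqref{e:pluga3}--\eqref{e:pluga4} to be $\asymp(N/L)^{2/3}|\alpha-A|$, the factor $(N/L)^{2/3}$ arising from $\sqrt{\beta-y}\asymp(L/N)^{1/3}$ multiplied by the square-root vanishing of $\rho_V$ (resp.\ $F_V'$) at $A$. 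Rearranging gives the claim. To close your argument you would have to reproduce essentially this computation; as written, the proposal reduces the proposition to an unproved lemma that contains all of the difficulty.
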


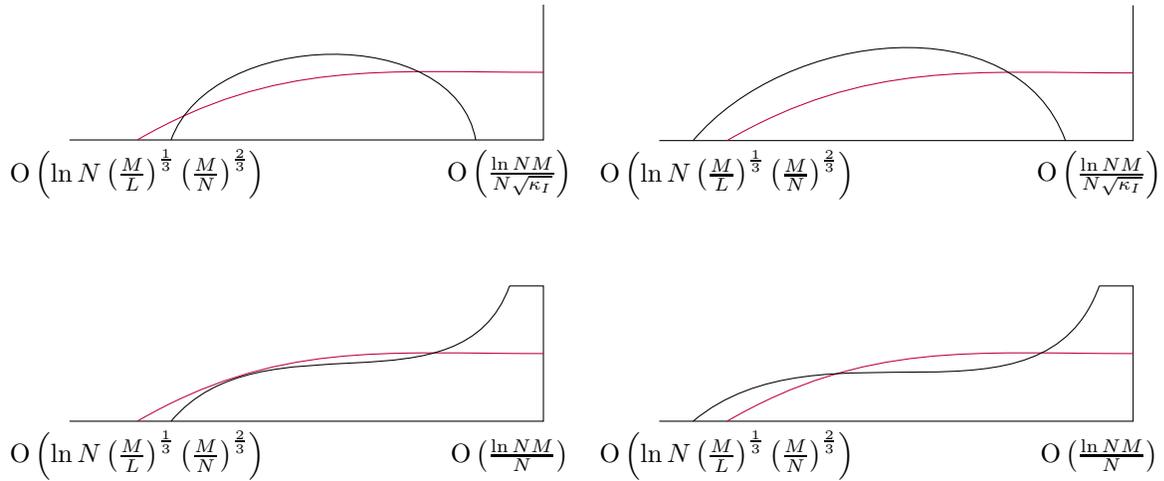
\begin{figure}[h]
\caption{equilibrium measure of localized discrete $\beta$-ensemble (left edge case, $\kappa_I\asymp(L/N)^{2/3}$).}
\vspace{1cm}
\begin{minipage}{0.4\textwidth}
\begin{tikzpicture}[scale=0.9]
\centering
\draw (0,0)--(7,0)--(7,2);
\draw[purple] (1,0) to [out=30, in=180] (7,1);
\draw (1.5,0) to [out=70,in=100] (6,0);
\node at (1,-0.5) {$\OO\left(\ln N\left(\frac{M}{L}\right)^{\frac{1}{3}}\left(\frac{M}{N}\right)^{\frac{2}{3}}\right)$};
\node at (6.5,-0.5) {$\OO\left(\frac{\ln NM}{N\sqrt{\kappa_I}}\right)$};
\end{tikzpicture}
\end{minipage}
\hspace{1cm}
\begin{minipage}{0.4\textwidth}
\begin{tikzpicture}[scale=0.9]
\centering
\draw (0,0)--(7,0)--(7,2);
\draw[purple] (1,0) to [out=30, in=180] (7,1);
\draw (0.5,0) to [out=50,in=110] (6,0);
\node at (1,-0.5) {$\OO\left(\ln N\left(\frac{M}{L}\right)^{\frac{1}{3}}\left(\frac{M}{N}\right)^{\frac{2}{3}}\right)$};
\node at (6.5,-0.5) {$\OO\left(\frac{\ln NM}{N\sqrt{\kappa_I}}\right)$};
\end{tikzpicture}
\end{minipage}

\vspace{1cm}
\begin{minipage}{0.4\textwidth}
\begin{tikzpicture}[scale=0.9]
\centering
\draw (0,0)--(7,0)--(7,2)--(6.5,2);
\draw[purple] (1,0) to [out=30, in=180] (7,1);
\draw (1.5,0) to [out=50,in=-110] (6.5,2);
\node at (1,-0.5) {$\OO\left(\ln N\left(\frac{M}{L}\right)^{\frac{1}{3}}\left(\frac{M}{N}\right)^{\frac{2}{3}}\right)$};
\node at (6.5,-0.5) {$\OO\left(\frac{\ln N M}{N}\right)$};
\end{tikzpicture}
\end{minipage}
\hspace{1cm}
\begin{minipage}{0.4\textwidth}
\begin{tikzpicture}[scale=0.9]
\centering
\draw (0,0)--(7,0)--(7,2)--(6.5,2);
\draw[purple] (1,0) to [out=30, in=180] (7,1);
\draw (0.5,0) to [out=40,in=-110] (6.5,2);
\node at (1,-0.5) {$\OO\left(\ln N\left(\frac{M}{L}\right)^{\frac{1}{3}}\left(\frac{M}{N}\right)^{\frac{2}{3}}\right)$};
\node at (6.5,-0.5) {$\OO\left(\frac{\ln N M}{N}\right)$};
\end{tikzpicture}
\end{minipage}
\end{figure}

\begin{proposition}\label{prop:edgeeq}[Estimates on the density when $K=0$]
We assume that the particle configuration $(\ell_1, \ell_2,\cdots, \ell_K, \ell_{K+L+1}, \ell_{K+L+2}, \cdots, \ell_N)\in \cG_{M,L,K}$, i.e. it is in the set of ``good" boundary conditions. We consider the left edge case, i.e. $K=0$ and $I=[a,b]$, where $a=a(N)/N$ and $b=(\ell_{L+1}-\theta)/N$. Let $\kappa_I=\dist(b, A)$, then $\kappa_I\asymp (L/N)^{2/3}$. For any $x\in I$ such that $x\geq \min\{A,\alpha\}$ and $b-x\asymp \kappa_I$, the following holds
\begin{enumerate}
\item if $\alpha\geq A$,
\begin{align}\label{e:erroraA}
\left|\rho_W(x)-\frac{N}{L}\rho_V(x)\right|\lesssim
\left\{\begin{array}{cc}
\frac{N}{L}\sqrt{\alpha-A},& 0\leq x-\alpha\leq \alpha-A,\\
\frac{N}{L}\frac{\alpha-A}{\sqrt{x-\alpha}}+\frac{\ln N M}{L}\frac{ N}{L}\sqrt{x-\alpha},& x-\alpha\geq \alpha-A,\\
\end{array}
\right.
\end{align}
\item if $\alpha \leq A$, 
\begin{align}\label{e:errorAa}
\left|\rho_W(x)-\frac{N}{L}\rho_V(x)\right|\lesssim
\left\{\begin{array}{cc}
\frac{N}{L}\ln\frac{A-\alpha}{x-\alpha}\sqrt{A-\alpha}, & \alpha-A\leq x-A\leq A-\alpha,\\
\frac{N}{L}\frac{A-\alpha}{\sqrt{x-\alpha}}+\frac{\ln N M}{L}\frac{N}{L}\sqrt{x-\alpha},& x-A\geq A-\alpha.
\end{array}
\right.
\end{align}
\end{enumerate}
\end{proposition}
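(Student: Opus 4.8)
The plan is to adapt the proof of Proposition~\ref{prop:bulkeq} to the soft edge $A$, the new features being that the square root vanishing of $\rho_W$ at its left edge $\alpha$ must be matched with that of $\rho_V$ at $A$, and the shift $|\alpha-A|$ controlled. First I would set up the comparison: on its band $\rho_W$ is given by a one-cut (Tricomi) formula in terms of $W'$ and the band edges, and $\tfrac{N}{L}\rho_V$ near $A$ by the analogous formula in terms of $\tfrac{N}{L}V'$ and the soft edge $A$, using the structure recorded after \eqref{e:RQ3} (recall $[\hat a,A]$ is a void, so $F_V'\equiv 0$ on $[A,B]\supset[A,b]$, $F_V'(t)=-s_A(t)\sqrt{A-t}$ for $t\lesssim A$, and $2\pi\theta\rho_V(t)=s_A(t)\sqrt{t-A}$ for $t\gtrsim A$). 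The difference of the two effective fields is, by \eqref{e:Wdefinition2} in the left edge case, $\tfrac{2\theta N}{L}$ times $\int_{[\hat a,\hat b]\setminus I}\tfrac{\rho_V(y)}{x-y}\,\rd y$ minus the corresponding sum over the frozen particles $\ell_{L+1},\dots,\ell_N$; since those particles sit past $b$, hence at distance $\asymp\kappa_I$ from the $x$ of interest, the local rigidity on $\cD_{M,r_0}\cup\cD_*$ --- applied through the dyadic partition of unity of Remark~\ref{r:decompint} and Propositions~\ref{prop:rig}--\ref{prop:rig2}, as in the proof of Proposition~\ref{prop:Westimate} --- shows this field difference is $\OO(\tfrac{\ln N\,M}{L}\cdot\tfrac{N}{L})$ away from the edges, up to powers of $\ln N$.

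Next I would record the representation precisely. By Proposition~\ref{prop:equilibriumstructure} (left edge case) $\supp\rho_W=[\alpha,\beta]\subset I=[a,b]$ is a single interval, with $a<\alpha$, $\rho_W\equiv 0$ on $[a,\alpha]$, $0<\rho_W<N/(L\theta)$ on $(\alpha,\beta)$, and $\rho_W\equiv 0$ or $\equiv N/(L\theta)$ on $[\beta,b]$; on $(\alpha,\beta)$ it solves $W'(x)=2\theta\,\mathrm{P.V.}\!\int_\alpha^\beta\tfrac{\rho_W(y)}{x-y}\,\rd y$. When $\rho_W$ saturates near $\beta$, Theorem~\ref{t:dual} lets me replace it by an unconstrained equilibrium measure with two soft edges $\alpha<\tilde\beta$, where $\tilde\beta$ is within $\OO(\ln N\,M/N)\ll\kappa_I$ of $\beta$ by Proposition~\ref{prop:abestimate}; since $\kappa_I\asymp(L/N)^{2/3}$ and $b-x\asymp\kappa_I$, the point $x$ sits at distance $\asymp\kappa_I$ from that edge, so its contribution to the Tricomi integral is smooth near $x$ and of the same order as the corresponding piece for $\tfrac{N}{L}\rho_V$, exactly as in the bulk case. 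This gives, for such $x$, a formula of the form $\rho_W(x)=c\,\sqrt{(x-\alpha)(\tilde\beta-x)}\,\mathrm{P.V.}\!\int_\alpha^{\tilde\beta}\tfrac{W'(t)\,\rd t}{\sqrt{(t-\alpha)(\tilde\beta-t)}\,(t-x)}$ plus a smooth edge term, and the identical formula with $\tfrac{N}{L}V'$, $A$, $B$ in place of $W'$, $\alpha$, $\tilde\beta$ produces $\tfrac{N}{L}\rho_V$ near $A$.

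Then $\rho_W(x)-\tfrac{N}{L}\rho_V(x)$ splits into (i) a \emph{potential error}, from the field difference above: inserting it into the Tricomi kernel, using $\mathrm{P.V.}\!\int_\alpha^{\tilde\beta}\tfrac{\rd t}{\sqrt{(t-\alpha)(\tilde\beta-t)}\,(t-x)}=0$ to kill the constant part, and bounding the remainder, one gets $\OO(\tfrac{\ln N\,M}{L}\cdot\tfrac{N}{L}\sqrt{x-\alpha})$ --- the analogue of Proposition~\ref{prop:bulkeq}'s bound, with the local density scale $\tfrac{N}{L}\sqrt{x-\alpha}$ replacing $\tfrac{N}{L}\sqrt{\kappa_I}$; and (ii) an \emph{edge-shift error}, from replacing $\alpha$ by $A$ in the prefactor, the integration endpoint and $F_V'$: since $\sqrt{t-A}-\sqrt{t-\alpha}\asymp\tfrac{|A-\alpha|}{\sqrt{t-\max\{A,\alpha\}}}$ and $|A-\alpha|$ is as in Proposition~\ref{prop:alphabetaloc}, this is $\OO(\tfrac{N}{L}\tfrac{|A-\alpha|}{\sqrt{x-\alpha}})$ when $x$ is at distance $\gtrsim|A-\alpha|$ from both edges, while when $\alpha\le A$ and $|x-A|\lesssim A-\alpha$ the shifted principal-value integral develops a logarithmic singularity at $\alpha$, producing the factor $\tfrac{N}{L}\ln\tfrac{A-\alpha}{x-\alpha}\sqrt{A-\alpha}$ of \eqref{e:errorAa}. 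In the ``inner'' regimes $0\le x-\alpha\lesssim|A-\alpha|$ (resp.\ $|x-A|\lesssim|A-\alpha|$) the asserted bounds are weaker than the separate square root estimates $\rho_W(x)\asymp\tfrac{N}{L}\sqrt{x-\alpha}\lesssim\tfrac{N}{L}\sqrt{|A-\alpha|}$ and $\tfrac{N}{L}\rho_V(x)\lesssim\tfrac{N}{L}\sqrt{[x-A]_+}\lesssim\tfrac{N}{L}\sqrt{|A-\alpha|}$, so there it suffices to bound each density separately, the logarithm in \eqref{e:errorAa} again arising from the above mechanism.

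I expect the main obstacle to be step (i): obtaining the \emph{relative} error $M/L$ --- rather than the crude $\ln N$ that a naive bound $\|W'\|_\infty=\OO(\ln N\,N/L)$ would give --- forces one to feed the sharp rigidity estimate for $\int_{I^c}(\cdots)(\rho_V\,\rd y-\rd\mu_N)$ and its $x$-derivative into the Tricomi formula, and then to track carefully how many powers of $\ln N$ survive the dyadic summation and the principal-value estimates. A secondary, more routine point is the uniform validity of the one-cut/Tricomi representation across the four configurations of Proposition~\ref{prop:equilibriumstructure} (void versus saturated near $\beta$) and in the presence of the hard wall at $a=a(N)/N$, which is handled by Theorems~\ref{t:bound}, \ref{t:supp} and~\ref{t:dual}, exactly as in the proof of Proposition~\ref{prop:bulkeq}.
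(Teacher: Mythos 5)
Your plan is substantially the same as the paper's: subtract the singular integral equations satisfied by $\rho_W$ and $\tfrac{N}{L}\rho_V$, invert the finite Hilbert transform on the interval $[\alpha,\beta]$, decompose the resulting expression into a ``rigidity'' piece coming from $\int_{I^c}\tfrac{\rd\mu-\rd\mu_N}{x-y}$ (your potential error, estimated by dyadic decomposition exactly as in Proposition~\ref{prop:bulkeq}), a small boundary piece over $[\beta,b]$, and an ``edge-shift'' piece coming from the terms $\int_A^\alpha\tfrac{\rho_V(y)}{x-y}\,\rd y$ or $F_V'(x)$ which account for the mismatch $\alpha\ne A$, with the logarithm in \eqref{e:errorAa} arising from the principal-value integral over $[\alpha,A]$. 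The only cosmetic difference is your choice of the Tricomi inverse with $\sqrt{(x-\alpha)(\tilde\beta-x)}$ in the \emph{numerator}, which builds in the vanishing at $\alpha$ automatically; the paper uses the other normalization (\eqref{e:invHil}, with the root in the denominator) and then obtains the improved prefactor $\sqrt{x-\alpha}$ by plugging $x=\alpha$ into \eqref{e:defrW2} to get the finiteness constraint \eqref{e:pluga} and subtracting it --- the two routes yield identical estimates.

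Two small points worth tightening. First, your second paragraph drifts between ``invert the difference relation on $[\alpha,\tilde\beta]$'' and ``compare two separate Tricomi formulas with endpoints $(\alpha,\tilde\beta)$ versus $(A,B)$.'' The paper takes the first route, which is the cleaner one: since $\tilde\beta$ and $B$ differ by an $\OO(1)$ amount, comparing two inversions on different intervals would require an extra argument that the far-edge contributions match to the stated accuracy; working throughout on $[\alpha,\beta]$ for the difference $\rho_W-\tfrac{N}{L}\rho_V$ means only the left edge shift $|A-\alpha|$ ever enters. Second, your shortcut for the inner regimes ``it suffices to bound each density separately'' presupposes that $\rho_W(x)\lesssim\tfrac{N}{L}\sqrt{x-\alpha}$ near $\alpha$, which is itself an output of the Tricomi representation, not an a priori fact (the only trivial bound is $\rho_W\le N/(L\theta)$). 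This is not circular since the outer-regime analysis establishes it, but the paper avoids the awkwardness by running the full computation (\eqref{e:defrWedge5}--\eqref{e:defrWedge13}) in the inner regime as well, which is what actually produces the precise logarithmic factor in \eqref{e:errorAa}.
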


\begin{proof}[Proof of Theorem \ref{t:lowerscale}]

By our assumption, with probability at least $1-e^{cM}$, the rigidity estimates  \eqref{e:LDPM2} hold on the scale $M/N$ in the spectral domain $\cD_{M,r_0}\cup \cD_*$. It follows from Proposition \ref{prop:rigiditylocalmeasure}, that with probability at least $1-e^{cM/2}$ with respect to the marginal measure, we have $(\ell_1, \ell_2, \cdots,\ell_K, \ell_{K+L+1}, \ell_{K+L+2}, \cdots, \ell_N)\in \cG_{M,L,K}$, i.e. it is in the set of ``good" boundary conditions. In the following we fix a ``good" boundary condition $(\ell_1, \ell_2, \cdots,\ell_K, \ell_{K+L+1}, \ell_{K+L+2}, \cdots, \ell_N)\in \cG_{M,L,K}$ and prove that \eqref{e:Gconcentratebulk} and \eqref{e:Gconcentrateedge} hold with probability $1-e^{cL\ln N }$ with respect to the conditioned measure $\bP_L$ (as defined in \eqref{e:condensemble}). Since by our choice of parameters, $M\ll L$, this implies Theorem \ref{t:lowerscale}. 

For any $z=E+\ri \eta\in \cD_{M,r_0+r}$ as defined in \eqref{def:domainD}, we denote 
$$
i_0=\argmin_{1\leq i\leq N} |E-\gamma_i|,
$$
where $\gamma_i$ are classical particle locations of the equilibrium measure $\mu$ as in \eqref{e:defloc}. Then $i_0\leq N_0+1$, where $N_0=\max\{i: \gamma_i\leq B-r_0-r\}$. There are bulk cases and edge cases,
\begin{enumerate}
\item The bulk cases: if $i_0>\lfloor3L/4\rfloor$, we take $K=i_0-\lfloor L/4\rfloor$, then $K\leq N_0$.  Let $I=[a,b]$, where $a=(\ell_{K}+\theta)/N$ and$b=(\ell_{K+L+1}-\theta)/N$, and $\kappa_I=\dist(I, \{A,B\})$, then $\kappa_I\gtrsim (L/M)^{2/3}$. We take a bump function $\chi_0:\bR\rightarrow \bR$, such that $\chi_0(x)=1$ on $[\gamma_{K+\lfloor L/5\rfloor}, \gamma_{K+\lfloor 4L/5\rfloor}]$, and $\chi_0(x)$ vanishes outside $[\gamma_{K+\lfloor L/6\rfloor}, \gamma_{K+\lfloor 5L/6\rfloor}]$. Moreover, $\chi_0'(x)=\OO(|I|^{-1})$ and $\chi_0'(x)=\OO(|I|^{-2})$.
\item The left edge cases: if $i_0\leq \lfloor3L/4\rfloor$, we take $K=0$. Let $I=[a(N)/N, b]$,  where $b=(\ell_{L+1}-\theta)/N$, and $\kappa_I=\dist(b, A)$, then $\kappa_I\asymp (L/N)^{2/3}$. We take a bump function $\chi_0:\bR\rightarrow \bR$, such that $\chi_0(x)=1$ on $[\hat a_N, \gamma_{\lfloor 4L/5\rfloor}]$ (where $\hat a_N=\min\{a(N)/N, \hat a\}$), and $\chi_0(x)$ vanishes for $x\geq \gamma_{\lfloor 5L/6\rfloor}$. Moreover, $\chi_0'(x)=\OO(\kappa_I^{-1})$ and $\chi_0'(x)=\OO(\kappa_I^{-2})$.
\end{enumerate}
We decompose the difference $G_N(z)-G_\mu(z)$,
\begin{align}\label{e:decompG}
G_N(z)-G_{\mu}(z)
=\int \frac{\chi_0(x)}{z-x}\left(\rd \mu_N(x)-\rd \mu(x)\right)
+\int \frac{1-\chi_0(x)}{z-x}\left(\rd\mu_N(x)-\rd\mu(x)\right).
\end{align}

In the following we estimate the second term in \eqref{e:decompG}. We first discuss the bulk case. Since $\chi_0(x)=1$ on $[\gamma_{K+\lfloor L/5\rfloor}, \gamma_{K+\lfloor 4L/5\rfloor}]$, we have
\begin{align}\label{e:decompRegion}
\int \frac{1-\chi_0(x)}{z-x}\left(\rd\mu_N(x)-\rd\mu(x)\right)
=\int_{x\geq \gamma_{K+\lfloor 4L/5\rfloor}}
+\int_{x\leq \gamma_{K+\lfloor L/5\rfloor}}.
\end{align}   
We estimate the first term on the righthand side of \eqref{e:decompRegion}. The estimate for the second term is analogous, which is similar to the argument for \eqref{e:inth5}. Similar to the construction in Remark \ref{r:decompint}, we take a dyadic partition of unity $\chi_1, \chi_2, \cdots, \chi_{n'}$ with scale $\tilde \eta =L/N$, such that $1=\chi_0(x)+\chi_1(x)+\chi_2(x)+\chi_3(x)+\cdots+\chi_{n'}(x)$ for $[\gamma_{K+\lfloor 4L/5\rfloor}, \hat b_N]$. $\chi_k(y)$ is supported on interval $I_k$, with $|I_k|\asymp 2^kL/N\sqrt{\kappa_I}$. It follows
$$
\left|\int_{x\geq \gamma_{K+\lfloor 4L/5\rfloor}} \frac{1-\chi_0(x)}{z-x}\left(\rd\mu_N(x)-\rd \mu(x)\right)\right|
= 
\left|\sum_{k=1}^{n'}\int \frac{\chi_k(x)}{z-x}\left(\rd\mu_N(x)-\rd\mu(x)\right)\right|.
$$
Moreover, from our choice of the index $K$, $\gamma_{K+\lfloor 4L/5\rfloor}-E\asymp L/N\sqrt{\kappa_I}$ and $\dist(I_k, E)\asymp 2^k L/N\sqrt{\kappa_I}$. Let 
$$
h_k(x)=\frac{\chi_k(y)|I_k|}{z-x}.
$$
Then we have, $h_k(x)=\OO(1)$, $h_k'(x)=\OO(|I_k|^{-1})$ and $h_k''(x)=\OO(|I_k|^{-2})$.
Moreover, either $I_k\subset [\gamma_{K+\lfloor 4L/5\rfloor}, B-r_0]$, or $I_k$ is not completely contained in $[\gamma_{K+\lfloor 4L/5\rfloor},B-r_0]$, in which case $|I_k|\gtrsim 1$. By Proposition \ref{prop:rig} and \ref{prop:rig2}, we have
\begin{eqnarray*}
\sum_{k=1}^{n'}\left|\int \frac{\chi_k(x)}{z-x}\left(\rd\mu_N(x)-\rd\mu(x)\right)\right|
&=&\sum_{k=1}^{n'}|I_k|^{-1}\left|\int h_k(x)\left(\rd\mu_N(x)-\rd \mu(x)\right)\right|\\
&\lesssim& \sum_{k=1}^{n'}|I_k|^{-1}\frac{\ln N M}{N}
\lesssim \frac{\ln N M}{N|I_1|}\lesssim \frac{\ln N M}{L}\sqrt{\kappa_I}.
\end{eqnarray*}
The same estimate holds for the second term in \eqref{e:decompRegion}, and it follows 
\begin{align}\label{e:secondtermest}
\left|\int \frac{1-\chi_0(x)}{z-x}\left(\rd\mu_N(x)-\rd\mu(x)\right)\right|\lesssim  \frac{\ln N M}{L}\sqrt{\kappa_I}.
\end{align}
For the left edge case, we have
$$
\int \frac{1-\chi_0(x)}{z-x}\left(\rd\mu_N(x)-\rd\mu(x)\right)
=\int_{x\geq \lfloor4L/5\rfloor} \frac{1-\chi_0(x)}{z-x}\left(\rd\mu_N(x)-\rd\mu(x)\right).
$$
Exactly the same argument leads to the estimate \eqref{e:secondtermest}.

For the first term in \eqref{e:decompG}, it depends only on particles in the interval $I$, we can rewrite it as
\begin{align}\label{e:local_diff}
\frac{L}{N}\int \frac{\chi_0(x)}{z-x}\left(\rd\mu_L(x)-\rho_W(x)\rd x\right)+
\int \frac{\chi_0(x)}{z-x}\left(\frac{L}{N}\rho_W(x)-\rho_V(x)\right)\rd x.
\end{align}
In the following we prove with probability at least $1-e^{-\ln N L}$, with respect to the conditioned measure $\bP_L$ (as defined in \eqref{e:condensemble}) \eqref{e:local_diff} is small. 

An estimate of the first term in \eqref{e:local_diff} follows from Proposition      \ref{prop:ldb}. We take $g(x)=\chi_0(x)/(z-x)$ in \eqref{e:integ}. Then $\|g\|_{1/2}\lesssim (\ln N)^{1/2}/\eta$, and it follows, with probability at least $1-e^{-\ln N L}$, with respect to the conditional measure \eqref{e:condensemble}, 
\begin{align}\label{e:decompG2}
\left|\frac{L}{N}\int \frac{\chi_0(x)}{z-x}\left(\rd\mu_L(x)-\rho_W(x)\rd x\right)\right|\leq \frac{C\ln N\sqrt{ L}}{N\eta}.
\end{align}

For the second term in \eqref{e:local_diff}, we first discuss the bulk cases. $\chi_0(x)$ is supported on $[\gamma_{K+\lfloor L/6\rfloor}, \gamma_{K+\lfloor 5L/6\rfloor}]$. From our choice of the index $K$, for any $x\in [\gamma_{K+\lfloor L/6\rfloor}, \gamma_{K+\lfloor 5L/6\rfloor}]$, we have $b-x\asymp x-a\asymp |I|$. Thanks to Proposition \ref{prop:bulkeq}, we have
\begin{align}\begin{split}\label{e:decompG3}
\left|\int \frac{\chi_0(x)}{z-x}\left(\frac{L}{N}\rho_W(x)-\rho_V(x)\right)\rd x\right|
\lesssim\int \frac{\chi_0(x)}{|z-x|}\frac{\ln N M}{L}\sqrt{\kappa_I}\rd x
\lesssim \frac{(\ln N)^2M}{L}\sqrt{\kappa_I}.
\end{split}\end{align}
It follows by combining \eqref{e:decompG}, \eqref{e:secondtermest}, \eqref{e:decompG2} and \eqref{e:decompG3}, with probability at least $1-e^{-\ln N L}$, with respect to the conditional measure \eqref{e:condensemble},
$$
|G_N(z)-G_\mu(z)|\leq C(\ln N)^2\left(\frac{\sqrt{L}}{N\eta}+\frac{M}{L}\sqrt{\kappa_I}\right).
$$

For the second term in \eqref{e:local_diff}, the analysis for the left edge case is more delicate.  As in Proposition \ref{prop:edgeeq}, there are two cases. If  $\alpha\leq A$, by using \eqref{e:errorAa}, we have 
\begin{align}\begin{split}\label{e:decompG4}
\left|\int \frac{\chi_0(x)}{z-x}\left(\frac{L}{N}\rho_W(x)-\rho_V(x)\right)\rd x\right|
\lesssim\int_{\alpha\leq x\leq 2A-\alpha} \frac{\chi_0(x)}{|z-x|}\left|\frac{L}{N}\rho_W(x)-\rho_V(x)\right|\rd x\\
+\int_{x\geq 2A-\alpha} \frac{\chi_0(x)}{|z-x|}\frac{A-\alpha}{\sqrt{x-\alpha}}\rd x
+\frac{\ln N M}{L}\int_{x\geq 2A-\alpha} \frac{\chi_0(x)}{|z-x|}\sqrt{x-\alpha}\rd x.
\end{split}\end{align}
Let $\kappa_{E}=\dist(E, \{A,B\})$.
For the first term in \eqref{e:decompG4}, we have by \eqref{e:errorAa}, 
\begin{align}\begin{split}\label{e:1decompG4}
&\int_{x\leq 2A-\alpha} \frac{\chi_0(x)}{|z-x|}\left|\frac{L}{N}\rho_W(x)-\rho_V(x)\right|\rd x
\lesssim \sqrt{\alpha-A} \int_{\alpha \leq x\leq 2A-\alpha}\frac{\chi_0(x)}{|z-x|}\ln \frac{A-\alpha}{x-\alpha}\rd x.
\end{split}\end{align}
If $\eta+\kappa_E\geq 2(A-\alpha)$, then 
\begin{align}\label{e:side1}
\int_{\alpha \leq x\leq 2A-\alpha}\frac{\chi_0(x)}{|z-x|}\ln \frac{A-\alpha}{x-\alpha}\rd x
\lesssim \int_{\alpha \leq x\leq 2A-\alpha}\frac{1}{\eta+\kappa_E}\ln \frac{A-\alpha}{x-\alpha}\rd x\lesssim \frac{\ln N(A-\alpha)}{\eta+\kappa_E}.
\end{align}
If $\eta+\kappa_E\leq 2(A-\alpha)$, we trivially bound
\begin{align}\label{e:side2}
\int_{\alpha \leq x\leq 2A-\alpha}\frac{\chi_0(x)}{|z-x|}\ln \frac{A-\alpha}{x-\alpha}\rd x
\lesssim \ln N.
\end{align}
By combining \eqref{e:1decompG4}, \eqref{e:side1} and \eqref{e:side2}, we have 
\begin{align}\begin{split}\label{e:1decompG4.5}
&\int_{x\leq 2A-\alpha} \frac{\chi_0(x)}{|z-x|}\left|\frac{L}{N}\rho_W(x)-\rho_V(x)\right|\rd x\lesssim \ln N \left(\frac{(A-\alpha)^{3/2}}{\eta+\kappa_E}\wedge (A-\alpha)^{1/2}\right).
\end{split}\end{align}
For the second term in \eqref{e:decompG4}, by a similar argument, we have
\begin{align}\label{e:2decompG4}
\int_{x\geq 2A-\alpha} \frac{\chi_0(x)}{|z-x|}\frac{A-\alpha}{\sqrt{x-\alpha}}\rd x\lesssim \ln N\left(\frac{A-\alpha}{\sqrt{\eta+\kappa_E}}\wedge (A-\alpha)^{1/2}\right).
\end{align}
For the last term in \eqref{e:decompG4}, we have
\begin{align}\label{e:3decompG4}
\frac{\ln N M}{L}\int_{x\geq 2A-\alpha} \frac{\chi_0(x)}{|z-x|}\sqrt{x-\alpha}\rd x\lesssim \frac{(\ln N)^2M}{L}
\left(\frac{L}{N}\right)^{1/3},
\end{align}
where we used that $x-\alpha\leq b-\alpha \leq (L/N)^{2/3}$.
By combining \eqref{e:decompG4}, \eqref{e:1decompG4.5}, \eqref{e:2decompG4} and \eqref{e:3decompG4}, we have
\begin{align}\begin{split}\label{e:decompG5}
\left|\int \frac{\chi_0(x)}{z-x}\left(\frac{L}{N}\rho_W(x)-\rho_V(x)\right)\rd x\right|
\leq C\left(\ln N\left(\frac{A-\alpha}{\sqrt{\kappa_E+\eta}}\wedge (A-\alpha)^{1/2}\right)+\frac{(\ln N)^2M}{L}\left(\frac{L}{N}\right)^{1/3}\right)\\
\leq C(\ln N)^2\left(\left(\frac{M}{L}\right)^{1/2}\left(\frac{L}{N}\right)^{1/3}\wedge\frac{M}{L}\left(\frac{L}{N}\right)^{2/3}\frac{1}{\sqrt{\kappa_{E}+\eta}}+\frac{M}{L}\left(\frac{L}{N}\right)^{1/3}\right).
\end{split}\end{align}
A similar argument implies that \eqref{e:decompG5} holds for the case $\alpha\geq A$.
It follows by combining \eqref{e:decompG}, \eqref{e:secondtermest}, \eqref{e:decompG2} and \eqref{e:decompG5}, with probability at least $1-e^{-\ln N L}$, with respect to the conditional measure \eqref{e:condensemble},
\begin{align}\nonumber
|G_N(z)-G_\mu(z)|\leq C(\ln N)^2\left(\frac{\sqrt{L}}{N\eta}+\left(\frac{M}{L}\right)^{1/2}\left(\frac{L}{N}\right)^{1/3}\wedge\frac{M}{L}\left(\frac{L}{N}\right)^{2/3}\frac{1}{\sqrt{\kappa_E+\eta}}+\frac{M}{L}\left(\frac{L}{N}\right)^{1/3}\right).
\end{align}
This finishes the proof of Theorem \ref{t:lowerscale}.
\end{proof}

\section{Edge Universality}

In this section we prove the edge universality of the discrete $\beta$-ensembles for any $\beta\geq 1$, i.e. the distributions of extreme particles are the same as those of continuous $\beta$-ensembles, governed by Tracy-Widom $\beta$ distributions. Heuristically, the edge universality states that extreme particles after scaling by $N^{2/3}$ converge in law to Tracy-Widom $\beta$ distribution. We can view discrete $\beta$-ensembles as ``discretized" continuous $\beta$-ensembles on the scale $1/N$. Since the discretization scale is much smaller than the scale of the fluctuations of extreme particles, the discretization does not affect the distribution of extreme particles, and the edge universality of the discrete $\beta$-ensembles follows. To do this, we study the conditioned discrete $\beta$-ensemble \eqref{e:condensemble2} of the first few particles, and compare it with its continuous analogue \eqref{e:contbeta}. We show that these two laws are the same up to negligible error. It is proven in \cite{MR3253704} that, under mild conditions, the distribution  of extreme particles of \eqref{e:contbeta} converges to Tracy-Widom $\beta$ distribution. 
The edge universality of the discrete $\beta$-ensembles follows.

We fix small constant $r>0$, small exponents $\fa, \fb>0$, and let $L=N^{\fb}$. 
For any $(\ell_{L+1}, \ell_{L+2},\cdots \ell_{N})\in \bW_N^{\theta}$, we define the conditioned discrete $\beta$-ensemble,
\begin{align}\label{e:condensemble2}
\bP^{\loc}_L(\ell_1, \ell_2, \cdots, \ell_L)=\frac{1}{Z_{L}^{\loc}}\prod_{1\leq i<j\leq L}\frac{\Gamma(\ell_j-\ell_i+1)\Gamma(\ell_j-\ell_i+\theta)}{\Gamma(\ell_j-\ell_i)\Gamma(\ell_j-\ell_i+1-\theta)}
\prod_{i=1}^{L} \hat w(\ell_i; L),
\end{align}
where
\begin{align*}
\hat w(Nu; L)=e^{-NV_N(u)}e^{-\beta\Theta(N^{2/3-\fa}(u-A))}\prod_{j>L}\frac{\Gamma(\ell_j-Nu+1)\Gamma(\ell_j-Nu+\theta)}{\Gamma(\ell_j-Nu)\Gamma(\ell_j-Nu+1-\theta)} ,
\end{align*}
where \eqref{e:condensemble2} is slightly different from \eqref{e:condensemble}, since we introduced an additional quadratic potential $\Theta(u)\deq (u+1)^2{\bf{1}}_{u\leq -1}$, or $\Theta(u)\deq 2(u+1)^2{\bf{1}}_{u\leq -1}$ to prevent the particles $\ell_i$ from deviating far in the left direction.

We define the continuous version of $\bP^{\loc}_L(\ell_1, \ell_2, \cdots, \ell_L)$, let $\beta=2\theta$,
\begin{align}\label{e:contbeta}
\rd\bP^{\cont}_L(\rd x_1, \rd x_2, \cdots, \rd x_L)=\frac{1}{Z_{L}^{\cont}}\prod_{1\leq i<j\leq L}|x_j-x_i|^{\beta}
\prod_{i=1}^{L} \hat w(x_i; L)\rd x_i,
\end{align}
where $\bP_L^{\cont}$ is a measure on $x_1<x_2<\cdots< x_L\leq \ell_{L+1}-\theta$.

\begin{definition}\label{def:good2}
We define the set of \emph{``good" boundary conditions}, $\cR_L^*=\cR_L^*(\fa,r)$, as particle configurations
\begin{align*}
(\ell_{L+1}, \ell_{L+2},\cdots \ell_{N})\in \bW_N^{\theta},
\end{align*}
 such that with probability $1-\exp(-c(\ln N)^2)$ with respect to the conditioned discrete $\beta$-ensemble $\bP_L^{\loc}$:
 \begin{enumerate}
 \item The Stieltjes transform
satisfies 
\begin{align*}
|G_N(z)-G_\mu(z)|\leq \frac{(\ln N)^2}{N\eta},
\end{align*}
uniformly for any $z\in \cD_r\cup \cD_*$.
\item The extreme eigenvalue satisfies $\ell_1/N\geq A-N^{-2/3+\fa}$.
\end{enumerate}
\end{definition}
If $(\ell_{L+1}, \ell_{L+2},\cdots \ell_{N})$ is in the set of ``good" boundary conditions, then there exists some $r'>0$, such that 
\begin{align*}
|\ell_k/N-\gamma_k|\leq N^{-2/3+\fa}k^{-1/3},\quad k\in \qq{L+1, r'N},
\end{align*}
and with high probability with respect to $\bP_L^{\loc}$, 
\begin{align*}
|\ell_k/N-\gamma_k|\leq N^{-2/3+\fa}k^{-1/3},\quad k\in \qq{1, L}.
\end{align*}

It follows from Theorem \ref{t:rigidity} and Corollary \ref{c:initialstep}, that with probability at least $1-\exp(c(\ln N)^2)$ with respect to $\bP_N$, we have $(\ell_{L+1}, \ell_{L+2},\cdots \ell_{N})\in \cR_L^*(\fa,r)$.

Thanks to the additional quadratic potential $\Theta$, similar to the proof of Proposition \ref{prop:ldb}, we have the following estimate on the extreme particles of $\bP_L^{\cont}$,
\begin{proposition}\label{p:LDPextreme}
For any ``good" boundary condition $(\ell_{L+1},\ell_{L+2},\cdots, \ell_{N})\in \cR_L^*(\fa,r)$, there exists universal constant $C>0$ such that
\begin{align}\label{e:edgeldp}
&\bP_L^{\cont}\left(x_1/N\leq A-C(L\ln N)^{1/2}N^{-2/3+\fa}\right)\leq e^{-L\ln N}.
\end{align}
\end{proposition}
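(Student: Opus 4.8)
The plan is to bound the probability that the smallest particle $x_1$ of the continuous local ensemble $\bP_L^{\cont}$ in \eqref{e:contbeta} falls below $A - C(L\ln N)^{1/2}N^{-2/3+\fa}$ by a standard first-moment / energy estimate, exactly parallel to the large deviation argument already carried out for $\bP_L$ in Proposition \ref{prop:ldb}. First I would recall that the Hamiltonian of $\bP_L^{\cont}$ is
$$
H^{\cont}_L(x_1,\ldots,x_L) = -\beta\sum_{i<j}\ln|x_i-x_j| + \sum_{i=1}^L\Bigl(NV_N(x_i/N) + \beta\Theta\bigl(N^{2/3-\fa}(x_i/N - A)\bigr) + (\text{boundary terms from } \ell_j, j>L)\Bigr),
$$
and that, after the same Stirling-type simplification as in \eqref{e:newproduct}--\eqref{e:newweight}, this is $L^2$ times an energy functional $\theta\,\cD^2[\rho_W - \rho_L] + (\text{const}) + \int F_W(x)\,\rho_L(x)\,dx + \int \beta\Theta(\cdots)\rho_L(x)\,dx/L + \OO(L\ln N/L)$, where $\rho_W$, $F_W$, $f_W$ are the local equilibrium objects from Proposition \ref{prop:equilibriumstructure} with $K=0$ and $\rho_L$ the smoothed empirical measure of $(x_1,\ldots,x_L)$. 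The key new ingredient compared with Proposition \ref{prop:ldb} is the extra quadratic confinement $\beta\Theta(N^{2/3-\fa}(x/N-A)) = \beta\,(N^{2/3-\fa}(x/N-A)+1)^2\mathbf 1_{\{x/N \le A - N^{-2/3+\fa}\}}$, which is $\gtrsim N^{4/3-2\fa}(x/N - A + N^{-2/3+\fa})^2$ on its support.

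The main steps would be: (1) reuse the lower bound on the partition function $Z_L^{\cont}$ obtained exactly as in \eqref{e:ldb2} (the quantile configuration $x_i$ of $\rho_W$ lies entirely in $[A - \OO(\ln N (M/L)^{1/3}(M/N)^{2/3}), b]$ by Proposition \ref{prop:alphabetaloc}, so $\Theta$ contributes $0$ there once $\fb$ is small relative to $\fa$, giving $Z_L^{\cont} \ge \exp(L^2(\theta\cD^2[\rho_W] - \tfrac{N}{\theta L}\int_{F_W<0}F_W - f_W + \OO(\ln N/L)))$); (2) on the event $\{x_1/N \le A - \gamma\}$ with $\gamma = C(L\ln N)^{1/2}N^{-2/3+\fa}$, estimate $H^{\cont}_L$ from below: the energy term $\cD^2[\rho_W-\rho_L]\ge 0$ is dropped, $\int F_W\rho_L \ge \tfrac{N}{\theta L}\int_{F_W<0}F_W + \OO(\ln N/L)$ by the same argument as \eqref{e:xiWbound}, and the $\Theta$-term for the single particle $x_1$ contributes at least $\tfrac{1}{L}\beta N^{4/3-2\fa}(\gamma - N^{-2/3+\fa})^2 \gtrsim \tfrac{1}{L}N^{4/3-2\fa}\cdot C^2 L\ln N\, N^{-4/3+2\fa} = C^2\ln N$, i.e. the density $\bP_L^{\cont}(\ell \in \cdot)$ on this event is bounded by $\exp(-c\,C^2 L^2 \ln N / L^2 \cdot L) = \exp(-cC^2 L\ln N)$ after multiplying through by the factor $e^{\OO(L\ln N)}$; (3) integrate over $x_2 < \cdots < x_L$ (which lie in a bounded region and contribute at most $e^{\OO(L\ln N)}$ volume and energy, absorbable into the constant once $C$ is large), and conclude \eqref{e:edgeldp}.

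The main obstacle I anticipate is controlling the contribution to the Hamiltonian of the \emph{interaction} between the escaping particle $x_1$ and the bulk of the cloud $x_2,\ldots,x_L$: when $x_1$ drifts far to the left, the logarithmic interaction $-\beta\sum_{j\ge 2}\ln|x_1 - x_j|$ becomes large and \emph{negative} (attractive, pulling the total energy down), which competes with the confining gain from $\Theta$. One must check that $\Theta$ wins: since $|x_1 - x_j| \le |x_1 - A| + \OO((L/N)^{2/3}) \le \OO(|x_1-A|)$ for the relevant range, $-\ln|x_1-x_j| \ge -\ln|x_1-A| - C$, so the interaction loses at most $\OO(L\ln|x_1-A|)$ compared to its equilibrium value, whereas $\Theta$ gains $\gtrsim N^{4/3-2\fa}|x_1-A|^2/L$ once $|x_1-A|\ge N^{-2/3+\fa}$; for $|x_1-A|$ up to $\OO(1)$ the quadratic term dominates the logarithm by a wide margin, so this is really just a bookkeeping check. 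A secondary technical point is ensuring the boundary terms coming from $\ell_j$, $j > L$ (which enter $\hat w$ and hence $W$) are handled uniformly over $(\ell_{L+1},\ldots,\ell_N)\in\cR_L^*(\fa,r)$ — but this is exactly what the ``good boundary condition'' hypothesis and Propositions \ref{prop:Westimate}, \ref{prop:xiWestimate} provide, so it transfers verbatim from the proof of Proposition \ref{prop:ldb}.
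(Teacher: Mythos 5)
Your proposal follows essentially the same route as the paper: repeat the large-deviation argument of Proposition \ref{prop:ldb} with the extra $\Theta$-penalty included in the Hamiltonian, drop the nonnegative $\cD^2$ energy, and read off a tail bound on $\Theta(N^{2/3-\fa}(x_1/N-A))$ which is then inverted to a bound on $x_1$. Two small remarks. First, the paper works with the \emph{unconstrained} local equilibrium measure $\rho_W^{\cont}$ (defined by \eqref{e:expW24}) rather than the constrained $\rho_W$ you invoke; this is the natural object for the continuous local measure $\bP_L^{\cont}$, and it simplifies the argument because then $F_W\geq 0$ everywhere and the term $\int F_W\,\rho_L$ can be dropped outright, with no need for the careful estimate \eqref{e:xiWbound}. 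Your version with the constrained $\rho_W$ still works, just with one extra lemma invoked. Second, the ``main obstacle'' you anticipate — the attractive logarithmic interaction between $x_1$ and the cloud competing with $\Theta$ — is in fact already disposed of by the $\cD^2$ decomposition you set up in step (1): the entire pairwise interaction of the configuration is packaged into the nonnegative quantity $\cD^2[\rho_W^{\cont}-\rho_L]$, which is discarded wholesale, so there is nothing further to check against $\Theta$. The subsequent heuristic ``$-\ln|x_1-x_j|$ vs.\ quadratic'' comparison is not needed once you commit to the energy-functional form of the Hamiltonian.
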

\begin{proof}
We recall that $W$ is defined in \eqref{e:expW}. We denote  by $\rho^{\cont}_W$ the unconstrained equilibrium measure of the potential $W$, which is characterized by
\begin{align}\label{e:expW24}
W(x)-2\theta \int_{a}^b \ln|x-y|\rho_W^{\cont}(y)\rd y-f_W=F_W(x)=\left\{
\begin{array}{lll}
=0,  &\text{on }  \rho_W^{\cont}(x)>0,\\
\geq 0, &\text{on }  \rho_W^{\cont}(x)=0.
\end{array}
\right.
\end{align}
The same argument as  in Proposition \eqref{prop:ldb} gives,
\begin{align*}
\bP_L\left(\sum_{i=1}^{L}\Theta(N^{2/3-\fa}(\ell_i/N-A))+L^2\cD^2 [\rho_W^{\cont}-\rho_L]\geq \gamma^2 L^2\right)\leq \exp\left(C\ln N L-\gamma^2 L^2\right),
\end{align*}
where $\rho_L$ is the convolution of the empirical particle density $1/L\sum_{i=1}^L \delta_{\ell_i/N}$ with the uniform measure on the interval $[-N^{-p}/2, N^{-p}/2]$ for some $p>2$, and the energy functional $\cD$ is defined in \eqref{def:energyfunctional}. In particular, we have
\begin{align*}
\bP_L\left(\Theta(N^{2/3-\fa}(\ell_1/N-A))\geq CL \ln N \right)\leq \exp\left(-\ln N L\right),
\end{align*}
which leads to \eqref{e:edgeldp} by rearranging.
\end{proof}

Following \cite{MR3253704}, we further define the following modification of $\cR_L^*$. %
We define $\cR_L^{\#}$ which adds the condition of a level repulsion near the boundary, 
\begin{align*}
\cR_L^{\#}=\cR_L^{\#}(\fa,r)\deq \{(\ell_{L+1}, \cdots, \ell_{N})\in \cR_L^*(\fa/3,r): |\ell_{L+2}-\ell_{L+1}|\geq N^{1/3-\fa}L^{-1/3}\}.
\end{align*}

The proof of edge universality of the discrete $\beta$-ensembles for any $\beta\geq 1$ follows from a direct comparison between $\bP_L^{\loc}$ and $\bP_L^{\cont}$. The following weak level repulsion estimates for $\bP_L^{\loc}$ and $\bP_L^{\cont}$ follows the same arguments as those in \cite[(D.1), (D.2)]{MR3253704}. We postpone their proofs to Appendix \ref{a:levelrep}.

\begin{proposition}\label{p:levelrep}
Let $\beta>0$. There are constants $C$ such that for $(\ell_{L+1}, \cdots, \ell_{N})\in \cR_L^*(\fa,r)$, $k\in \qq{1, L}$ and any $L^{4/3}N^{-1/3}\ll s\ll \min\{L^{-2},N^{-\fa}\}$, we have
\begin{align}
\label{e:dis}&\bP_L^{\loc}\left(|\ell_{k+1}-\ell_{k}|\leq sL^{-1/3}N^{1/3}\right)\leq CsL^2,\\
\label{e:cont}&\bP_L^{\cont}\left(|x_{k+1}-x_{k}|\leq sL^{-1/3}N^{1/3}\right)\leq CsL^2.
\end{align}
Moreover, thanks to the rigidity estimates for the local measure $\bP_L^{\loc}$, we have better estimate, for $k\in \qq{1,L}$ and any $L^{4/3}N^{-1/3}\ll s\ll N^{-\fa}$ 
\label{e:dis2}\begin{align}
\bP_L^{\loc}\left(|\ell_{k+1}-\ell_{k}|\leq sk^{-1/3}N^{1/3}\right)\leq C\left(sN^{\fa}+\exp(-c(\ln N)^2)\right).
\end{align}
\end{proposition}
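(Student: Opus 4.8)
The plan is to follow the strategy of \cite[Appendix D]{MR3253704}: reduce each bound to a one–particle conditional estimate and exploit the strong repulsion built into the pair interaction. I begin with \eqref{e:cont}, the continuous case, which is notationally cleanest. Fix $k$ and condition $\bP_L^{\cont}$ on all particles $x_j$, $j\neq k$; the conditional law of $x_k$ on the interval $(x_{k-1},x_{k+1})$ has density proportional to
\[
f(y)=|y-x_{k+1}|^{\beta}\,|y-x_{k-1}|^{\beta}\,G(y),\qquad G(y)\deq \prod_{j\neq k-1,k,k+1}|y-x_j|^{\beta}\,\hat w(y;L).
\]
Then $\bP_L^{\cont}(x_{k+1}-x_k\leq t\mid \text{rest})$ is the ratio of $\int_{x_{k+1}-t}^{x_{k+1}}f$ to $\int_{x_{k-1}}^{x_{k+1}}f$. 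In the numerator one extracts $|y-x_{k+1}|^{\beta}\leq t^{\beta}$ and $|y-x_{k-1}|^{\beta}\lesssim g_{\mathrm{typ}}^{\beta}$; in the denominator one restricts $y$ to the middle third of $(x_{k-1},x_{k+1})$, where both factors are comparable to $g_{\mathrm{typ}}^{\beta}$, with $g_{\mathrm{typ}}\asymp k^{-1/3}N^{1/3}$ the typical gap at index $k$. The lower bound on the gap $x_{k+1}-x_{k-1}$ comes from the rigidity estimates encoded in $\cR_L^*$, together with the confinement of the leftmost particles (Proposition \ref{p:LDPextreme} for $\bP_L^{\cont}$, and part (2) of Definition \ref{def:good2} for $\bP_L^{\loc}$), which justify using rigidity for the resampled particles as well. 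The remaining input is that $G(y)$ varies by at most a polynomial-in-$L$ factor as $y$ ranges over $(x_{k-1},x_{k+1})$: this follows by writing $\log G(y)-\log G(y')=\sum_{j}\beta\log\frac{|y-x_j|}{|y'-x_j|}+\log\frac{\hat w(y;L)}{\hat w(y';L)}$ and bounding the sum using rigidity for the $x_j$ and the regularity of $V_N$ from Assumptions \ref{a:VN} and \ref{a:moreVN}. Combining these, $\bP_L^{\cont}(x_{k+1}-x_k\leq t\mid \text{rest})\lesssim (t/g_{\mathrm{typ}})^{\beta+1}L^{\OO(1)}$; since $\beta\geq 1$ and $t=sL^{-1/3}N^{1/3}$ one has $t/g_{\mathrm{typ}}=s(k/L)^{1/3}\leq s$, and a crude accounting of the polynomial factor gives $\leq CsL^{2}$. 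Taking expectations over the boundary particles yields \eqref{e:cont}.

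The discrete bound \eqref{e:dis} is proved the same way, with integrals replaced by sums over the (shifted) lattice and the factors $|y-x_j|^{\beta}$ replaced by the Gamma ratios in \eqref{eqn:distr}. By Stirling's formula, exactly as in \eqref{e:newproduct}--\eqref{e:newweight}, each Gamma ratio is comparable with uniform constants to $|\ell_k-\ell_j|^{2\theta}$ for $|\ell_k-\ell_j|\geq\theta$; since $t$, the relevant gap, and the middle-third interval are all $\gg1$, the counting of lattice points matches Lebesgue measure up to constants and the argument carries over verbatim, again using the rigidity of $\bP_L^{\loc}$ from Definition \ref{def:good2}. For the sharpened estimate \eqref{e:dis2} one keeps track of the polynomial factor: the stronger rigidity $|\ell_j/N-\gamma_j|\leq N^{-2/3+\fa}j^{-1/3}$, valid with probability $1-\exp(-c(\ln N)^2)$ under $\bP_L^{\loc}$, pins the typical gap at index $k$ to $\asymp k^{-1/3}N^{1/3}$ and makes $\sup G/\inf G$ over the relevant gap at most $N^{\OO(\fa)}$, so that with $t=sk^{-1/3}N^{1/3}$ one obtains $\bP_L^{\loc}(|\ell_{k+1}-\ell_k|\leq t)\lesssim sN^{\fa}$ off an event of probability $\exp(-c(\ln N)^2)$, which is \eqref{e:dis2} (after renaming $\fa$).

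The main obstacle is the uniform control of the slowly-varying factor $G$ across a full particle gap: one must show that the product of distances to all other particles, times the external weight $\hat w$, does not fluctuate by more than a polynomial-in-$L$ (for \eqref{e:dis}, \eqref{e:cont}) or $N^{\OO(\fa)}$ (for \eqref{e:dis2}) amount as the resampled particle moves across its gap. This is precisely where the rigidity estimates of Theorem \ref{t:rigidity} and Corollary \ref{c:initialstep}, propagated to the conditioned ensembles through $\cR_L^*$ and Proposition \ref{p:LDPextreme}, enter, and where the analyticity and derivative bounds on $V_N$ from Assumptions \ref{a:VN} and \ref{a:moreVN} are used to control the contribution of the external potential near the edge. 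Everything else — the extraction of the $|y-x_{k+1}|^{\beta}$ factor, the middle-third comparison, and the passage from the conditional estimate to the unconditional one — is routine and identical to \cite[Appendix D]{MR3253704}.
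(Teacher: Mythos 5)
Your proof takes a genuinely different route from the paper's, and there is a real gap in it.

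\textbf{Different approach.} The paper proves \eqref{e:dis} and \eqref{e:dis2} by a dilation/change-of-variable argument in the spirit of \cite[Lemma D.1]{MR3253704}: one fixes the window $[\ell_-,\ell_+]=[\ell_{L+1}-\theta-a,\,\ell_{L+1}-\theta]$ with $a=N^{\fa+1/3}L^{-1/3}$, introduces the restricted partition function $Z_D$ (configurations with $\ell_L\le \ell_+-a/2D$), and compares $Z_D$ to $Z_\infty=Z_L^{\loc}$ via an explicit transition kernel $\bK$ that pushes particles in the window slightly to the right; the bookkeeping of the $\OO(L^2)$ pair interactions gives the $(1-C/D)^{CL^2}$ bound and hence $CsL^2$. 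For the continuous case \eqref{e:cont} the paper simply cites \cite{MR3253704}. For \eqref{e:dis2} the paper uses rigidity only through an \emph{upper} bound on the number of particles in $[\ell_-,\ell_+]$ ($\lesssim N^{\fa}$), which improves the exponent $CL^2$ to $CN^{\fa}$. You instead condition on $\{x_j\}_{j\ne k}$, write the one-particle conditional density $f(y)=|y-x_{k+1}|^\beta|y-x_{k-1}|^\beta G(y)$ on $(x_{k-1},x_{k+1})$, and estimate the small-gap probability by a numerator/denominator ``middle-third'' comparison.

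\textbf{The gap.} Your argument requires a \emph{lower} bound $x_{k+1}-x_{k-1}\gtrsim g_{\mathrm{typ}}\asymp k^{-1/3}N^{1/3}$ (you invoke it both to control $|y-x_{k-1}|$ in the numerator and to lower-bound the denominator), and you claim this ``comes from the rigidity estimates encoded in $\cR_L^*$.'' It does not. Rigidity in $\cR_L^*$ gives $|x_j/N-\gamma_j|\le N^{-2/3+\fa}j^{-1/3}$, and the classical spacing is $\gamma_{k+1}-\gamma_{k-1}\asymp N^{-2/3}k^{-1/3}$, which is \emph{smaller} by a factor $N^{\fa}$. Hence rigidity permits $x_{k-1}$, $x_k$, $x_{k+1}$ to collapse onto each other: the lower bound $x_{k+1}-x_{k-1}\gtrsim g_{\mathrm{typ}}$ is itself a level-repulsion statement for next-nearest neighbours, and assuming it is circular. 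If $x_{k+1}-x_{k-1}=u\ll g_{\mathrm{typ}}$, your conditional bound degrades to $(t/u)^{\beta+1}G_{\max}/G_{\min}$, which is uncontrolled without an a priori tail bound on small $u$ --- and that tail bound is precisely what needs proving. The paper's dilation argument sidesteps this entirely by never conditioning on a single particle's neighbours; it only needs an \emph{upper} bound on how many particles sit in the window $[\ell_-,\ell_+]$, which rigidity does give. (A secondary issue: the ratio $G_{\max}/G_{\min}$ over a full typical gap also picks up contributions from the frozen particles $\ell_j$, $j>L$, that sum to $\log N$ rather than $\log L$, so the ``polynomial-in-$L$'' claim would need care even if the gap lower bound were available.)
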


In the following we fix a small number $s$ which will be chosen later. We define $\cG^{\loc}$ to be the set of  particle configurations $(\ell_1, \ell_2, \cdots, \ell_L)\in \bW_N^\theta$, such that  
\begin{align*}
\ell_1/N\geq A-C(L\ln N)^{1/2}N^{-2/3+\fa}, \quad |\ell_{k+1}-\ell_{k}|\geq sL^{-1/3}N^{1/3}, \quad k\in \qq{1, L}.
\end{align*}

We define the continuous version of $\cG^{\loc}$ as 
\begin{align*}
\cG^{\cont}=\{(x_1, x_2,\cdots, x_L): (\lceil x_1\rceil_\theta, \lceil x_2\rceil_\theta,\cdots, \lceil x_L\rceil_\theta)\in \cG^{\loc}\},
\end{align*}
where $\lceil x_k\rceil_\theta=\inf\{\ell\geq x_k: \ell\in a(N)+ k\theta+\bZ \}$. We remark that $\lceil x_k\rceil_\theta-1<x_k\leq \lceil x_k\rceil_\theta$.
Thanks to Propositions \ref{p:LDPextreme}, \ref{p:levelrep} and a union bound, we have 
\begin{align}\label{e:probG}
\bP^{\loc}_L(\cG^{\loc})\geq 1-CsL^3,\quad \bP^{\cont}_L(\cG^{\cont})\geq 1-CsL^3.
\end{align}

\begin{proposition}\label{p:same}
For any ``good" boundary condition $(\ell_{L+1},\ell_{L+2},\cdots, \ell_{N})\in \cR_L^*(\fa,r)$, and particle configurations $(x_1,x_2,\cdots,x_L)\in \cG^{\cont}$, we have
\begin{align}\label{e:same}
Z^{\loc}_L\bP_L^{\loc}(\ell_1, \ell_2,\cdots, \ell_L)=\exp\left\{O\left(\frac{L^{4/3}N^\fa}{sN^{1/3}}\right)\right\}Z^{\cont}_L\bP_L^{\cont}(x_1, x_2,\cdots, x_L),
\end{align}
where $\ell_k=\lceil x_k\rceil_\theta=\inf\{\ell\geq x_k: \ell\in a(N)+ k\theta+\bZ \}$, for $k\in \qq{1, L}$. Moreover
\begin{align}\label{e:same2}
\bP_L^{\loc}(\ell_1, \ell_2,\cdots, \ell_L)=\left(1+\OO\left(sL^3+\frac{L^{4/3}N^\fa}{sN^{1/3}}\right)\right)\bP_L^{\cont}(x_1, x_2,\cdots, x_L).
\end{align}
In the above estimates, we assume $L^{4/3}N^{\fa-1/3}\ll s\ll L^{-3}$.
\end{proposition}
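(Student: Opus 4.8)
The plan is to prove the two estimates \eqref{e:same} and \eqref{e:same2} by a direct term-by-term comparison of the densities of $\bP_L^{\loc}$ and $\bP_L^{\cont}$, using the rounding map $x_k\mapsto \ell_k=\lceil x_k\rceil_\theta$ which changes each coordinate by at most one. First I would write out the ratio of the two (unnormalized) densities: the discrete weight carries the Gamma-function interaction $\prod_{i<j}\frac{\Gamma(\ell_j-\ell_i+1)\Gamma(\ell_j-\ell_i+\theta)}{\Gamma(\ell_j-\ell_i)\Gamma(\ell_j-\ell_i+1-\theta)}$ while the continuous weight carries $\prod_{i<j}|x_j-x_i|^{\beta}$, and both carry $\prod_i \hat w(\cdot;L)$ (with $\hat w$ built from $V_N$, the fixed boundary factors $\Gamma(\ell_j-\cdot+1)\Gamma(\ell_j-\cdot+\theta)/\Gamma(\ell_j-\cdot)\Gamma(\ell_j-\cdot+1-\theta)$ for $j>L$, and the quadratic cutoff $\Theta$). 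For the pairwise interaction among the first $L$ particles I would apply Stirling's formula exactly as in \eqref{e:newproduct}, namely $\Gamma(h+\theta)/\Gamma(h)=h^\theta e^{\OO(h^{-1})}$, to get $\frac{\Gamma(\ell_j-\ell_i+1)\Gamma(\ell_j-\ell_i+\theta)}{\Gamma(\ell_j-\ell_i)\Gamma(\ell_j-\ell_i+1-\theta)}=|\ell_j-\ell_i|^{2\theta}e^{\OO(|\ell_j-\ell_i|^{-1})}=|\ell_j-\ell_i|^{\beta}e^{\OO(|\ell_j-\ell_i|^{-1})}$, and likewise for the boundary factors inside $\hat w$.

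The key point is then to control, coordinate by coordinate, the replacement of $|\ell_j-\ell_i|$ by $|x_j-x_i|$ and of $\ell_k$ by $x_k$ in the smooth factors. On the event $\cG^{\cont}$ we have the separation $|\ell_{k+1}-\ell_k|\ge sL^{-1/3}N^{1/3}$ and $|x_k-\lceil x_k\rceil_\theta|\le 1$, so $\big|\log|\ell_j-\ell_i|-\log|x_j-x_i|\big|\lesssim 1/|\ell_j-\ell_i|$, and summing the $\OO(1/|\ell_j-\ell_i|)$ errors over $i<j$ using $|\ell_j-\ell_i|\ge |i-j|\,sL^{-1/3}N^{1/3}$ (for neighbors) gives a total interaction error of order $\sum_k \frac{L}{sL^{-1/3}N^{1/3}}\ln L \lesssim \frac{L^{4/3}N^\fa}{sN^{1/3}}$ — here the extra $N^{\fa}$ absorbs the logarithmic factors. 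For the one-body factors: the $V_N$ part contributes $N|V_N(\ell_k/N)-V_N(x_k/N)|\lesssim N\cdot \|V_N'\|_\infty/N \lesssim \ln N$ per particle by Assumption \ref{a:moreVN} (analyticity and $V_N'=V'+\OO(N^{-1/3})$), hence $\lesssim L\ln N$ total, which is dominated by $L^{4/3}N^\fa/(sN^{1/3})$ since $s\ll L^{-3}$; the boundary $\Gamma$-factors for $j>L$ change by $\OO(|\ell_j-\ell_k|^{-1})$ per pair, and since $\ell_j$ ($j>L$) and $x_k$ ($k\le L$) are separated by $\gtrsim$ (the bulk scale) on $\cR_L^*$, these sum to $\OO(\ln N)$; the cutoff $\Theta(N^{2/3-\fa}(\cdot-A))$ is Lipschitz with the relevant modulus, contributing a negligible error on $\cG^{\cont}$. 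Collecting everything yields $Z_L^{\loc}\bP_L^{\loc}(\ell)=e^{\OO(L^{4/3}N^\fa/(sN^{1/3}))}Z_L^{\cont}\bP_L^{\cont}(x)$, which is \eqref{e:same}. (One must also check that the map $(x_1,\dots,x_L)\mapsto(\ell_1,\dots,\ell_L)$ pushes Lebesgue measure on a unit box to counting measure, so that summing \eqref{e:same} over the discrete configuration and integrating the continuous one match; this is immediate from the definition of $\lceil\cdot\rceil_\theta$.)

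For \eqref{e:same2} I would divide \eqref{e:same} by its integral: summing $Z_L^{\loc}\bP_L^{\loc}$ over $\ell\in\cG^{\loc}$ equals, up to the multiplicative factor $e^{\OO(L^{4/3}N^\fa/(sN^{1/3}))}=1+\OO(L^{4/3}N^\fa/(sN^{1/3}))$ (valid since that exponent is $\ll 1$ under the hypothesis $s\gg L^{4/3}N^{\fa-1/3}$), the integral of $Z_L^{\cont}\bP_L^{\cont}$ over $\cG^{\cont}$; hence $\frac{Z_L^{\loc}}{Z_L^{\cont}}=\big(1+\OO(L^{4/3}N^\fa/(sN^{1/3}))\big)\frac{\bP_L^{\cont}(\cG^{\cont})}{\bP_L^{\loc}(\cG^{\loc})}$. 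Then \eqref{e:probG}, which gives $\bP_L^{\loc}(\cG^{\loc}),\bP_L^{\cont}(\cG^{\cont})=1+\OO(sL^3)$, turns this into $\frac{Z_L^{\loc}}{Z_L^{\cont}}=1+\OO(sL^3+L^{4/3}N^\fa/(sN^{1/3}))$, and substituting back into \eqref{e:same} gives \eqref{e:same2}.

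I expect the main obstacle to be the careful bookkeeping of the interaction-error sum: one must organize the double sum $\sum_{i<j}\OO(|\ell_j-\ell_i|^{-1})$ so that the $k$-th nearest-neighbor gap $\ge sL^{-1/3}N^{1/3}$ is used with the right combinatorial weight (there are $\OO(L)$ pairs at separation $\sim$ one gap, $\OO(L)$ at $\sim$ two gaps, etc., giving a harmonic sum $\sim \ln L$), and then to verify that the resulting bound $\frac{L^{4/3}\ln L}{sN^{1/3}}$ genuinely dominates all the other error sources (the $V_N$ rounding error $L\ln N$, the far-boundary error, the $\Theta$-error). The separation hypothesis is exactly what makes the rounding $x_k\mapsto\lceil x_k\rceil_\theta$ a harmless perturbation of the log-interaction, so all of this rests on having restricted to $\cG^{\cont}$; outside that event the comparison would fail, which is why the probability bounds \eqref{e:probG} from Propositions \ref{p:LDPextreme} and \ref{p:levelrep} are needed as a prerequisite.
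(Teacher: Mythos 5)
Your framework is right — Stirling on the $\Gamma$-ratios, using the gap $|\ell_{k+1}-\ell_k|\ge sL^{-1/3}N^{1/3}$ on $\cG^{\cont}$ to control the replacement of $|\ell_j-\ell_i|$ by $|x_j-x_i|$, the harmonic-sum bookkeeping for the double sum $\sum_{i<j}\OO(1/|\ell_j-\ell_i|)$, the Lebesgue-to-counting-measure matching, and the final normalization step via \eqref{e:probG} are all what the paper does. But there is a genuine gap in your treatment of the one-body factor $\hat w(\ell_k;L)/\hat w(x_k;L)$, and it is the heart of the matter.

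You bound the external-potential contribution and the boundary $\Gamma$-factor contribution \emph{separately}: the $V_N$ part gives $N|V_N(\ell_k/N)-V_N(x_k/N)|\lesssim |V_N'(\xi)|\,|\ell_k-x_k|=\OO(1)$ per particle (you wrote $\OO(\ln N)$, but either way), and the fixed-boundary $\Gamma$ factors give $\sum_{j>L}\OO(1/(\ell_j-\ell_k))=\OO(1)$ per particle. Summing over $k\le L$ yields a total one-body error of order $L$ (or $L\ln N$). This is $\gg 1$, whereas \eqref{e:same} asserts a total error of $\OO(L^{4/3}N^\fa/(sN^{1/3}))$, which under the hypothesis $s\gg L^{4/3}N^{\fa-1/3}$ is $\ll 1$. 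Your claim that $L\ln N$ is ``dominated by $L^{4/3}N^\fa/(sN^{1/3})$ since $s\ll L^{-3}$'' runs the inequality the wrong way: the \emph{lower} bound $s\gg L^{4/3}N^{\fa-1/3}$ makes $L^{4/3}N^\fa/(sN^{1/3})\ll 1$, not $\gg L\ln N$. So the two separate bounds are both of the wrong size, and the argument cannot close.

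The missing idea is a \emph{cancellation} between the two contributions, which is the content of \eqref{e:ders4}--\eqref{e:ders7} in the paper's proof. One should never estimate $V_N'$ and $\sum_{j>L}(x-\ell_j)^{-1}$ separately; one must combine them as
\begin{align*}
\del_x\ln\hat w(x;L)=-V_N'\!\left(\tfrac{x}{N}\right)+2\theta\sum_{j>L}\frac{1}{x-\ell_j}+\text{(small)}.
\end{align*}
By Assumption \ref{a:moreVN} and the equilibrium relation, $V_N'(u)=2\theta\,\mathrm{P.V.}\!\int_A^B \tfrac{\rho_V(y)}{u-y}\rd y+\OO(N^{-1/3})$ near the band, while rigidity on $\cR_L^*$ (applied as in \cite[Lemma C.1]{MR3253704}) shows the Riemann sum $2\theta\sum_{j\ge L+CN^\fa}(x-\ell_j)^{-1}$ agrees with $2\theta\,\mathrm{P.V.}\!\int_A^B\rho_V(y)/(x/N-y)\,\rd y$ up to $\OO(L^{1/3}N^\fa/N^{1/3})$; the near-boundary terms $L+1\le j\le L+CN^\fa$ contribute $\OO(L^{1/3}\ln N/(sN^{1/3}))$ using the gap, and the $\Theta$-cutoff derivative is $\OO((L\ln N)^{1/2}/N^{1/3+\fa})$. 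Only after this cancellation is $\del_x\ln\hat w(x;L)=\OO(L^{1/3}N^\fa/(sN^{1/3}))$ genuinely small, so that $|\ell_k-x_k|\le 1$ gives a per-particle ratio $e^{\OO(L^{1/3}N^\fa/(sN^{1/3}))}$ and a total of $e^{\OO(L^{4/3}N^\fa/(sN^{1/3}))}$, matching the interaction error. This cancellation is exactly where Assumption \ref{a:moreVN} and the rigidity information in $\cR_L^*$ enter the comparison, and without it the one-body terms swamp the target bound. Your derivation of \eqref{e:same2} from \eqref{e:same} via \eqref{e:probG} is correct as written.
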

\begin{proof}
For any $1\leq i<j\leq L$, since $\ell_j-\ell_i\geq sL^{-1/3}N^{1/3}(j-i)\gg 1$, we can estimate the interaction terms in \eqref{e:condensemble2} 
\begin{align*}
\frac{\Gamma(\ell_j-\ell_i+1)\Gamma(\ell_j-\ell_i+\theta)}{\Gamma(\ell_j-\ell_i)\Gamma(\ell_j-\ell_i+1-\theta)}
=|\ell_j-\ell_i|^{2\theta}e^{\OO\left(1/(\ell_j-\ell_i)\right)}
=|x_j-x_i|^{2\theta}e^{\OO\left(1/(\ell_j-\ell_i)\right)}.
\end{align*}
Therefore we have 
\begin{align}\begin{split}\label{e:inter}
\prod_{1\leq i<j\leq L}\frac{\Gamma(\ell_j-\ell_i+1)\Gamma(\ell_j-\ell_i+\theta)}{\Gamma(\ell_j-\ell_i)\Gamma(\ell_j-\ell_i+1-\theta)}
&=e^{\sum_{1\leq i<j\leq L}\OO\left(1/(\ell_j-\ell_i)\right)}\prod_{1\leq i<j\leq L}|x_j-x_i|^{2\theta}\\
&=\exp\left\{\OO\left(\frac{L^{4/3}\ln N}{sN^{1/3}}\right)\right\}\prod_{1\leq i<j\leq L}|x_j-x_i|^{2\theta}.
\end{split}\end{align}

In the following we estimate, $\hat w(\ell_k; L)/\hat w(x_k; L)$. Notice that we have the following asymptotics,
\begin{align*}
\del_z \ln \Gamma(z)=\ln z-\frac{1}{2z}+\OO\left(\frac{1}{z^2}\right).
\end{align*}
Moreover, the derivative of the potential $V_N$ can be written in terms of the equilibrium measure,
\begin{align}\label{e:ders4}
V_N'(u)=V'(u)+\OO(N^{-1/3})=2\theta P.V.\int_A^B \frac{\rho_V(y)}{u-y}\rd x +1_{u\not\in [A,B]}\OO\left(\sqrt{(u-A)(u-B)}\right) +\OO(N^{-1/3}).
\end{align}
where we used assumption \ref{a:moreVN}.
Therefore, for any $x\in [N(A-C(L\ln N)^{1/2}N^{-2/3+\fa}), \ell_{L+1}-sL^{-1/3}N^{1/3}]$, we have
\begin{align}\begin{split}\label{e:ders5}
&\del_x \ln \hat w(x;L)=-V_N'\left(\frac{x}{N}\right)+\OO\left(\frac{(L\ln N)^{1/2}}{N^{1/3+\fa}}\right)
+2\theta\sum_{j\geq L+1}\frac{1}{x-\ell_j}+\OO\left(\frac{1}{(x-\ell_j)^2}\right).
\end{split}\end{align}
We deduce  from \cite[Lemma C.1]{MR3253704} that 
\begin{align}\label{e:ders6}
-2\theta P.V.\int_A^B \frac{\rho_V(y)}{x/N-y}
+2\theta\sum_{j\geq L+CN^\fa}\frac{1}{x-\ell_j}
=\OO\left(\frac{L^{1/3}N^{\fa}}{N^{1/3}}\right),
\end{align}
Moreover, since $x\leq \ell_{L+1}-sL^{-1/3}N^{1/3}$, we have
\begin{align}\label{e:ders7}
2\theta\sum_{j= L+1}^{L+CN^{\fa}}\frac{1}{x-\ell_j}+2\theta\sum_{j\geq L+1}\OO\left(\frac{1}{(x-\ell_j)^2}\right)
=\OO\left( \frac{L^{1/3}\ln N}{sN^{1/3}}\right).
\end{align}
We get the following estimate by combining \eqref{e:ders4}, \eqref{e:ders5}, \eqref{e:ders6} and \eqref{e:ders7},
\begin{align*}
\del_x \ln \hat w(x;L)=\OO\left( \frac{L^{1/3}N^{\fa}}{sN^{1/3}}\right).
\end{align*}
Therefore, we deduce that 
\begin{align}\label{e:hatw2}
\frac{\hat w(\ell_k; L)}{\hat w(x_k; L)}=
e^{C\sup_{x_k\leq x\leq \ell_k}|\del_x\ln \hat w(x;L)||\ell_k-x_k|}=\exp\left\{\OO\left(\frac{L^{1/3}N^{\fa}}{sN^{1/3}}\right)\right\},
\end{align}
where we used that $|\ell_k-x_k|=\OO(1)$.

It follows by combining \eqref{e:inter} and \eqref{e:hatw2} together,
\begin{align*}
Z^{\loc}_L\bP_L^{\loc}(\ell_1, \ell_2,\cdots, \ell_L)=\exp\left\{\OO\left(\frac{L^{4/3}\ln N}{sN^{1/3}}\right)+\OO\left(\frac{L^{4/3}N^{\fa}}{sN^{1/3}}\right)\right\}Z^{\cont}_L\bP_L^{\cont}(x_1, x_2,\cdots, x_L).
\end{align*}
and \eqref{e:same} follows. 

For \eqref{e:same2}, we need to show $Z_L^{\loc}\approx Z_L^{\cont}$. Thanks to \eqref{e:probG}, it follows
\begin{align*}\begin{split}
&(1+\OO(sL^3))Z_L^{\loc}=Z_L^{\loc}\bP_L^{\loc}(\cG^{\loc})
=\sum_{(\ell_1, \ell_2,\cdots, \ell_L)\in \cG^{\loc}} Z_L^{\loc}\bP_L^{\loc}(\ell_1, \ell_2,\cdots, \ell_L)\\
=&\sum_{(\ell_1, \ell_2,\cdots, \ell_L)\in \cG^{\loc}} \exp\left\{\OO\left(\frac{L^{4/3}N^\fa}{sN^{1/3}}\right)\right\}
Z^{\cont}_L\bP_L^{\cont}(\{x_1, x_2,\cdots, x_L: \lceil x_k\rceil_\theta=\ell_k, \quad k\in \qq{1,L}\})\\
=&\exp\left\{\OO\left(\frac{L^{4/3}N^\fa}{sN^{1/3}}\right)\right\}Z^{\cont}_L\bP_{L}^{\cont}(\cG^{\cont})=(1+\OO(sL^3))\exp\left\{\OO\left(\frac{L^{4/3}N^\fa}{sN^{1/3}}\right)\right\}Z_L^{\loc}.
\end{split}\end{align*}
By rearranging the above expression, we get
\begin{align}\label{e:ZL}
Z_L^{\loc}=\left(1+\OO\left(sL^3+\frac{L^{4/3}N^\fa}{sN^{1/3}}\right)\right)Z_L^{\cont},
\end{align}
and \eqref{e:same2} follows from \eqref{e:same} and \eqref{e:ZL}.

\end{proof}

The edge universality of discrete $\beta$-ensemble follows easily from Proposition \ref{p:same}.

\begin{proof}[Proof of Theorem \ref{t:edgeuniversality}]
Thanks to Theorems \ref{t:rigidity} and \eqref{e:dis2} in Proposition \ref{p:levelrep}, the event $\cR_L^*(\fa, r) \cR_L^{\#}(\fa, r)$ holds with probability $1-\OO(N^{-2\fa/3})$. \eqref{e:edgeuniv} follows from the following statement: there exists some $\chi>0$, for any boundary conditions $(\ell_{L+1}, \ell_{L+2}, \cdots, \ell_{N})\in \cR^{*}_L(\fa, r)\cap \cR^{\#}_L(\fa, r)$, it holds,
\begin{align}\label{e:edgeuniv2}
\left|\bE_{\bP^{\loc}_L}\left[O\left(\left(N^{2/3}k^{1/3}s_A^{2/3}(\ell_k/N-\gamma_k)\right)_{k\in \Lambda}\right)\right]-\bE_{G\beta E}\left[O\left(\left(N^{2/3}k^{1/3}(x_k/N-\tilde \gamma_k)\right)_{k\in \Lambda}\right)\right]\right|=\OO(N^{-\chi}).
\end{align}
In the following we prove \eqref{e:edgeuniv2}. Notice that 
\begin{align}\begin{split}\label{e:same3}
&\bE_{\bP^{\cont}_L}\left[O\left(\left(N^{2/3}k^{1/3}s_A^{2/3}(x_k/N-\gamma_k)\right)_{k\in \Lambda}\right)\right]\\
=&\bE_{\bP^{\cont}_L}\left[{\bf 1}_{\{\cG^{\cont}\}}O\left(\left(N^{2/3}k^{1/3}s_A^{2/3}(\lceil x_k\rceil_{\theta}/N-\gamma_k)\right)_{k\in \Lambda}\right)\right]+\OO\left((L/N)^{1/3}+sL^3\right)\\
=&\bE_{\bP^{\loc}_L}\left[{\bf 1}_{\{\cG^{\loc}\}}O\left(\left(N^{2/3}k^{1/3}s_A^{2/3}(\ell_k/N-\gamma_k)\right)_{k\in \Lambda}\right)\right]+\OO\left(sL^3+\frac{L^{4/3}N^\fa}{sN^{1/3}}\right)\\
=&\bE_{\bP^{\loc}_L}\left[O\left(\left(N^{2/3}k^{1/3}s_A^{2/3}(\ell_k/N-\gamma_k)\right)_{k\in \Lambda}\right)\right]+\OO\left(sL^3+\frac{L^{4/3}N^\fa}{sN^{1/3}}\right),
\end{split}\end{align}
where we used \eqref{e:probG} and $|x_k-\lceil x_k \rceil_\theta|<1$ for the second and fourth lines, and \eqref{e:same2} for the third line. Recall that $L=N^{\fb}$ and $\fb<1/13$. We can take $\chi= 1/6-\fa/2-13\fb/6$, $s=N^{-1/6+\fa/2-5\fb/6}$, and $\fa$ small enough such that $\chi>0$. Then the error in \eqref{e:same3} is bounded by $\OO(N^{-\chi})$.

In particular  \eqref{e:same3} implies that
\begin{align*}
\bP_L^{\cont}(x_1\geq \gamma_1-2N^{-2/3+\fa})\geq 1/2,\quad |\bE_{\bP_L^{\cont}} [x_k-\gamma_k]|\leq 2N^{-2/3+\fa}k^{-1/3},\quad k\in \qq{1, L}.
\end{align*}
Therefore the boundary condition $(\ell_{L+1}, \ell_{L+2}, \cdots, \ell_{N})$ satisfies the assumptions in \cite[Theorem 3.3]{MR3253704}. As a result, there exits a small $\chi>0$,
\begin{align}\label{e:edgeuniv3}
\left|\bE_{\bP^{\cont}_L}\left[O\left(\left(N^{2/3}k^{1/3}s_A^{2/3}(x_k/N-\gamma_k)\right)_{k\in \Lambda}\right)\right]
-\bE_{G\beta E}\left[O\left(\left(N^{2/3}k^{1/3}(x_k/N-\tilde \gamma_k)\right)_{k\in \Lambda}\right)\right]\right|=
\OO(N^{-\chi}).
\end{align}
\eqref{e:edgeuniv2} follows from combining \eqref{e:same3} and \eqref{e:edgeuniv3}.

\end{proof}

\section{Multi-cut Case}\label{s:multi-cut}
In this section, we indicate how the arguments used so far in the article can be carried to the multi-cut stochastic systems with fixed filling fractions, which is the full model studied in \cite{Borodin2016}. We recall the definition of the multi-cut model below.

 We fix an integer
$k>0$, whose meaning is the number of segments in the support of the measure ($k=1$ corresponds to the one-cut case). For each
$N$ we take $k$ integers $n_1(N),\dots,n_k(N)$, such that $\sum_{i=1}^k n_i(N)=N$ and $k$
disjoint intervals $[a_1(N),b_1(N)]$, \dots, $[a_k(N),b_k(N)]$ of the real line ordered from left
to right.

 We assume that $b_i(N)+\theta\leq a_{i+1}(N)$ for $i\in \qq{1,k-1}$. The numbers $a_i(N)$,
$b_i(N)$ must also satisfy the conditions
 $b_i(N)-\theta n_i(N)-a_i(N)\in\mathbb Z_{> 0}$.
Further, the number $n_i(N)$ counts the number of the particles in the $i$-th interval; to make this
statement precise we define the sets of indices $I_j\subset\qq{1, N}$ for  $j\in\qq{1,k}$, via
\begin{align*}
 I_j=\qq{1+ \sum_{m=1}^{j-1} n_m(N) , \sum_{m=1}^j n_m(N)}.
\end{align*}
We also set $I_j^+$ and $I_j^-$ to be the maximal and minimal elements of $I_j$,
respectively.

\begin{definition} \label{Def_state_space}The state space $\bW_N^{\theta}$ consists of $N$--tuples $\ell_1<\ell_2<\dots<\ell_N$
such that for each $j=1,\dots,k$:
\begin{enumerate}
 \item If $i=I^-_j$, then $\ell_i-a_i(N)\in \mathbb Z_{> 0}$.
 \item If $i=I^+_j$, then $b_i(N)-\ell_i\in\mathbb Z_{> 0}$.
 \item If $i\in I_j$, but $i\ne I^+_j$, then $\ell_{i+1}-\ell_{i}-\theta\in\bZ_{\geq 0}$.
\end{enumerate}
\end{definition}

We also take a weight function $w(x;N)>0$ in the interior of the intervals $(a_i(N),b_i(N))$, and $w(a_i(N); N)=w(b_i(N); N)=0$ for $i\in \qq{1, k}$. The multi-cut discrete $\beta$-ensemble is given by the probability measure $\bP_N$ on $\bW_N^{\theta}$
\begin{equation}\label{eq_distribution_form}
 \bP_N(\ell_1,\dots,\ell_N)= \frac{1}{Z_N} \prod_{1\leq i<j \leq N}
 \frac{\Gamma(\ell_j-\ell_i+1)\Gamma(\ell_j-\ell_i+\theta)}{\Gamma(\ell_j-\ell_i)\Gamma(\ell_j-\ell_i+1-\theta)}
 \prod_{i=1}^N w(\ell_i;N).
\end{equation}

For the study of the multi-cut model, the Assumptions \ref{a:VN}, \ref{a:wx} and \ref{a:Hz} and \ref{a:moreVN} need to be replaced by their counterparts from \cite[Section 3.2]{Borodin2016}, as listed below

\begin{assumption}\label{a:mVN}
We require that as $N\rightarrow \infty$, for each $i\in\qq{1, k}$,
\begin{align*}
a_i(N)=N\hat a_i+\OO(\ln (N)),\quad b_i(N)=N\hat b_i+\OO(\ln(N)), \quad w(a_i(N); N)=w(b_i(N); N)=0,
\end{align*}
and 
\begin{align*}\nonumber
\hat a_1<\hat b_1<\hat a_2<\cdots<\hat a_k<\hat b_k.
\end{align*}
We require the weight $w(x;N)$ in the intervals $[a_i(N)+1, b_i(N)-1]$ for $i\in \qq{1,k}$, has the form
\begin{align*}\nonumber
w(x; N)=\exp\left(-NV_N\left(\frac{x}{N}\right)\right),
\end{align*}
for a function $V_N$ that is continuous in the intervals $[a_i(N)+1,b_i(N)-1]$, and such that 
\begin{align*}\nonumber
V_N(u)=V(u)+\OO\left(\frac{\ln(N)}{N}\right), 
\end{align*}
uniformly over $u\in [(a_i(N)+1)/N, (b_i(N)-1)/N]$. The function $V(u)$ is differentiable and the following bound holds for a constant $C>0$,
\begin{align*}
|V'(u)|\leq C\left(1+\sum_{i=1}^k\left(|\ln(u-\hat a_i)|+|\ln(u-\hat b_i)|\right)\right).
\end{align*}
\end{assumption}

\begin{assumption}\label{a:mfilling}
There exits a constant $\fd>0$ such that for any $i\in \qq{1, k}$ and $N$ large enough, the filling fractions satisfy
\begin{align*}\nonumber
\fd\leq n_i(N)/N\leq \theta^{-1}(\hat b_i-\hat a_i)-\fd.
\end{align*}
\end{assumption}

As in the one-cut case, we denote the empirical particle density as $\mu_N=N^{-1}\sum_{i=1}^N \delta_{\ell_i/N}$, and let $\mu(\rd x)=\rho_V(x)\rd x$ be the constrained equilibrium measure of the discrete $\beta$-ensemble $\bP_N$, which is given by the minimizer of the following functional 
\begin{align}\label{e:defIV}
-\theta \int_{x\neq y}\ln |x-y|\rho(x)\rho(y)\rd x\rd y+\int V(x)\rho(x)\rd x,
\end{align}
over all the densities $\rho(x)$, supported on $\cup_{i=1}^k[\hat a_i, \hat b_i]$, with $0\leq \rho(x)\leq \theta^{-1}$, and 
\begin{align}\nonumber
\int_{\hat a_i}^{\hat b_i}\rho(x)\rd x=\hat n_i, \quad i\in \qq{1,k}.
\end{align}
We remark that $\rho_V$ depends on $\hat n_i$, and the latter depend on $N$, so $\rho_V$ depends on $N$. However, we will hide this dependence from our notations.
We denote by $F_V$ the (non-centered) effective potential given by
 \begin{align}\nonumber
F_V(x)\deq -2\theta\int_{-\infty}^\infty \ln|x-y|\rho_V(y)\rd y+V(x),
\end{align}

On each single cut $[\hat a_i, \hat b_i]$, there exists a constant $f_{i}$ so that $F_V$ satisfies:
\begin{enumerate}
\item $F_V (x)-f_i  \geq 0$, for all $x$ in voids in $[\hat a_i, \hat b_i]$, i.e. maximal closed connected  intervals where $\rho_V(x)=0$;
\item $F_V (x)-f_i \leq  0$, for all $x$ in saturated regions in $[\hat a_i, \hat b_i]$, i.e. maximal closed connected intervals where $\rho_V(x)=\theta^{-1}$;
\item $F_V (x) -f_i = 0$, for all $x$ in bands in $[\hat a_i, \hat b_i]$, i.e. maximal open connected intervals where $0<\rho_V(x)<\theta^{-1}$.
\end{enumerate}

We denote the classical particle locations $\gamma_1, \gamma_2, \cdots, \gamma_N$ corresponding to the constrained equilibrium measure $\mu$, as
\begin{align}\label{e:mdefloc}
\frac{i-1/2}{N}=\int_{-\infty}^{\gamma_i}\rho_V(x)\rd x, \quad i\in \qq{1, N}.
\end{align}

\begin{assumption}\label{a:mwx}
For $i\in\qq{1,k}$, there exist open sets $\cM_i$, which contains the interval $[\hat a_i, \hat b_i]$, such that the functions $\psi^{\pm}_N(x)$ defined through
\begin{align}\nonumber
\frac{w(x;N)}{w(x-1;N)}=\frac{\psi_N^+(x)}{\psi_N^-(x)},
\end{align}
can be chosen in such a way that
\begin{align}
\psi_N^{\pm}(x)=\phi^{\pm}\left(\frac{x}{N}\right)+\OO\left(\frac{1}{N}\right),\nonumber
\end{align}
uniformly over $x/N$ in compact subsets of $\cM\deq \cup_{i=1}^k\cM_i$. All the aforementioned functions are analytic. 
\end{assumption}

We recall the Stieltjes transform $G_\mu$ of the equilibrium measure $\mu$ as in \eqref{e:defGmu}, 
and the two functions $R_\mu, Q_\mu$ as in \eqref{e:defRQ},
\begin{align}\begin{split}\label{e:mdefRQ}
R_\mu(z) \deq \phi^-(z)e^{-\theta G_\mu(z)}+\phi^+(z)e^{\theta G_\mu(z)},\\
Q_\mu(z) \deq \phi^-(z)e^{-\theta G_\mu(z)}-\phi^+(z)e^{\theta G_\mu(z)}.
\end{split}\end{align}
In the multi-cut case, they all depend on $N$. Moreover, under Assumptions \ref{a:mVN}, \ref{a:mfilling} and  \ref{a:mwx}, $R_\mu(z)$ is analytic on $\cal M$ \cite[Proposition 5.11]{Borodin2016}.

\begin{assumption}\label{a:mHz}
We assume there exists a function $H(z)$ analytic in $\cM$ and numbers $\{A_i, B_i\}_{i=1}^k$ such that 
\begin{itemize}
\item {$\hat a_i< A_i<B_i< \hat b_i$} for $i\in\qq{1, k}$;
\item $Q_\mu(z)=H(z)\prod_{i=1}^k\sqrt{(z-A_i)(z-B_i)}$, where the branch of square root is such that $\sqrt{(z-A_i)(z-B_i)}\sim z$ as $z\rightarrow \infty$; 
\item $H(z)\neq 0$ for all $z$ in a neighborhood of $\cup_{i=1}^k[\hat a_i, \hat b_i]$.
\end{itemize}
\end{assumption}

\begin{assumption}\label{a:mmoreVN}
For any $u$ in a small neighborhood of $\cup_{i=1}^k[A_i, B_i]$, { $V_N(u)$ is analytic and} the following holds
\begin{align*}
V_N'(u)=V'(u)+\OO\left(N^{ -1/3}\right).
\end{align*}
\end{assumption}

Under Assumptions \ref{a:mVN}, \ref{a:mfilling}, \ref{a:mwx} and \ref{a:mHz}, when restricted on a single cut $[\hat a_i, \hat b_i]$, the equilibrium measure $\rho_V|_{[\hat a_i, \hat b_i]}$ behaves as in one-cut case. Especially, it has  a single band, and the square root behaviors at $A_1, B_1, \cdots, A_k, B_k$.

To state the rigidity theorem in the multi-cut case, we need some more definitions. We fix a small parameter $\fa>0$, and an index $i\in \qq{1, k}$, and define the spectral domains
\begin{align}\begin{split}\label{def:mdomainD}
\cD_{i,r}&\deq \cD_{i,r}^{\rm int}\cup \cD^{\ext}_i,\\
\cD_{i,r}^{\rm int}&\deq \{E+\ri \eta \in \cM_i\cap \bC_+: E\in[A_i, B_i-r], \eta\sqrt{\kappa_E+\eta}\geq N^{\fa}/N\},\\
\cD^{\ext}_i&\deq \{E+\ri \eta \in \cM_i\cap \bC_+: E\leq A_i, \eta\geq (N^{\fa}/N)^{2/3}\},
\end{split}
\end{align}
and
\begin{align}\begin{split}\label{def:mD_0}
\cD_*&\deq \cD_*^{\rm int}\cup \cD_*^{\ext},\\
\cD_*^{\rm int}&\deq \{E+\ri \eta \in \cM\cap \bC_+: E\in\cup_{i=1}^k [A_i, B_i], \eta\sqrt{\kappa_E+\eta}\geq N^{-(1-\fa)/2}\},\\
\cD_*^{\ext}&\deq \{E+\ri \eta \in \cM\cap \bC_+: E\notin \cup_{i=1}^k [A_i, B_i], \eta\geq N^{-(1-\fa)/3}\}.
\end{split}
\end{align}
where $\kappa_E=\dist(E, \{A_1,B_1,\cdots, A_k,B_k\})$.

As in the one-cut case, we define the dual equilibrium measure $\mu^{\dual}$, 
\begin{align}\label{e:mdualmu}
\mu^{\dual}(\rd x)=\rho_V^{\dual}(x)\rd x, \quad \rho_V^{\dual}(x)=\theta^{-1}\sum_{i=1}^k{\bf 1}_{[\hat a_i, \hat b_i]}-\rho_V(x),
\end{align}
and the Stieltjes transform $G_\mu^{\dual}$ of the dual equilibrium measure $\mu^{\dual}$.
For any $z=E+\ri \eta\in \bC_+$, let $\kappa_E=\dist(E,\{A_1, B_1,\cdots,A_k,B_k\})$, $\hat\kappa_E=\dist(E, \{\hat a_1,\hat b_1,\cdots,\hat a_k, \hat b_k\})$, we define the control parameter:
\begin{align}\begin{split}\label{e:mdefcontrolmu}
\Theta_\mu(z)=\min\left\{\frac{|\Im[G_\mu(z)]|}{N\eta},\frac{ |\Im[G_\mu^{\dual}(z)]|}{N\eta}+\frac{1}{N(\eta+\hat \kappa_E)}\right\}.
\end{split}\end{align}
Thanks to the square root behavior of the equilibrium density $\rho_V(x)$ at $A_1, B_1,\cdots, A_k, B_k$, the asymptotic behavior of $\Theta_\mu$ depends on $\eta$, $\kappa_E$ and $\hat \kappa_E$,
\begin{align}\label{e:mThetaest}
\Theta_\mu(z)\asymp\left\{\begin{array}{ll}
\sqrt{\kappa_E+\eta}/N\eta, &\quad E\in \cup_{i=1}^k[A_i, B_i],\\
1/N\sqrt{\kappa_E+\eta}, & \quad E\not\in \cup_{i=1}^k[A_i, B_i] \text{ on voids}, \\
1/N\sqrt{\kappa_E+\eta}+1/N(\hat \kappa_E+\eta), & \quad E\not\in \cup_{i=1}^k[A_i, B_i] \text{ on saturated regions},\\
\end{array}\right.
\end{align}

For any $v\in \bC_+$, $0\leq t\leq 1$, $\al=\pm1,\pm \ri$, and large number $K=K(N,\Im[v])\leq N$, we introduce the deformed probability measure
\begin{align}\begin{split}\label{e:mdefdeformm}
\bP_N^{K,t,v,\al}=\frac{Z_N}{Z_N^{K,t,v,\al}}\bP_N e^{ 2K\Re\left[\sum_{i=1}^N \ln \left(1+\frac{\al t}{N(v-\ell_i/N)}\right)\right]}.
\end{split}\end{align}

As in the one-cut case, we can use the loop equation to improve the estimates of the Green's function. For the proof of the following Theorem, we need to { introduce the hyperelliptic integrals to invert the loop equation},
which is slightly different from that of Theorem \ref{t:bootstrap}. We will sketch the proof at the end of  this section.

\begin{theorem}\label{t:mbootstrap}
Let $v=E+\ri \eta\in \cM\cap \bC_+$, $\kappa_E=\dist(E,\{A_1, B_1,\cdots,A_k,B_k\})$, $\hat\kappa_E=\dist(E, \{\hat a_1,\hat b_1,\cdots,\hat a_k, \hat b_k\})$ and $\Theta_\mu$ as defined in \eqref{e:mdefcontrolmu}. We assume Assumptions \ref{a:mVN}, \ref{a:mfilling}, \ref{a:mwx}, \ref{a:mHz}, and that $N\eta\sqrt{\kappa_E+\eta}\gg 1$. For any $0\leq t\leq 1$, $\alpha=\pm 1, \pm\ri$, and $K\ll (N\eta)^2\sqrt{\kappa_E+\eta}$, we assume that with high probability w.r.t. $\bP_N^{K,t,v,\al}$ 
\begin{align}\label{e:moldbound}
G_N(v)=G_\mu(v)+\OO(\varepsilon),
\end{align}
then we have
\begin{align}\label{e:mbootstrap}
\bE_{\bP_N^{K,t,v,\al}}\left[G_N(v)-G_\mu(v)\right]=\OO\left(\frac{\varepsilon^2}{\sqrt{\kappa_E+\eta}}+\varepsilon_0\right),
\end{align}
where,
\begin{align}\label{e:defepsilon0}
\varepsilon_0=\frac{1}{\sqrt{\kappa_E+\eta}}\left(\Theta_\mu(v)+\frac{K\min\{|\Im[G_\mu(v)]|,|\Im[G_\mu^{\dual}(v)]|\}}{(N\eta)^2}+\frac{\ln N}{(N\eta)^2}+\frac{\ln N}{N}+\frac{\ln NK}{(N\eta)^3}+\frac{K^2}{(N\eta)^4}\right).
\end{align}
 Let $\tilde \varepsilon_0 =\max \{\veps_0, (\ln N)^{2}K^{-1}\}$. If  there exists some $\fc>0$, such that $\varepsilon \leq N^{-\fc}\sqrt{\kappa_E+\eta}$, then we have
\begin{align}\label{e:mLDP1}
\bP_N\left(|G_N(v)-G_\mu(v)|\lesssim s\tilde\varepsilon_0\right)\geq 1-e^{-csK\tilde \veps_0},
\end{align}
for any $s\geq 1$.
\end{theorem}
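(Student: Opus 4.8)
\textbf{Proof proposal for Theorem \ref{t:mbootstrap}.} The plan is to run the same contour-integral analysis of Nekrasov's equation as in the one-cut case (Theorem \ref{t:bootstrap}), the only genuine novelty being how the linearized loop equation is inverted when $Q_\mu$ has $2k$ branch points rather than $2$. First I would set up the deformed measure $\bP_N^{K,t,v,\al}$ and, exactly as in \eqref{e:defPhi+-}--\eqref{e:defRNPhi}, construct holomorphic functions $\phi_N^\pm$ so that Theorem \ref{t:aRN} produces an $R_N(zN)$ analytic on $\cM=\cup_i\cM_i$. Dividing by the appropriate power of $(v-z+t/N)$-type factors, I obtain the analogue of \eqref{e:perturbedRN}: a decomposition of $R_N(zN)/(\cdots)$ into (i) the expectation of $\prod(1-\theta/N(z-\ell_i/N))-e^{-\theta G_\mu(z)}$ against $\psi_N^-$ plus its $+$ counterpart, (ii) the term $\psi_N^- e^{-\theta G_\mu}+\psi_N^+ e^{\theta G_\mu}=R_\mu+\OO(1/N)$, and (iii) the error term $\cE(z)$ with the same expansion $\cE(z)=-\frac{tK}{N^2}\frac1{(v-z)^2}-\frac{\bar tK}{N^2}\frac1{(\bar v-z)^2}+\OO(K(N\eta)^{-3}+K^2(N\eta)^{-4})$ from \eqref{e:defE}.

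Next I would integrate the relation against $\frac{1}{2\pi\ri(z-v)}\cdot\frac{1}{H(z)\prod_{i=1}^k\sqrt{(z-A_i)(z-B_i)}}$ times a suitable polynomial correction, over contours $\cC_1$ (two small circles around $v,\bar v$) and $\cC_2$ (a large contour enclosing $\cup_i[\hat a_i,\hat b_i]$). This is where the hyperelliptic structure enters: to kill the unwanted poles/residues at the branch points and to make the integrand decay like $1/z^2$ at infinity — the step that in the one-cut case was automatic because $Q_\mu=H(z)\sqrt{(z-A)(z-B)}$ — one must add to $1/(z-v)$ a polynomial in $z$ of degree $k-1$ whose coefficients solve the linear system coming from requiring vanishing of the hyperelliptic periods (equivalently, of the residues at $A_i,B_i$ where $\prod\sqrt{\cdot}$ has branch points, and the residue at $\infty$). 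Because Assumption \ref{a:mHz} guarantees $H\neq0$ near $\cup_i[\hat a_i,\hat b_i]$ and $\hat a_i<A_i<B_i<\hat b_i$, this linear system is nondegenerate and its solution is $\OO(1)$, so the correction polynomial contributes only bounded multiplicative constants. With that correction in place, $I_1$ again captures $\theta\,\bE[G_N(v)-G_\mu(v)]$ times the nonvanishing factor $\prod_i\sqrt{(v-A_i)(v-B_i)}\asymp\sqrt{\kappa_E+\eta}$ (using that $v$ is near at most one band, so exactly one factor is $\asymp\sqrt{\kappa_E+\eta}$ and the rest are $\asymp1$), plus the $\OO(\varepsilon^2)$, $\Theta_\mu$, $1/N$, $(N\eta)^{-2}$ errors via Lemma \ref{l:GNapprox} and \eqref{e:Thetadif}.

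The terms $I_2,I_3,I_4$ are then estimated verbatim as in \eqref{e:I2}--\eqref{e:I4}: $I_2$ vanishes by analyticity outside $\cC_2$ and $1/z^2$ decay (now with $\prod_i\sqrt{(z-A_i)(z-B_i)}\sim z^k$ balanced by the degree-$(k-1)$ numerator and the $1/(z-v)$, again giving $1/z^2$); $I_3=\OO(\ln N/N)$ since $R_\mu/(H\prod\sqrt{\cdot})$ is analytic inside $\cC_2$; and $I_4$ splits into the $\cC_2$ piece $\lesssim K/N^2$ and the $\cC_1$ piece which, after writing $f(z)=\phi^-(z)\bE[e^{-\theta G_N(z)}]/(H(z)\prod\sqrt{\cdot})$ and a residue computation as in \eqref{e:I4_2}--\eqref{e:I4_4} (including the dual rewriting via \eqref{e:Gdual} to get the $\min\{|\Im G_N|,|\Im G_N^\dual|\}$ gain), yields the $\veps_0$ error. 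Collecting everything and dividing by $\theta\prod_i\sqrt{(v-A_i)(v-B_i)}$ (absorbing the $\OO((N\eta)^{-1}+K(N\eta)^{-2})|\bE[\Im(G_N-G_\mu)]|$ self-term into the left side since $N\eta\sqrt{\kappa_E+\eta}\gg1$ and $(N\eta)^2\sqrt{\kappa_E+\eta}\gg K$) gives \eqref{e:mbootstrap}. Finally \eqref{e:mLDP1} follows exactly as \eqref{e:LDP1}: differentiate $\ln\bE_{\bP_N}[e^{2K\Re[\sum\ln(1+\al t/N(v-\ell_i/N))]}]$ in $t$ as in \eqref{e:derdensity}, use \eqref{e:mbootstrap} and \eqref{e:diffat} to control the derivative by $2K\Re[\al G_\mu(v)]+\OO(K\veps_2)$, integrate, apply Cauchy--Schwarz and Markov to transfer to $\bP_N^{K_2,t,v,\al}$, extract real and imaginary parts as in \eqref{e:impartbound}--\eqref{e:realpartbound3}, and iterate $\lceil1/\fc\rceil$ times, halving $K$ each round, then set $t=0$.

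\textbf{Main obstacle.} The only nonroutine point is the inversion of the linearized loop equation with $2k$ branch points: one must verify that the hyperelliptic period conditions determine the degree-$(k-1)$ correction polynomial uniquely with $\OO(1)$ coefficients, uniformly in $N$ (recall $A_i,B_i,H$ depend on $N$ through the filling fractions), and that the resulting kernel still decays like $1/z^2$ at infinity so that $I_2,I_3$ vanish. Everything else is a transcription of the one-cut argument, using that near any fixed $v$ at most one of the $\sqrt{(v-A_i)(v-B_i)}$ is small.
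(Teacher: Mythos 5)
Your overall architecture is the right one and matches the paper's: the same deformed measure, the same $\phi_N^{\pm}$ and $R_N(zN)$, the same contours $\cC_1\cup\cC_2$ (with $\cC_2$ now a union of $k$ loops), and identical treatments of $I_1$, $I_3$, $I_4$; the second half of the theorem (passing from \eqref{e:mbootstrap} to \eqref{e:mLDP1}) is indeed a verbatim repetition of the one-cut argument. You also correctly isolate the one genuinely new point, namely the inversion of the linearized loop equation in the presence of $2k$ branch points. However, the mechanism you propose for that inversion does not work as stated, for two reasons. First, a kernel containing $\prod_{i}\sqrt{(z-A_i)(z-B_i)}$ in the denominator has branch cuts along $\cup_i[A_i,B_i]$, which lie in the region between $\cC_1$ and $\cC_2$; the identity $0=I_1+I_2+I_3+I_4$ rests on the integrand being analytic in that region, so it fails for your kernel (and the residue at $z=v$ would then produce the factor $1$ rather than $\prod_i\sqrt{(v-A_i)(v-B_i)}$, contradicting your own description of $I_1$). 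Second, and more fundamentally, $I_2$ is a pairing of the kernel with the unknown function $\bE[G_N-G_\mu]$ on $\cC_2$; the polynomial obstruction it produces has coefficients that are linear functionals of this unknown function, so no correction polynomial chosen in advance, by period conditions imposed on the kernel alone, can make $I_2$ vanish.

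The missing input is that the filling fractions are fixed: $\oint_{\cC_2^i}G_N(z)\,\rd z=2\pi\ri\, n_i(N)/N$ deterministically, which equals $\oint_{\cC_2^i}G_\mu(z)\,\rd z$, so all $k$ periods of $\bE[G_N-G_\mu]$ vanish. The paper keeps the one-cut kernel $1/((z-v)H(z))$, observes that $I_2=P_N(v)+\OO(\ln N/N)$ with $P_N$ an a priori unknown polynomial of degree at most $k-2$ (not $k-1$: with degree $k-1$ the ratio $P_N/\prod_i\sqrt{(z-A_i)(z-B_i)}$ would only decay like $1/z$ and the decay at infinity needed to apply the period argument would be lost), divides the resulting relation by $\prod_i\sqrt{(v-A_i)(v-B_i)}$, and then applies the operator $\Upsilon_v$ built from the isomorphism $\Omega$ of hyperelliptic periods; the vanishing of the periods of $\bE[G_N-G_\mu]$ is exactly what identifies $\bE[G_N-G_\mu]$ with $\Upsilon_v$ applied to the error term, and the Lipschitz continuity of $\Upsilon_v$ (it depends only on the contours) preserves the error bounds. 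Without invoking the fixed filling fractions your argument cannot resolve the $(k-1)$-dimensional ambiguity, and indeed the conclusion would be false if the filling fractions were allowed to fluctuate.
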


Using the large deviation estimate \cite[Proposition 5.6]{Borodin2016} as input for Theorem \ref{t:mrigidity}, we have the same statement as Corollary \ref{c:initialstep} for the multi-cut case: 
\begin{corollary}\label{c:minitialstep} We assume Assumptions \ref{a:mVN}, \ref{a:mfilling}, \ref{a:mwx} and \ref{a:mHz}.
Fix $s\geq (\ln N)^2$. With probability at least $1-e^{-cs}$ with respect to $\bP_N$, it holds uniformly for any $z=E+\ri \eta\in \cD_{*}$ (as defined in \eqref{def:mD_0}),
\begin{align}\label{e:LDPGNGmu}
\left|G_N(z)-G_\mu(z)\right|\lesssim \frac{s}{N\eta}.
\end{align}
\end{corollary}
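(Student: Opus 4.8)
\textbf{Proof proposal for Corollary \ref{c:minitialstep}.}

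The plan is to follow the one-cut argument of Corollary \ref{c:initialstep} verbatim, with the only substantive change being the replacement of the one-cut large deviation estimate (Proposition \ref{p:LDP0}, itself a consequence of \cite[Proposition 2.15]{Borodin2016}) by its multi-cut analogue \cite[Proposition 5.6]{Borodin2016}, and the replacement of Theorem \ref{t:bootstrap} by its multi-cut counterpart Theorem \ref{t:mbootstrap}. First I would record that the density of the deformed measure $\bP_N^{K,t,v,\al}$ (defined in \eqref{e:mdefdeformm}) with respect to $\bP_N$ is bounded uniformly by $e^{cN\ln N}$ for $K\leq N$: this follows exactly as in the one-cut case from the bound $|G_N(v)|=\OO(\ln N)$ of Lemma \ref{l:GNprop}, which holds for the multi-cut configuration space as well since the particles are still separated by distance at least $\theta$. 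Hence the multi-cut large deviation bound \cite[Proposition 5.6]{Borodin2016} transfers to $\bP_N^{K,t,v,\al}$, and after the same Fourier/Helffer--Sj\"ostrand-type manipulation used in the proof of Proposition \ref{p:LDP0} one obtains, with high probability with respect to $\bP_N^{N\eta,t,v,\al}$,
\begin{align*}
\left|G_N(z)-G_\mu(z)\right|\lesssim \frac{N^{\fa/4}}{N^{1/2}\eta}\lesssim N^{-\fa/4}\sqrt{\kappa_E+\eta},
\end{align*}
uniformly for $z=E+\ri\eta\in\cD_*$, where I used $\eta\sqrt{\kappa_E+\eta}\geq N^{-(1-\fa)/2}$ from the definition \eqref{def:mD_0} of $\cD_*$.

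Next I would feed this estimate into Theorem \ref{t:mbootstrap} with $K=N\eta$ and $\varepsilon=N^{-\fa/4}\sqrt{\kappa_E+\eta}$. The point is that $\varepsilon_0$ as given in \eqref{e:defepsilon0} has exactly the same form as in the one-cut case, and the estimate \eqref{e:mThetaest} on $\Theta_\mu$ is identical in structure to \eqref{e:Thetaest}, with $\kappa_E$ now the distance to the finite set $\{A_i,B_i\}_{i=1}^k$ and $\hat\kappa_E+\kappa_E\gtrsim 1$ still holding. Therefore the same arithmetic gives
\begin{align*}
\varepsilon_0\lesssim\frac{1}{N\eta}+\frac{\ln N}{N\sqrt{\kappa_E+\eta}}+\frac{\ln N}{N\eta}\lesssim\frac{\ln N}{N\eta},\qquad \td\veps_0=\frac{(\ln N)^2}{N\eta},
\end{align*}
using $N\eta\sqrt{\kappa_E+\eta}\gg 1$ on $\cD_*$. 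Applying \eqref{e:mLDP1} then yields, for each fixed $z\in\cD_*$ and any $s\geq(\ln N)^2$,
\begin{align*}
\bP_N\left(|G_N(z)-G_\mu(z)|\lesssim\frac{s}{N\eta}\right)\geq 1-e^{-cs}.
\end{align*}

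Finally I would upgrade this pointwise bound to a uniform one over $\cD_*$ by a standard net argument: cover $\cD_*$ by polynomially many points at spacing $N^{-10}$, apply the union bound, and interpolate using the Lipschitz estimate $|\del_z(G_N(z)-G_\mu(z))|=\OO(1/\eta)$ from Lemma \ref{l:GNprop} (valid for $\eta\geq\ln N/N$, which holds on $\cD_*$); this is exactly the step producing \eqref{e:uniformLDPM3} in Corollary \ref{c:initialstep}. I do not anticipate any genuine obstacle here — the proof is a line-by-line transcription of the one-cut case — but the one place requiring a small amount of care is checking that $\Theta_\mu(z)/\sqrt{\kappa_E+\eta}\lesssim 1/N\eta$ holds for energies $E$ lying in saturated regions or near the outer edges $\hat a_i,\hat b_i$ (the third line of \eqref{e:mThetaest}), i.e. that the extra term $1/N(\hat\kappa_E+\eta)$ does not spoil the bound; this is handled exactly as in the one-cut case using $\hat\kappa_E+\kappa_E\gtrsim 1$, but it is the only spot where the multi-cut geometry enters non-trivially.
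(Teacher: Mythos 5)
Your proposal is correct and follows the same route the paper takes: the paper dispatches Corollary \ref{c:minitialstep} by noting that one feeds the multi-cut large deviation estimate \cite[Proposition 5.6]{Borodin2016} into Theorem \ref{t:mbootstrap} exactly as Proposition \ref{p:LDP0} was fed into Theorem \ref{t:bootstrap} in the one-cut Corollary \ref{c:initialstep}, and you have reproduced that transcription faithfully, including the density bound on the tilted measure, the arithmetic for $\veps_0$ using \eqref{e:mThetaest}, the check that $\hat\kappa_E+\kappa_E\gtrsim 1$ still holds under Assumptions \ref{a:mVN} and \ref{a:mHz}, and the union-bound-plus-Lipschitz upgrade to uniformity. (The paper's text cites ``Theorem \ref{t:mrigidity}'' at this point, which is evidently a slip for Theorem \ref{t:mbootstrap}; your reading is the intended one.)
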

With the estimate \eqref{e:LDPGNGmu}, and Proposition \ref{prop:rig2}, we can localize at a single cut $[\hat a_i, \hat b_i]$, the effect of the other cuts  is equivalent to a smooth potential of size $\OO(s/N)$. The same argument as in one-cut case leads to the following rigidity and edge universality statement.

\begin{theorem}\label{t:mrigidity}
We assume Assumptions \ref{a:mVN}, \ref{a:mfilling}, \ref{a:mwx}, \ref{a:mHz} and that $[\hat a_i, A_i]$ is a void region, i.e. $\rho_V(x)=0$ on $[\hat a_i, A_i]$, then the following holds:
\begin{enumerate}
\item
Fix $s\geq (\ln N)^2$ and small $r>0$. With probability at least $1-e^{-cs}$, it holds uniformly for any $z=E+\ri \eta\in \cD_{i,r}$, 
\begin{align}\label{e:mbulkrig}
\left|G_N(z)-G_\mu(z)\right|\leq \frac{s}{N\eta}.
\end{align}

\item 
Fix a small $\fc$ such that $0<\fc<\fa/4$. With probability at least $1-\exp(-c(\ln N)^{2})$, it holds uniformly for $z=E+\ri (N^{\fa}/N)^{2/3}\in  \cM_i$ with $E\leq A_i-N^{2\fc}(N^{\fa}/N)^{2/3}$, 
\begin{align}\label{e:medgerig}
\left|G_N(z)-G_\mu(z)\right|\ll \frac{1}{N\eta}.
\end{align}
\end{enumerate}
A similar statement holds if $[B_i, \hat b_i]$ is a void region.
\end{theorem}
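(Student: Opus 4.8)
\textbf{Proof plan for Theorem \ref{t:mrigidity}.}

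The plan is to import the entire one-cut machinery almost verbatim, using Corollary \ref{c:minitialstep} to decouple the cuts. First I would observe that the large deviation input at the macroscopic scale is already available: \cite[Proposition 5.6]{Borodin2016} provides the analogue of Proposition \ref{p:LDP0}, and combined with Theorem \ref{t:mbootstrap} (the multi-cut bootstrap, proved via hyperelliptic integrals) it yields Corollary \ref{c:minitialstep}, i.e. $|G_N(z)-G_\mu(z)|\lesssim s/N\eta$ with probability $1-e^{-cs}$ on $\cD_*$. This is the base of the multiscale iteration, exactly as $\cD_*$ was the base in the proof of Theorem \ref{t:rigidity}.

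Next I would set up the conditioning/localization step for the multi-cut case. Fix the cut $[\hat a_i,\hat b_i]$ whose left edge $A_i$ we want to control; the key point is that, once Corollary \ref{c:minitialstep} holds, the particles in all the other cuts are rigid on scale $M/N$, and by Proposition \ref{prop:rig2} their influence on a test function supported near $[\hat a_i,\hat b_i]$ is $\OO(M/N)$ — i.e.\ a smooth external potential of size $\OO(\ln N/N)$ after the first step. Therefore, conditioning on all particles outside a window of $L$ consecutive particles near $A_i$, the conditioned measure is a one-cut conditioned discrete $\beta$-ensemble of the form \eqref{e:condensemble} with a perturbed potential $W$ satisfying the same bounds $W(x)=NF_V(x)/L+\OO(\ln N)$, $W'(x)=\OO(\ln N\,N/L)$ as in Proposition \ref{prop:Westimate} (the extra term from the other cuts is handled by the dyadic decomposition of Remark \ref{r:decompint} applied on each cut separately). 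Consequently Propositions \ref{prop:ldb}, \ref{prop:equilibriumstructure}, \ref{prop:abestimate}, \ref{prop:bulkeq}, \ref{prop:alphabetaloc}, \ref{prop:edgeeq} and hence Theorem \ref{t:lowerscale} hold verbatim, with $\{A,B\}$ replaced by $\{A_1,B_1,\dots,A_k,B_k\}$ in the definition of $\kappa_E$. Then the two-step scheme \eqref{e:midstep1}--\eqref{e:midstep2} — Theorem \ref{t:lowerscale} to drop the scale, Theorem \ref{t:mbootstrap} to restore optimality — can be iterated $n=1+\lceil\log_{3/4}(\fa/5)\rceil$ times exactly as in the proof of Theorem \ref{t:rigidity}, giving \eqref{e:mbulkrig} on $\cD_{i,r}$ for arbitrarily small $r$, and \eqref{e:medgerig} by the same edge argument with $K=N^{1+\fc}\eta$. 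The statement for $[B_i,\hat b_i]$ follows by the symmetry $x\mapsto -x$.

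The main obstacle, and the only place where genuinely new work is needed, is the multi-cut bootstrap Theorem \ref{t:mbootstrap}: inverting the linearized Nekrasov equation is no longer a single division by $\sqrt{(v-A)(v-B)}$ but requires solving a scalar Riemann--Hilbert-type problem on the hyperelliptic curve $y^2=\prod_{i=1}^k(z-A_i)(z-B_i)$, with the $k-1$ extra degrees of freedom fixed by the filling-fraction constraints $\int_{\hat a_i}^{\hat b_i}\rho_V=\hat n_i$ (which are \emph{not} allowed to fluctuate in the fixed-filling-fraction model, so the residues of the relevant differentials over the $\cA$-cycles vanish). Concretely, in the contour-integral analysis of $I_1,I_2,I_3,I_4$ in the proof of Theorem \ref{t:bootstrap} one replaces the Cauchy kernel $\frac{1}{(z-v)H(z)}$ by the properly normalized third-kind differential on the curve; the cancellation of $I_2$ then uses that $(G_N-G_\mu)\prod\sqrt{(z-A_j)(z-B_j)}$ is analytic outside the contour and decays at infinity, together with the vanishing $\cA$-period condition. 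Once this inversion is in place the error bookkeeping is identical to the one-cut case, which is why I would only sketch it at the end of the section as the excerpt indicates. Everything else — the concentration estimates, the good-boundary-condition sets, the dyadic partitions — transfers without change.
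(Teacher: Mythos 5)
Your proposal is correct and matches the paper's route: Corollary \ref{c:minitialstep} (obtained from the macroscopic large-deviation bound of \cite[Proposition 5.6]{Borodin2016} together with Theorem \ref{t:mbootstrap}) is the base of the multiscale iteration, the other cuts are absorbed into a smooth $\OO(s/N)$ perturbation of the local potential via Proposition \ref{prop:rig2}, and the one-cut local-measure analysis then transfers. You also correctly pinpoint the only genuinely new step, the inversion of the linearized Nekrasov equation through the hyperelliptic operator $\Upsilon_v$, with the $k-1$ free parameters fixed by the vanishing of the $\cC_2^i$-periods of $\bE[G_N-G_\mu]$, which holds precisely because the filling fractions are fixed.
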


\begin{theorem}\label{t:medgeuniversality}
Fix $\beta\geq 1$, $\theta=\beta/2$,  $0<\fb<1/13$ and $L=N^{\fb}$. We assume Assumptions \ref{a:mVN}, \ref{a:mfilling}, \ref{a:mwx}, \ref{a:mHz}, \ref{a:mmoreVN} and that $[\hat a_i, A_i]$ is a void region, $\rho_V(u)=(1+\OO(u-A_i))s_{A_i}\sqrt{u-A_i}/\pi$.  
Take any fixed $m\geq 1$ and a continuously differentiable compactly supported function $O:\bR^m\rightarrow \bR$. For any index set $\Lambda\in \qq{1, L}$ with $|\Lambda|=m$, we denote the shifted index set $\Lambda_i=\{k+I_i^--1: k\in \Lambda\}$, then
\begin{align}\label{e:medgeuniv}
\left|\bE_{\bP_N}\left[O\left(\left(N^{2/3}k^{1/3}s_{A_i}^{2/3}(\ell_k/N-\gamma_k)\right)_{k\in \Lambda_i}\right)\right]-\bE_{G\beta E}\left[O\left(\left(N^{2/3}k^{1/3}(x_k/N-\tilde \gamma_k)\right)_{k\in \Lambda}\right)\right]\right|=\OO(N^{-\chi}),
\end{align}
where $\gamma_k$ are classical eigenvalue locations of $\rho_V$, and $\tilde \gamma_k$ are classical eigenvalue locations of semi-circle distribution. A similar statement holds if $[B_i, \hat b_i]$ is a void region.
\end{theorem}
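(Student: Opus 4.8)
\textbf{Proof proposal for Theorem \ref{t:medgeuniversality}.}
The plan is to mimic the one-cut proof of Theorem \ref{t:edgeuniversality} essentially verbatim, replacing the one-cut rigidity and bootstrap inputs by their multi-cut counterparts and handling one extra bookkeeping issue: the index shift between the $i$-th cut and the global labelling. First I would fix the cut index $i$ and localize: condition on all particles except those with indices in a window of size $L=N^{\fb}$ at the left edge $A_i$ of the $i$-th band, i.e. on $(\ell_{I_i^-+L}, \ell_{I_i^-+L+1}, \dots, \ell_N)$, and resample $(\ell_{I_i^-}, \dots, \ell_{I_i^-+L-1})$. Exactly as in Section \ref{s:lm} and Section on edge universality, one writes the conditioned measure as a discrete $\beta$-ensemble $\bP_L^{\loc}$ with an effective weight obtained from the fixed boundary particles (now including particles from the other $k-1$ cuts), plus the auxiliary confining potential $\Theta(N^{2/3-\fa}(\cdot - A_i))$. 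The key point, which follows from Theorem \ref{t:mrigidity} and Corollary \ref{c:minitialstep} in the same way as in the one-cut case, is that with probability $1-\exp(-c(\ln N)^2)$ with respect to $\bP_N$ the boundary configuration lies in the analogue $\cR_L^*(\fa,r)$ of the good-boundary set: the particles of the other cuts are rigid on scale $N^{-2/3+\fa}$, so their contribution to the effective potential near $A_i$ is a smooth perturbation of size $\OO((\ln N)^2/N)$ with controlled derivative, hence negligible.

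Next I would run the comparison of Proposition \ref{p:same} unchanged: for configurations in the good set $\cG^{\loc}$ (extreme particle within $C(L\ln N)^{1/2}N^{-2/3+\fa}$ of $A_i$, consecutive gaps $\geq sL^{-1/3}N^{1/3}$), Stirling's formula converts the ratio of Gamma functions to $\prod|x_j-x_i|^{2\theta}$ up to $\exp(\OO(L^{4/3}\ln N/(sN^{1/3})))$, and the weight ratio $\hat w(\ell_k;L)/\hat w(x_k;L)$ is controlled by $\del_x\ln\hat w(x;L)=\OO(L^{1/3}N^{\fa}/(sN^{1/3}))$; here one uses Assumption \ref{a:mmoreVN} and the multi-cut analogue of \cite[Lemma C.1]{MR3253704} to replace $V_N'$ by the principal-value integral against $\rho_V$ restricted to the bands, the out-of-band correction being $\OO(\sqrt{\prod(u-A_j)(u-B_j)})$ and harmless near $A_i$. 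The level-repulsion bounds (Proposition \ref{p:levelrep}) and the edge large-deviation bound (Proposition \ref{p:LDPextreme}) go through verbatim for $\bP_L^{\loc}$ and its continuous version $\bP_L^{\cont}$, since the confining potential $\Theta$ and the Hamiltonian structure are the same. This gives $\bP_L^{\loc}(\ell_1,\dots,\ell_L)=(1+\OO(sL^3+L^{4/3}N^{\fa}/(sN^{1/3})))\bP_L^{\cont}(x_1,\dots,x_L)$ on the good set, with $\ell_k=\lceil x_k\rceil_\theta$.

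Then I would conclude exactly as in the proof of Theorem \ref{t:edgeuniversality}: the good boundary event $\cR_L^*(\fa,r)\cap\cR_L^{\#}(\fa,r)$ holds with probability $1-\OO(N^{-2\fa/3})$; on it, for any $C^1$ compactly supported $O$, replace the expectation under $\bP_N$ of $O((N^{2/3}k^{1/3}s_{A_i}^{2/3}(\ell_k/N-\gamma_k))_{k\in\Lambda_i})$ by the expectation under $\bP_L^{\cont}$ of the same observable (with $\ell_k$ replaced by $x_k$, error $\OO(sL^3+L^{4/3}N^{\fa}/(sN^{1/3})+(L/N)^{1/3})$ from Proposition \ref{p:same} and $|x_k-\lceil x_k\rceil_\theta|<1$), optimize $s=N^{-1/6+\fa/2-5\fb/6}$ to get error $\OO(N^{-\chi})$ with $\chi=1/6-\fa/2-13\fb/6>0$ for $\fb<1/13$ and $\fa$ small, and finally invoke \cite[Theorem 3.3]{MR3253704} for $\bP_L^{\cont}$ — whose hypotheses are verified since $\bP_L^{\cont}$ has the right rigidity near $A_i$ — to compare with the $G\beta E$ expectation. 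The only genuinely new ingredient is notational: the shift $\Lambda_i=\{k+I_i^--1:k\in\Lambda\}$ must be carried through so that $\gamma_k$ in \eqref{e:medgeuniv} refers to the global classical location, which near $A_i$ agrees with the $i$-th-cut edge location because, by Assumption \ref{a:mfilling}, $(I_i^--1)/N\to\sum_{m<i}\hat n_m$ is exactly the mass of $\rho_V$ to the left of $A_i$.

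\textbf{Main obstacle.} The substantive difficulty — beyond the index bookkeeping — is re-deriving the multi-cut bootstrap Theorem \ref{t:mbootstrap} (and hence Theorem \ref{t:mrigidity}), because inverting the linearized Nekrasov equation in the multi-cut setting is no longer a single contour-integral computation: one must introduce hyperelliptic integrals and control the auxiliary linear system coming from the fixed filling fractions (the periods on each band), as the excerpt flags. Once Theorem \ref{t:mrigidity} and Corollary \ref{c:minitialstep} are in hand, however, the localization-plus-comparison argument above is a routine adaptation of the one-cut case, so I expect no further serious obstruction in the proof of Theorem \ref{t:medgeuniversality} itself.
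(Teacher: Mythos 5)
Your proposal matches the paper's own (brief) argument: after establishing the multi-cut bootstrap (Theorem \ref{t:mbootstrap}) via hyperelliptic integrals and the resulting multi-cut rigidity (Theorem \ref{t:mrigidity}, Corollary \ref{c:minitialstep}), the paper likewise localizes to the $i$-th cut and appeals verbatim to the one-cut comparison machinery of Section 4, treating the contribution of the other cuts as a smooth perturbation of the effective potential of size $\OO((\ln N)^2/N)$. One small slip to fix: when you ``condition on $(\ell_{I_i^-+L},\dots,\ell_N)$'' you must also fix $(\ell_1,\dots,\ell_{I_i^--1})$ — the boundary configuration comprises all particles outside the resampled window — but this is clearly what you intend, and the rest of the argument, including the index shift $\Lambda_i$ and the optimization $s=N^{-1/6+\fa/2-5\fb/6}$, is exactly the paper's route.
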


\begin{proof}[Proof of Theorem \ref{t:mrigidity}]
We recall the notations $\phi_N^+(x),\phi_N^-(x), R_N(zN), \cE(z)$ from \eqref{e:defPhi+-},\eqref{e:defRNPhi} and \eqref{e:defE}. In the multi-cut setting, $R_N(zN)$ is also an analytic function in $\cM$ as proven in \cite[Theorem 4.1]{Borodin2016}. We take two sets of oriented contours: $\cC_1$ consists of two clockwise oriented circles, one is centered at $v$ with radius $\eta/2$, and the other is centered at $\bar v$ with radius $\eta/2$.  $\cC_2$ consists of $k$ counterclockwise oriented contours: $\cC_2^1, \cC_2^2,\cdots, \cC_2^k$, such that each $\cC_2^i\subset \cM_i$ ($1\leq i\leq k$) encloses a small neighborhood of $[\hat a_i,\hat b_i]$. By dividing \eqref{e:perturbedRN} by $2\pi \ri (z-v) H(z)$ (as defined in Assumption \ref{a:mHz}), and integrating along the union of contours $\cC_1\cup \cC_2$, we get
\begin{align}\label{e:relationIs}
0=I_1+I_2+I_3+I_4,
\end{align}
where $I_1, I_2, I_3,I_4$ are defined in \eqref{e:defIs}. We have the same estimate for $I_1, I_3,I_4$ as in the proof of Theorem \ref{t:bootstrap}:
\begin{align}
\begin{split}\label{e:estimateIs}
I_1
&=\theta \prod_{i=1}^k\sqrt{(v-A_i)(v-B_i)}
\bE\left[G_N(v)-G_\mu(v)\right]\\
&+\OO\left(\varepsilon^2
+\frac{\left|\bE\left[\Im[G_N(v)-G_\mu(v)]\right]\right|}{N\eta}+\Theta_\mu(v)+\frac{1}{N}+\frac{\ln N}{(N\eta)^2}\right)\\
I_3&=\OO\left(\frac{\ln N}{N}\right)\\
I_4&\lesssim \frac{K\min\{\bE[|\Im[G_N(v)]|],\bE[|\Im[G_N^{\dual}(v)]|]\}}{(N\eta)^2}+\OO\left(\frac{1}{N}+\frac{K}{N^2\eta}+\frac{\ln N K}{(N\eta)^3}+\frac{K^2}{(N\eta)^4}\right).
\end{split}
\end{align}

For the estimate of $I_2$ in the multi-cut setting, we need to introduce certain notations from the theory of hyperelliptic integrals to invert the loop equation \cite[Section 6]{Borodin2016}.

Let $P(z)=p_0+p_1z+\cdots+p_{k-2}z^{k-2}$ be a polynomial of degree $k-2$, and consider the map
\begin{align*}
\Omega: P(z)\mapsto \left(\frac{1}{2\pi \ri}\int_{\cC_2^1}\frac{P(z)\rd z}{\prod_{i=1}^k\sqrt{(z-A_i)(z-B_i)}},\cdots,\frac{1}{2\pi \ri}\int_{\cC_2^k}\frac{P(z)\rd z}{\prod_{i=1}^k\sqrt{(z-A_i)(z-B_i)}}
\right).
\end{align*}
Note that the sum of the integrals in the definition of $\Omega$ equals zero. And it turns out that $\Omega$ is an isomorphism between $(k-1)$-dimensional vector spaces for any $k\geq 2$. Using $\Omega$ we can now define a more complicated map $\Upsilon$. Given a function $f(z)$ defined on the contours $
\cC_2^i$ and such that the sum of its integrals over these contours is zero, we define a function $\Upsilon_z[f]$ through
\begin{align*}
\Upsilon_z[f]=f(z)+\frac{P(z)}{\prod_{i=1}^k\sqrt{(z-A_i)(z-B_i)}},
\end{align*}
where $P(z)$ is the unique polynomial of degree at most $k-2$, such that for each $i\in \qq{1,k}$,
\begin{align*}
\frac{1}{2\pi \ri}\int_{\cC_2^i}\Upsilon_z[f]\rd z=0.
\end{align*}
The polynomial $P(z)$ can be evaluated in terms of the map $\Omega$ via
\begin{align*}
P=\Omega^{-1}\left(-\frac{1}{2\pi\ri}\int_{\cC_2^1}f(z)\rd z,\cdots, -\frac{1}{2\pi\ri}\int_{\cC_2^k}f(z)\rd z\right).
\end{align*}
The same as in \eqref{e:I2}, we have
\begin{align}\begin{split}\label{e:mI2_1}
I_2=\frac{\theta}{2\pi \ri}\int_{\cC_2}\frac{\bE[G_N(z)-G_\mu(z)]\prod_{i=1}^k\sqrt{(z-A_i)(z-B_i)}}{z-v}\rd z+\OO\left(\frac{\ln N}{N}\right).
\end{split}\end{align}
Since $\bE[G_{N}(z)-G_\mu(z)]$ is analytic outside the contours of integration and decays as $1/z^2$ as $z\rightarrow \infty$, the integral is a polynomial $P_N(v)$ of degree at most $k-2$ (to see that one uses $(z-v)^{-1}=z^{-1}\sum_{n\geq 0}(v/z)^m$). We conclude,
\begin{align}\label{e:mI2}
I_2=P_N(v)+\OO\left(\frac{\ln N}{N}\right).
\end{align}
Combining the estimates \eqref{e:relationIs}, \eqref{e:estimateIs}, and \eqref{e:mI2} all together, we deduce that
\begin{align}\begin{split}\label{e:mbootstrap0}
&\theta \prod_{i=1}^k\sqrt{(v-A_i)(v-B_i)}
\bE\left[G_N(v)-G_\mu(v)\right]+P_N(v)
=\OO\left(\frac{1}{N\eta}+\frac{K}{(N\eta)^2}\right)\left|\bE[\Im[G_N(v)-G_\mu(v)]]\right|\\
&+\OO\left(\varepsilon^2+\Theta_\mu(v)+\frac{K\min\{|\Im[G_\mu(v)]|,|\Im[G_\mu^{\dual}(v)]|\}}{(N\eta)^2}+\frac{\ln N}{(N\eta)^2}+\frac{\ln N}{N}+\frac{\ln N K}{(N\eta)^3}+\frac{K^2}{(N\eta)^4}\right).
\end{split}\end{align}
Notice that $\prod_{i=1}^k\sqrt{(v-A_i)(v-B_i)}\asymp \sqrt{\kappa_E+\eta}$, and recall $\varepsilon_0$ from \eqref{e:defepsilon0},
\begin{align}\begin{split}
&\theta
\bE\left[G_N(v)-G_\mu(v)\right]+\frac{P_N(v)}{ \prod_{i=1}^k\sqrt{(v-A_i)(v-B_i)}}\\
&=\OO\left(\frac{1}{N\eta\sqrt{\kappa_E+\eta}}+\frac{K}{(N\eta)^2\sqrt{\kappa_E+\eta}}\right)\left|\bE[\Im[G_N(v)-G_\mu(v)]]\right|
+\OO\left(\frac{\varepsilon^2}{\sqrt{\kappa_E+\eta}}+\varepsilon_0\right).\nonumber
\end{split}\end{align}
Now we are in a position to apply the map $\Upsilon_v$. Indeed, the integral of $G_N(z)$ around $\cC_2^i$ is deterministic and equals $n_i(N)/N$. On the other hand, the integral of $G_\mu(z)$ around $\cC_2^i$ equals the total mass of $\mu(x)$ inside $\cC_2^i$, which is $\hat n_i=n_i(N)/N$. Thus the integral of $\bE[G_N(z)-G_\mu(z)]$ around each loop $\cC_2^i$ vanishes. So $\bE[G_N(z)-G_\mu(z)]$ is given by
\begin{align}\begin{split}\label{e:invertloop}
&\bE[G_N(z)-G_\mu(z)]\\
&=\Upsilon_v\left[\OO\left(\frac{1}{N\eta\sqrt{\kappa_E+\eta}}+\frac{K}{(N\eta)^2\sqrt{\kappa_E+\eta}}\right)\left|\bE[\Im[G_N(v)-G_\mu(v)]]\right|
+\OO\left(\frac{\varepsilon^2}{\sqrt{\kappa_E+\eta}}+\varepsilon_0\right)\right]\\
&=\OO\left(\frac{1}{N\eta\sqrt{\kappa_E+\eta}}+\frac{K}{(N\eta)^2\sqrt{\kappa_E+\eta}}\right)\left|\bE[\Im[G_N(v)-G_\mu(v)]]\right|
+\OO\left(\frac{\varepsilon^2}{\sqrt{\kappa_E+\eta}}+\varepsilon_0\right),
\end{split}\end{align}
where in the last line we used that the map $\Upsilon_v$ depends only on the contours $\cC_2$, and is Lipschitz.

By our assumption $N\eta\sqrt{\kappa_E+\eta}\gg 1$, and $(N\eta)^2\sqrt{\kappa_E+\eta}\gg K$, it follows by rearranging \eqref{e:invertloop}, 
\begin{align*}\begin{split}
\bE[G_N(z)-G_\mu(z)]
=\OO\left(\frac{\varepsilon^2}{\sqrt{\kappa_E+\eta}}+\varepsilon_0\right)\,.\nonumber
\end{split}\end{align*}
This finishes the proof of \eqref{e:mbootstrap}. The remaining of the Theorem \ref{t:mbootstrap} can be proven in the same way as in the one-cut case.

\end{proof}

\section{Examples and Generalizations}

In this section, we discuss the generalizations of Theorems \ref{t:rigidity} and \ref{t:edgeuniversality} in the following four senses:
\begin{enumerate}
\item In the special case when $\theta=1$, thanks to the duality between particles and holes (see \cite{MR1952523}), our results imply the rigidity and edge universality for both void and \emph{saturated} regions.

\item Thanks to certain large deviation estimates for the largest and smallest particles, our results imply the rigidity and edge universality of stochastic systems supported on \emph{infinite} intervals.

\item By conditioning on filling fractions, our results imply the edge universality of multi-cut stochastic systems \emph{without fixed filling fractions}.

\item Thanks to certain large deviation estimates for the number of total particles, our results imply the edge universality of stochastic systems \emph{without fixed number of particles}.

\end{enumerate}

We demonstrate how the generalizations of Theorems \ref{t:rigidity} and \ref{t:edgeuniversality} imply the rigidity and edge universality of certain stochastic systems, i.e., Binomial ensembles, discretized $\beta$-ensemble with convex potentials,  Lozenge Tilings and Jack deformation of Plancherel measures.

\subsection{$\theta=1$ Binomial Weight}\label{s:bw}

The first example is the binomial ensemble, which is a special case of the discrete $\beta$-ensemble defined in Section \ref{s:notation}. This ensemble is also
known as the \emph{Krawtchouk orthogonal polynomial ensemble}. 
We demonstrate how to use the duality between the particles and holes to prove the rigidity and edge universality for both void and saturated regions.

Let $\bP_N$ be a discrete $\beta$-ensemble as defined in Section \ref{s:notation} with $\theta=1$, i.e.,
\begin{align*}
\bP_N(\ell_1, \ell_2, \cdots, \ell_N)=\frac{1}{Z_N} \prod_{1\leq i<j\leq N}(\ell_i-\ell_j)^2
\prod_{i=1}^N w(\ell_i; N),
\end{align*}
on $\bW_N$, where $\bW_N$ is the set of ordered $N$-tuples of integers
\begin{align}\label{e:defWN1}
 0\leq \ell_1<\ell_2<\dots<\ell_N\leq M.
\end{align}
We can consider its dual particle locations. More precisely, for any given particle configuration $\ell_1, \ell_2, \cdots, \ell_N$, we denote the locations of holes, $0\leq x_1<x_2<\cdots<x_{M-N+1}\leq M$. The distribution $\bP_N$ induces a distribution on the configuration of the holes, 
\begin{align}\label{e:dualdensity0}
\bP_N^{\dual} (x_1, \cdots, x_{M-N+1})
=&\bP_N(\ell_1, \ell_2, \cdots, \ell_{N})
=\frac{1}{Z_N} \prod_{1\leq i<j \leq N}
 (\ell_i-\ell_j)^2 \prod_{i=1}^{N} w(\ell_i; N).
\end{align}
Up to some constant, we can rewrite the righthand side of \eqref{e:dualdensity0} in terms of $x_1,x_2,\cdots,x_{M-N+1}$,
\begin{align*}\begin{split}
\prod_{1\leq i<j \leq N}
 (\ell_i-\ell_j)^2 \prod_{i=1}^{N} w(\ell_i; N)
 &=\frac{\prod_{1\leq i\leq N,\atop0\leq k\leq M, k\neq \ell_i}|\ell_i-k|}{\prod_{1\leq i\leq M-N+1,\atop 0\leq k\leq M, k\neq x_i}|x_i-k|}\prod_{1\leq i<j \leq N}
 (x_i-x_j)^2 \prod_{i=1}^{N} w(\ell_i; N)\\
 &\propto \frac{1}{\prod_{1\leq i\leq M-N+1,\atop 0\leq k\leq M, k\neq x_i}(x_i-k)^2}\prod_{1\leq i<j \leq N}
 (x_i-x_j)^2\prod_{i=1}^{N-M+1}w(x_i;N)^{-1}.
\end{split}
\end{align*}
where we used that 
\begin{align*}
\prod_{1\leq i\leq N,\atop0\leq k\leq M, k\neq \ell_i}|\ell_i-k|
\prod_{1\leq i\leq M-N+1,\atop 0\leq k\leq M, k\neq x_i}|x_i-k|&=\prod_{0\leq j\neq k\leq M}|j-k|\\
\prod_{i=1}^{N} w(\ell_i; N)\prod_{i=1}^{N-M+1}w(x_i;N)&=\prod_{j=1}^M w(j;N),
\end{align*}
are both constants depending on $M,N$.

Therefore, there exists some constant $Z_N^{\dual}$ so that  the distribution of the configuration of the holes takes the following form
\begin{align}\begin{split}\label{e:dualdensity}
\bP_N^{\dual} (x_1, \cdots, x_{M-N+1})
=& \frac{1}{Z_N^{\dual}} \prod_{1\leq i<j \leq M-N+1}
 (x_i-x_j)^2 \prod_{i=1}^{N-M+1}w(x_i;N)^{-1}\prod_{0\leq k\leq M,\atop k\neq x_i}(x_i-k)^{-2},
\end{split}\end{align}
which is again a discrete $\beta$-ensemble with $\theta=1$ and weight,
\begin{align*}\nonumber
w^{\dual}(x;N)=w(x;N)^{-1}\prod_{0\leq k\leq M,\atop k\neq x}(x-k)^{-2}.
\end{align*}
This is called the dual ensemble as studied in \cite[Section 3.2]{MR1952523}.
The saturated regions (void regions) of the equilibrium measure of $\bP_N$ become the void regions (saturated regions) of the equilibrium measure of $\bP_N^{\dual}$.  In the following we use the Krawtchouk orthogonal polynomial ensemble as an example to illustrate how to use the duality between the particles and holes to prove the rigidity and edge universality for both void and saturated regions.

Fix two integers $0\leq N\leq M$, with $M=\lceil \fm N\rceil$ and $\fm>1$, and consider the space $\bW_N$ of $N$-tuples of integers as defined in \eqref{e:defWN1}. The Krawtchouk orthogonal polynomial ensemble is the probability distribution $\bP_{N}$ on $\bW_N$, 
\begin{equation}
\label{eq_Binomial_ensemble}
 \bP_{N}(\ell_1,\dots,\ell_N)=\frac{1}{Z(N,M)} \prod_{1\leq i<j \leq N}
 (\ell_i-\ell_j)^2 \prod_{i=1}^{N} {M \choose \ell_i}.
\end{equation}
We remark that the partition function $Z(N,M)$ is explicitly known in this case:
\begin{align*}\nonumber
 Z(N,M)= 2^{N(M-N+1)} (M!)^N  \prod_{j=0}^{N-1} \frac{ j!}{(M-j)!}.
\end{align*}
We denote the empirical particle density $\mu_N=N^{-1}\sum_{i=1}^N \delta_{\ell_i/N}$. The empirical particle denisty $\mu_N$ converges to the equilibrium measure, which is absolutely continuous. For $\fm> 2$, the density of the
 measure $\mu_\fa$ is given by
 \begin{align*}
  \mu_\fm(x)=\left\{\begin{array}{cc}
  \dfrac{1}{\pi}{\mathrm{arccot}}\left(\dfrac{\fm-2}{2\sqrt{\fm-1-\left(x-\fm/2\right)^2}}\right),&
  \left|x-\frac{\fm}{2}\right|<\sqrt{\fm-1},\\
  0,& \left|x-\frac{\fm}{2}\right|\geq \sqrt{\fm-1},
  \end{array}\right.\nonumber
 \end{align*}
 and for $1<\fm< 2$,
\begin{align*}
  \mu_\fm(x)=\left\{\begin{array}{cc}
  \dfrac{1}{\pi}{\mathrm{arccot}}\left(\dfrac{\fm-2}{2\sqrt{\fm-1-\left(x-\fm/2\right)^2}}\right),&
  \left|x-\frac{\fm}{2}\right|<\sqrt{\fm-1},\\
  1,& \frac{\fm}{2}\geq \left|x -\frac{\fm}{2}\right|\geq \sqrt{\fm-1},\\
  0, & \left|x-\frac{\fm}{2}\right|\geq \frac{\fm}{2},
  \end{array}\right.\nonumber
 \end{align*}
where $\mathrm{arccot}(y)$ is the inverse cotangent function. We use the notations from Section \ref{s:notation}, 
\begin{align*}\nonumber
\hat a=0, \quad \hat b=\fm,\quad A=\frac{\fm}{2}-\sqrt{\fm-1}, \quad B=\frac{\fm}{2}+\sqrt{\fm-1}.
\end{align*}

The Assumptions \ref{a:VN}, \ref{a:wx} and \ref{a:Hz} are verified in \cite[Section 2]{Borodin2016}. For the case $\fm\geq 2$, since both $[\hat a, A]$ and $[B, \hat b]$ are void regions, Theorem \ref{t:rigidity} and \ref{t:edgeuniversality} imply that the rigidity and edge universality hold. For the case $1<\fm< 2$, unfortunately, since both $[\hat a, A]$ and $[B, \hat b]$ are saturated regions, Theorem \ref{t:rigidity} and \ref{t:edgeuniversality} do not say anything. However, in this case, the rigidity and edge universality follow by considering the dual particle locations. As in \eqref{e:dualdensity}, for any given particle configuration $\ell_1, \ell_2, \cdots, \ell_N$, the distribution on the configuration of the holes, $0\leq x_1<x_2<\cdots<x_{M-N+1}\leq M$ is given by 
\begin{align*}\begin{split}
\bP_N^{\dual} (x_1, \cdots, x_{M-N+1})
= \frac{1}{Z^{\dual}(N,M)} \prod_{1\leq i<j \leq M-N+1}
 (x_i-x_j)^2 \prod_{i=1}^{N-M+1}w^{\dual}(x_i;N), 
\end{split}\end{align*}
which is again a discrete $\beta$-ensemble as defined in Section \ref{s:notation} with $\theta=1$, and weight,
\begin{align}\nonumber
w(x;N)^{\dual}={M \choose x}^{-1}\prod_{0\leq j\leq M,\atop j\neq x}(x-j)^{-2}.
\end{align}
One can check that for the dual measure $\bP_N^{\dual}$, the Assumptions \ref{a:VN}, \ref{a:wx} and \ref{a:Hz} hold, and its equilibrium measure is given by 
\begin{align*}
  \mu^{\dual}_\fm(x)=\left\{\begin{array}{cc}
  \frac{1}{\fm-1}\left(1-\dfrac{1}{\pi}{\mathrm{arccot}}\left(\dfrac{\fm-2}{2\sqrt{\fm-1-\left(x-\fm/2\right)^2}}\right)\right)&
  \left|x-\frac{\fm}{2}\right|<\sqrt{\fm-1},\\
  0& \left|x -\frac{\fm}{2}\right|\geq \sqrt{\fm-1}.
  \end{array}\right.\nonumber
 \end{align*}
Since $[\hat a, A]$ and $[B,\hat b]$ are both void regions for the dual equilibrium measure, $\mu_{\fm}^{\dual}$, Theorem \ref{t:rigidity} and \ref{t:edgeuniversality} imply the rigidity and edge universality for the configuration of holes. Since the particle configuration is determined by the hole configuration, the rigidity of the hole configuration implies the rigidity of the particle configuration.

In general for the discrete $\beta$-ensemble with $\theta=1$ and satisfying Assumptions \ref{a:VN}, \ref{a:wx}, \ref{a:Hz} and \ref{a:moreVN}, by considering the duality between particles and holes, we have
\begin{enumerate}
\item The optimal rigidity holds, i.e. any particle is close to its classical location given by the equilibrium measure.
\item The distributions of extreme particles or extreme holes after proper scaling are the same as the distributions of extreme eigenvalues of $GUE$. 
\end{enumerate}

\subsection{Arbitrary Convex Potential on $\bR$ With No Saturation}\label{s:cp}
The second example is the discrete $\beta$-ensemble supported on infinite intervals. We take a real convex analytic function $V(x)$, i.e. such that $V''(x)>0$ for all $x\in \bR$. Fix a constant $\kappa>0$ such that
\begin{equation}
\label{eq_potential_behavior}
 \liminf_{x\to\infty} \frac{\kappa V(x)}{2\theta \ln |x|}>1,
\end{equation}
  and consider the probability distribution given by
\begin{equation}
\label{eq_convex_distribution}
 \bP(\ell_1,\dots,\ell_N)= \frac{1}{Z_N}
 \prod_{1\le i<j \le N}
 \frac{\Gamma(\ell_j-\ell_i+1)\Gamma(\ell_j-\ell_i+\theta)}{\Gamma(\ell_j-\ell_i)\Gamma(\ell_j-\ell_i+1-\theta)}
 \prod_{i=1}^N \exp\left( - \kappa N  \cdot V\left(\frac{\ell_i}{N}\right)\right),
\end{equation}
on $N$-tuples $\ell_1<\ell_2<\dots<\ell_N$ such that $\ell_j=\lambda_j+\theta j$ and $\lambda_1\leq \la_2\leq \cdots\leq \la_N$ are integers. We are in the framework of Section \ref{s:notation} except that now the particle configuration is supported on an infinite interval, i.e., $\bR$. We have $a(N)=\hat a=-\infty$ and $b(N)=\hat b=\infty$. We hereafter assume that
the equilibrium measure has no saturated regions. Then the constrained equilibrium measure $\mu$ is the same as the unconstrained equilibrium and $\mu$ has only one band.
This is the case when 
 $\kappa$ is small enough. By taking 
\begin{align}\label{e:defpsiphi}
\psi_N^{+}(x)=\exp\{\kappa N\left(V((x-1)/N)-V(x/N)\right)\}, \quad \psi_N^-(x)=1,\quad \phi^+(z)=\exp\{-\kappa V'(z)\}, \quad \phi^-(z)=1,
\end{align}
we also have the following Nekrasov's equation,
\begin{align}\label{e:nekrasov2}
R_N(\xi)=\psi^-_N(\xi)\bE_{\bP_N}\left[\prod_{i=1}^{N}\left(1-\frac{\theta}{\xi-\ell_i}\right)\right]+\psi^+_N(\xi)\bE_{\bP_N}\left[\prod_{i=1}^{N}\left(1+\frac{\theta}{\xi-\ell_i-1}\right)\right].
\end{align}
where $R_N(\xi)$ is analytic over $\bC$. In the argument of the proof of Theorem \ref{t:bootstrap}, we take a contour integral which encloses a small neighborhood of $[\hat a, \hat b]$. For this to work, we need to localize the probability measure $\bP_N$ onto a finite interval. Thanks to \cite[Proposition 9.3]{Borodin2016}, with exponentially high probability the empirical particle density of \eqref{eq_convex_distribution} is supported on a finite interval, i.e. there exist constants $L, c$ such that
\begin{align}\label{e:extremeeiLDP}
\bP_{N}\left(-L\leq \ell_1/N\leq \ell_N/N\leq L\right)\geq 1-e^{-cN},
\end{align}
for $N$ large enough. As in \cite{Borodin2016}, we define the new measure $\hat \bP_N$ as $\bP_N$ conditioned on the event that $\{\sup_{1\leq i\leq N}|\ell_i/N|\leq L\}$. For this new density $\hat\bP_N$, we take 
\begin{align*}
\hat a=-L, \quad \hat b=L,\quad a(N)=\max\{\la+\theta< -NL:\la\in \bZ\}, \quad b(N)=\min\{\la+N\theta>NL: \la \in \bZ\},
\end{align*}
and $\psi^{\pm}_N(z)$ and $\psi^\pm(z)$ are as given in \eqref{e:defpsiphi}.
The Assumptions \ref{a:VN}, \ref{a:wx}, and \ref{a:Hz} are verified in \cite[Section 9.3]{Borodin2016}.  However, since we no longer have the vanishing boundary condition $w(a(N); N)=w(b(N); N)=0$, we don't have the precise Nekrasov's equation. The righthand side of \eqref{e:nekrasov2}, after replacing $\bP_N$ by $\hat\bP_N$
\begin{align*}
\psi^-_N(\xi)\bE_{\hat\bP_N}\left[\prod_{i=1}^{N}\left(1-\frac{\theta}{\xi-\ell_i}\right)\right]+\psi^+_N(\xi)\bE_{\hat\bP_N}\left[\prod_{i=1}^{N}\left(1+\frac{\theta}{\xi-\ell_i-1}\right)\right],
\end{align*}
has two poles at $a(N)+1$ and $b(N)$. Fortunately, thanks to the large deviation estimate \eqref{e:extremeeiLDP}, the residues at $a(N)+1$ and $b(N)$ are exponentially small. Therefore, we have the following approximated Nekrasov's equation
\begin{align}\nonumber
R_N(\xi)+\OO(e^{-cN})=\psi^-_N(\xi)\bE_{\hat\bP_N}\left[\prod_{i=1}^{N}\left(1-\frac{\theta}{\xi-\ell_i}\right)\right]+\psi^+_N(\xi)\bE_{\hat\bP_N}\left[\prod_{i=1}^{N}\left(1+\frac{\theta}{\xi-\ell_i-1}\right)\right],
\end{align}
where $R_N(\xi)$ is an analytic function, and the error $\OO(e^{-cN})$ is uniform provided that $\dist(\xi/N, [-L,L])\geq 2/N$.  The proof of Theorem \ref{t:bootstrap} can be carried through without any change. For the remaining proofs, we do not use Nekrasov's equation. Therefore, the rigidity and edge universality hold for both $\hat \bP_N$ and $\bP_N$,
\begin{corollary}
We fix a constant $\kappa>0$,  take a real convex analytic function $V(x)$ satisfying \eqref{eq_potential_behavior}, and consider the probability $\bP_N$ as given in \eqref{eq_convex_distribution}. Suppose that  the equilibrium measure $\mu(x)=\rho_V(x)\rd x$ is supported on $[A,B]$, has  no saturated region, and therefore only one band. Then the rigidity and edge universality hold for $\bP_N$:
\begin{enumerate}
\item For any $\fa>0$, with probability at least $1-\exp(-c(\ln N)^{2})$, we have
\begin{align*}
\left|\frac{\ell_i}{N}-\gamma_i\right|\leq \frac{N^{\fa}}{N^{2/3}\min\{i, N-i\}^{1/3}},\quad i\in \qq{1,N},
\end{align*}
where $\gamma_i$ are the classical particle locations of the equilibrium measure $\mu$.
\item Fix $\beta\geq 1$, $0<\fb<1/13$ and $L=N^{\fb}$. Let $\rho_V(u)=(1+\OO(u-A))s_{A}\sqrt{u-A}/\pi$.  
Take any fixed $m\geq 1$ and a continuously differentiable compactly supported function $O:\bR^m\rightarrow \bR$. For any index set $\Lambda\in \qq{1, L}$ with $|\Lambda|=m$, 
\begin{align*}
\left|\bE_{\bP_N}\left[O\left(\left(N^{2/3}k^{1/3}s_{A}^{2/3}(\ell_k/N-\gamma_k)\right)_{k\in \Lambda}\right)\right]-\bE_{G\beta E}\left[O\left(\left(N^{2/3}k^{1/3}(x_k/N-\tilde \gamma_k)\right)_{k\in \Lambda}\right)\right]\right|=\OO(N^{-\chi}),
\end{align*}
where $\tilde \gamma_i$ are the classical particle locations of the semi-circle distribution. The same statement holds for particles close to the right edge.
\end{enumerate}
\end{corollary}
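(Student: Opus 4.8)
The plan is to reduce the statement to the compactly supported situation already treated by Theorems \ref{t:rigidity} and \ref{t:edgeuniversality}. First I would invoke the large deviation bound \eqref{e:extremeeiLDP} (i.e.\ \cite[Proposition 9.3]{Borodin2016}), which gives constants $L,c>0$ with $\bP_N(-L\le \ell_1/N\le \ell_N/N\le L)\ge 1-e^{-cN}$, and define $\hat\bP_N$ to be $\bP_N$ conditioned on $\{\sup_{i}|\ell_i/N|\le L\}$. With the choices $\hat a=-L$, $\hat b=L$, $a(N)=\max\{\lambda+\theta<-NL:\lambda\in\bZ\}$, $b(N)=\min\{\lambda+N\theta>NL:\lambda\in\bZ\}$, and $\psi_N^\pm,\phi^\pm$ as in \eqref{e:defpsiphi}, one checks Assumptions \ref{a:VN}, \ref{a:wx}, \ref{a:Hz}, \ref{a:moreVN} for $\hat\bP_N$; this is done in \cite[Section 9.3]{Borodin2016}. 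The point where convexity of $V$ and the no-saturation hypothesis enter is that they force the constrained equilibrium measure $\mu$ to coincide with the unconstrained one and to have a single band $[A,B]$ with square-root vanishing at the two endpoints; in particular $[\hat a,A]=[-L,A]$ (and $[B,\hat b]$) is a void region, so the hypotheses of Theorem \ref{t:rigidity} are met.

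The one genuine difference from the bounded case is that $\hat\bP_N$ no longer satisfies the vanishing boundary condition $w(a(N);N)=w(b(N);N)=0$, so the exact Nekrasov equation of Theorem \ref{t:aRN} fails: the combination $\psi_N^-(\xi)\bE_{\hat\bP_N}[\prod_i(1-\tfrac{\theta}{\xi-\ell_i})]+\psi_N^+(\xi)\bE_{\hat\bP_N}[\prod_i(1+\tfrac{\theta}{\xi-\ell_i-1})]$ acquires simple poles at $\xi=a(N)+1$ and $\xi=b(N)$. However, by \eqref{e:extremeeiLDP} the residues at these two points are $\OO(e^{-cN})$, so one only has the approximate identity $R_N(\xi)+\OO(e^{-cN})=\psi_N^-(\xi)\bE_{\hat\bP_N}[\cdots]+\psi_N^+(\xi)\bE_{\hat\bP_N}[\cdots]$ with $R_N$ analytic, the error uniform for $\dist(\xi/N,[-L,L])\ge 2/N$. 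The next step is to re-run the bootstrap of Theorem \ref{t:bootstrap}: Nekrasov's equation is used there only to produce the relation $0=I_1+I_2+I_3+I_4$ after the contour integration, and the extra $\OO(e^{-cN})$ contributes an additional term (collected, say, into $I_4$) that is exponentially small, hence dwarfed by every polynomial-in-$N$ quantity in $\varepsilon_0$; it is therefore absorbed without changing any estimate. Consequently Corollary \ref{c:initialstep}, Theorem \ref{t:lowerscale} and Theorem \ref{t:rigidity} hold verbatim for $\hat\bP_N$. Since $\bP_N$ and $\hat\bP_N$ differ only on an event of probability $e^{-cN}$, all high-probability statements transfer to $\bP_N$, which yields part (1) after the Helffer--Sj\"ostrand step used in the corollary following Theorem \ref{t:rigidity}.

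For part (2), I would copy the argument of Section \ref{s:eu} essentially unchanged: using part (1), condition on $\ell_{L+1},\dots,\ell_N$ (which lie near their classical locations), form the local measure $\bP_L^{\loc}$ and its continuous analogue $\bP_L^{\cont}$, and apply Proposition \ref{p:same} together with the level-repulsion estimates of Proposition \ref{p:levelrep} to conclude that the laws of the rescaled extreme particles under $\bP_N$ and under $\bP_L^{\cont}$ agree up to $\OO(N^{-\chi})$; the conclusion then follows from the edge universality of $\bP_L^{\cont}$, \cite[Theorem 3.3]{MR3253704}. None of this part invokes Nekrasov's equation, so it goes through with no modification once part (1) is available. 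The main obstacle is thus the bookkeeping of the exponentially small residue terms through the bootstrap of Theorem \ref{t:bootstrap} and confirming they never interfere with the polynomial thresholds; because they are $\OO(e^{-cN})$ this is essentially automatic, and the remaining content is the verification of the one-cut, square-root-edge structure from convexity and absence of saturation, which is routine.

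\def\s:eu{5}
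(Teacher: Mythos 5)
Your proposal follows the paper's own treatment in Section~6.2 essentially verbatim: truncate to the event $\{\sup_i|\ell_i/N|\le L\}$ via \eqref{e:extremeeiLDP}, verify the assumptions for the conditioned measure $\hat\bP_N$ with the $\psi_N^\pm,\phi^\pm$ of \eqref{e:defpsiphi}, observe that the failure of the vanishing boundary condition only introduces poles at $a(N)+1$ and $b(N)$ whose residues are $\OO(e^{-cN})$, and absorb this exponentially small error into the bootstrap of Theorem~\ref{t:bootstrap}, after which Theorems~\ref{t:rigidity} and \ref{t:edgeuniversality} apply directly. This is exactly the paper's argument, so there is nothing to add.
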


\subsection{Lozenge Tilings}\label{s:lt}
The third example is the uniformly random lozenge tilings of a hexagon with a hole. We demonstrate that by conditioning on the filling fractions, our results imply the edge universality of multi-cut stochastic systems without filling fractions.

Consider an $A\times B\times C$ hexagon drawn on the regular triangular lattice. We cut a rhombic $D\times D$ hole in the hexagon, where the bottom point of the hole is at distance $t$ from the left side of the hexagon and at height $H$ (counted from the bottom of the hexagon along $t$-th vertical line). Let $\bP_N$ be the probability distribution of the horizontal lozenges (outside the hole) on $t$-th vertical line induced by the uniform measure on all tilings. Assuming $t>\max(B,C)$, which yields $N=B+C-D-t$, and introducing the coordinate system such that the lowest possible position for horizontal lozenge on the $t$-th vertical line is $1$ and the highest one is $A+B+C-t$. The probability distribution $\bP_N$ is given by
\begin{align*}
\bP_N(\ell_1, \ell_2, \cdots, \ell_N)
=\frac{1}{Z_N}\prod_{1\leq i<j\leq N}(\ell_i-\ell_j)^2
\prod_{i=1}^N w(\ell_i;N),
\end{align*}
where 
\begin{align*}
w(\ell_i;N)=(A+B+C+1-t-\ell_i)_{t-B}(\ell_i)_{t-C}(H-\ell_i)_D(H-\ell_i)_D,
\end{align*}
and $(a)_n$ is the Pochhammer symbol, $(a)_n=a(a+1)\cdots(a+n-1)$.
This is a two-cut discrete $\beta$-ensemble with $k=2$ and $\theta=1$. We also need two filling fractions $n_1$ and $n_2$: we consider such tilings that there are $n_1(N)$ horizontal lozenges (on $t$-th vertical line) below the hole and $n_2(N)$ lozenges above. We use $N$ as a large parameter, and suppose that
\begin{align*}\begin{split}
A&=\hat A N+\OO(1),\quad B=\hat B N+\OO(1),
\quad C=\hat C+\OO(1),\quad t={\hat{t}} N+\OO(1),\\
H&=\hat H N+\OO(1), \quad D=\hat DN +\OO(1), \quad n_1(N)=\hat n_1 N, \quad n_2(N)=\hat n_2 N.
\end{split}
\end{align*}
Using the notations from Section \ref{s:multi-cut}, we have 
\begin{align*}
\hat a_1 =0,\quad \hat b_1 =\hat H, \quad \hat a_2=\hat H+\hat D,\quad \hat b_2=\hat A+\hat B+\hat C-\hat t.
\end{align*}
We further assume that $\hat t>\max\{\hat B, \hat C\}$, and there exists a constant $\fd>0$,
\begin{align*}
\fd<\hat n_1 <\hat H-\fd,\quad \fd<\hat n_2 <\hat A+\hat B+\hat C-\hat t-\hat H-\hat D-\fd.
\end{align*}
For the density $\bP_N$, the potential $V(x)$ has the form,
\begin{align*}
\begin{split}
V(x)&=(\hat A+\hat B+\hat C-\hat t-x)\ln(\hat A+\hat B+\hat C-\hat t-x)-(\hat A+\hat C-x)\ln(\hat A+\hat C-x)\\
&+x\ln x-(\hat t-\hat C-x)\ln (\hat t-\hat C-x)
+2(\hat H-x)\ln (\hat H-x)-2(\hat H+\hat D-x)\ln (\hat H+\hat D-x).
\end{split}
\end{align*}
We denote the equilibrium measure with filling fraction  $\hat n=(\hat n_1, \hat n_2)$ by $\mu^{\hat n}=\rho_V^{\hat n}(x)\rd x$, which is given by the minimizer of the following functional 
\begin{align}\label{e:equilibrium_fixed_filling}
-\int_{x\neq y}\ln |x-y|\rho(x)\rho(y)\rd x\rd y+\int V(x)\rho(x)\rd x,
\end{align}
over all the densities $\rho(x)$, supported on $[\hat a_1, \hat b_1]\cup [\hat a_2,\hat b_2]$, with $0\leq \rho(x)\leq 1$, and 
\begin{align*}
\int_{a_i}^{b_i}\rho(x)\rd x=n_i,\quad i=1,2.
\end{align*} 
The function $Q_{\mu(\hat n)}$ is a square root of a polynomial of degree $6$ \cite[Section 9.2]{Borodin2016} 
\begin{align*}
Q_{\mu^{\hat n}}(z)=c^{\hat n}(z-C^{\hat n})\sqrt{(z-A_1^{\hat n})(z-B_1^{\hat n})(z-A_2^{\hat n})(z-B_2^{\hat n})},
\end{align*}
where $c^{\hat n}$ is a constant, and 
\begin{align*}
a_1<A_1^{\hat n}<B_1^{\hat n}<b_1<C^{\hat n}<a_2<A_2^{\hat n}<B_2^{\hat n}<b_2.
\end{align*}

The Assumptions \ref{a:mVN}, \ref{a:mfilling}, \ref{a:mwx}, \ref{a:mHz} and \ref{a:mmoreVN}  are verified in Section \cite[Section 9.2]{Borodin2016}.  Theorem \ref{t:mrigidity} and \ref{t:medgeuniversality}, combining with a duality argument as in Section \ref{s:bw}, imply the rigidity and edge universality for lozenge tilings of a hexagon with a hole (with fixed filling fraction above and below the hole).

In the following, we consider the uniformly random lozenge tilings without fixed filling fraction. 
We denote the empirical particle density $\mu_N=N^{-1}\sum_{i=1}^N \delta_{\ell_i/N}$, and $\mu^*=\rho_V^*(x)\rd x$ the equilibrium measure, which is given by the minimizer of the following functional 
\begin{align}\label{e:defIV}
-\int_{x\neq y}\ln |x-y|\rho(x)\rho(y)\rd x\rd y+\int V(x)\rho(x)\rd x,
\end{align}
over all the densities $\rho(x)$, supported on $[\hat a_1, \hat b_1]\cup [\hat a_2,\hat b_2]$, with $0\leq \rho(x)\leq 1$. We denote 
\begin{align*}
\hat n_1=\mu_N([\hat a_1,\hat b_1]),\quad \hat n_1^*=\mu^*([\hat a_1,\hat b_1]),\quad
\hat n_2=\mu_N([\hat a_2,\hat b_2]),\quad
 \hat n_2^*=\mu^*([\hat a_2,\hat b_2]),
 \end{align*}
and write $\hat n^*=(\hat n_1^*,\hat n_2^*)$ and $\hat n=(\hat n_1,\hat n_2)$. We recall that $\mu^{\hat n}=\rho_V^{\hat n}(x)\rd x$, the equilibrium measure with filling fraction $\hat n$ is given by the minimizer of the functional
\eqref{e:equilibrium_fixed_filling}. Since the equilibrium measure $\rho_V^{\hat n}$ depends on the filling fraction $\hat n$, we cannot expect the rigidity and edge universality if the filling fraction $\hat n$ fluctuates too much. 
We make the following assumption: with probability going to one
\begin{align}\label{e:fractionfluc}
\|\hat n^*-\hat n\|_{\infty}\deq \max\{|\hat n^*_1-\hat n_1|, |\hat n^*_2-\hat n_2|\}=o( N^{-2/3}).
\end{align}
\begin{remark}
Under mild assumptions, statements like \eqref{e:fractionfluc} are proven in \cite{GGG} for multi-cut discrete $\beta$-ensembles without fixed filling fraction in a more general setting. In particular, for the tiling model studied in this section, \eqref{e:fractionfluc} always holds.
\end{remark}

As proven in \cite[Proposition 5.2]{Borodin2016}, the equilibrium measure $\mu^{\hat n}$ is Lipschitz with respect to the filling fraction, i.e. if $\hat n$ and $\hat n^*$ are close enough
\begin{align}\label{e:measureLipschitz}
\cD(\mu^{\hat n}, \mu^{\hat n^*})\lesssim \|\hat n-\hat n^*\|_{\infty},
\end{align}
where $\cD(\cdot, \cdot)$ is defined in \eqref{eq_quadratic_main}.
As a consequence,  $Q_{\mu^{\hat n}}$ is Lipschitz in $\hat n$ in a neighborhood $B_\varepsilon=\{\hat n: \|\hat n-\hat n^*\|\leq \varepsilon$\} of $\hat n^*$, 
and  $c^{\hat n}, C^{\hat n}, A_1^{\hat n}, B_1^{\hat n}, A_2^{\hat n}, B_2^{\hat n}$ are all Lipschitz with respect to the filling fractions in a neighborhood of $\hat n^*$ ($c^{\hat n^*}\neq 0$ and the $A_1^{\hat n^*}, B_1^{\hat n^*}, A_2^{\hat n^*}, B_2^{\hat n^*}$ are all distinct so that the derivatives of $Q_{\mu^{\hat n}}$ with respect to these parameters do not vanish). We observe here the large deviation estimates similar to Proposition \ref{p:LDP0} insures that $B_\varepsilon^c$ has probability bounded by $e^{-c N^2}$ for some $c>0$ for both continuous and discrete models. 

Fix $i\in \qq{1,2}$. We assume that $[\hat a_i, A_i^{\hat n^*}]$ is a void region for the equilibrium measure $\mu^*$, and  $\rho_V^{*}=(1+\OO(u-A_i^{\hat n^*}))s_{A_i^{\hat n^*}}\sqrt{u-A_i^{\hat n^*}}/\pi$. In the following, we show that under assumption \ref{e:fractionfluc}, the left few particles on the cut $[\hat a_i, \hat b_i]$ have Tracy-Widom fluctuations.

We first show that if we condition on the filling fraction $\hat n$, satisfying \eqref{e:fractionfluc}, the left few particles on the cut $[\hat a_i, \hat b_i]$ have Tracy-Widom fluctuations. Thanks to \eqref{e:measureLipschitz}, $[\hat a_i, A_i^{\hat n}]$ is also a void region for the equilibrium measure $\mu^{\hat n}$, and $\rho_V^{\hat n}=(1+\OO(u-A_i^{\hat n}))s_{A_i^{\hat n}}\sqrt{u-A_i^{\hat n}}/\pi$. The following is a consequence of Theorem \ref{t:medgeuniversality}. Take any fixed $m\geq 1$ and a continuously differentiable compactly supported function $O:\bR^m\rightarrow \bR$. For any index set $\Lambda\in \bZ_{\geq 1}$ with $|\Lambda|=m$, we denote the shifted index set $\Lambda_i=\{k+n_11_{i=2}: k\in \Lambda\}$, then
\begin{align}\begin{split}\label{e:condition_exp}
&\bE_{G\beta E}\left[O\left(\left(N^{2/3}(x_k/N+2)\right)_{k\in \Lambda}\right)\right]+\OO(N^{-\chi})\\
=&\bE\left[1_{\hat n\in B_\varepsilon}\bE_{\bP_N}\left[\left.O\left(\left(N^{2/3}s_{A_i^{\hat n}}^{2/3}(\ell_k/N-A_i^{\hat n})\right)_{k\in \Lambda_i}\right)\right|\hat n\right]\right] +\OO(e^{-c N^2})\\
=&\bE\left[\bE_{\bP_N}\left[\left.O\left(\left(N^{2/3}s_{A_i^{\hat n^*}}^{2/3}(\ell_k/N-A_i^{\hat n^*})\right)_{k\in \Lambda_i}\right)\right|\hat n\right]\right]+\mathbb E[1_{\hat n\in B_\varepsilon}\OO(N^{2/3}\|\hat n-\hat n^*\|_{\infty})]+\OO(e^{-c N^2})\\
=&\bE\left[\bE_{\bP_N}\left[\left.O\left(\left(N^{2/3}s_{A_i^{\hat n^*}}^{2/3}(\ell_k/N-A_i^{\hat n^*})\right)_{k\in \Lambda^*}\right)\right|\hat n\right]\right]+o(1),
\end{split}\end{align}
where $\Lambda^*=\{k+j-1: k\in \Lambda\}$, and  $j$ is the index of the left most particle on the cut $[a_i,b_i]$, and in the third line we used that $s_{A_i^{\hat n}}$ and $A_i^{\hat n}$ are Lipschitz with respect to the filling fraction.
Combining with a duality argument as in Section \ref{s:bw}, the above argument implies, under assumption \ref{e:fractionfluc}, extreme particles or holes for lozenge tilings of a hexagon with a hole without fixed filling fraction have Tracy-Widom distribution.

\subsection{Jack Deformation of Plancherel Measure}\label{s:JM}
The last example is the Jack Deformation of Plancherel Measure. We prove that for $\beta\geq 1$, the distribution of the lengths of the first few rows in Young diagrams under the Jack deformation of Plancherel measure, appropriately scaled, converges to the Tracy-Widom $\beta$ distribution. This answers an open problem in \cite[Section 1.6]{MR3498866}.

Young diagrams of size $n$ is a collection of $n$ boxes, arranged in left-justified rows, with the row lengths in non-increasing order, We denote a Young diagram of size $n$ by listing the number of boxes in each row, $\la=(\la_1,\la_2,\cdots, \la_{\ell_{\la}})$, where $\ell_\la$ is the length of $\la$, $\la_i$ is the number of boxes in the $i$-th row of $\la$ and $\la_1\geq \la_2\cdots\geq \la_{\ell_{\la}}$.
In this section we study the following probability measure on Young diagrams of size $n$,
\begin{align}\label{e:defMn}
\bM^{(n)}(\la)=\frac{n! \theta^n}{H(\la,\theta)H'(\la,\theta)},
\end{align}
where 
\begin{align}\begin{split}\label{e:defHla}
H(\la, \theta)&=\prod_{(i,j)\in \la}\left((\la_i-j)+(\la_j'-i)\theta+1\right),\\
H'(\la,\theta)&=\prod_{(i,j)\in \la}\left((\la_i-j)+(\la_j'-i)\theta+\theta\right),
\end{split}\end{align}
and $(i,j)\in \la$ stands for the box in the $i$-th row and $j$-th column of the Young diagram $\la$, and $\la'$ is the transposed Young diagram. In the expressions \eqref{e:defHla}, $\la_i-j$ is the arm length and $\la_j'-i$ is the leg length of the box $(i,j)\in \la$.

When $\theta=1$, the measure $\bM^{(n)}$ specializes to the well-known Plancherel measure for the symmetric groups. In general, it is called the Jack deformation of Plancherel measure, because of its connection to Jack polynomials. We refer to \cite{MR2143199, MR2098845,MR2607329,MR3498866,MR2465773,MR2837720} for recent researches on the Jack measure.
We prove that the distribution of the lengths of the first few rows in Young diagrams under the Jack deformation of Plancherel measure, after proper scaling, converges to the Tracy-Widom $\beta$ distribution.
\begin{theorem}\label{t:Jackmeasure}
Fix $\beta\geq 1$, $\theta=\beta/2$ and integer $m>0$. For any $(t_1,t_2,\cdots,t_m)\in \bR^{m}$, we have
\begin{align}
\lim_{M\rightarrow\infty}\bM^{(M)}\left(\theta^{-5/6}M^{1/3}\left(\la_k/\sqrt{M}-2\sqrt{\theta}\right)\leq t_k, 1\leq k\leq m\right)=
F_{\theta}(t_1,t_2,\cdots, t_m),
\end{align}
where 
\begin{align}\label{e:defFtheta}
F_{\theta}(t_1,t_2,\cdots, t_m)\deq\lim_{N\rightarrow\infty}\bP_{G\beta E}\left(N^{2/3}\left(x_k/N-2\right)\leq t_k, 1\leq k\leq m\right).
\end{align}
\end{theorem}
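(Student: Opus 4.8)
The plan is to realize the Jack--Plancherel measure $\bM^{(M)}$ as a discrete $\beta$-ensemble of the form \eqref{eqn:distr} (up to a sum over the number of particles), verify that it falls under the framework of Theorem \ref{t:edgeuniversality} together with the generalizations of Section \ref{s:cp} (convex potential on an infinite interval) and Section \ref{s:lt} (no fixed number of particles), and then transfer the Tracy--Widom $\beta$ edge universality. Concretely, following the identification used in \cite{MR3498866,MR2227852}, one encodes a Young diagram $\la$ by its \emph{modified Frobenius-type coordinates}: set $\ell_i = \la_i - \theta i + \text{const}$ for $i=1,\dots,N$, with $N$ an auxiliary cutoff on the number of rows (padding $\la$ with zeros), so that the $\ell_i$ lie in $\bW_N^\theta$. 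Under this substitution the hook-length product $H(\la,\theta)H'(\la,\theta)$ becomes, up to explicit Gamma-function prefactors, the ratio-of-Gammas interaction $\prod_{i<j}\frac{\Gamma(\ell_j-\ell_i+1)\Gamma(\ell_j-\ell_i+\theta)}{\Gamma(\ell_j-\ell_i)\Gamma(\ell_j-\ell_i+1-\theta)}$, and the factor $\theta^n/n!$ with $n=|\la|=\sum_i(\ell_i+\theta i-\text{const})$ produces a weight of the form $w(\ell_i;N)=e^{-NV_N(\ell_i/N)}$ with $V$ essentially the entropy-type potential $V(x)= x\ln x - x$ (after the standard scaling $\la_i\sim\sqrt{M}$, i.e. $N\asymp\sqrt{M/\theta}$ up to the Poissonization of the size $n$). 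This $V$ is convex and analytic on $(0,\infty)$, satisfies the growth condition \eqref{eq_potential_behavior}, and its equilibrium measure is the image of the Logan--Shepp--Vershik--Kerov curve, which has a single band $[A,B]$ with square-root vanishing at both ends and a void $[0,A]$ near the left edge — exactly the situation of Section \ref{s:cp}.

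The steps, in order, are: (1) \emph{Poissonization / depoissonization}: replace the fixed size $n$ by a Poisson random size with parameter $\gamma^2=M$, turning $\bM^{(M)}$ into a measure with a fluctuating number of particles of the type handled at the end of Section \ref{s:multi-cut}/Section \ref{s:cp}; here one needs a large-deviation estimate showing the number of boxes concentrates at scale $\ll M$ around $M$, so that depoissonization (a contour-integral / saddle-point argument à la Johansson) does not disturb the edge fluctuations. (2) \emph{Matching the algebraic form}: carry out the Gamma-function bookkeeping to check that after the Frobenius substitution the density is precisely \eqref{eqn:distr} with the claimed $w$, and that the auxiliary cutoff $N$ on rows can be taken so that the extra (empty) rows contribute negligibly — this uses that, with overwhelming probability, $\ell_\la = O(\sqrt{M})$, so the "convex potential on $\bR$ with no saturation" setup of Section \ref{s:cp} applies after localizing to a finite window. (3) \emph{Verification of Assumptions \ref{a:VN}, \ref{a:wx}, \ref{a:Hz}, \ref{a:moreVN}}: compute $V_N$, $\psi_N^\pm$, $\phi^\pm$, the functions $R_\mu, Q_\mu$, and check the square-root edge behavior and $H(z)\neq 0$ near the band; for $V(x)=x\ln x-x$ this is a direct computation and is essentially recorded in \cite{Borodin2016,MR3498866}. (4) \emph{Apply Theorem \ref{t:rigidity} and Theorem \ref{t:edgeuniversality}} (in the convex-potential-on-$\bR$ form of the Corollary in Section \ref{s:cp}) to the first $m$ particles $\ell_1 > \ell_2 > \cdots$, i.e. the largest $\ell_i$, which correspond to the longest rows $\la_1,\dots,\la_m$: this gives that $N^{2/3}k^{1/3}s_A^{2/3}(\ell_k/N - \gamma_k)$ converges jointly to the $G\beta E$ edge, hence to $F_\theta$. (5) \emph{Translate the scaling}: compute $A$, $s_A$, and $\gamma_k$ explicitly for this $V$ and check that after the change of variables $\ell_k \leftrightarrow \la_k$ and $N\leftrightarrow\sqrt{M/\theta}$, the edge scaling $N^{2/3}k^{1/3}s_A^{2/3}(\ell_k/N-\gamma_k)$ becomes $\theta^{-5/6}M^{1/3}(\la_k/\sqrt{M}-2\sqrt\theta)$, matching the statement; this is a one-line-per-constant computation once $A=0$ (left edge) is identified with the small rows and $B$ with $\la_1\sim 2\sqrt{\theta M}$.

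I expect the main obstacle to be \textbf{step (1)}, the rigorous depoissonization together with the control of the fluctuating particle number: Theorem \ref{t:edgeuniversality} as stated is for a \emph{fixed} number $N$ of particles, so one must first prove that conditioning the Poissonized model on its (fluctuating) total size $n$, or equivalently on an appropriate ``filling'' of the auxiliary row-count, changes the law of the top $m$ particles only by $o(1)$ — this is the analogue for the Jack--Plancherel model of the fluctuating-filling-fraction argument sketched in Section \ref{s:lt}, and it requires a large-deviation bound of the type $\|n/M - 1\|\ = o(M^{-1/3})$ with high probability, sharp enough not to wash out the $M^{1/3}$ edge scaling. A secondary technical point is ensuring Assumption \ref{a:moreVN} (the $N^{-1/3}$ control on $V_N' - V'$): here $V_N$ comes from Stirling expansions of the Gamma prefactors, so one needs the error in $\del_z\ln\Gamma(z) = \ln z - \tfrac{1}{2z} + O(z^{-2})$ to be uniform on the relevant window, which is routine but must be done carefully near the left void edge $A$. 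Everything else — the algebraic identification in steps (2)--(3), the application of the black-box Theorems in step (4), and the constant-chasing in step (5) — is mechanical given the results already proved in the paper.
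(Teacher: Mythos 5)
Your skeleton (Poissonize, realize $\bM^{(n)}$ as a fixed-$N$ discrete $\beta$-ensemble, verify the assumptions, apply edge universality at the right edge, depoissonize) matches the paper's argument in Section~\ref{s:JM}, and you correctly flag depoissonization as a step that needs care. But there is one genuine conceptual error and one structural omission.

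The conceptual error is your description of the equilibrium measure. You write that it ``has a single band $[A,B]$ with square-root vanishing at both ends and a void $[0,A]$ near the left edge --- exactly the situation of Section~\ref{s:cp}.'' In fact the constrained equilibrium measure here (see~\eqref{e:eqmJM}) has $\rho_V\equiv\theta^{-1}$ on $[0,A]$: the interval $[0,A]$ is a \emph{saturated} region, the density does not vanish like a square root at $A$, and the hypothesis of Section~\ref{s:cp} (``Arbitrary Convex Potential on $\bR$ With No Saturation'') is \emph{not} satisfied, so the Corollary you propose to invoke is not directly applicable. The paper does not route through Section~\ref{s:cp}; it verifies Assumptions~\ref{a:VN},~\ref{a:wx},~\ref{a:Hz},~\ref{a:moreVN} directly for the Meixner-type weight $w(x;N)=(\theta M)^x/\Gamma(x+1)\Gamma(x+\theta)$, and has to address the fact that the saturation forces $H(z)=z$ to vanish at $z=0\in[\hat a,\hat b]$, in apparent violation of Assumption~\ref{a:Hz}; it then argues this zero is harmless because $R_\mu$, $\psi_N^-$, $\phi^-$ and $e^{\theta G_\mu}$ all vanish there and cancel it, so the Nekrasov-equation argument of Theorem~\ref{t:bootstrap} still goes through. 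Your proposal, as written, would never encounter (and therefore never dispose of) this obstacle, because you've misidentified the void/saturation structure. Relatedly, your tentative potential $V(x)=x\ln x-x$ misses the two Gamma factors in the weight: the correct potential is $V(x)=2x\ln x-2x-x\ln(\theta/\fc^2)$.

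The structural omission is that you haven't pinned down the mechanism that lets you pass to a fixed particle number. You say the padding rows ``contribute negligibly,'' but in fact they contribute \emph{exactly nothing}: the identity $V_N(\la)W_N(\la)=V_{\ell_\la}(\la)W_{\ell_\la}(\la)$ for $N\geq\ell_\la$ (Lemma~\ref{l:VWVW}) makes the conditioned Poissonized measure literally equal to a discrete $\beta$-ensemble with $N=\fc\sqrt M$ particles, with no approximation. Without this algebraic identity one does not get a bona fide $\bP_N$ to which Theorem~\ref{t:edgeuniversality} applies. Similarly, the ``fluctuating filling fraction'' language from Section~\ref{s:lt} is the wrong analogy here: the fluctuating quantity is the Poisson size $n$, and the paper handles it not by a filling-fraction argument but by (i) the exponential tail bound of Lemma~\ref{l:lengthLDP} to truncate the number of rows, and (ii) the Johansson-style monotone depoissonization lemma (\cite[Lemma 3.8]{MR1826414}), exploiting that $\bM^{(n)}$ is the time-$n$ law of a monotone Young-diagram growth process. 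Your heuristic $|n/M-1|=o(M^{-1/3})$ is in fact available (the Poisson fluctuation is $M^{-1/2}$), but the monotonicity lemma is both sharper and cleaner than a raw concentration argument, and your write-up should actually commit to one of these.

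The rest --- Poissonization, the coordinate change $\ell_i=\la_{N-i+1}+(i-1)\theta$, verification of the remaining assumptions, application of the right-edge version of Theorem~\ref{t:edgeuniversality}, and the constant chase giving $\theta^{-5/6}M^{1/3}(\la_k/\sqrt M-2\sqrt\theta)$ --- is the right plan and matches the paper.
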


In the special case $\theta=1$, Theorem \ref{t:Jackmeasure} specializes to the well-known Baik-Deift-Johansson theorem, which was first proven in \cite{MR1682248} for the first row, and generalized to the first few rows in \cite{MR1758751,MR1826414,MR1802530}. For the cases $\theta=1/2$ and $\theta=2$, the above theorem was proven in \cite{MR1845180}.

We can rewrite the quantities $H(\la, \theta)$ and $H'(\la,\theta)$ in \eqref{e:defHla} in a more explicit way, in terms of row lengths of $\la$ (see \cite[Lemma 4.1]{MR2465773}):
\begin{align}\begin{split}\label{e:simplifyH}
H(\la, \theta)&=\prod_{1\leq i<j\leq \ell_\la} \frac{\Gamma(\la_i-\la_j+(j-i-1)\theta+1)}{\Gamma(\la_i-\la_j+(j-i)\theta+1)}\prod_{i=1}^{\ell_\la}\Gamma(\la_i+(\ell_\la-i)\theta+1)\\
H'(\la, \theta)&=\prod_{1\leq i<j\leq \ell_\la} \frac{\Gamma(\la_i-\la_j+(j-i)\theta)}{\Gamma(\la_i-\la_j+(j-i+1)\theta)}\prod_{i=1}^{\ell_\la}\frac{\Gamma(\la_i+(\ell_\la-i)\theta+\theta)}{\Gamma(\theta)}
\end{split}\end{align}
For $m\geq 1$ and any Young diagram $\la=(\la_1,\la_2,\cdots, \la_{\ell_\la})$, we define
\begin{align*}
V_m(\la)=\prod_{1\leq i<j\leq m}\frac{\Gamma(\la_i-\la_j+(j-i)\theta+1)\Gamma(\la_i-\la_j+(j-i+1)\theta)}{\Gamma(\la_i-\la_j+(j-i-1)\theta+1)\Gamma(\la_i-\la_j+(j-i)\theta)}
\end{align*}
and
\begin{align*}
W_m(\la)=\prod_{i=1}^{m}\frac{\Gamma(\theta)}{\Gamma(\la_i+(m-i)\theta+1)\Gamma(\la_i+(m-i)\theta+\theta)},
\end{align*}
where by our convention, $\la_i=0$ for $i>\ell_\la$. According to the formula \eqref{e:simplifyH}, the measure $\bM^{(n)}$ as in \eqref{e:defMn} can be rewritten as 
\begin{align}\label{e:bMexp}
\bM^{(n)}(\la)=\frac{n! \theta^n}{H(\la,\theta)H'(\la,\theta)}=n!\theta^n V_{\ell_\la}(\la)W_{\ell_\la}(\la).
\end{align}

The following lemma proven in \cite[Lemma 6.6]{MR2098845} will allow us to conclude that the lengths of the Young diagrams under the measure $\bM^{(n)}$ are of size $\OO(\sqrt{n})$ with high probability.
\begin{lemma}\label{l:lengthLDP}
Suppose that $\theta>0$, then we have
\begin{align*}\begin{split}
\bM^{(n)}\left(\la_1\geq 2e\sqrt{n\theta}\right)&\leq \theta^{-1}n^24^{-e\sqrt{n\theta}},\\
\bM^{(n)}\left(\ell_\la\geq 2e\sqrt{n/\theta}\right)&\leq \theta n^2 4^{-e\sqrt{n/\theta}}.
\end{split}\end{align*}
\end{lemma}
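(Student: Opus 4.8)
\textbf{Proof proposal for Theorem~\ref{t:Jackmeasure}.}
The plan is to realize the Jack--Plancherel measure $\bM^{(n)}$, after conditioning on the length $\ell_\la$ of the Young diagram, as a discrete $\beta$-ensemble of the form \eqref{eqn:distr} on an infinite interval with a convex potential, and then apply Theorem~\ref{t:edgeuniversality} (or rather the corollary in Section~\ref{s:cp}) together with the Poissonization/de-Poissonization machinery. The first step is to make the change of variables $\ell_i = \la_i + (\ell_\la - i)\theta$ for $i\in\qq{1,\ell_\la}$, so that the strictly decreasing sequence $\la_1 > \la_2-\theta > \cdots$ turns into an element of $\bW_{\ell_\la}^\theta$ (reversing the order). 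Under this substitution, the expression \eqref{e:bMexp}, using \eqref{e:simplifyH}, becomes
\begin{align*}
\bM^{(n)}(\la)= n!\,\theta^n \prod_{1\leq i<j\leq \ell_\la}\frac{\Gamma(\ell_i-\ell_j+1)\Gamma(\ell_i-\ell_j+\theta)}{\Gamma(\ell_i-\ell_j)\Gamma(\ell_i-\ell_j+1-\theta)}\prod_{i=1}^{\ell_\la}\frac{1}{\Gamma(\ell_i+1)\Gamma(\ell_i+\theta)},
\end{align*}
which is exactly the interaction in \eqref{eqn:distr} with weight $w(\ell;N)=1/(\Gamma(\ell+1)\Gamma(\ell+\theta))\asymp e^{-2\ell\ln\ell}$. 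By Stirling this is $\exp(-NV_N(\ell/N))$ with $N=\sqrt{n/\theta}$ (the typical length scale, by Lemma~\ref{l:lengthLDP}) and $V(x)=2x\ln x - 2x$ asymptotically, which is convex and satisfies the growth condition \eqref{eq_potential_behavior}; its equilibrium measure is the (shifted, rescaled) Marchenko--Pastur-type density with a square-root vanishing left/right edge, and by explicit computation the relevant edge is at $\la_k/\sqrt M \to 2\sqrt\theta$ with edge coefficient $s_A \asymp \theta^{-1/6}$, matching the normalization $\theta^{-5/6}M^{1/3}$ in the statement.

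The second step is to handle the fact that $n$ (the number of boxes) rather than $N$ (the number of rows) is fixed, and that $\ell_\la$ fluctuates. Here I would \emph{Poissonize}: introduce the measure $\bM^\xi = e^{-\xi}\sum_n \xi^n \bM^{(n)}/n!$, which is known (Borodin--Olshanski, and \cite{MR2098845}) to factorize nicely and to coincide, after the above change of variables, with a \emph{fixed-$\theta$, random-number-of-particles} discrete $\beta$-ensemble on $\bZ_{\ge 0}$ (in spirit the $z$-measures at a special point). Item (4) in Section~\ref{s:multi-cut} — "edge universality of stochastic systems without fixed number of particles, thanks to large deviation estimates for the number of total particles" — is precisely the tool: the number of rows concentrates around $N=\sqrt{n/\theta}$ with Gaussian fluctuations of order $n^{1/4}\ll n^{1/3}$ (again via the large deviation bound of Lemma~\ref{l:lengthLDP} controlling the extreme row), so conditioning on $\ell_\la$ does not move the edge on the relevant scale. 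Then Theorem~\ref{t:edgeuniversality}/Section~\ref{s:cp} gives that $\theta^{-5/6}M^{1/3}(\la_k/\sqrt M - 2\sqrt\theta)$ converges jointly to the same limit as $N^{2/3}(x_k/N-2)$ under $G\beta E$, which is by definition $F_\theta$ in \eqref{e:defFtheta}. Finally one \emph{de-Poissonizes} using a monotonicity/Johansson-type lemma (the Jack-Plancherel measures are stochastically monotone in $n$ in the appropriate sense, cf. \cite{MR1682248}) to transfer the statement from $\bM^\xi$ at $\xi=M$ to $\bM^{(M)}$.

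In more detail the steps are: (i) the change of variables and Stirling asymptotics to identify $\bM^{(n)}\!\mid\!\{\ell_\la = N\}$ with \eqref{eqn:distr} for an explicit convex $V$, verifying Assumptions~\ref{a:VN}, \ref{a:wx}, \ref{a:Hz}, \ref{a:moreVN}; (ii) compute the equilibrium measure $\rho_V$ and its left edge $A=2\sqrt\theta/(\text{rescaling})$, checking the square-root behavior and extracting $s_A$; (iii) use Lemma~\ref{l:lengthLDP} to confine $\ell_\la$ to $[N(1-\delta), N(1+\delta)]$ with probability $1-e^{-cN}$ and to confine $\la_1$ to $O(\sqrt n)$, which legitimizes the truncation to a finite interval exactly as in Section~\ref{s:cp}; (iv) establish the Poissonized identity and the concentration of the number of rows, reducing to the fixed-$N$ result; (v) apply Theorem~\ref{t:edgeuniversality}; (vi) de-Poissonize. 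I expect the main obstacle to be step (iv): making rigorous the passage between "fixed $n$", "fixed number of rows $N=\ell_\la$", and "Poissonized ensemble", i.e. showing the edge statistics are insensitive to these changes of ensemble. This requires either a clean stochastic-monotonicity argument for $\bM^{(n)}$ in $n$ (to de-Poissonize) and a large-deviation estimate for $\ell_\la$ sharp enough that the induced shift $A^{\ell_\la} - A^{N}$ in the edge location is $o(N^{-2/3}\cdot N) = o(N^{1/3})$ in the relevant units — entirely analogous to the filling-fraction fluctuation condition \eqref{e:fractionfluc} used for lozenge tilings in Section~\ref{s:lt}, and to the "random number of particles" generalization announced there. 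The remaining steps are routine given the results already proved in the paper.
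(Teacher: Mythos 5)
Your proposal does not address the statement you were asked to prove. The statement is Lemma \ref{l:lengthLDP}, a concrete large-deviation tail bound for the first row $\la_1$ and the number of rows $\ell_\la$ under the Jack--Plancherel measure $\bM^{(n)}$, with the explicit bounds $\theta^{-1}n^2 4^{-e\sqrt{n\theta}}$ and $\theta n^2 4^{-e\sqrt{n/\theta}}$. What you have written instead is a proof sketch of Theorem \ref{t:Jackmeasure}, the edge-universality result for the Jack measure. These are different statements, and the relationship between them runs in the wrong direction for your argument to count as a proof: Lemma \ref{l:lengthLDP} is an \emph{input} to the proof of Theorem \ref{t:Jackmeasure} (it is what justifies the concentration bound \eqref{e:rowConcentrate}, the truncation of the ensemble to at most $N=\fc\sqrt{M}$ rows, and the confinement of the particles to a finite interval as in Section \ref{s:cp}). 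Indeed your own step (iii) invokes Lemma \ref{l:lengthLDP} to "confine $\ell_\la$" and "confine $\la_1$", so your proposal presupposes the very statement it is supposed to establish. Nothing in your text bears on the claimed inequalities themselves.

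For the record, the paper does not prove this lemma either; it quotes it from \cite[Lemma 6.6]{MR2098845}. A self-contained proof would be a direct combinatorial estimate: starting from the product formula \eqref{e:defMn}--\eqref{e:defHla}, one bounds $\bM^{(n)}(\la_1=k)$ by comparing $H(\la,\theta)H'(\la,\theta)$ with the corresponding quantity for the diagram obtained by deleting the first row, which yields a bound of the shape $\binom{n}{k}$ times a factor controlled by $\theta$; summing the resulting geometric-type bound over $k\geq 2e\sqrt{n\theta}$ and over the at most $n$ admissible values of the remaining size gives the polynomial prefactor $n^2$ and the exponential decay $4^{-e\sqrt{n\theta}}$. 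The second inequality then follows from the first by transposing the diagram, which exchanges $\la_1$ with $\ell_\la$ and $\theta$ with $\theta^{-1}$ (up to an overall normalization), as is visible from the symmetry of \eqref{e:defHla} under $(i,j)\mapsto(j,i)$. If you want to submit a proof of this lemma, that is the argument you need to supply; the universality machinery of the paper plays no role in it.
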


Although that the number of rows of the Young diagrams under the measure $\bM^{(n)}$ is random, the following simple but important lemma allows us to view $\bM^{(n)}$ as a measure of Young diagrams with fixed number of rows.
\begin{lemma}\label{l:VWVW}
If $N\geq \ell_\la$, then 
\begin{align*}
V_N(\la)W_N(\la)=V_{\ell_\la}(\la)W_{\ell_\la}(\la).
\end{align*}
\end{lemma}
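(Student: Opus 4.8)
The plan is to prove the one-step identity $V_{m+1}(\la)W_{m+1}(\la)=V_m(\la)W_m(\la)$ for every integer $m\geq\ell_\la$, from which Lemma \ref{l:VWVW} follows by a trivial induction on $N\geq\ell_\la$ (with base case $N=\ell_\la$). The key observation is that when $m\geq\ell_\la$, passing from $m$ to $m+1$ rows only appends the empty row $\la_{m+1}=0$, so the entire verification should reduce to an elementary cancellation of Gamma functions.

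First I would isolate $W_{m+1}(\la)/W_m(\la)$. The $i=m+1$ factor of $W_{m+1}(\la)$ equals $\Gamma(\theta)/\bigl(\Gamma(\la_{m+1}+1)\Gamma(\la_{m+1}+\theta)\bigr)=\Gamma(\theta)/\bigl(\Gamma(1)\Gamma(\theta)\bigr)=1$ since $\la_{m+1}=0$, and for $1\leq i\leq m$ the $i$-th factor changes only through the shift $(m-i)\theta\mapsto(m+1-i)\theta$. Using $\Gamma(\la_i+(m-i)\theta+\theta)=\Gamma(\la_i+(m+1-i)\theta)$ and the analogous identity one step further, this expresses $W_{m+1}(\la)/W_m(\la)$ as an explicit finite product over $1\leq i\leq m$.

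Next I would compute $V_{m+1}(\la)/V_m(\la)$: the only factors occurring in $V_{m+1}$ but not in $V_m$ are those with $j=m+1$ and $1\leq i\leq m$, and since $\la_{m+1}=0$ these collapse to a product over $1\leq i\leq m$ of ratios of four Gamma functions with arguments $\la_i+(m-i)\theta+1$, $\la_i+(m+1-i)\theta$, $\la_i+(m+1-i)\theta+1$, $\la_i+(m+2-i)\theta$. Comparing with the product found for $W_{m+1}(\la)/W_m(\la)$, the two are termwise reciprocal, hence $V_{m+1}(\la)W_{m+1}(\la)/\bigl(V_m(\la)W_m(\la)\bigr)=1$, as desired.

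The argument is routine, so the only real obstacle is careful bookkeeping: one must match up the four Gamma arguments of the $V$-ratio with the four of the $W$-ratio (repeatedly invoking $(m-i)\theta+\theta=(m+1-i)\theta$), and one should check that the convention $\la_i=0$ for $i>\ell_\la$ is applied consistently to all indices $i>\ell_\la$ that appear in the $m$-row formulas, not merely to $\la_{m+1}$.
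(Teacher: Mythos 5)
Your one-step induction is correct and the Gamma-function cancellations check out; it is essentially the same telescoping argument the paper gives, just organized as an induction on $m$ from $\ell_\la$ to $N$ rather than as a direct computation of $V_N/V_{\ell_\la}$ with a telescoping product over $j$. The paper's version collapses the whole chain in one pass (separating the cross terms $i\leq\ell_\la<j$ from the pure-zero terms $\ell_\la<i<j$), while your version makes the per-step cancellation transparent, but the underlying identity $(m-i)\theta+\theta=(m+1-i)\theta$ does all the work in both.
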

\begin{proof}
We may assume that $N>\ell_\la$. By our convention, $\la_i=0$ for $i>\ell_\la$. Hence,
\begin{align*}
V_N(\la)
&=
V_{\ell_\la}(\la)\prod_{1\leq i\leq \ell_\la,\atop \ell_\la<j\leq N}\frac{\Gamma(\la_i+(j-i)\theta+1)\Gamma(\la_i+(j-i+1)\theta)}{\Gamma(\la_i+(j-i-1)\theta+1)\Gamma(\la_i+(j-i)\theta)}
\prod_{\ell_\la< i<j\leq N}\frac{\Gamma((j-i)\theta+1)\Gamma((j-i+1)\theta)}{\Gamma((j-i-1)\theta+1)\Gamma((j-i)\theta)}\\
&=V_{\ell_\la}(\la)\prod_{1\leq i\leq \ell_\la}\frac{\Gamma(\la_i+(N-i)\theta+1)\Gamma(\la_i+(N-i+1)\theta)}{\Gamma(\la_i+(\ell_\la-i)\theta+1)\Gamma(\la_i+(\ell_\la-i+1)\theta)}
\prod_{\ell_\la< i\leq N}\frac{\Gamma((N-i)\theta+1)\Gamma((N-i+1)\theta)}{\Gamma(\theta)}\\
&=V_{\ell_\la}(\la)\frac{W_{\ell_\la}(\la)}{W_N(\la)}.
\end{align*}
This finishes the proof.
\end{proof}

The proof of Theorem \ref{t:Jackmeasure} uses the Poissonization and de-Poissonization scheme of \cite{MR1682248, MR1758751,MR1826414}. We consider the Poissonized Jack measure with rate $M$, 
\begin{align*}
\bP^{(M)}(\la)=e^{-M}\sum_{n=0}^{\infty}\bM^{(n)}(\la)\frac{M^n}{n!}=e^{-M}V_{\ell_\la}(\la)W_{\ell_\la}(\la)\prod_{i=1}^{\ell_\la}(\theta M)^{\la_i},
\end{align*}
on the set of all partitions, where $\bM^{(n)}(\la)=0$ if $\sum_{i}\la_i\neq n$, and we used \eqref{e:bMexp}. Thanks to the Poisson concentration of measure and Lemma \ref{l:lengthLDP}, for $M$ large enough, there exists $\fc\geq 2e\max\{\theta^{1/2},\theta^{-1/2}\}$ and $c>0$ such that 
\begin{align}\label{e:rowConcentrate}
\bP^{(M)}(\ell_\la>\fc \sqrt{M})+\bP^{(M)}( \la_1 > \fc\sqrt{M})\leq e^{-cM^{1/2}}.
\end{align}

In the following we take  $N$ large and $M=N^2/\fc^2$, and define the measure on the Young diagrams with at most $N$ rows
\begin{align*}
\tilde \bP^{(M)}(\la)=\frac{{\bf 1}_{\{\ell_\la\leq N\}}}{Z_{M}}e^{-M}\sum_{n=0}^{\infty}\bM^{(n)}(\la)\frac{M^n}{n!}.
\end{align*}
where $Z_M$ is a normalizing constant. It follows from \eqref{e:rowConcentrate}, $Z_M=1+\OO(\exp(-cM^{1/2}))$. Hence for any bounded test function $O:\bR^m\mapsto\bR$, we have
\begin{align}\label{e:tP=P}
\bE_{ \bP^{(M)}}[O(\la_1,\la_2,\cdots, \la_m)]=\bE_{\tilde \bP^{(M)}}[O(\la_1,\la_2,\cdots, \la_m)]+\OO\left(\exp(-cM^{1/2})\right).
\end{align}

In the following we show that the measure $\tilde \bP^{(M)}$ falls into the category of discrete $\beta$ ensemble. Hence, the law of the first few rows of Young diagrams under $\tilde \bP^{(M)}$, after scaling, converges to the Tracy-Widom $\beta$ distribution. For any partition $\la=(\la_1,\la_2,\cdots,\la_N)$ with at most $N$ rows, where $\la_i=0$ if $i>\ell_\la$, we define $\ell_i=\la_{N-i+1}+(i-1)\theta$, for $1\leq i\leq N$.  Thanks to Lemma \ref{l:VWVW}, we can view $\tilde\bP^{(M)}$ as a measure on the particle configurations $(\ell_1,\ell_2,\cdots, \ell_N)\in \bW^\theta_N$ with $a(N)=-1$ and $b(N)=\infty$ as in Definition \ref{Def_state_space_1cut}, and 
\begin{align}\begin{split}\label{def:PNMeixner}
\tilde\bP^{(M)}(\lambda)&=\frac{{\bf 1}_{\{\ell_\la\leq N\}}}{Z_M}e^{-M}V_N(\la)W_N(\la)\prod_{i=1}^N(\theta M)^{\la_i}\\
&=\frac{1}{Z_N}\prod_{1\leq i<j\leq N}\frac{\Gamma(\ell_i-\ell_j+1)\Gamma(\ell_i-\ell_j+\theta)}{\Gamma(\ell_i-\ell_j)\Gamma(\ell_i-\ell_j+1-\theta)}\prod_{i=1}^N\frac{(\theta M)^{\ell_i}}{\Gamma(\ell_i+1)\Gamma(\ell_i+\theta)}=:\bP_N(\ell_1,\ell_2,\cdots, \ell_N),
\end{split}\end{align}
which is a discrete $\beta$ ensemble with weight
\begin{align*}
w(x;N)=\frac{(\theta M)^{x}}{\Gamma(x+1)\Gamma(x+\theta)}.
\end{align*}
In the following, we check that the measure $\bP_N$ in \eqref{def:PNMeixner} satisfies the Assumptions  \ref{a:VN}, \ref{a:wx}, \ref{a:Hz}, \ref{a:moreVN}.
\begin{enumerate}
\item For Assumption \ref{a:VN}, we have $a(N)=-1$, $b(N)=\infty$, $\hat a=0$ and $\hat b=\infty$. Although the particle configuration is supported on an infinite interval, i.e. $[0,\infty)$. Thanks to \eqref{e:rowConcentrate}, we have
\begin{align*}
\bP_N(\ell_N\leq (1+\theta)N)> \tilde \bP^{(M)}(\la_1> N)=\OO(\exp(-cM^{1/2})).
\end{align*}
We can condition on the event that $\ell_N\leq (1+\theta)N$ as in Section \ref{s:cp}. For those particle configurations, one can check that
\begin{align*}
V_N(x)=\frac{1}{N}\ln \left(\Gamma(Nx+1)\Gamma(Nx+\theta)\right)-x\ln (\theta M), 
\end{align*}
and
\begin{align*}
V(x)=2x\ln x -2x-x\ln(\theta/\fc^2),\quad V'(x)=2\ln x-\ln (\theta/\fc^2).
\end{align*}
\item For Assumption \ref{a:wx}, we have
\begin{align*}
\psi^+_N(x)=\theta/\fc^2,\quad \psi_N^-(x)=\frac{x(x+\theta-1)}{N^2},\quad \phi^+(x)=\theta/\fc^2,\quad \phi^-(x)=x^2.
\end{align*}
\item For Assumption \ref{a:Hz}, we look for an equilibrium measure $\mu(\rd x)=\rho_V(x)\rd x$ so  that there exists $0<A<B$, such that it has a unique band $[A,B]$ and $[0,A]$ is a saturated region. Therefore, by taking derivative on both sides of \eqref{e:defFV}, 
\begin{align*}
2\theta\int_{0}^B\frac{\rho_V(y)\rd y}{x-y}=V'(x),
\end{align*}
for any $x\in [A,B]$. We notice that the total mass of $\mu$ on $[A,B]$ is $1-A/\theta$. We can solve for $\rho_V$ using the inverse Hilbert transform formula \eqref{e:invHil}, 
\begin{align}\label{e:eqmJM}
A=\theta-\frac{2\sqrt{\theta}}{\fc},\quad B=\theta+\frac{2\sqrt{\theta}}{\fc},\quad 
\rho_V(x)=\left\{
\begin{array}{cc}
\frac{1}{\theta},& 0\leq x\leq A,\\
\frac{2}{\pi \theta}\arctan\sqrt{\frac{B-x}{x-A}}, &A\leq x\leq B.
\end{array}
\right.
\end{align}
$\rho_V(x) dx$ is indeed our unique equilibrium measure as soon  as $0<A<B$, that is $\theta\ge (2/\fc)^2$.
The Stieltjes transform of the equilibrium measure $\mu$ is given by 
\begin{align*}
G_\mu(z)=\frac{1}{\theta}\ln\left(\frac{(z^2-\theta z)-z\sqrt{(z-A)(z-B)}}{2\theta/\fc^2}\right).
\end{align*}
where the branch of square root is such that $\sqrt{(z-A)(z-B)}\sim z$ as $z\rightarrow \infty$. The quantities $R_\mu(z)$ and $Q_\mu(z)$ are given by
\begin{align*}
R_\mu(z)=z^2-\theta z,\quad Q_\mu(z)=H(z)\sqrt{(z-A)(z-B)},\quad H(z)=z.
\end{align*}
Although $H(z)$ vanishes at $z=0$, $R_\mu(z)$, $\psi_N^-(zN)$, $\phi^-(z)$ and $e^{\theta G_\mu(z)}$ all vanish at $z=0$, which cancels the zero of $H(z)$. One can check that the Proof of Theorem \ref{t:bootstrap} can be carried through without any change.  
\item For Assumption \ref{a:moreVN}, we have that $V(x)$ is analytic in a neighborhood of $[A,B]$, and 
\begin{align*}
V'_N(x)=V'(x)+\OO(1/N).
\end{align*}
\end{enumerate}
Therefore, it follows from Theorem \ref{t:edgeuniversality} and \eqref{e:tP=P}, the distribution of the lengths of the first few rows in Young diagrams under the Poissonized Jack measure converges to the Tracy-Widom $\beta$ distribution.

\begin{theorem}\label{t:PJM}
Fix $\beta\geq 1$, $\theta=\beta/2$, and an index set $\Lambda\in \bZ_{\geq 1}$ with $|\Lambda|=m$. For any continuously differentiable compactly supported function $O:\bR^m\rightarrow \bR$, there exists a small $\chi>0$, such that 
\begin{align}\label{e:PJM}
\bE_{\bP^{(M)}}\left[O\left(\theta^{-5/6}M^{1/3}\left(\la_k/\sqrt{M}-2\sqrt{\theta}\right), k\in \Lambda\right)\right]-\bE_{G\beta E}\left[O\left(N^{2/3}\left(x_k/N-2\right), k\in \Lambda\right)\right]=\OO(M^{-\chi}),
\end{align}
where $N=\fc \sqrt{M}$, provided $M$ is large enough and $\fc$ is chosen greater than $2e\max\{\theta^{1/2},\theta^{-1/2}\}$.
\end{theorem}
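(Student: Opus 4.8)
The plan is to deduce Theorem~\ref{t:PJM} directly from the edge universality Theorem~\ref{t:edgeuniversality} applied to the discrete $\beta$-ensemble $\bP_N$ of \eqref{def:PNMeixner}, together with the Poissonization identity \eqref{e:tP=P} and the concentration estimate \eqref{e:rowConcentrate}; the hypotheses of Theorem~\ref{t:edgeuniversality} for this ensemble have already been checked above. First, since $Z_M=1+\OO(e^{-cM^{1/2}})$ by \eqref{e:rowConcentrate}, \eqref{e:tP=P} lets me replace $\bP^{(M)}$ by $\tilde\bP^{(M)}$ in \eqref{e:PJM} up to an exponentially small error. Through the identification $\ell_i=\la_{N-i+1}+(i-1)\theta$ and Lemma~\ref{l:VWVW}, $\tilde\bP^{(M)}$ is exactly $\bP_N$ of \eqref{def:PNMeixner}; conditioning further on the event $\{\ell_N\le(1+\theta)N\}$ (again exponentially likely by \eqref{e:rowConcentrate}) confines the particles to a bounded interval, so that, exactly as in Section~\ref{s:cp}, the approximate Nekrasov equation holds and Theorems~\ref{t:rigidity} and \ref{t:edgeuniversality} apply with the explicit equilibrium data \eqref{e:eqmJM}: $\hat a=0$, $[0,A]$ saturated, $[A,B]$ the unique band and $[B,\hat b)=[B,\infty)$ a void region. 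The first rows $\la_1\ge\la_2\ge\cdots$ correspond to the largest particles $\ell_N\ge\ell_{N-1}\ge\cdots$, which sit at the void edge $B$, so the relevant form of Theorem~\ref{t:edgeuniversality} is the ``$[B,\hat b]$ a void region'' version.

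Applying that version, for any fixed finite index set one obtains
\[
\bE_{\bP_N}\!\Bigl[\tilde O\bigl((N^{2/3}k^{1/3}s_B^{2/3}(\gamma_{N-k+1}-\ell_{N-k+1}/N))_k\bigr)\Bigr]
=\bE_{G\beta E}\!\Bigl[\tilde O\bigl((N^{2/3}k^{1/3}(\tilde\gamma_{N-k+1}-x_{N-k+1}/N))_k\bigr)\Bigr]+\OO(N^{-\chi})
\]
for every continuously differentiable compactly supported test function $\tilde O$, where $\gamma_j,\tilde\gamma_j$ are the classical locations of $\rho_V$ and of the semicircle law and $x_j$ the G$\beta$E particles. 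It remains to rewrite each side in the coordinates of \eqref{e:PJM}. From \eqref{e:eqmJM} one reads off $A=\theta-2\sqrt\theta/\fc$, $B=\theta+2\sqrt\theta/\fc$ and the square-root edge constant $s_B=\fc^{1/2}\theta^{-5/4}$, hence with $N=\fc\sqrt M$ one has $N^{2/3}s_B^{2/3}=\fc\,\theta^{-5/6}M^{1/3}$. Using $\la_k=\ell_{N-k+1}-(N-k)\theta$ and $\fc(B-\theta)=2\sqrt\theta$ gives, for fixed $k$,
\[
\theta^{-5/6}M^{1/3}\bigl(\tfrac{\la_k}{\sqrt M}-2\sqrt\theta\bigr)
=N^{2/3}s_B^{2/3}\bigl(\tfrac{\ell_{N-k+1}}{N}-\gamma_{N-k+1}\bigr)+N^{2/3}s_B^{2/3}\bigl(\gamma_{N-k+1}-B\bigr)+\OO(M^{-1/6}),
\]
and the square-root asymptotics of $\rho_V$ near $B$ yield the deterministic evaluation $N^{2/3}s_B^{2/3}(\gamma_{N-k+1}-B)=-(3\pi(k-\tfrac{1}{2})/2)^{2/3}+\OO(N^{-2/3})$. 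Thus $\theta^{-5/6}M^{1/3}(\la_k/\sqrt M-2\sqrt\theta)$ equals an explicit $k$-dependent affine function of $N^{2/3}k^{1/3}s_B^{2/3}(\gamma_{N-k+1}-\ell_{N-k+1}/N)$, up to a deterministic $\OO(M^{-1/6})$.

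The identical computation for the semicircle law, whose edge constant is $1$ and for which $N^{2/3}(\tilde\gamma_{N-k+1}-2)=-(3\pi(k-\tfrac{1}{2})/2)^{2/3}+\OO(N^{-2/3})$, shows that $N^{2/3}(x_k/N-2)$ is the \emph{same} affine function of $N^{2/3}k^{1/3}(\tilde\gamma_{N-k+1}-x_{N-k+1}/N)$, up to a deterministic $\OO(N^{-2/3})$ --- the $k$-dependent shift $(3\pi(k-\tfrac{1}{2})/2)^{2/3}$ being common to both sides. Let $\tilde O$ be the precomposition of $O$ with this affine map; it is again $C^1$ and compactly supported (the map is invertible). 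Since $O$ has bounded derivative, the two ``up to $\OO(M^{-1/6})$'' replacements above are pointwise bounds in the particle configuration, so applying the displayed comparison with this $\tilde O$ and reassembling gives
\[
\bE_{\bP^{(M)}}\!\left[O(\cdots)\right]-\bE_{G\beta E}\!\left[O\bigl((N^{2/3}(x_k/N-2))_k\bigr)\right]=\OO\bigl(e^{-cM^{1/2}}+M^{-1/6}+N^{-\chi}\bigr)=\OO(M^{-\chi'}),
\]
with $\chi'=\min\{1/6,\chi/2\}$, which is \eqref{e:PJM}.

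I expect the only genuine work here to be bookkeeping rather than new ideas. The first delicate point is making the bounded-interval reduction of Section~\ref{s:cp} go through cleanly at the void edge $B$ while the equilibrium data degenerate at the opposite, saturated end $\hat a=0$ --- the situation flagged in the remark accompanying \eqref{e:eqmJM}, where $H(z)$ vanishes at $z=0$ but is cancelled by the simultaneous vanishing of $R_\mu$, $\psi_N^-$, $\phi^-$ and $e^{\theta G_\mu}$, so that the proof of Theorem~\ref{t:bootstrap} (and hence Theorems~\ref{t:rigidity} and \ref{t:edgeuniversality}) still applies. The second is tracking the square-root classical-location asymptotics near $B$ precisely enough to see that the $k$-dependent affine reparametrisations on the discrete and continuous sides agree exactly, and that the residual errors are genuine powers of $M$. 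Neither step is deep, but both require care with the error terms.
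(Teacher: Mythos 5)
Your proposal is correct and follows the same route as the paper: condition to a bounded interval via \eqref{e:rowConcentrate} and \eqref{e:tP=P}, apply Theorem~\ref{t:edgeuniversality} at the right (void) edge $B$ with the explicit edge constant $s_B=\fc^{1/2}\theta^{-5/4}$ from \eqref{e:eqmJM}, and convert $\ell_{N-k+1}$-coordinates to $\la_k$-coordinates using $\ell_{N-k+1}=\la_k+(N-k)\theta$ and $N=\fc\sqrt M$, absorbing constant-sized errors into the Lipschitz test function. Your version is slightly more explicit than the paper's about absorbing the $k^{1/3}$ factor and the deterministic shift $N^{2/3}s_B^{2/3}(B-\gamma_{N-k+1})\to(3\pi(k-\tfrac12)/2)^{2/3}$ into the test function and matching it against the analogous semicircle shift, a step the paper leaves implicit when it passes directly from $\gamma_{N-k+1}$ to $B$ and from $\tilde\gamma$ to $2$.
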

\begin{proof}
From \eqref{e:eqmJM}, the equilibrium measure of $\bP_N$ is 
\begin{align*}
\rho_V(x)=\frac{\fc^{1/2}}{\theta^{5/4}\pi}\left(1+\OO\left(B-x\right)\right)\sqrt{B-x},\quad x\rightarrow B-,
\end{align*}
where $B=\theta+2\sqrt{\theta}/\fc$.
Theorem \ref{t:edgeuniversality} implies that 
\begin{align*}
\bE_{\bP_N}\left[O\left(\fc^{1/3}\theta^{-5/6}N^{2/3}\left(\ell_{N-k+1}/N-B\right), k\in \Lambda\right)\right]-\bE_{G\beta E}\left[O\left(N^{2/3}\left(x_k/N-2\right), k\in \Lambda\right)\right]=\OO(M^{-\chi}).
\end{align*}
Since $N=\fc \sqrt{M}$ and $\ell_{N-k+1}=\la_{k}+(N-k+1)\theta$, and $O$ is continuous differentiable, we have
\begin{align*}
O\left(\fc^{1/3}\theta^{-5/6}N^{2/3}\left(\ell_{N-k+1}/N-B\right), k\in \Lambda\right)
=O\left(\theta^{-5/6}M^{1/3}\left(\la_k/\sqrt{M}-2\sqrt{\theta}\right), k\in \Lambda\right)+\OO(N^{-1/3}).
\end{align*}
Hence,
\begin{align}\label{e:trunckPJM}
\bE_{\tilde\bP^{(M)}}\left[O\left(\theta^{-5/6}M^{1/3}\left(\la_k/\sqrt{M}-2\sqrt{\theta}\right), k\in \Lambda\right)\right]-\bE_{G\beta E}\left[O\left(N^{2/3}\left(x_k/N-2\right), k\in \Lambda\right)\right]=\OO(M^{-\chi}).
\end{align}
\eqref{e:PJM} follows from combining \eqref{e:trunckPJM} and \eqref{e:tP=P}.
\end{proof}

In the following we prove Theorem \ref{t:Jackmeasure} using the de-Poissonization argument as in \cite{MR1682248, MR1758751,MR1826414}.
\begin{proof}[Proof of Theorem \ref{t:Jackmeasure}]
We denote $F_n$ the distribution function of $\la_1,\la_2,\cdots, \la_m$ under the Jack deformation of Plancherel measure $\bM^{(n)}$,
\begin{align*}
F_n(x_1,x_2,\cdots,x_m)=\bM^{(n)}(\la_k\leq x_k, 1\leq i\leq m).
\end{align*}
We also denote the distribution function of the Poissonized Jack deformation of the Plancherel measure $\bP^{(M)}$,
\begin{align*}
F(M; x_1,x_2,\cdots,x_m)=e^{-M}\sum_{n=0}^{\infty}\frac{M^n}{n!}F_n(x_1,x_2,\cdots,x_m).
\end{align*}
The measures $\bM^{(n)}$ can be obtained as distribution at time $n$ of a
certain random growth process of  Young diagrams  \cite[Section 1]{MR2143199}, i.e. there exist $q_{\theta}(\mu,\la)\geq 0$, such that
\begin{align*}
\bM^{(n-1)}(\mu)=\sum_{\la:\la\searrow\mu}q_{\theta}(\mu,\la)\bM^{(n)}(\la),
\end{align*}
where $\la\searrow\mu$ means that $\mu$ can be obtained from $\la$ by removing one box. This implies that 
\begin{align*}
F_{n}(x_1,x_2,\cdots,x_m)\leq F_{n-1}(x_1,x_2,\cdots,x_m).
\end{align*}
Moreover, by construction, $F_{n}(x_1,x_2,\cdots,x_m)$ and $F(M;x_1,x_2,\cdots,x_m)$ are monotone in $x_k$ for any $1\leq k\leq m$. Hence, let $M_{\pm}=M\pm4\sqrt{M\ln M}$,  \cite[Lemma 3.8]{MR1826414} (see also  \cite[Lemma 4.7]{BOO}) implies that 
\begin{align*}
F(M_+; x_1,x_2,\cdots,x_m)-\frac{C}{M^2}\leq F_M(x_1,x_2,\cdot,x_m)\leq F(M_-;x_1,x_2,\cdots,x_m)+\frac{C}{M^2}.
\end{align*}
Especially, for any $(t_1,t_2,\cdots, t_m)\in \bR^m$, by taking $x_k=2\sqrt{\theta M}+\theta^{5/6}M^{1/6}t_k$, we have
\begin{align}\begin{split}\label{e:sandwich1}
&\phantom{{}\leq{}}F(M_+; 2\sqrt{\theta M}+\theta^{5/6}M^{1/6}t_k, 1\leq k\leq m)-\frac{C}{M^2}
\leq F_M(2\sqrt{\theta M}+\theta^{5/6}M^{1/6}t_k, 1\leq k\leq m)\\
&\leq F(M_-;2\sqrt{\theta M}+\theta^{5/6}M^{1/6}t_k, 1\leq k\leq m)+\frac{C}{M^2}.
\end{split}\end{align}
For any $1\leq k\leq m$, $2\sqrt{\theta M_{\pm}}+\theta^{5/6}M_{\pm}^{1/6}t_k=2\sqrt{\theta M}+\theta^{5/6}M^{1/6}t_k+\OO(\sqrt{\ln M})$, so for any $\varepsilon>0$ and $M$ sufficiently large, 
\begin{align}\label{e:monotone}
2\sqrt{\theta M_+}+\theta^{5/6}M_+^{1/6}(t_k-\varepsilon)\leq 2\sqrt{\theta M}+\theta^{5/6}M^{1/6}t_k\leq 2\sqrt{\theta M_-}+\theta^{5/6}M_-^{1/6}(t_k+\varepsilon).
\end{align} 
Thanks to the monotonicity property of $F(M_{\pm}, x_1,x_2,\cdot, x_m)$, \eqref{e:monotone} implies that $F(M_+; 2\sqrt{\theta M_+}+\theta^{5/6}M_+^{1/6}(t_k-\varepsilon), 1\leq k\leq m)\leq F(M_+; 2\sqrt{\theta M}+\theta^{5/6}M^{1/6}t_k, 1\leq k\leq m)$
and $F(M_-;2\sqrt{\theta M}+\theta^{5/6}M^{1/6}t_k, 1\leq k\leq m)\leq F(M_-;2\sqrt{\theta M_-}+\theta^{5/6}M_-^{1/6}(t_k+\varepsilon), 1\leq k\leq m)$. Hence,
\eqref{e:sandwich1} leads to
\begin{align}\begin{split}\label{sandwich2}
&\phantom{{}\leq{}}F(M_+; 2\sqrt{\theta M_+}+\theta^{5/6}M_+^{1/6}(t_k-\varepsilon), 1\leq k\leq m)-\frac{C}{M^2}
\leq F_M(2\sqrt{\theta M}+\theta^{5/6}M^{1/6}t_k, 1\leq k\leq m)\\
&\leq F(M_-;2\sqrt{\theta M_-}+\theta^{5/6}M_-^{1/6}(t_k+\varepsilon), 1\leq k\leq m)+\frac{C}{M^2}.
\end{split}\end{align}
From this and Theorem \ref{t:PJM}, we conclude that
\begin{align*}
F_{\theta}(t_k-\varepsilon, 1\leq k\leq m)-o(1)
\leq F_M(2\sqrt{\theta M}+\theta^{5/6}M^{1/6}t_k, 1\leq k\leq m)\leq 
F_{\theta}(t_k+\varepsilon, 1\leq k\leq m)+o(1)
\end{align*}
as $M\rightarrow \infty$, where $F_\theta$ is as defined in \eqref{e:defFtheta}. Since $\varepsilon>0$ is arbitrary and $F_\theta$ is continuous, this finishes the proof of Theorem \ref{t:Jackmeasure}.
\end{proof}

\appendix

\section{Hilbert Transform on a finite interval $[a,b]$.}
In this section, we collects some useful facts of the Hilbert transform on a finite interval $[a,b]$. A comprehensive study of this topic can be found in \cite[Chapter 4.3]{MR809184}.

It follows by directly computation:
\begin{align*}
P.V.\int_a^b \frac{1}{\sqrt{(y-a)(b-y)}}\frac{\rd y}{x-y}=
\left\{
\begin{array}{cc}
0 & x\in [a,b],\\
\frac{\pi}{\sqrt{(x-a)(x-b)}} & x\notin [a,b].
\end{array}
\right.
\end{align*}
and
\begin{align*}
\int_a^b \frac{\sqrt{(y-a)(b-y)}\rd y}{x-y}=
\left\{
\begin{array}{cc}
\pi \left(x-\frac{a+b}{2}\right)& x\in [a,b],\\
\pi \left(x-\frac{a+b}{2}\right)-\pi\sqrt{(x-a)(x-b)} & x\notin [a,b],
\end{array}
\right.
\end{align*}
where the branch of $\sqrt{(x-a)(x-b)}$ is chosen such that $\sqrt{(x-a)(x-b)}\sim x$, as $x\rightarrow \infty$.

Similar to Hilbert transform, there is an inverse Hilbert transform formula for Hilbert transform on finite intervals: if $g$ is the Hilbert transform of $f$,
\begin{align*}
P.V.\int_a^b \frac{f(y)}{x-y} \rd y =g(x),\quad x\in [a,b],
\end{align*}
then $f$ is given by the inverse Hilbert transform,
\begin{align}\label{e:invHil}
f(x)=\frac{1}{\pi^2\sqrt{(x-a)(b-x)}} \int_a^b \frac{\sqrt{(y-a)(b-y)}}{y-x}g(y)\rd y +\frac{C}{\pi\sqrt{(x-a)(b-x)}}, \quad x\in (a,b),
\end{align}
where 
\begin{align*}
C=\int_{a}^b f(x)\rd x.
\end{align*}

\section{Rigidity and Proof of Proposition \ref{prop:rig} and \ref{prop:rig2}}\label{ap:proofrig}
In this section we prove Proposition \ref{prop:rig} and \ref{prop:rig2}. The proofs follow a similar argument to \cite[Lemma B.1]{MR2639734}, which expresses an integral in terms of the Stieltjes transform using the Helffer-Sj{\"o}strand functional calculus.

\begin{proof}[Proof of Proposition \ref{prop:rig}]
Let $S(x+\ri y)= G_N(x+\ri y)-G_\mu(x+\ri y)$. We take a cutoff function $\chi: \bR\rightarrow \bR$, such that $\chi(y)=1$ on $[-\eta, \eta]$, $\chi(y)$ vanishes outside $[-2\eta, 2\eta]$, and $\chi'(y)=\OO(1/\eta)$.
By the Helffer-Sj{\"o}strand formula we have,
\begin{align}\begin{split}\label{e:eigHF}
\left|\int_{-\infty}^{\infty} f(x)( \rd  \mu_N(x)-\rd\mu(x))\right|
&\leq
C\int_{\bC_+} ( |f(x)|+|y||f'(x)|)|\chi'(y)||S(x+\ri y)|\rd x\rd y\\
&+C\left|\int_{\bC_+} y\chi(y)f''(x)\Im[S(x+\ri y)]\rd x\rd y\right|.
\end{split}\end{align} 
The first term in \eqref{e:eigHF} is nonzero, only if $x\in I$ and $y\in [\eta, 2\eta]$, which implies $x+\ri y\in \cD_{M,r_0}$. Thanks to the rigidity estimate on $\cD_{M,r_0}$, we have $|S(x+\ri y)|\leq \tilde \eta/y$. The first term is easily bounded by
\beq\label{e:fterm}
\int_{\bC_+} ( |f(x)|+|y||f'(x)|)|\chi'(y)||S(x+\ri y)|\rd x\rd y
\lesssim \tilde \eta.
\eeq 
For the second term in \eqref{e:eigHF}, we define $\eta_0\deq\tilde \eta/\sqrt{\kappa_I}$ in the bulk case; or $\eta_0\deq\tilde \eta^{2/3}$ in the edge case. From our assumptions, we have $\eta\geq \eta_0$. We split the second term into two integrals $\{x+\ri y\in \bC_+: x\in I, y\geq \eta_0\}\cup \{x+\ri y\in \bC_+: x\in I, y\leq \eta_0\}$. 
For the integral over the first region we integrate by part, 
\begin{align}\begin{split}\label{exp3}
&\int_{x\in I}\int_{y\geq \eta_0} y\chi(y)f''(x)\Im[S(x+\ri y)]\rd x\rd y 
=
-\int_{x\in I} f'(x)\eta_0 \Re[S(x+\eta_0)]\rd x\\
-&\int_{x\in I}\int_{y\geq \eta_0} f'(x)\del_y(y\chi(y))\Re[S(x+\ri y)]\rd x\rd y.
\end{split}\end{align}
By the definition of $\eta_0$, the rigidity estimates hold for $y\geq \eta_0$. Hence, both terms are easily estimated by $C  \tilde \eta\ln(N) $.

On the region $\{x+\ri y\in \bC_+: x\in I,y\leq \eta_0\}$, let $\kappa_x\deq \dist(x,\{A,B\})$. In the bulk case, by our assumption, we have $\kappa_x\asymp \kappa_I$, and
\begin{align}\label{e:Gbehaviorbulk1}
|\Im[G_{\mu}(x+\ri y)]|\asymp \sqrt{\kappa_x+y}\lesssim \sqrt{\kappa_I+\eta_0}\lesssim \sqrt{\kappa_I}.
\end{align} 
In the edge case, if $x\in [A,B]$, we have
\begin{align}\label{e:Gbehaviorbulk2}
|\Im[G_{\mu}(x+\ri y)]|\asymp \sqrt{\kappa_x+y}\lesssim  \sqrt{\eta+\eta_0}\lesssim\eta^{1/2}+\tilde \eta^{1/3}.
\end{align} 
If $x\not\in [A,B]$, we have
\begin{align}\label{e:Gbehaviorbulk3}
|\Im[G_{\mu}(x+\ri y)]|\asymp y/\sqrt{\kappa_x+y}\lesssim y^{1/2}\lesssim \tilde \eta^{1/3}.
\end{align} 
It follows for the bulk case, 
\begin{align}\begin{split}\label{e:controlS01}
| \Im [ S ( x + \ri y ) ] |
\lesssim& |\Im[G_N(x+\ri y)]|+|\Im[G_\mu(x+\ri y)]|\\
\lesssim& \frac{\eta_0}{y}|\Im[G_N(x+\ri\eta_0)]|+|\Im[G_\mu(x+\ri y)]|\\
\lesssim &\frac{\eta_0}{y}|\Im[S(x+\ri\eta_0)]|+\frac{\eta_0}{y}|\Im[G_\mu(x+\ri \eta_0)]|+|\Im[G_\mu(x+\ri y)]|,
\end{split}\end{align}
where we used that $y|\Im[G_N(x+\ri y)]|$ is monotonic increasing as functions of $y$.
It follows by combining \eqref{e:Gbehaviorbulk1} and \eqref{e:controlS01} 
\begin{align*}
| \Im [ S ( x + \ri y ) ] |\lesssim \frac{\tilde \eta}{y}+\frac{\eta_0\sqrt{\kappa_I}}{y}+\sqrt{\kappa_I}\lesssim \frac{\tilde \eta}{y}.
\end{align*}
Similarly for the edge case, It follows by combining \eqref{e:Gbehaviorbulk2}, \eqref{e:Gbehaviorbulk3} and \eqref{e:controlS01}
\begin{align*}\begin{split}
| \Im [ S ( x + \ri y ) ] |
\lesssim \frac{\tilde \eta}{y}+\frac{\eta_0\eta^{1/2}}{y}.
\end{split}\end{align*}
Hence we have
\begin{align*}
\left|\int_{x\in I}\int_{y\leq \eta_0} y\chi(y)f''(x)\Im[S(x+\ri y)]\rd x\rd y\right| &\lesssim 
\left\{\begin{array}{cc}
\frac{\eta_0\tilde \eta}{\eta}\lesssim \tilde \eta, & \text{bulk case},\\
\frac{\eta_0\td \eta}{\eta}+\frac{\eta_0^2}{\eta^{1/2}}\lesssim \td \eta, & \text{edge case}.
\end{array}
\right.
\end{align*}
This finishes the proof of \eqref{e:rig}.

In the following, we prove that uniformly for any $E\in [a(N)/N, B-r_0-r/4]$: 
\begin{align}\label{e:location2}
|\#\{i: \ell_i/N\leq E\}-\#\{i: \gamma_i/N\leq E\}|\lesssim N\tilde \eta\ln N,
\end{align}
and \eqref{e:location} follows. For any $E\in [a(N)/N, B-r_0-r/4]$ with $\kappa_E=\dist(E, \{A,B\})$. We take two test functions $f_1(x)$ and $f_2(x)$ such that
\begin{enumerate}
\item if $E\leq A-\tilde \eta^{2/3}$, we take $f_1(x)=1$ on $x\leq E-\tilde\eta^{2/3}$ and vanishes on $x\geq E$; $f_2(x)=1$ on $x\leq E$ and vanishes  on $x\geq E+\tilde \eta^{2/3}$.
\item if $A-\tilde \eta^{2/3}\leq E\leq A+2\tilde \eta^{2/3}$, we take $f_1(x)=1$ on $x\leq A-2\tilde \eta^{2/3}$ and vanishes on $x\geq A-\tilde \eta^{2/3}$; $f_2(x)=1$ on $x\leq A+2\tilde\eta^{2/3}$ and vanishes on $x\geq A+4\tilde \eta^{2/3}$.
\item if $A+2\tilde \eta^{2/3}\leq E\leq B-r_0-r/4$, we take $f_1(x)=1$ on $x\leq E-\tilde\eta/\sqrt{\kappa_E/2}$ and vanishes on $x\geq E$; $f_2(x)=1$ on $x\leq E$ and vanishes  on $x\geq E+\tilde \eta/\sqrt{\kappa_E/2}$.
\end{enumerate} 
Let $I$ be the interval such that $f'(x)\neq 0$, we further assume that $f'(x)=\OO(1/|I|)$ and $f''(x)=\OO(1/|I|^2)$. One can check that $I$ satisfies the conditions in Proposition \ref{prop:rig}. The same argument as for \eqref{e:rig} gives that 
\begin{align}\label{e:location3}
\left|\int f_i(x)\left(\rd \mu_N(x)-\rd \mu(x)\right)\right|\lesssim
\td \eta \ln N,\quad i=1,2.
\end{align}
By our choices of the functions $f_1$ and $f_2$, they obey $f_1\leq 1_{a(N)/N\leq x\leq E}\leq f_2$. Therefore by \eqref{e:location3},
\begin{align*}\begin{split}
\frac{\#\{i: \ell_i/N\leq E\}}{N}\leq \int f_2(x)\rd\mu_N(x)
=\int_{x\leq E} f_2(x)\rd\mu(x)+\int_{x\geq E} f_2(x)\rd\mu(x)+\OO(\tilde \eta \ln N)\\
=\frac{\#\{i: \gamma_i/N\leq E\}}{N}+\int_{x\geq E} f_2(x)\rd\mu(x)+\OO(\tilde \eta \ln N).
\end{split}\end{align*}
For $x\in [A,B-r]$, $\rho_V(x)\asymp \sqrt{\dist(x, \{A,B\})}$. We have $\int_{x\geq E} f_2(x)\rd\mu(x)=\OO(\tilde \eta)$. Hence,  $\#\{i: \ell_i/N\leq E\}\leq \#\{i: \gamma_i\leq E\}+\OO(N\tilde \eta \ln N)$. The lower bound follows the same argument. 
\end{proof}

\begin{proof}[Proof of Proposition \ref{prop:rig2}]
Let $S(x+\ri y)= G_N(x+\ri y)-G_\mu(x+\ri y)$. Without loss of generality, we assume $\cal M$ contains a distance $\tau$ neighborhood of $[\hat a,\hat b]$, where $\tau\gtrsim 1$. We take a cutoff function $\chi: \bR\rightarrow \bR$, such that $\chi(y)=1$ on $[-\tau/2, \tau/2]$, $\chi(y)$ vanishes outside $[-\tau, \tau]$, and $\chi'(y)=\OO(1)$.
By the Helffer-Sj{\"o}strand formula we have,
\begin{align}\begin{split}\label{e:eigHF2}
\left|\int_{-\infty}^{\infty} f(x)( \rd  \mu_N(x)-\rd\mu(x))\right|
&\leq
C\int_{\bC_+} ( |f(x)|+|y||f'(x)|)|\chi'(y)||S(x+\ri y)|\rd x\rd y\\
&+C\left|\int_{\bC_+} y\chi(y)f''(x)\Im[S(x+\ri y)]\rd x\rd y\right|.
\end{split}\end{align} 
The first term in \eqref{e:eigHF2} is nonzero, only if $y\in [\tau/2, \tau]$, which implies $x+\ri y\in \cD_*$. Thanks to the rigidity estimate on $\cD_*$, we have $|S(x+\ri y)|\leq \tilde \eta/y$. The first term is easily bounded by
\beq\label{e:fterm2}
\int_{\bC_+} ( |f(x)|+|y||f'(x)|)|\chi'(y)||S(x+\ri y)|\rd x\rd y
\lesssim \tilde \eta.
\eeq 
For the second term in \eqref{e:eigHF2}, we denote $\kappa_x=\dist(x,\{A,B\})$, and we split the integral domain of the second term into three domains: $\cD'\cup \cD''\cup \cD'''$, where
\begin{align*}\begin{split}
\cD'&=\cD_*,\\
 \cD''&=\{x+\ri y\in \bC_+: x\in [A,B], y\leq N^{-(1-\fa)/2}/\sqrt{\kappa_x+y}\},\\
 \cD'''&=\{x+\ri y\in \bC_+: x\not\in[A,B], y\leq N^{-(1-\fa)/3}\}.
\end{split}\end{align*}
On $\cD'$, we have $|S(x+\ri y)|\leq \tilde\eta/y$, and
\begin{align*}
\left|\int_{\cD'} y\chi(y)f''(x)\Im[S(x+\ri y)]\rd x\rd y\right|\leq C\int_{\cD'}\tilde\eta\rd x\rd y\lesssim \tilde\eta.
\end{align*}
On $\cD''$, we have $x\in [A,B]$, and 
\begin{align}\label{e:controlS0}
\min\{|\Im[G_{\mu}(x+\ri y)]|, |\Im[G^{\dual}_\mu(x+\ri y)]|\}\asymp \sqrt{\kappa_x+y}.
\end{align}
We denote $\eta_x=N^{-(1-\fa)/2}/\sqrt{\kappa_x+\eta_x}$, it follows, 
\begin{align}\begin{split}\label{e:controlS1}
| \Im [ S ( x + \ri y ) ] |
\lesssim& |\Im[G_N(x+\ri y)]|+|\Im[G_\mu(x+\ri y)]|\\
\lesssim& \frac{\eta_x}{y}|\Im[G_N(x+\ri\eta_x)]|+|\Im[G_\mu(x+\ri y)]|\\
\lesssim &\frac{\eta_x}{y}|\Im[S(x+\ri\eta_x)]|+\frac{\eta_x}{y}|\Im[G_\mu(x+\ri \eta_x)]|+|\Im[G_\mu(x+\ri y)]|,
\end{split}\end{align}
where we used that $y|\Im[G_N(x+\ri y)]|$ is monotonic increasing as a function of $y$.
Thanks to the relation \eqref{e:Gdual}, there is another way to estimate $| \Im [ S ( x + \ri y ) ] |$,
\begin{align}\begin{split}\label{e:controlS2}
| \Im [ S ( x + \ri y ) ] |
\lesssim& |\Im[G^{\dual}_N(x+\ri y)]|+|\Im[G^{\dual}_\mu(x+\ri y)]|+\frac{1}{Ny}\\
\lesssim& \frac{\eta_x}{y}|\Im[G^{\dual}_N(x+\ri\eta_x)]|+|\Im[G^{\dual}_\mu(x+\ri y)]|+\frac{1}{Ny}\\
\lesssim &\frac{\eta_x}{y}|\Im[S(x+\ri\eta_x)]|+\frac{\eta_x}{y}|\Im[G^{\dual}_\mu(x+\ri \eta_x)]|+|\Im[G^{\dual}_\mu(x+\ri y)]|+\frac{1}{Ny}.
\end{split}\end{align}
Therefore, it follows from combining \eqref{e:controlS0}, \eqref{e:controlS1} and \eqref{e:controlS2}, 
\begin{align*}
| \Im [ S ( x + \ri y ) ] |\lesssim \frac{\tilde\eta}{y}
+\frac{\eta_x\sqrt{\kappa_x+y}}{y}+\frac{1}{Ny}
\lesssim\frac{\tilde\eta}{y}
+\frac{N^{-(1-\fa)/2}}{y}+\frac{1}{Ny},
\end{align*}
where we used that $|\Im[S(x+\ri\eta_x)]|\lesssim \tilde\eta/\eta_x$. The integral over $\cD''$ is bounded
\begin{align*}
\left|\int_{\cD''} y\chi(y)f''(x)\Im[S(x+\ri y)]\rd x\rd y\right|\leq C\left(\int_{\cD''}\tilde\eta+N^{-(1-\fa)/2}\rd x\rd y\right)\lesssim \tilde\eta+N^{-(1-\fa)}\lesssim \tilde\eta.
\end{align*}
On $\cD'''$, we denote $\eta_x=N^{-(1-\fa)/3}$. For $x\not\in [A,B]$, we have
\begin{align}\label{e:controlS4}
\min\{|\Im[G_{\mu}(x+\ri y)]|, |\Im[G^{\dual}_\mu(x+\ri y)]|\}\asymp \frac{y}{\sqrt{\kappa_x+y}}.
\end{align}
The same argument as above, for $x+\ri y\in \cD'''$, we have
\begin{align*}
| \Im [ S ( x + \ri y ) ] |\lesssim \frac{\tilde\eta}{y}
+\frac{\eta_x}{\sqrt{\kappa_x+y}}+\frac{1}{Ny}
\lesssim\frac{\tilde\eta}{y}
+\frac{N^{-(1-\fa)/3}}{\sqrt{\kappa_x+y}}+\frac{1}{Ny},
\end{align*}
and 
\begin{align*}
\left|\int_{\cD'''} y\chi(y)f''(x)\Im[S(x+\ri y)]\rd x\rd y\right|\leq C\left(\int_{\cD'''}\tilde\eta+\frac{yN^{-(1-\fa)/3}}{\sqrt{\kappa_x+y}}\rd x\rd y\right)\lesssim \tilde\eta+N^{-(1-\fa)}\lesssim \tilde\eta.
\end{align*}
This finishes the proof of \eqref{e:rigO1}.

\end{proof}

\section{Proof of Propositions \ref{prop:abestimate}, \ref{prop:bulkeq}, \ref{prop:alphabetaloc} and \ref{prop:edgeeq}}
\begin{proof}[Rigidity and Proof of Proposition \ref{prop:abestimate}]
Let $J\deq [a,\alpha]$. We prove the bulk case:  If $\rho_W(x)\equiv 0$ on $[a,\alpha]$, then $\alpha-a=\OO(\ln N M/N\sqrt{\kappa_I})$. If $\rho_W(x)\equiv N/L\theta$ on $[a,\alpha]$, then $\alpha-a=\OO(\ln N M/N)$. The rest statements follow from the same argument.

If $\rho_W(x)\equiv 0$ on $J$, and $\alpha-a\gtrsim \ln N M/N\sqrt{\kappa_I}$, we can take a bump function $\chi(x)$ supported on $J$, with $\chi'(x)\asymp 1/|J|$ and $\chi''(x)\asymp 1/|J|^2$. It follows from Proposition \ref{prop:ldb}, with high probability with respect to $\bP_L$
\begin{align}\label{e:rig1}
\left|\int \chi(x)(\rd \mu_L(x)-\rho_W(x)\rd x)\right|=\int \chi(x) \rd\mu_L(x)\leq C\sqrt{\frac{\ln N}{L}}.
\end{align}
Moreover, it follows from our definition $\cG_{M, L, K}$ of the set of ``good" boundary conditions, and proposition \ref{prop:rig}, with high probability with respect to $\bP_L$,
\begin{align}\label{e:rig2}
\left|\int \chi(x)(\rd \mu_N(x)-\rho_V(x)\rd x)\right|\lesssim \frac{\ln N M}{N}.
\end{align}
Since $\chi(x)$ is supported on $J\subset I$, 
\begin{align*}
\int \chi(x)\rd\mu_N(x)=\int_I \chi(x)\rd\mu_N(x)=\frac{L}{N}\int \chi(x)\rd\mu_L(x).
\end{align*}
By comparing\eqref{e:rig1} and \eqref{e:rig2}, it follows
\begin{align}\label{e:rig3}
\int \chi(x) \rho_V(x)\rd x\leq \frac{C\ln N M}{N},
\end{align}
provided that $M^2\gg L$. On $[a,\alpha]$, $\rho_V(x)\asymp \sqrt{\kappa_I}$.  We can take the bump function $\chi(x)$, such that $\int \chi(x) \rho_V(x)\rd x\asymp (\alpha-a)\sqrt{\kappa_I}$. It follows that $\alpha-a\lesssim \ln NM/N\sqrt{\kappa_I}$.

If $\rho_W(x)\equiv N/L\theta$ on $[a,\alpha]$, and $\alpha-a\gtrsim \ln NM/N$, we can take a bump function $\chi(x)$ supported on $J$, with $\chi'(x)\asymp 1/|J|$ and $\chi''(x)\asymp 1/|J|^2$. It follows from Proposition \ref{prop:ldb}, with high probability with respect to $\bP_L$
\begin{align}\label{e:rig4}
\left|\int \chi(x)(\rd \mu_L(x)-\rho_W(x)\rd x)\right|\leq C\sqrt{\frac{\ln N}{L}}.
\end{align}
Moreover, it follows from our definition $\cG_{M,L,K}$ of the set of ``good" boundary conditions, and proposition \ref{prop:rig}, with high probability with respect to $\bP_L$,
\begin{align}\label{e:rig5}
\left|\int \chi(x)(\rd \mu_N(x)-\rho_V(x)\rd x)\right|=\left|\int \chi(x)\left(\frac{L}{N}\rd \mu_L(x)-\rho_V(x)\rd x\right)\right|\lesssim \frac{\ln N M}{N}.
\end{align}
By taking difference of \eqref{e:rig4} and \eqref{e:rig5}, it follows
\begin{align}\label{e:rig6}
\frac{L}{N}\int \chi(x) \rho_W(x)\rd x-\int \chi(x)\rho_V(x)\rd x
=\int\chi(x)\left(\theta^{-1}-\rho_V(x)\right)\rd x\leq  \frac{C\ln N M}{N},
\end{align}
provided $M^2\gg L$. On $J$, there exists a constant $c>0$ such that $\theta^{-1}-\rho_V(x)>c$. We can take the bump function $\chi(x)$, such that $\int \chi(x) \left(\theta^{-1}-\rho_V(x)\right)\rd x\asymp (\alpha-a)$. It follows that $\alpha-a\asymp \ln NM/N$. This finishes the proof.
\end{proof}

\begin{proof}[Proof of Proposition \ref{prop:bulkeq}]

From Proposition \ref{prop:Iproperty}, $\kappa_I\gtrsim (L/N)^{2/3}$,  $|I|\asymp L/N\sqrt{\kappa_I}$ and $\rho_V(x)\asymp \sqrt{\kappa_I}$. We recall $\alpha, \beta$ from Proposition \ref{prop:equilibriumstructure} and the relevant estimates from Proposition \ref{prop:abestimate}.
We take derivative of both sides of \eqref{e:expW} at $x\in [\alpha, \beta]$,
\begin{align}\label{e:expW4}
W'(x)=\frac{2\theta N}{L}\left(P.V.\int_\alpha^\beta\frac{\rho_V(y)}{x-y}\rd y+\int_a^{\alpha}\frac{\rho_V(y)}{x-y}\rd y+\int_\beta^b\frac{\rho_V(y)}{x-y}\rd y+\int_{I^c}\frac{\rd\mu(y)-\rd \mu_N(y)}{x-y}\right),
\end{align}
where we have $F_V(x)=0$, since $x\in [\alpha, \beta]\subset [A,B]$. It follows by taking 
derivative of both sides of \eqref{e:expW2} at $x\in[\alpha, \beta]$
\begin{align}\label{e:expW5}
W'(x)=2\theta\left( P.V.\int_\alpha^\beta \frac{\rho_W(y)}{x-y}\rd y+ \int_{a}^\alpha\frac{\rho_W(y)}{x-y}\rd y+ \int_{\beta}^b\frac{\rho_W(y)}{x-y}\rd y\right),
\end{align}
where we have $F_W(x)=0$ on $[\alpha,\beta]$. By taking difference of \eqref{e:expW4} and \eqref{e:expW5}, we have
\begin{align*}\begin{split}
P.V.&\int_{\alpha}^\beta \frac{\rho_W(y)-\frac{N}{L}\rho_V(y)}{x-y}\rd y
=\frac{N}{L}\int_{I^c}\frac{\rd\mu(y)-\rd \mu_N(y)}{x-y}\\
-&\int_{a}^\alpha\frac{\rho_W(y)-\frac{N}{L}\rho_V(y)}{x-y}\rd y-\int_{\beta}^b\frac{\rho_W(y)-\frac{N}{L}\rho_V(y)}{x-y}\rd y.
\end{split}\end{align*}
Thanks to the inverse Hilbert transform \ref{e:invHil}, we get 
\begin{align}\begin{split}\label{e:defrW}
&\rho_W(x)-\frac{N}{L}\rho_V(x)=\frac{1}{\pi\sqrt{(x-\alpha)(\beta-x)}}\left(\frac{N}{L}\int_{I^c}h(y,x)\left(\rd\mu(y)-\rd\mu_N(y)\right)\right.-\\
-&\left.\int_{a}^\alpha h(y,x)\left(\rho_W(y)-\frac{N}{L}\rho_V(y)\right)\rd y-\int_{\beta}^b h(y,x)\left(\rho_W(y)-\frac{N}{L}\rho_V(y)\right)\rd y\right),
\end{split}\end{align}
for $x\in [\alpha, \beta]$, where 
\begin{align*}
h(y,x)=\frac{\sqrt{(y-\alpha)(y-\beta)}}{y-x}-1,
\end{align*}
and we choose the branch such that $\sqrt{(y-\alpha)(y-\beta)}\sim y$ as $y\rightarrow \infty$.

For the first term in \eqref{e:defrW}, it breaks into two terms,
\begin{align}\begin{split}\label{e:inth0}
\int_{b}^{\infty}h(y,x)\left(\rd\mu(y)-\rd\mu_N(y)\right)
+\int_{-\infty}^ah(y,x)\left(\rd\mu(y)-\rd\mu_N(y)\right).
\end{split}\end{align}
We estimate the first term in \eqref{e:inth0},
and the second part can be estimated similarly. 
We recall the construction from Remark \ref{r:decompint}. Notice that $\dist(b, \{A,B\})\asymp \kappa_I\gtrsim (L/N)^{2/3}$, we construct bump functions $\chi_1,\chi_2,\cdots, \chi_{n'}$ with scale $\tilde \eta=M/N$. $\chi_k(y)$ is supported on interval $I_k$, with $|I_k|\asymp 2^kM/N\sqrt{\kappa_I}$ and $\dist(I_k, \beta)\asymp 2^k M/N\sqrt{\kappa_I}$. 
\begin{align}\begin{split}\label{e:inth1}
\int_{y\geq b}h(y,x)\left(\rd\mu(y)-\rd\mu_N(y)\right)
&=\int_{b_0}^{b_1} (1-\chi_1(y))h(y,x)\left(\rd\mu(y)-\rd\mu_N(y)\right)\\
&+\sum_{k=1}^{n'}\int_{y\geq b}\chi_k(y)h(y,x)\left(\rd \mu(y)-\rd\mu_N(y)\right).
\end{split}\end{align}
Notice $1-\chi_1$ is supported on $[b_0,b_1]$ where $b_0=b$ and $b_1=b+\OO(M/N\sqrt{\kappa_I})$. For any $y\in [b_0,b_1]$, since $b-\beta\lesssim M/N\sqrt{\kappa_I}$, $y-\beta\lesssim M/N\sqrt{\kappa_I}$. Also, by our assumption $b-x\asymp |I|\asymp L/N\sqrt{\kappa_I}$, we have $y-x\asymp y-\alpha\asymp |I|\asymp L/N\sqrt{\kappa_I}$. It follows that $h(y,x)\lesssim (M/L)^{1/2}$. Moreover, the total mass of $\mu$ and $\mu_N$ on this interval is bounded by $\OO(\ln NM/N)$. Therefore,
\begin{align}\begin{split}\label{e:inth2}
\left|\int_{b_0}^{b_1} h(y,x)(1-\chi_1(y))\left(\rd\mu(y)-\rd\mu_N(y)\right)\right|
&\lesssim \frac{\ln N M}{N}\left(\frac{M}{L}\right)^{1/2}=\left(\frac{M}{L}\right)^{3/2}\frac{\ln N L}{N}.
\end{split}\end{align}
For $k\geq 1$, there are two cases, $|I_k|\leq |I|$ or $|I_k|\geq |I|$.
If $|I_k|\leq |I|$, since by our assumption $b-x\asymp |I|\asymp L/N\sqrt{\kappa_I}$, we have $y-x\asymp y-\alpha\asymp |I|$, $y-\beta\asymp |I_k|$, and $I_k\subset[b, B-r_0]$.
As a function of $y$, $h(y,x)$ and its derivatives satisfy
$$
|h(y,x)|\lesssim |I_k|^{1/2}|I|^{-1/2},\quad |\del_y h(y,x)|\lesssim|I_k|^{-1/2}|I|^{-1/2},\quad |\del_y^2 h(y,x)|\lesssim |I_k|^{-3/2}|I|^{-1/2}.
$$
Let $h_k(y,x)=(|I|/|I_k|)^{1/2}h(y,x)$,
then as a function of $y$, $h_k(y,x)=\OO(1)$, $\del_yh_k(y,x)=\OO(|I_k|^{-1})$ and $\del_y^{2}h_k(y,x)=\OO(|I_k|^{-2})$.
By Proposition \ref{prop:rig}, we have
\begin{align}\begin{split}
\sum_{k: |I_k|\leq |I|}\left|\int h(y,x)\chi_k(y)\left(\rd\mu(y)-\rd\mu_N(y)\right)\right|
&=\sum_{k: |I_k|\leq |I|}\left(\frac{|I_k|}{|I|}\right)^{1/2}\left|\int h_k(y,x)\left(\rd\mu(y)-\rd\mu_N(y)\right)\right|\\
&\lesssim \sum_{k: |I_k|\leq |I|}\left(\frac{|I_k|}{|I|}\right)^{1/2}\frac{\ln N M}{N}
\lesssim \frac{\ln N M}{N}.
\label{e:inth3}
\end{split}\end{align}
For $|I_k|\geq |I|$, we have $y-x\asymp y-\alpha\asymp y-\beta\asymp |I_k|$, and 
\begin{align}\begin{split}\label{e:inth3.5}
&\sum_{k: |I_k|\geq |I|}\int h(y,x)\chi_k(y)\left(\rd\mu(y)-\rd\mu_N(y)\right)\\
=&\sum_{k: |I_k|\geq |I|}\int (h(y,x)-1)\chi_k(y)\left(\rd\mu(y)-\rd\mu_N(y)\right)
+\sum_{k: |I_k|\geq |I|}\int \chi_k(y)\left(\rd\mu(y)-\rd\mu_N(y)\right)\\
=&\sum_{k: |I_k|\geq |I|}\int (h(y,x)-1)\chi_k(y)\left(\rd\mu(y)-\rd\mu_N(y)\right)
+\OO\left( \frac{\ln N M}{N}\right).
\end{split}\end{align}
As a function of $y$, $h(y,x)-1$ and its derivatives satisfy,
\begin{align*}
|h(y,x)-1|\lesssim |I|/|I_k|,\quad |\del_y (h(y,x)-1)|\lesssim|I|/|I_k|^2,\quad |\del_y^2 (h(y,x)-1)|\lesssim |I|/|I_k|^3.
\end{align*}
Let $h_k(y,x)=(|I_k|/|I|)(h(y,x)-1)$,
then as a function of $y$, $h_k(y,x)=\OO(1)$, $\del_yh_k(y,x)=\OO(|I_k|^{-1})$ and $\del_y^{2}h_k(y,x)=\OO(|I_k|^{-2})$. Moreover, either $I_k\subset [b, B-r_0]$, or $I_k$ is not completely contained in $[b,B-r_0]$, in which case $|I_k|\gtrsim 1$. By Proposition \ref{prop:rig} and \ref{prop:rig2}, we have
\begin{align}\begin{split}\label{e:inth4}
&\sum_{k: |I_k|\geq |I|}\left|\int (h(y,x)-1)\chi_k(y)\left(\rd\mu(y)-\rd\mu_N(y)\right)\right|
=\sum_{k: |I_k|\geq |I|}\frac{|I|}{|I_k|}\left|\int h_k(y,x)\left(\rd\mu(y)-\rd\mu_N(y)\right)\right|\\
\lesssim &\sum_{k: |I_k|\leq |I|}\frac{|I|}{|I_k|}\frac{\ln N M}{N}
\lesssim \frac{\ln N M}{N}.
\end{split}\end{align}

It follows by combining \eqref{e:inth1}, \eqref{e:inth2}, \eqref{e:inth3} \eqref{e:inth3.5} and \eqref{e:inth4}, 
\begin{align}\label{e:inth5}
\left|\int_{y\geq b}h(y,x)\left(\rd\mu(y)-\rd\mu_N(y)\right)\right|
\lesssim \frac{\ln N M}{N}.
\end{align}
Similarly, for the second term in \eqref{e:inth0}, 
\begin{align}\label{e:inth6}
\left|\int_{y\leq a}h(y,x)\left(\rd\mu(y)-\rd\mu_N(y)\right)\right|
\lesssim \frac{\ln N M}{N}.
\end{align}

For the second term in \eqref{e:defrW}, there are two cases. If $\rho_W(y)\equiv 0$ on $[a,\alpha]$, then $\rho_V(y)\asymp \sqrt{\kappa_I}$, $\alpha-a=\OO(\ln N M/N\sqrt{\kappa_I})$, $x-y\asymp \beta-y\asymp L/N\sqrt{\kappa_I}$, $h(y,x)\asymp (N\sqrt{\kappa_I}/L)^{1/2}\sqrt{y-\alpha}$ and 
\begin{align*}
\left|\int_{a}^\alpha h(y,x)\left(\rho_W(y)-\frac{N}{L}\rho_V(y)\right)\rd y\right|
\lesssim\left(\frac{N\sqrt{\kappa_I}}{L}\right)^{3/2}
\int_a^{\alpha} \sqrt{y-\alpha}\rd y\lesssim \left(\frac{\ln NM}{L}\right)^{3/2}.
\end{align*}
If $\rho_W(y)\equiv N/L\theta$ on $[a,\alpha]$, then $\alpha-a=\OO(\ln N M/N)$, $\rho_W(y)-N\rho_V(y)/L\asymp N/L$, $h(y,x)\asymp (N\sqrt{\kappa_I}/L)^{1/2}\sqrt{y-\alpha}$ and 
\begin{align*}
\left|\int_{a}^\alpha h(y,x)\left(\rho_W(y)-\frac{N}{L}\rho_V(y)\right)\rd y\right|
\lesssim\left(\frac{N}{L}\right)^{3/2}\kappa_I^{1/4}
\int_a^{\alpha} \sqrt{y-\alpha}\rd y\lesssim\left(\frac{\ln NM}{L}\right)^{3/2}\kappa_I^{1/4}.
\end{align*}
In both cases, the second term in \eqref{e:defrW} is bounded,
\begin{align}\label{e:bbetabounda}
\left|\int_{a}^\alpha h(y,x)\left(\rho_W(y)-\frac{N}{L}\rho_V(y)\right)\rd y\right|
\lesssim\left(\frac{\ln N M}{L}\right)^{3/2}.
\end{align}
Similarly, for the third term in \eqref{e:defrW}, we have
\begin{align}\label{e:bbetabound}
\left|\int_{\beta}^b h(y,x)\left(\rho_W(y)-\frac{N}{L}\rho_V(y)\right)\rd y\right|
\lesssim\left(\frac{\ln N M}{L}\right)^{3/2}.
\end{align}

By combining the estimates \eqref{e:defrW}, \eqref{e:inth5},\eqref{e:inth6}, \eqref{e:bbetabounda} and \eqref{e:bbetabound} all together, for $x\in[a,b]$ such that $b-x\asymp x-a\asymp |I|\asymp L/N\sqrt{\kappa_I}$, we have
\begin{align*}\begin{split}
\left|\rho_W(x)-\frac{N}{L}\rho_V(x)\right|\lesssim\frac{\pi}{\sqrt{(x-\alpha)(\beta-x)}}\left(\frac{\ln N M}{L}\right)\lesssim \left(\frac{\ln N M}{L}\right)\frac{N\sqrt{\kappa_I}}{L}.
\end{split}\end{align*}
This finishes the proof.
\end{proof}

\begin{proof}[Proof of Proposition \ref{prop:alphabetaloc}]

We prove $|a-\alpha|=\OO\left(\ln N (M/L)^{1/3}(M/N)^{2/3}\right)$ for the left edge case. The proof for the right edge case is exactly the same. 

We recall that for the left edge case, we have $I=[a,b]$, where $a=a(N)/N$ and $b=(\ell_{L+1}-\theta)/N$. We denote $\kappa_I=\dist(b, A)$, then $\kappa_I\asymp (L/N)^{2/3}$ and $\rho_V(x)\asymp \sqrt{[x-A]_+}$.
We take derivative of both sides of \eqref{e:expW} at $x\in [\alpha, \beta]$,
\begin{align}\label{e:expW7}
W'(x)=\frac{2\theta N}{L}\left(F_V'(x)+P.V.\int_\alpha^\beta\frac{\rho_V(y)}{x-y}\rd y+\int_A^{\alpha}\frac{\rho_V(y)}{x-y}\rd y+\int_\beta^b\frac{\rho_V(y)}{x-y}\rd y+\int_{I^c}\frac{\rd\mu(y)-\rd \mu_N(y)}{x-y}\right).
\end{align}
It follows by taking 
derivative of both sides of \eqref{e:expW2} at $x\in[\alpha, \beta]$
\begin{align}\label{e:expW8}
W'(x)=2\theta\left( P.V.\int_\alpha^\beta \frac{\rho_W(y)}{x-y}\rd y+ \int_{\beta}^b\frac{\rho_W(y)}{x-y}\rd y\right),
\end{align}
where we used that $F_W(x)=0$ on $[\alpha,\beta]$. 

By taking difference of \eqref{e:expW7} and \eqref{e:expW8}, we have
\begin{align*}\begin{split}
P.V.&\int_{\alpha}^\beta \frac{\rho_W(y)-\frac{N}{L}\rho_V(y)}{x-y}\rd y
=\frac{N}{L}\int_{I^c}\frac{\rd\mu(y)-\rd \mu_N(y)}{x-y}\\
-&\int_{\beta}^b\frac{\rho_W(y)-\frac{N}{L}\rho_V(y)}{x-y}\rd y+\frac{N}{L}\int_{A}^\alpha\frac{\rho_V(y)}{x-y}\rd y+\frac{N}{L}F_V'(x).
\end{split}\end{align*}
Thanks to the inverse Hilbert transform \ref{e:invHil}, we get 
\begin{align}\begin{split}\label{e:defrW2}
&\rho_W(x)-\frac{N}{L}\rho_V(x)=\frac{1}{\pi\sqrt{(x-\alpha)(\beta-x)}}\left(\frac{N}{L}\int_{b}^{\infty}h(y,x)\left(\rd\mu(y)-\rd\mu_N(y)\right)\right.\\
-&\left.\int_{\beta}^b h(y,x)\left(\rho_W(y)-\frac{N}{L}\rho_V(y)\right)\rd y
+\frac{N}{L}\int_{A}^\alpha h(y,x)\rho_V(y)\rd y+\frac{N}{\pi L}P.V.\int_\alpha^\beta \frac{\sqrt{(y-\alpha)(\beta-y)}}{y-x}F'_V(y)\rd y\right),
\end{split}\end{align}
for $x\in [\alpha, \beta]$, where 
\begin{align*}
h(y,x)=\frac{\sqrt{(y-\alpha)(y-\beta)}}{y-x}.
\end{align*}
and we choose the branch such that $\sqrt{(y-\alpha)(y-\beta)}\sim y$ as $y\rightarrow \infty$.
At $x=\alpha$, $\rho_W(x)-N\rho_V(x)/L$ is a finite number. Therefore, by plugging $x=\alpha$ in \eqref{e:defrW2}, we get
\begin{align}\begin{split}\label{e:pluga}
0=&\frac{N}{L}\int_{b}^{\infty} \sqrt{\frac{y-\beta}{y-\alpha}}(\rd \mu(y)-\rd\mu_N(y))
-\int_\beta^b \sqrt{\frac{y-\beta}{y-\alpha}}\left(\rho_W( y)-\frac{N}{L}\rho_V(y)\right)\rd y\\
+&\frac{N}{L}\int_A^\alpha \sqrt{\frac{y-\beta}{y-\alpha}}\rho_V(y)\rd y
+\frac{N}{\pi L}\int_\alpha^\beta \sqrt{\frac{\beta-y}{y-\alpha}}F_V'(y)\rd y.
\end{split}\end{align}
We first estimate the first term in \eqref{e:pluga}. For $y\geq b$, the behaviors of the function $(y-\beta)^{1/2}/(y-\alpha)^{1/2}$ and its first and second derivatives is the same as those of the function $h(y,x)$.
Therefore, by the same argument as for \eqref{e:inth5}, we have
\begin{align}\label{e:pluga1}
\left|\frac{N}{L}\int_{b}^{\infty} \sqrt{\frac{y-\beta}{y-\alpha}}(\rd \mu(y)-\rd\mu_N(y))\right|\lesssim \frac{\ln N M}{L}.
\end{align}
For the second term in \eqref{e:pluga}, by the same argument as for \eqref{e:bbetabound}, we have
\begin{align}\label{e:pluga2}
\left|\int_\beta^b \sqrt{\frac{y-\beta}{y-\alpha}}\left(\rho_W( y)-\frac{N}{L}\rho_V(y)\right)\rd y\right|\lesssim \left(\frac{\ln N M}{L}\right)^{3/2}.
\end{align}
For the last two terms in \eqref{e:pluga}, exact one of them is nonzero. More precisely, if $A\leq \alpha$, the last term in \eqref{e:pluga} vanishes. Otherwise the third term in \eqref{e:pluga} vanishes. If $A\leq \alpha$, we have the following estimate for the third term in \eqref{e:pluga},
\begin{align}\label{e:pluga3}
\frac{N}{L}\int_A^\alpha \sqrt{\frac{y-\beta}{y-\alpha}}\rho_V(y)\rd y
\asymp \frac{N}{L}\int_{A}^\alpha \left(\frac{L}{N}\right)^{1/3}\sqrt{\frac{y-A}{\alpha-y}}\rd y\asymp \left(\frac{N}{L}\right)^{2/3}(\alpha-A),
\end{align}
where we used $\beta-y\asymp \kappa_I \asymp (L/N)^{2/3}$ and $\rho_V(y)\asymp \sqrt{[y-A]_+}$. If $\alpha\leq A$, we have the following estimate for the last term in \eqref{e:pluga},
\begin{align}\label{e:pluga4}
\frac{N}{\pi L}\int_\alpha^A \sqrt{\frac{y-\beta}{y-\alpha}}F_V'(y)\rd y
\asymp \frac{N}{L}\int_\alpha^A \left(\frac{L}{N}\right)^{1/3}\sqrt{\frac{A-y}{y-\alpha}}\rd y\asymp \left(\frac{N}{L}\right)^{2/3}(A-\alpha),
\end{align}
where we used $\beta-y\asymp \kappa_I \asymp (L/N)^{2/3}$ and $F_V'(y)\asymp \sqrt{[A-y]_+}$.

By combining \eqref{e:pluga}, \eqref{e:pluga1}, \eqref{e:pluga2}, \eqref{e:pluga3}, and \eqref{e:pluga4} all together, we get 
\begin{align*}
\left(\frac{N}{L}\right)^{2/3}|\alpha-A|\lesssim \frac{\ln N M}{L},
\end{align*}
and the statement follows by rearranging. 
\end{proof}
 
\begin{proof}[Proof of Proposition \ref{prop:edgeeq}]

We prove the estimate for the left edge case. The proof for the right edge case is exactly the same.
It follows from taking differece of \eqref{e:defrW2} and \eqref{e:pluga}, 
\begin{align}\begin{split}\label{e:defrWedge}
&\rho_W(x)-\frac{N}{L}\rho_V(x)=\frac{1}{\pi\sqrt{(x-\alpha)(\beta-x)}}\left(\frac{N}{L}\int_{b}^\infty\left( h(y,x)-\sqrt{\frac{y-\beta}{y-\alpha}}\right)\left(\rd\mu(y)-\rd\mu_N(y)\right)\right.\\
-&\left.\int_{\beta}^b \left(h(y,x)-\sqrt{\frac{y-\beta}{y-\alpha}}\right)\left(\rho_W(y)-\frac{N}{L}\rho_V(y)\right)\rd y
+\frac{N}{L}\int_{A}^\alpha \left(h(y,x)-\sqrt{\frac{y-\beta}{y-\alpha}}\right)\rho_V(y)\rd y\right.\\
+&\left.\frac{N}{\pi L}P.V.\int_\alpha^\beta \left(\frac{\sqrt{(y-\alpha)(\beta-y)}}{y-x}-\sqrt{\frac{y-\beta}{y-\alpha}}\right)F'_V(y)\rd y\right)\\
=&
\frac{1}{\pi}\sqrt{\frac{x-\alpha}{\beta-x}}\left(\frac{N}{L}\int_{b}^\infty g(y,x)\left(\rd\mu(y)-\rd\mu_N(y)\right)
-\int_{\beta}^b g(y,x)\left(\rho_W(y)-\frac{N}{L}\rho_V(y)\right)\rd y\right.\\
+&\left.\frac{N}{L}\int_{A}^\alpha g(y,x)\rho_V(y)\rd y
+\frac{ N}{\pi L}P.V.\int_\alpha^\beta \frac{\sqrt{\beta-y}}{(y-x)\sqrt{y-\alpha}}F'_V(y)\rd y\right)
,
\end{split}\end{align}
where 
\begin{align*}
g(y,x)=\frac{\sqrt{y-\beta}}{(y-x)\sqrt{y-\alpha}}.
\end{align*}

We first estimate the first term on the righthand side of \eqref{e:defrWedge}, which is similar to the argument for \eqref{e:inth5}. 
We recall the construction from Remark \ref{r:decompint}. Notice that $\dist(b, A)= \kappa_I\gtrsim (L/N)^{2/3}$, we construct bump functions $\chi_1,\chi_2,\cdots, \chi_{n'}$ with scale $\tilde \eta=M/N$, from Remark \ref{r:decompint}. $\chi_k(y)$ is supported on interval $I_k$, with $|I_k|\asymp 2^kM/N\sqrt{\kappa_I}$ and $\dist(I_k, b)\asymp 2^k M/N\sqrt{\kappa_I}$. 
\begin{align}\begin{split}\label{e:intg1}
\int_{y\geq b}g(y,x)\left(\rd\mu(y)-\rd\mu_N(y)\right)
&=\int_{b_0}^{b_1} (1-\chi_1(y))g(y,x)\left(\rd\mu(y)-\rd\mu_N(y)\right)\\
&+\sum_{k=1}^{n'}\int_{y\geq b}\chi_k(y)g(y,x)\left(\rd \mu(y)-\rd\mu_N(y)\right).
\end{split}\end{align}
Notice $1-\chi_1$ is supported on $[b_0,b_1]$ where $b_0=b$ and $b_1=b+\OO(M/N\sqrt{\kappa_I})$. For any $y\in [b_0,b_1]$, since $b-\beta\lesssim M/N\sqrt{\kappa_I}$, $y-\beta\lesssim M/N\sqrt{\kappa_I}$. Also, by our assumption $b-x\asymp\kappa_I\asymp (L/N)^{2/3}$. It follows by combining Proposition \ref{prop:alphabetaloc} that $y-x\asymp y-\alpha\asymp\kappa_I\asymp (L/N)^{2/3}$. All these estimates together impliy $g(y,x)\lesssim (M/L)^{1/2}$ for $y\in[b_0,b_1]$. Moreover, the total mass of $\mu$ and $\mu_N$ on this interval is bounded by $\OO(\ln NM/N)$. Therefore,
\begin{align}\begin{split}\label{e:intg2}
\left|\int_{b_0}^{b_1} g(y,x)(1-\chi_1(y))\left(\rd\mu(y)-\rd\mu_N(y)\right)\right|
&\lesssim \ln N\left(\frac{M}{N}\right)^{1/3}\left(\frac{M}{L}\right)^{7/6}.
\end{split}\end{align}
For $k\geq 1$, there are two cases, $|I_k|\leq \kappa_I$ or $|I_k|\geq \kappa_I$.
In the first case that $|I_k|\leq \kappa_I$, we $I_k\subset[b, B-r_0]$. Moreover,
if $y\in I_k$ with $|I_k|\leq \kappa_I$, by our assumption $b-x\asymp \kappa_I\asymp L/N\sqrt{\kappa_I}$, and Proposition \ref{prop:alphabetaloc}, we have $y-x\asymp y-\alpha\asymp\kappa_I\asymp (L/N)^{2/3}$, and $y-\beta\asymp |I_k|$.
As a function of $y$, $g(y,x)$ and its derivatives satisfy
\begin{align*}
|g(y,x)|\lesssim |I_k|^{1/2}\kappa_I^{-3/2},\quad |\del_y g(y,x)|\lesssim|I_k|^{-1/2}\kappa_I^{-3/2},\quad |\del_y^2 g(y,x)|\lesssim |I_k|^{-3/2}\kappa_I^{-3/2}.
\end{align*}
Let $g_k(y,x)=(\kappa_I^{3/2}/|I_k|^{1/2})g(y,x)$,
then as a function of $y$, $g_k(y,x)=\OO(1)$, $\del_yg_k(y,x)=\OO(|I_k|^{-1})$ and $\del_y^{2}g_k(y,x)=\OO(|I_k|^{-2})$.
By Proposition \ref{prop:rig}, we have
\begin{align}\begin{split}\label{e:intg3}
&\sum_{k: |I_k|\leq \kappa_I}\left|\int g(y,x)\chi_k(y)\left(\rd\mu(y)-\rd\mu_N(y)\right)\right|
=\sum_{k: |I_k|\leq \kappa_I}\frac{|I_k|^{1/2}}{\kappa_I^{3/2}}\left|\int g_k(y,x)\left(\rd\mu(y)-\rd\mu_N(y)\right)\right|\\
\lesssim &\sum_{k: |I_k|\leq \kappa_I}\frac{|I_k|^{1/2}}{\kappa_I^{3/2}}\frac{\ln N M}{N}
\lesssim \frac{\ln N M}{N\kappa_I}\asymp \frac{\ln N M}{N}\left(\frac{N}{L}\right)^{2/3}.
\end{split}\end{align}
For $|I_k|\geq \kappa_I$, we have $y-x\asymp y-\alpha\asymp y-\beta\asymp |I_k|$, and 
\begin{align*}
|g(y,x)|\lesssim |I_k|^{-1},\quad |\del_y g(y,x)|\lesssim|I_k|^{-2},\quad |\del_y^2 g(y,x)|\lesssim |I_k|^{-3}.
\end{align*}
Let $g_k(y,x)=|I_k|g(y,x)$,
then as a function of $y$, $g_k(y,x)=\OO(1)$, $\del_yg_k(y,x)=\OO(|I_k|^{-1})$ and $\del_y^{2}g_k(y,x)=\OO(|I_k|^{-2})$.
Moreover, either $I_k\subset [b, B-r_0]$, or $I_k$ is not completely contained in $[b,B-r_0]$, in which case $|I_k|\gtrsim 1$. By Proposition \ref{prop:rig} and \ref{prop:rig2}, we have
\begin{align}\begin{split}\label{e:intg4}
&\sum_{k: |I_k|\geq \kappa_I}\left|\int g(y,x)\chi_k(y)\left(\rd\mu(y)-\rd\mu_N(y)\right)\right|
=\sum_{k: |I_k|\geq \kappa_I}\frac{1}{|I_k|}\left|\int g_k(y,x)\left(\rd\mu(y)-\rd\mu_N(y)\right)\right|\\
\lesssim &\sum_{k: |I_k|\geq \kappa_I}\frac{\ln N M}{N|I_k|}
\lesssim \frac{\ln N M}{N\kappa_I}\asymp \frac{\ln N M}{N}\left(\frac{N}{L}\right)^{2/3}.
\end{split}\end{align}
It follows by combining \eqref{e:intg2}, \eqref{e:intg3} and \eqref{e:intg4}, and noticing $\beta-x\asymp \kappa_I\asymp (L/N)^{2/3}$, we have
\begin{align}\label{e:defrWedge1}
\left|\frac{1}{\pi}\sqrt{\frac{x-\alpha}{\beta-x}}\frac{N}{L}\int_{b}^{\infty} g(y,x)(\rd \mu(y)-\rd\mu_N(y))\right|\lesssim \frac{\ln N M}{L}\frac{N}{L}\sqrt{x-\alpha}.
\end{align}
For the second term on the righthand side of \eqref{e:defrWedge}, we have $g(y,x)\asymp \sqrt{y-\beta}/\kappa_I^{3/2}$. It follows from the same argument as for \eqref{e:bbetabound}, 
\begin{align}\label{e:defrWedge2}
\left|\frac{1}{\pi}\sqrt{\frac{x-\alpha}{\beta-x}}\int_\beta^b g(y,x)\left(\rho_W( y)-\frac{N}{L}\rho_V(y)\right)\rd y\right|\lesssim \left(\frac{\ln N M}{L}\right)^{3/2}\frac{N}{L}\sqrt{x-\alpha}.
\end{align}

For the last two terms on the righthand side of \eqref{e:defrWedge}, exact one of them is nonzero. More precisely, if $A\leq \alpha$, the last term vanishes; if $\alpha\leq A$, the third term vanishes. We first exam the first case. If $A\leq \alpha$, we can estimate the third term in \eqref{e:defrWedge},
\begin{align}\label{e:defrWedge3}
\frac{1}{\pi} \sqrt{\frac{x-\alpha}{\beta-x}}\frac{N}{L}\int_{A}^{\alpha}g(y,x)\rho_V(y)\rd y
\lesssim\frac{N}{L}\sqrt{x-\alpha} \int_{A}^{\alpha}\frac{\sqrt{\alpha-A}}{(x-y)\sqrt{\alpha-y}}\rd y,
\end{align}
where we used $\beta-x, \beta-y\asymp (L/N)^{2/3}$,  and $\rho_V(y)\asymp\sqrt{[y-A]_+}\leq \sqrt{\alpha-A}$. For the integral in \eqref{e:defrWedge3}, there are two cases,
\begin{align}\label{e:defrWedge4}
\int_{A}^{\alpha}\frac{1}{(x-y)\sqrt{\alpha-y}}\rd y\lesssim 
\left\{
\begin{array}{cc}
\frac{1}{\sqrt{x-\alpha}},  &0\leq x-\alpha\leq \alpha-A,\\
\frac{\sqrt{\alpha-A}}{x-\alpha}, & x-\alpha\geq \alpha-A.
\end{array}
\right.
\end{align}
\eqref{e:erroraA} follows from \eqref{e:defrWedge}, \eqref{e:defrWedge1}, \eqref{e:defrWedge2}, \eqref{e:defrWedge3} and \eqref{e:defrWedge4}.

If $\alpha\leq A$, we estimate the last term in \eqref{e:defrWedge}. There are two cases, if $x\geq A$, the last term on the righthand side of \eqref{e:defrWedge} is a normal integral, 
\begin{align*}
\frac{1}{\pi}\sqrt{\frac{x-\alpha}{\beta-x}}\frac{ N}{L}\int_{\alpha}^A\frac{\sqrt{\beta-y}}{(y-x)\sqrt{y-\alpha}}F_V'(y)\rd y
\lesssim \frac{N}{L}\sqrt{x-\alpha}\int_{\alpha}^A\frac{\sqrt{A-\alpha}}{(x-y)\sqrt{y-\alpha}}\rd y,
\end{align*}
where we used $\beta-x, \beta-y\asymp (L/N)^{2/3}$, $F_V'(y)\asymp \sqrt{[A-y]_+}\leq \sqrt{A-\alpha}$. It follows \eqref{e:defrWedge4}, 
\begin{align}\label{e:defrWedge4.5}
\frac{1}{\pi}\sqrt{\frac{x-\alpha}{\beta-x}}\frac{ N}{L}\int_{\alpha}^A\frac{\sqrt{\beta-y}}{(y-x)\sqrt{y-\alpha}}F_V'(y)\rd y\lesssim 
\left\{
\begin{array}{cc}
\frac{N}{L}\sqrt{A-\alpha},  &0\leq x-A\leq A-\alpha,\\
\frac{N}{L}\frac{A-\alpha}{\sqrt{x-\alpha}}, & x-A\geq A-\alpha.
\end{array}
\right.
\end{align}
%
%
If $\alpha\leq x\leq A$, we denote 
\begin{align*}
F_V'(y)=-\sqrt{A-y}S_A(y),\quad t(y)=\sqrt{\frac{\beta-y}{y-\alpha}}F_V'(y).
\end{align*}
where $S_A(y)$ is an analytic function as defined in the discussion below \eqref{e:RQ3}. Then we have
\begin{align}\label{e:defrWedge5}
P.V.\int_{\alpha}^A\frac{\sqrt{\beta-y}}{(y-x)\sqrt{y-\alpha}}F_V'(y)\rd y=\int_{\alpha}^A \frac{t(y)-t(x)}{y-x}\rd y+t(x)P.V.\int_{\alpha}^A\frac{1}{y-x}\rd y.
\end{align}
For the second term in \eqref{e:defrWedge5},
\begin{align}\label{e:defrWedge6}
t(x)P.V.\int_{\alpha}^A\frac{1}{y-x}\rd y
\asymp \left(\frac{L}{N}\right)^{1/3}\sqrt{\frac{A-x}{x-\alpha}}\ln \frac{A-\alpha}{\dist(x, \{\alpha, A\})}
\lesssim 
\left(\frac{L}{N}\right)^{1/3}\sqrt{\frac{A-\alpha}{x-\alpha}}\ln \frac{A-\alpha}{x-\alpha},
\end{align}
where we used $\beta-x\asymp (L/N)^{2/3}$. For the first term in \eqref{e:defrWedge5}, we rewrite it as the sum of two terms
\begin{align}\label{e:defrWedge7}
-\sqrt{\frac{(\beta-x)(A-x)}{x-\alpha}}\int_\alpha^A\frac{S_A(y)-S_A(x)}{y-x}\rd y-S_A(x)\int_{\alpha}^A\frac{\sqrt{\frac{(\beta-y)(A-y)}{y-\alpha}}-\sqrt{\frac{(\beta-x)(A-x)}{x-\alpha}}}{y-x}\rd y.
\end{align}
For the first term in \eqref{e:defrWedge7},
\begin{align}\label{e:defrWedge8}
-\sqrt{\frac{(\beta-x)(A-x)}{x-\alpha}}\int_\alpha^A\frac{S_A(y)-S_A(x)}{y-x}\rd y
\lesssim \left(\frac{L}{N}\right)^{1/3}\sqrt{\frac{A-x}{x-\alpha}}
\lesssim  \left(\frac{L}{N}\right)^{1/3}\sqrt{\frac{A-\alpha}{x-\alpha}},
\end{align}
where we used that $S_A$ is an analytic function, so it is Lipschitz. For the second term in \eqref{e:defrWedge7}, the integrand is bounded by 
\begin{align}\label{e:defrWedge9}
\left(\frac{N}{L}\right)^{1/3}\left(\sqrt{\frac{A-y}{y-\alpha}}+\sqrt{\frac{A-x}{x-\alpha}}\right)^{-1}
+\left(\frac{L}{N}\right)^{1/3}\frac{A-\alpha}{(x-\alpha)\sqrt{(A-y)(y-\alpha)}+(y-\alpha)\sqrt{(A-x)(x-\alpha)}},
\end{align}
where we used $\beta-\alpha\asymp \beta-x\asymp \beta-y\asymp (L/N)^{2/3}$. For the first term in \eqref{e:defrWedge9},
\begin{align}\label{e:defrWedge10}
\left(\frac{N}{L}\right)^{1/3 }\int_\alpha^A \left(\sqrt{\frac{A-y}{y-\alpha}}+\sqrt{\frac{A-x}{x-\alpha}}\right)^{-1}\rd y\lesssim \left(\frac{N}{L}\right)^{1/3 }\sqrt{(A-\alpha)(x-\alpha)}\lesssim \left(\frac{L}{N}\right)^{1/3 }\sqrt{\frac{A-\alpha}{x-\alpha}},
\end{align}
where in the last inequality, we used $x-\alpha\leq A-\alpha \lesssim (L/N)^{2/3}$ from Proposition \ref{prop:alphabetaloc}.
For the second term in \eqref{e:defrWedge9}, if $x\geq (\alpha+A)/2$, then $x-\alpha\asymp A-\alpha$, and 
\begin{align}\begin{split}\label{e:defrWedge11}
&\left(\frac{L}{N}\right)^{1/3}\int_{\alpha}^A\frac{A-\alpha}{(x-\alpha)\sqrt{(A-y)(y-\alpha)}+(y-\alpha)\sqrt{(A-x)(x-\alpha)}}\rd y\\
\leq& \left(\frac{L}{N}\right)^{1/3}\int_{\alpha}^A\frac{A-\alpha}{(x-\alpha)\sqrt{(A-y)(y-\alpha)}}\rd y
\lesssim  \left(\frac{L}{N}\right)^{1/3}.
\end{split}\end{align}
If $x\leq (\alpha+A)/2$, then $A-x\asymp A-\alpha$, and 
\begin{align}\begin{split}\label{e:defrWege12}
&\left(\frac{L}{N}\right)^{1/3}\int_{\alpha}^A\frac{A-\alpha}{(x-\alpha)\sqrt{(A-y)(y-\alpha)}+(y-\alpha)\sqrt{(A-x)(x-\alpha)}}\rd y\\
\lesssim&\frac{A-\alpha}{\sqrt{x-\alpha}} \left(\frac{L}{N}\right)^{1/3}\int_{\alpha}^A\frac{\rd y}{\sqrt{y-\alpha}\left(\sqrt{(x-\alpha)(A-y)}+\sqrt{(y-\alpha)(A-\alpha)}\right)}\\
\lesssim  &\frac{A-\alpha}{\sqrt{x-\alpha}} \left(\frac{L}{N}\right)^{1/3}\left(\int_{\alpha}^{(\alpha+A)/2}\frac{\rd y}{\sqrt{y-\alpha}\sqrt{A-\alpha}\left(\sqrt{x-\alpha}+\sqrt{y-\alpha}\right)}
+\int_{(\alpha+A)/2}^A\frac{\rd y}{(A-\alpha)^{3/2}}
\right)\\
\lesssim &\sqrt{\frac{A-\alpha}{x-\alpha}}\left(\frac{L}{N}\right)^{1/3}\ln \frac{A-\alpha}{x-\alpha}.
\end{split}\end{align}
It follows from combining the above estimates all together, for $\alpha \leq x\leq A$, 
\begin{align}\label{e:defrWedge13}
\frac{1}{\pi}\sqrt{\frac{x-\alpha}{\beta-x}}\frac{ N}{L}\int_{\alpha}^A\frac{\sqrt{\beta-y}}{(y-x)\sqrt{y-\alpha}}F_V'(y)\rd y\lesssim 
\frac{N}{L}\sqrt{A-\alpha}\ln \frac{A-\alpha}{x-\alpha}.
\end{align}
\eqref{e:erroraA} follows from \eqref{e:defrWedge}, \eqref{e:defrWedge1}, \eqref{e:defrWedge2}, \eqref{e:defrWedge4.5} and \eqref{e:defrWedge13}.This finishes the proof.
\end{proof}

\section{Level Repulsion and Proof of Proposition \ref{p:levelrep}}\label{a:levelrep}

\begin{proof}[Proof of Proposition \ref{p:levelrep}]
The continuous case, i.e. \eqref{e:cont}, is the the same statement as in \cite[Lemma D.1]{MR3253704}. We present the proof for \eqref{e:dis} when $k=L$, which is a discrete analogue of the proof of \cite[Lemma D.1]{MR3253704}. The general case when $k\in \qq{1,L}$ follows by further conditioning on $\ell_{k+1}, \ell_{k+1}, \cdots, \ell_{L}$.

We set $\ell_+\deq\ell_{L+1}-\theta$ and $\ell_-\deq\ell_+-a$ with $a\deq N^{\fa+1/3}L^{-1/3}$. By our assumption, $(\ell_{L+1}, \ell_{L+2}, \cdots, \ell_{N})\in \cR_L^{*}(\fa, r)$ is a ``good" boundary condition as in Definition \ref{def:good2}.

We decompose the configurational space according to the number of the particles in $[\ell_-, \ell_+]$, which we denote by $n$. For any large integer $D>0$ (we allow that $D=\infty$), we define the partition function 
\begin{align*}
Z_D\deq \sum_{n=0}^L \sum_{\ell_1<\ell_2<\cdots < \ell_{L-n}<\ell_-,\atop \ell_-\leq \ell_{L-n+1}<\cdots < \ell_L\leq \ell_+-a/2D} Z_L^{\loc}\bP_L^{\loc}(\ell_1, \ell_2, \cdots, \ell_L).
\end{align*}
We remark that $Z_{\infty}=Z_{L}^{\loc}$.
In the following we prove that if $a\gg DL$ and $D\gg L^2$, then there exists some constant $C$ such that 
\begin{align}\label{e:ratio}
\frac{Z_D}{Z_\infty}\geq \left(1-\frac{C}{D}\right)^{CL^2}.
\end{align}
Then it follows by taking $D=\lfloor N^{\fa}/2s\rfloor$, that if $s\ll L^{-2}$, we have
\begin{align*}
\bP_L^{\loc}(|\ell_{L+1}-\ell_L|\geq sN^{1/3}L^{-1/3})
\geq \bP_L^{\loc}(|\ell_{L+1}-\ell_L|\geq a/2D)
=\frac{Z_D}{Z_\infty}\geq \left(1-Cs L^2\right),
\end{align*}
and this finishes the proof. 

For the proof of \eqref{e:ratio}, similarly to the proof of \cite[Lemma D.1]{MR3253704}, we fix $n$ and perform a discrete change of variable which sends $\ell_{L-n+1}, \ell_{L-n+2}, \cdots, \ell_{L}\in [\ell_-, \ell_+-a/2D]$
to $\tilde\ell_{L-n+1}, \tilde\ell_{L-n+2}, \cdots, \tilde\ell_{L}\in [\ell_-, \ell_+]$
. Let $\ell_-^n=\inf\{\ell\in a(N)+ (L-n+1)\theta+\bZ_{\geq 0}: \ell\geq \ell_-\}$, then we have
\begin{align*}\begin{split}
y_1\deq \ell_{L-n+1}-\ell_-^n, \quad y_2\deq \ell_{L-n+2}-\ell_{L-n+1}-\theta,\quad \cdots, \quad y_n\deq \ell_{L}-\ell_{L-1}-\theta,\\
\tilde y_1\deq \tilde\ell_{L-n+1}-\ell_-^n, \quad \tilde y_2\deq \tilde\ell_{L-n+2}-\tilde\ell_{L-n+1}-\theta,\quad \cdots, \quad y_n\deq \tilde\ell_{L}-\tilde\ell_{L-1}-\theta,
\end{split}\end{align*}
where $y_1, y_2,\cdots, y_n, \tilde y_1, \tilde y_2,\cdots, \tilde y_n\in \bZ_{\geq 0}$. We define the transition kernel
\begin{align*}
\bK(\{\tilde\ell_i\}_{L-n+1}^L, \{\ell_i\}_{L-n+1}^L)
=\prod_{i=1}^{n}\bK_0(\tilde y_i, y_i),
\end{align*}
where 
\begin{align*}
\bK_0(\tilde y, y)=\left\{
\begin{array}{cc}
\delta_{\tilde y, y} & \text{if $y=m$, where $0\leq m\leq D-1$,}\\
\frac{m}{D-1}\delta_{\tilde y-k,  y} +\frac{D-1-m}{D-1} \delta_{\tilde y-(k-1),y}& \text{if $\tilde y=kD+m$, where $1\leq k $ and $0\leq m\leq D-1$.}
\end{array}
\right.
\end{align*}
One can easily check:
\begin{claim}\label{c:ltol}
If $\tilde\ell_{L-n+1}, \tilde\ell_{L-n+2}, \cdots, \tilde\ell_{L}\in [\ell_-, \ell_+]$ and $\bK(\{\tilde\ell_i\}_{L-n+1}^L, \{\ell_i\}_{L-n+1}^L)\neq 0$, then there exists some large constants $C$, such that 
\begin{enumerate}
\item \label{a1}$\ell_{L-n+1}, \ell_{L-n+2}, \cdots, \ell_{L}\in [\ell_-, \ell_+-a/2D]$, provided that $a\geq CDL$;
\item \label{a2}$\sum_{\tilde\ell_{L-n+1}, \tilde\ell_{L-n+2}, \cdots, \tilde\ell_{L}\in [\ell_-, \ell_+]}\bK(\{\tilde\ell_i\}_{L-n+1}^L, \{\ell_i\}_{L-n+1}^L)\leq (1+C/D)^n$;
\item \label{a3}For $L-n+1\leq i\leq L$ and $j>L$, we have $\ell_j-\tilde\ell_i\leq \ell_j-\ell_i$;
\item \label{a4}For $L-n+1\leq i \leq L$, $|\tilde \ell_i-\ell_i|\leq C|\ell_i-\ell_{-}^n|/D\leq Ca/D$;
\item \label{a5}For $L-n+1\leq i<j \leq L$, $|\tilde \ell_j-\tilde \ell_i|\leq  (1+C/D)| \ell_j-\ell_i|$;
\item \label{a6}For $1\leq i<L-n+1\leq j \leq L$, $|\tilde \ell_j- \ell_i|\leq (1+C/D)| \ell_j-\ell_i|$.
\end{enumerate}
\end{claim}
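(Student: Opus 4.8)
The statement to be proved is Claim~\ref{c:ltol}, which collects six elementary consequences of the explicit transition kernel $\bK$ built out of the single-site kernels $\bK_0(\tilde y,y)$. The strategy is to argue site by site: each $\bK_0(\tilde y, y)$ moves a single gap variable $y_i$ to $\tilde y_i$ by adding a multiple of the block size $D$, so all six assertions reduce to explicit bookkeeping on the gaps $y_i, \tilde y_i$ and on their partial sums. First I would record the basic mechanics of $\bK_0$: if $\bK_0(\tilde y, y)\neq 0$ then either $\tilde y = y \leq D-1$, or writing $\tilde y = kD+m$ with $k\geq 1$, $0\leq m\leq D-1$, one has $y\in\{(k-1)D+m, kD+m\}$, hence $0\leq \tilde y - y \leq D$ and in fact $y \geq \tilde y - D \geq \tilde y(1 - D/\tilde y)$, while always $y\leq \tilde y$. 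This immediately gives $\tilde\ell_i \geq \ell_i$ at each moved index, which is the engine behind \eqref{a3} and part of \eqref{a4}.

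For \eqref{a1}: summing the telescoped gaps, $\ell_i - \ell_-^n = \sum_{j\leq i} y_j + (\text{integer multiples of }\theta)$ and similarly for $\tilde\ell_i$, so $\tilde\ell_i - \ell_i = \sum_{j\leq i}(\tilde y_j - y_j)$ with each term in $[0,D]$; since $i \leq L$ there are at most $L$ terms, so $0 \leq \tilde\ell_i - \ell_i \leq DL$. If $\tilde\ell_i \in [\ell_-,\ell_+]$ then $\ell_i \geq \ell_- - 0 = \ell_-$ and $\ell_i \leq \tilde\ell_i \leq \ell_+$; moreover $\ell_i = \tilde\ell_i - \sum_{j\leq i}(\tilde y_j - y_j)$; the point of choosing $a \geq CDL$ is that we can afford the cruder bound $\ell_i \leq \ell_+ - a/2D$ — here one uses that when some gap $\tilde y_j$ exceeds $D$ the corresponding $y_j$ is strictly smaller by at least one $D$, so if $\ell_L$ were too close to $\ell_+$ then many gaps were large, forcing $\sum(\tilde y_j - y_j)$ to be at least $a/2D$; I would make this precise by the contrapositive. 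For \eqref{a2}: the sum over $\tilde\ell_i$ factorizes over sites because $\bK$ is a product, so it suffices to bound $\sum_{\tilde y} \bK_0(\tilde y, y) \leq 1 + C/D$ for each fixed $y$; from the formula, given $y$, there are at most two values of $\tilde y$ with nonzero weight (namely $\tilde y = y$ if $y\leq D-1$, or $\tilde y \in \{y + D - m', \ldots\}$ more carefully the preimages), and the weights $\tfrac{m}{D-1} + \tfrac{D-1-m}{D-1} = 1$ sum exactly to $1$ along each column of the ``downward'' map; the $1+C/D$ slack absorbs boundary effects near $y$ close to a multiple of $D$. Raising to the $n$-th power (with $n\leq L$) gives $(1+C/D)^n$. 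Assertions \eqref{a4}, \eqref{a5}, \eqref{a6} then follow from $\tilde\ell_i - \ell_i = \sum_{j\leq i}(\tilde y_j - y_j) \in [0, D\cdot\#\{\text{large gaps among the first }i\}]$ together with the observation that the number of gaps exceeding $D$ among indices up to $i$ is at most $(\ell_i - \ell_-^n)/D \lesssim |\ell_i - \ell_-^n|/D$, which bounds $|\tilde\ell_i - \ell_i|$ by $C|\ell_i - \ell_-^n|/D$; since $|\ell_i - \ell_-^n|\leq \ell_+ - \ell_- = a$ this is $\leq Ca/D$, giving \eqref{a4}. For \eqref{a5}, $|\tilde\ell_j - \tilde\ell_i| = |\ell_j - \ell_i| + \sum_{i< k\leq j}(\tilde y_k - y_k)$, and the added sum is $\leq D\cdot\#\{\text{large gaps in }(i,j]\} \leq D\cdot (\ell_j - \ell_i)/D = \ell_j - \ell_i$, yielding $|\tilde\ell_j - \tilde\ell_i| \leq 2|\ell_j - \ell_i|$; to get the sharper constant $1+C/D$ one uses that a large gap of size $kD+m$ contributes $\tilde y_k - y_k \leq D$ but against a gap that is already $\geq kD \geq k\cdot(\text{that }D)$, so the relative increase is $\leq 1/k \leq C/D$ only after summing — more honestly, $\sum_{i<k\leq j}(\tilde y_k - y_k) \leq (C/D)\sum_{i<k\leq j}\tilde y_k \leq (C/D)|\tilde\ell_j - \tilde\ell_i|$, and rearranging gives the claim for $D$ large. \eqref{a6} is identical with $\ell_i$ (an unmoved boundary particle, $i < L-n+1$) in place of $\tilde\ell_i$, using $\ell_i \leq \tilde\ell_i$ is not needed — only that $\tilde\ell_j \geq \ell_i$ and the same relative-increase bound.

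The main obstacle, and where I would spend the most care, is the precise constant bookkeeping in \eqref{a2} and \eqref{a5}: one must verify that the ``per-site'' multiplicative error is $1 + O(1/D)$ rather than $O(1)$, since only then does raising to the power $n \leq L$ (and later to power $\sim L^2$ when counting pairs in \eqref{e:ratio}) stay controlled. This requires noticing that the kernel $\bK_0$ is \emph{mass-preserving downstream} — i.e. $\sum_{\tilde y}\bK_0(\tilde y, y)$ would be exactly $1$ were it not for the finitely many $y$ near the edges of each length-$D$ block — so the deviation from $1$ is genuinely $O(1/D)$ uniformly. Everything else is routine telescoping of gaps and counting how many gaps can be ``large'' (exceed $D$), which is bounded by the total length divided by $D$. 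No analytic input is needed; the claim is combinatorial, and the proof is a short case analysis on $\bK_0$ followed by summation of the per-site estimates.
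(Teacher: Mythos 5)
The paper does not spell out a proof of Claim~\ref{c:ltol} (it is prefaced with ``One can easily check''), so I am judging your argument on its own terms — and it contains a decisive misreading of the kernel that breaks items~\eqref{a1} and~\eqref{a4}.

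You read the second branch of $\bK_0$ as forcing $y\in\{(k-1)D+m,\,kD+m\}$, i.e.\ $\tilde y - y \in\{0,D\}$, an \emph{additive} shift by at most $D$. But $\delta_{\tilde y - k, y}$ and $\delta_{\tilde y - (k-1), y}$ mean $y = \tilde y - k$ or $y = \tilde y - (k-1)$, with $k=\lfloor\tilde y/D\rfloor$; the shift is $\tilde y - y \in\{k-1,k\}$, a \emph{multiplicative} contraction satisfying $\tilde y - y \leq \tilde y/D$. This is not a cosmetic point: the entire reason the kernel is built this way is to compress $[\ell_-,\ell_+]$ into $[\ell_-,\ell_+ - a/2D]$ by a relative factor $1-\Theta(1/D)$, which a bounded additive shift cannot do. You in fact know the correct inequality — your ``more honestly'' sentence for~\eqref{a5} uses $\tilde y_k - y_k \leq (C/D)\tilde y_k$ — but you never reconcile it with the incorrect ``$0\leq\tilde y - y\leq D$'' stated at the outset, and you apply the wrong version where it matters.

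Concretely, your chain for~\eqref{a4} reads $\tilde\ell_i - \ell_i \leq D\cdot\#\{\text{large gaps}\} \leq D\cdot(\ell_i-\ell_-^n)/D = \ell_i-\ell_-^n$, which is a factor of $D$ weaker than the claimed $C|\ell_i-\ell_-^n|/D$; the conclusion you announce does not follow from the two premises you write down. The correct route is $\tilde\ell_i-\ell_i=\sum_{j\leq i}(\tilde y_j-y_j)\leq\sum_{j\leq i}\tilde y_j/D\leq(\tilde\ell_i-\ell_-^n)/D$, and then absorb the self-reference: $(\tilde\ell_i-\ell_i)(1-1/D)\leq(\ell_i-\ell_-^n)/D$. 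Likewise for~\eqref{a1}, the assertion that ``when some gap $\tilde y_j$ exceeds $D$ the corresponding $y_j$ is strictly smaller by at least one $D$'' is false (for $\tilde y_j = D+m$ one has $y_j\in\{\tilde y_j-1,\tilde y_j\}$); what is true, and what the hypothesis $a\geq CDL$ is for, is that the total shift is at least $\sum_j\tilde y_j/D - Cn \gtrsim a/D - CL$, so that whenever $\tilde\ell_L$ is within $a/2D$ of $\ell_+$ the $a/D$ contraction outweighs the $O(L)$ error terms. Items~\eqref{a2},~\eqref{a3},~\eqref{a5},~\eqref{a6} as you finally state them are essentially right (the per-site reverse sum $\sum_{\tilde y}\bK_0(\tilde y,y)$ does equal $1+1/(D-1)$ for $y\geq D$ and $1$ for $y<D$, and the ``more honestly'' bound gives~\eqref{a5} and~\eqref{a6} directly), but the argument needs to be rewritten around the correct reading of $\bK_0$ from the start.
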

Using \eqref{a3},\eqref{a4}, \eqref{a5} and \eqref{a6} in Claim \ref{c:ltol}, following the same argument as in the proof of \cite[Lemma D.1]{MR3253704}, we  can  compare the densities  and find that
\begin{align}\begin{split}\label{qw}
\bP_L^{\loc}(\ell_1, \ell_2, \cdots,\ell_{L-n},\tilde \ell_{L-n+1}, \cdots, \tilde \ell_L)
\leq& (1+C/D)^{C(n^2+n(L-n))} e^{Cn N^{2\fa}/D}\bP_L^{\loc}(\ell_1, \ell_2, \cdots,\ell_L)\\
\leq &(1+C/D)^{CL^2} \bP_L^{\loc}(\ell_1, \ell_2, \cdots,\ell_L),
\end{split}\end{align}
provided that $D\gg N^{2\fa}$.

For simplification of notations, we denote $\{\ell_i\}=\{\ell_{L-n+1}, \ell_{L-n+2},\cdots, \ell_{L}\}$, and $\{\tilde\ell_i\}=\{\tilde \ell_{L-n+1}, \tilde\ell_{L-n+2},\cdots, \tilde\ell_{L}\}$.
All the discussions above together lead to the following estimate for $Z_{\infty}$,
\begin{align}\begin{split}\label{e:changeofvbound}
&\sum_{\ell_1<\ell_2<\cdots < \ell_{L-n}<\ell_-,\atop \ell_-\leq \tilde\ell_{L-n+1}<\cdots < \tilde \ell_L\leq \ell_+} \bP_L^{\loc}(\ell_1, \ell_2, \cdots,\ell_{L-n},\tilde \ell_{L-n+1}, \cdots, \tilde \ell_L)\\
= &
\sum_{\ell_1<\ell_2<\cdots < \ell_{L-n}<\ell_-,\atop \ell_-\leq \tilde\ell_{L-n+1}<\cdots < \tilde \ell_L\leq \ell_+} \bP_L^{\loc}(\ell_1, \ell_2, \cdots,\ell_{L-n},\tilde \ell_{L-n+1}, \cdots, \tilde \ell_L)\sum_{\{\ell_{i}\}}\bK(\{\tilde \ell_i\},\{\ell_i\})\\
\leq &
(1+C/D)^{CL^2} \sum_{\ell_1<\ell_2<\cdots < \ell_{L-n}<\ell_-,\atop \ell_-\leq \tilde\ell_{L-n+1}<\cdots < \tilde \ell_L\leq \ell_+} \sum_{\{\ell_{i}\}}\bP_L^{\loc}(\ell_1, \ell_2, \cdots,\ell_L)\bK(\{\tilde \ell_i\},\{\ell_i\})\\
\leq &
(1+C/D)^{CL^2} \sum_{\ell_1<\ell_2<\cdots < \ell_{L-n}<\ell_-,\atop \ell_-\leq \ell_{L-n+1}<\cdots < \ell_L\leq \ell_+-a/2D} \bP_L^{\loc}(\ell_1, \ell_2, \cdots,\ell_L)\sum_{\ell_-\leq \tilde\ell_{L-n+1}<\cdots<\tilde\ell_{L}\leq \ell_+}\bK(\{\tilde \ell_i\},\{\ell_i\})\\
\leq &
(1+C/D)^{CL^2+n} \sum_{\ell_1<\ell_2<\cdots < \ell_{L-n}<\ell_-,\atop \ell_-\leq \ell_{L-n+1}<\cdots < \ell_L\leq \ell_+-a/2D} \bP_L^{\loc}(\ell_1, \ell_2, \cdots,\ell_L),
\end{split}
\end{align}
where in the first equality we used that $\bK$ is a transition kernel, in the third line we used \eqref{qw}, and in the last line we used \eqref{a2} in Claim \ref{c:ltol}. \eqref{e:ratio} follows by summing over $n$ from $0$ to $L$ in \eqref{e:changeofvbound}, provided that
$a\gg DL$, $D\gg L^2$, $D\gg N^{2\fa}$ and $s\ll L^{-2}$. With a little algebra, they are satisfied if $L^{4/3}N^{-1/3}\ll s\ll \min\{L^{-2},N^{-\fa}\}$.

The estimate \eqref{e:dis2} follows the previous proof combining with the rigidity estimates assumption, i.e. with probability $1-\exp(-c(\ln N)^2)$, with respect to $\bP_L^{\loc}$, the number of particles on the interval $[\ell_-, \ell_+]$ is bounded by $CN^\fa$. 

\end{proof}

\bibliography{References}{}
\bibliographystyle{plain}

\end{document}